\newtheorem{definition}{Definition}
\newtheorem{proposition}{Proposition}
\newtheorem{example}{Example}
\newtheorem{theorem}{Theorem}
\newtheorem{corollary}{Corollary}
\newtheorem{lemma}{Lemma}
\newtheorem{notation}{Notation}
\newtheorem{observation}{Observation}
\newtheorem{note}{Note}
\newcommand{\CommaBin}{\mathbin{,}}
\newcommand{\ra}{\rightarrow}
\newcommand{\bit}{\begin{itemize}}
\newcommand{\eit}{\end{itemize}}
\newcommand{\ben}{\begin{enumerate}}
\newcommand{\een}{\end{enumerate}}
\newcommand{\bds}{\begin{description}}
\newcommand{\eds}{\end{description}}
\newcounter{romc}
\newcounter{alphc}
\newcommand{\blr}{\begin{list}{~(\roman{romc})~} {\usecounter{romc}
        \setlength{\topsep}{0pt} \setlength{\itemsep}{0pt}}}
\newcommand{\elr}{\end{list}}
\newcommand{\bla}{\begin{list}{~(\alph{alphc})~} {\usecounter{alphc}
        \setlength{\topsep}{0pt} \setlength{\itemsep}{0pt}}}
\newcommand{\ela}{\end{list}}
\begin{document}

\begin{frontmatter}

\title{Topological Representation of Double Boolean Algebras\tnoteref{mytitlenote}}
\tnotetext[mytitlenote]{Declarations of interest: none}
 \author[label1]{Prosenjit Howlader\corref{mycorrespondingauthor}}
 \cortext[mycorrespondingauthor]{Corresponding author}
\ead{prosenjithowlader@gmail.com}
 \author[label1]{Mohua Banerjee}
  \ead{mohua@iitk.ac.in}
 \address[label1]{Department of Mathematics and Statistics, Indian Institute of Technology  Kanpur,
Kanpur 208016, Uttar Pradesh, India }

\begin{abstract}
In formal concept analysis, the collection of  protoconcepts of any context forms a double Boolean algebra (dBa) which is fully contextual. Semiconcepts of a context form a  pure dBa. The present article is a study on topological representation results for dBas, and in particular, those for fully contextual and  pure dBas. The representation is  in terms of object oriented protoconcepts and semiconcepts of a context. 
A context on topological spaces (CTS) is considered,  and the focus is on   a special kind of CTS in which  the relation defining  the context as well as the converse of the relation are  continuous with respect to the topologies. Such CTS are denoted as ``CTSCR''. 
It is observed that  clopen object oriented protoconcepts of a CTSCR form a fully contextual dBa, while clopen object oriented semiconcepts form a pure dBa. Every dBa is shown to be quasi-embeddable into the dBa of clopen object oriented protoconcepts of a particular CTSCR.  
The quasi-embedding turns into an embedding in case of a 
contextual dBa, and into an isomorphism, when the dBa is fully contextual. 
For    pure dBas, one obtains an isomorphism with the algebra of clopen object oriented semiconcepts of the CTSCR. Representation of finite dBas and Boolean algebras is also addressed in the process. Abstraction of properties of this CTSCR leads to the definition of   ``Stone contexts''.  Stone contexts and CTSCR-homeomorphisms are seen to form a category, denoted as  $\textbf{Scxt}$. 
Furthermore, correspondences are observed between dBa isomorphisms and CTSCR-homeomorphisms.
This motivates a  study of categorical duality of dBas, constituting the second part of the article.
Pure dBas  and fully contextual dBas  along with dBa isomorphisms form categories, denoted as $\textbf{PDBA}$  and  $\textbf{FCDBA}$ respectively. It is established that  $\textbf{PDBA}$ is equivalent to $\textbf{FCDBA}$, while $\textbf{FCDBA}$ and $\textbf{PDBA}$ are dually equivalent to $\textbf{Scxt}$.


\end{abstract}


\begin{keyword}
Formal concept analysis, protoconcept, semiconcept, object oriented concept, double Boolean algebra, 
continuous relation, 
categorical duality. 
\MSC[2010] 06B15 \sep 06D50 \sep 06E15\sep  06E75
\end{keyword}

\end{frontmatter}


\section{Introduction}
\label{sec:Introduction}
In the field of lattice theory, topological representation and categorical duality  have been a subject of study for many years.  Well-known instances of  topological representation and duality results  are those for Boolean algebras by Stone \cite{StoneB,stone1937applications}, and  for distributive lattices  by Stone \cite{stone1938topological} and Priestley  \cite{priestley1970representation}.    Duality  for general lattices was investigated by Hartonas and  Dunn \cite{hartonas1997stone}, while that for bounded lattices was studied by  Urquhart \cite{urquhart1978topological} and Hartung \cite{Hartung1992}. \cite{Hartung1992} is of particular relevance to the present work, as {\it formal concept analysis} (FCA) \cite{ganter2012formal} was made use of to obtain the   results. This article addresses topological representation  results for a class of algebraic structures that arise in FCA, namely the {\it double Boolean algebras}. Furthermore, categorical duality is investigated for the classes of {\it fully contextual} and {\it pure} double Boolean algebras.

FCA was introduced by Wille \cite{wille1982restructuring} and has given rise to a rich body of work, both in theory and applications. In particular, a lot of theoretical development has been made in the direction of algebraic and category-theoretic studies related to FCA  (see e.g. \cite{wille,hitzler2004cartesian,hitzler2006categorical,zhang2006approximable,kwuida2007prime,GUO201929,howlader3}). 
The central objects in FCA are {\it contexts} and {\it concepts}. A {\it context} is a triple $(G,M,R)$, where $G,M$ are sets of {\it objects} and {\it properties} respectively, and $R$ is a relation between them with $gRm$ signifying that object $g$ has the property $m$. A {\it concept} of a context is a pair $(A,B), A \subseteq G, B \subseteq M$, such that $B$ contains exactly the properties that all objects of $A$ have, while $A$ contains exactly the objects having all the properties in $B$. The set of all concepts of a context forms a complete lattice, called the {\it concept lattice} of the context. Moreover, any complete lattice is isomorphic to a concept lattice of some context. Wille initiated the study of the  negation of a  concept  \cite{wille1989knowledge,wille}, to make formal concept analysis more useful for representation, processing, and acquisition of conceptual knowledge. 
If set-complement is used to define the negation of a concept, one encounters the  problem of closure. So the notion of a concept was generalized to that of a {\it semiconcept} and a {\it protoconcept} \cite{wille}. The set of all protoconcepts fails to form a lattice. However, it  leads to the algebraic structure of a double Boolean algebra (dBa)  \cite{wille}. In particular, protoconcepts  form fully contextual dBas \cite{vormbrock2005semiconcept}. In comparison, the set of all semiconcepts forms a subalgebra of the algebra of protoconcepts.  Abstraction of properties of this subalgebra yields a  pure dBa \cite{wille}. In this work, we obtain topological representation results for dBas in general, as well as for fully contextual and pure dBas.

A concept lattice is also dually isomorphic to the lattice formed by the set of all {\it object oriented concepts}, the latter defined in the context of {\it rough set theory}. 
Proposed by Pawlak  \cite{pawlak2012rough}, rough set theory  is a well-established mathematical tool  to handle incompleteness in data.  The  theory hinges on the notions of {\it approximation space} and {\it lower} and {\it upper approximation operators} defined on the space. The Pawlakian  definition of approximation space has been generalized; a {\it generalized approximation space} \cite{yao1996generalization} consists of a set $W$ and a binary relation $R$ on $W$. On such a space, different definitions of lower and upper approximation operators are found in literature. In the ones used here, the lower approximation of any subset $B$ of $W$ collects every  element of $W$ such that all elements  $R$-related to it lie within $B$, while the  upper approximation of $B$ contains all elements of $W$  that are $R$-related to at least one element of $B$. Many comparative studies between rough set theory and FCA have been made, e.g. in \cite{RCA,CACLkeyun,saquer2001concept,duntsch2002modal,yao2004comparative,yao2004concept,RSAFCAyao,aclmc,FCAARDT,howlader2018algebras,howlader2020}. The   approximation operators in a generalized approximation space have been imported into FCA and named as  {\it necessity} and {\it possibility operators} \cite{duntsch2002modal}. Using these  operators,  D\"{u}ntsch and Gediga \cite{duntsch2002modal} defined {\it property oriented concepts}, and Yao  \cite{yao2004concept} defined {\it object oriented concepts}. 
Besides property oriented and object oriented concepts, various other types of concepts and related notions have been defined in rough set theory and  studied from the algebraic  and categorical points of view (see e.g. \cite{lei2009rough,yang2009rough,GUO2014885,howlader2018algebras,howlader2020}).  
The present authors introduced negation in the study of  object oriented concepts of a context, in the lines of Wille's study on negation of concepts. Notions of {\it object oriented semiconcepts} and {\it object oriented protoconcepts} of a context were  defined in \cite{howlader2018algebras,howlader2020}. It was shown that the algebra of protoconcepts  is  isomorphic to that of object oriented protoconcepts \cite{howlader2020}, and the algebra of  semiconcepts is dually isomorphic to that of object oriented semiconcepts \cite{howlader2018algebras}. 
The representation results proved in this work involve object oriented protoconcepts and object oriented semiconcepts of certain special contexts.

Wille \cite{wille} constructed a {\it standard context} for each dBa $\textbf{D}$, denoted as $\mathbb{K}(\textbf{D})$. The context consists of the sets of all primary filters and ideals of $\textbf{D}$, and a relation $\Delta$  such that $F\Delta I$ if and only if $F\cap I\neq\emptyset$ for any primary filter $F$ and primary ideal $I$. It has been proved that every dBa $\textbf{D}$ is  quasi-embedded into the algebra of protoconcepts of $\mathbb{K}(\textbf{D})$. The quasi-embedding becomes an embedding in case of   pure dBas \cite{BALBIANI2012260}. The special case of the representation result for finite dBas was also given by Wille in \cite{wille}.
In this work, we equip   the sets of all primary filters and ideals of $\textbf{D}$ with certain topologies. The resulting structure, denoted as $\mathbb{K}_{pr}^{T}(\textbf{D})$, is an instance of a {\it context on topological spaces} (CTS), that consists of a pair of  topological spaces and a relation between the domains. In fact, $\mathbb{K}_{pr}^{T}(\textbf{D})$  is proved to be an instance of a special kind of 
   CTS, denoted as  {\it CTSCR},   in which   the relation and its converse are both {\it continuous}.  

Continuity (or hemicontinuity) of a relation was introduced by  Berge \cite{berge1997topological}.
We work with a more general definition of continuity that is  considered  in \cite{guide2006infinite}. 
A relation  $R$ is said to be continuous  when the {\it upper} and {\it lower inverses} of any open set under $R$ are both open; we  observe that the upper and lower inverses of a set are just the images of the set under the  necessity and possibility operators  (respectively) in FCA. Our approach for proving  the  representation  results presented in this paper is based on the approach adopted by  Hartung \cite{Hartung1992} to deal with bounded lattices, and the above observation.

That $\mathbb{K}_{pr}^{T}(\textbf{D})$  is a CTSCR, is proved by using the prime ideal theorem \cite{kwuida2007prime,howlader3} for dBas. We consider  {\it clopen} object oriented semiconcepts and  protoconcepts of a CTS, namely where the component sets of the object oriented semiconcepts and  protoconcepts are both clopen (that is, closed and open)  in the respective topologies. It is established  that the set of clopen object oriented protoconcepts (semiconcepts) of a CTSCR forms a fully contextual  dBa (pure dBa). The following representation results are then obtained. Any dBa $\textbf{D}$ is quasi-embeddable into the algebra of clopen object oriented protoconcepts of $\mathbb{K}_{pr}^{T}(\textbf{D})$. In case $\textbf{D}$ is contextual, the quasi-embedding is an embedding. If $\textbf{D}$ is fully contextual, the quasi-embedding turns into an isomorphism.  On the other hand, it is shown that the largest pure subalgebra $\textbf{D}_{p}$ of any dBa $\textbf{D}$ is isomorphic to  the algebra of clopen object oriented semiconcepts of $\mathbb{K}_{pr}^{T}(\textbf{D})$. This results in an isomorphism theorem for pure dBas, since $\textbf{D}_{p}=\textbf{D}$ if $\textbf{D}$  is  pure.  The representation theorems for fully contextual and pure dBas yield  an isomorphism theorem for Boolean algebras as well.   It is observed that a representation result for finite dBas can be  obtained in terms of object oriented protoconcepts and semiconcepts; we show that it is also obtained as a special case from the above-mentioned representation result for dBas. 

The second part of the  paper focusses on categorical  duality  results for dBas. For a dBa $\textbf{D}$, the CTSCR $\mathbb{K}_{pr}^{T}(\textbf{D})$  is observed to have some special properties, which, on abstraction, lead to the definition of a {\it Stone context}. Stone contexts  and CTSCR-homeomorphisms form a category, denoted as $\textbf{Scxt}$.
Fully contextual and pure dBas along with dBa isomorphisms also form  categories, denoted as $\textbf{FCDBA}$ and $\textbf{PDBA}$ respectively. It is shown that $\textbf{FCDBA}$  is equivalent to $\textbf{PDBA}$, whereas  $\textbf{PDBA}$ is dually equivalent to $\textbf{Scxt}$. As a consequence, one obtains a dual equivalence  between $\textbf{FCDBA}$ and $\textbf{Scxt}$.
 
The paper has been arranged as follows. Section \ref{dba} gives the preliminaries that are required in this work.  
 Some  algebraic properties of dBas have been obtained  in Section \ref{AIdBa} -- these are  used later  in Sections \ref{RT}  and  \ref{CE}. In Section \ref{TC}, CTS, clopen object oriented semiconcepts and protoconcepts, and CTSCR are defined and studied in relation to dBas. The representation theorems for dBas are proved in Section \ref{RT}. Section \ref{CE} presents the study on categorical  duality of fully contextual and pure dBas. Section  \ref{conclusion} concludes the  work.
 
 In our presentation, the symbols $\forall$,  $\Rightarrow$, $\Leftrightarrow$, {\it and}, {\it or} and $not$ will be used with the usual meanings in the metalanguage.

\section{Preliminaries}
\label{dba}

In the following subsections, we present preliminaries related to dBas, object oriented concepts, semiconcepts and protoconcepts. 
Our primary references are \cite{ganter2012formal,wille,duntsch2002modal,yao2004concept,yao2004comparative,howlader2018algebras,howlader2020,howlader3}.
 
\subsection{\rm{\textbf{Concept, semiconcept and protoconcept of a context}}}

Let us recall the definitions and some properties of contexts, concepts, semiconcepts and protoconcepts. The following are taken from  \cite{ganter2012formal}.

\begin{definition}
{\rm A {\it context} is a triple $\mathbb{K}:=(G, M, R)$, where $G$ is a set of {\it objects}, $M$ a set of {\it properties}, and $R\subseteq G\times M$. \\
The {\it complement}  of a {\rm context} $\mathbb{K}:=(G,M,R)$ is the context $\mathbb{K}^{c}:=(G,M,-R)$, where $-R:=(G\times M) \setminus R$.\\
For any $A\subseteq G, B\subseteq M$, consider the sets \\
$A^{\prime}:=\{m\in M:\forall g\in G(g\in A\implies gRm)\}$ and
$ B^{\prime}:=\{g\in G:\forall m\in M(m\in B\implies gRm)\}$.\\
$(A,B)$ is a {\it concept} of $\mathbb{K}$, when $A^{\prime}=B$ and $B^{\prime}=A$. 
The set of all concepts is denoted by $\mathfrak{B}(\mathbb{K})$.\\
A partial order relation $\leq$ is given on $\mathfrak{B}(\mathbb{K})$ as follows. For concepts $(A_{1},B_{1})$ and  $(A_{2},B_{2})$, \\$(A_{1},B_{1})\leq (A_{2},B_{2})$ if and only if $A_{1}\subseteq A_{2}$
(equivalently $B_{2}\subseteq B_{1})$.}
\end{definition} 

\begin{definition}
{\rm Let $\mathbb{K}:=(G,M, R)$ be a context and  $H\subseteq G$, $N\subseteq M$. $\mathbb{S}:=(H,N,R\cap(H\times N))$ is called  a {\it subcontext} of $\mathbb{K}$.}
\end{definition}

\begin{definition}
{\rm Let $\mathbb{K}_{1}:=(G_1,M_1,R_1)$ and $\mathbb{K}_{2}:=(G_2,M_2,R_2)$ be two contexts. A {\it context homomorphism}  $f:\mathbb{K}_{1} \rightarrow \mathbb{K}_{2}$ is a pair of maps $(\alpha,\beta)$, where $\alpha:G_1 \ra G_2$, $\beta: M_{1} \ra M_{2}$ are such that $gR_1 m\iff\alpha(g)R_{2}\beta(m),~\mbox{for all}~ g\in G_{1}~\mbox{and}~ m\in M_{1}.$\\
If $\alpha$ and $\beta$  are injective then $f$ is called a {\it context embedding}.   $f$ is called a {\it context isomorphism} if $\alpha$, $\beta$ are bijective -- in that case one says that $\mathbb{K}_{1}$ is {\it isomorphic} to $\mathbb{K}_{2}$.}
\end{definition}
\noindent For the context $\mathbb{K}:=(G,M,R)$ and the identity maps $id_{G},id_{M}$ on $G,M$ respectively, $id_{\mathbb{K}}:\mathbb{K} \rightarrow \mathbb{K}$ denotes the context isomorphism $(id_{G},id_{M})$. \\
Composition of context homomorphisms is  defined component wise:

\begin{center}
\begin{tikzcd}[row sep =large, column sep = large]
\mathbb{K}^{T}_{1}\arrow[r,"f_{1}:=(\alpha\CommaBin\beta)"] \arrow[dr,dashrightarrow,"f_{2}\circ f_{1}=(\delta\circ\alpha\CommaBin\gamma\circ\beta)"']& \mathbb{K}^{T}_{2}\arrow[d,"f_{2}:=(\delta\CommaBin\gamma)"]\\
& \mathbb{K}^{T}_{3}
\end{tikzcd}
\end{center}

\begin{proposition}
\label{contx inv iso}
{\rm For a context isomorphism $f:=(\alpha,\beta)$ from $\mathbb{K}_{1}$ to $\mathbb{K}_{2}$,   $g:=(\alpha^{-1},\beta^{-1})$ is a context isomorphism from $\mathbb{K}_{2}$ to $\mathbb{K}_{1}$ such that $f\circ g=id_{\mathbb{K}_{2}}$ and $g\circ f=id_{\mathbb{K}_{1}}$.}
\end{proposition}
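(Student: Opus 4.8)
The plan is to establish three things in turn: that $g:=(\alpha^{-1},\beta^{-1})$ is a well-defined pair of bijections between the appropriate sets, that this pair satisfies the defining biconditional of a context homomorphism from $\mathbb{K}_{2}$ to $\mathbb{K}_{1}$, and finally that the two composites $f\circ g$ and $g\circ f$ collapse to the respective identity isomorphisms. First I would observe that, since $f=(\alpha,\beta)$ is a context isomorphism, $\alpha:G_{1}\to G_{2}$ and $\beta:M_{1}\to M_{2}$ are bijections; their set-theoretic inverses $\alpha^{-1}:G_{2}\to G_{1}$ and $\beta^{-1}:M_{2}\to M_{1}$ therefore exist and are again bijections. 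This secures the bijectivity requirement, so that $g$ will qualify as a context isomorphism as soon as the homomorphism condition is verified.

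The key step is to check that $hR_{2}n\iff\alpha^{-1}(h)R_{1}\beta^{-1}(n)$ for all $h\in G_{2}$ and $n\in M_{2}$. Here I would exploit surjectivity of $\alpha$ and $\beta$ to write $h=\alpha(g_{0})$ and $n=\beta(m_{0})$ for the unique preimages $g_{0}=\alpha^{-1}(h)\in G_{1}$ and $m_{0}=\beta^{-1}(n)\in M_{1}$. Applying the defining biconditional of $f$ to the pair $(g_{0},m_{0})$ gives $g_{0}R_{1}m_{0}\iff\alpha(g_{0})R_{2}\beta(m_{0})$, which rewrites exactly as $\alpha^{-1}(h)R_{1}\beta^{-1}(n)\iff hR_{2}n$. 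Hence $g$ is a context homomorphism, and being a pair of bijections, a context isomorphism from $\mathbb{K}_{2}$ to $\mathbb{K}_{1}$.

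Finally, invoking the componentwise composition rule, $f\circ g=(\alpha\circ\alpha^{-1},\beta\circ\beta^{-1})=(id_{G_{2}},id_{M_{2}})=id_{\mathbb{K}_{2}}$, and likewise $g\circ f=(\alpha^{-1}\circ\alpha,\beta^{-1}\circ\beta)=(id_{G_{1}},id_{M_{1}})=id_{\mathbb{K}_{1}}$, using the definition of $id_{\mathbb{K}}$ recalled just above the statement. I do not expect any genuine obstacle in this argument; the only point demanding care is the direction of the biconditional verification, where one must substitute $h$ and $n$ by their $\alpha$- and $\beta$-preimages \emph{before} invoking the homomorphism property of $f$, so that the conclusion is phrased in terms of $\alpha^{-1}$ and $\beta^{-1}$ rather than $\alpha$ and $\beta$.
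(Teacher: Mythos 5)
Your proof is correct: the paper states this proposition without proof, treating it as a routine verification, and your argument is exactly that routine verification (bijectivity of the inverses, transporting the biconditional of $f$ along the preimages $g_{0}=\alpha^{-1}(h)$, $m_{0}=\beta^{-1}(n)$, and componentwise composition yielding the identities). Nothing is missing, and the care you note about substituting preimages before invoking the homomorphism property of $f$ is precisely the right point to watch.
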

\noindent $g$ is called the {\it inverse of} $f$.

\begin{notation} {\rm 
For a relation $R\subseteq G\times M$,  $R^{-1}$ denotes the converse of $R$, that is $R^{-1}\subseteq M\times G$ and $yR^{-1}x$ if and only if $xRy$. \\ 
$R(x):=\{y\in M~:~xRy\}$,  and $R^{-1}(y):=\{x\in G~:~xRy\}$, for any $x\in G$, $y\in M$. \\
$X^c$ denotes the complement of a subset $X$ of $G$ (or $M$) and $\mathcal{P}(X)$, the power set of any set $X$.\\
$\mathcal{B}(\mathbb{K})$ denotes the set of all concepts of the context $\mathbb{K}$. For a concept $(A,B)$, $A:=ext((A,B))$ is its {\it  extent},  its {\it intent}, $int((A,B))$, is $B$.
}\end{notation}

While attempting to define the negation of a formal concept, it was noticed that  the set-complement   could not be used as it resulted in a problem of closure. 
So generalizations of the notion of a concept, namely  {\it semiconcepts}  and {\it protoconcepts} were introduced \cite{wille}.
\begin{definition}
{\rm For  a context  $\mathbb{K}:=(G,M,R)$  and  $A\subseteq G, B\subseteq M$, the pair $(A,B)$ is called a {\it semiconcept}  of $\mathbb{K}$ if and only if $A^{\prime}=B$ or $B^{\prime}=A$. $(A,B)$ is a {\it protoconcept} of $\mathbb{K}$ if and only if $A^{\prime\prime}=B^{\prime}~ (\mbox{equivalently}~ A^{\prime}=B^{\prime\prime})$.
}\end{definition}
\begin{notation} {\rm $\mathfrak{H}(\mathbb{K})$ denotes the set of all semiconcepts   and the set of all protoconcepts  is denoted by $\mathfrak{P}(\mathbb{K})$.} \end{notation} 

\begin{observation} 
{\rm $\mathfrak{H}(\mathbb{K})\subseteq\mathfrak{P}(\mathbb{K})$.}
 \end{observation}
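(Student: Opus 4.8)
The plan is to unwind the two definitions directly, treating an arbitrary semiconcept case by case. Let $(A,B)\in\mathfrak{H}(\mathbb{K})$; by definition of a semiconcept, either $A^{\prime}=B$ or $B^{\prime}=A$, and I would split the argument according to which of these two equalities holds. The only structural fact I need is that the derivation operators $(\cdot)^{\prime}$ (from $\mathcal{P}(G)$ to $\mathcal{P}(M)$ and from $\mathcal{P}(M)$ to $\mathcal{P}(G)$) are genuine maps, so that applying $(\cdot)^{\prime}$ to both sides of a set-theoretic equality again produces a valid equality.

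In the first case, where $A^{\prime}=B$, I would apply the derivation operator to both sides to obtain $A^{\prime\prime}=(A^{\prime})^{\prime}=B^{\prime}$, which is exactly the defining condition $A^{\prime\prime}=B^{\prime}$ for $(A,B)$ to be a protoconcept. In the second case, where $B^{\prime}=A$, applying $(\cdot)^{\prime}$ gives $B^{\prime\prime}=(B^{\prime})^{\prime}=A^{\prime}$, that is $A^{\prime}=B^{\prime\prime}$, which is the equivalent form of the protoconcept condition recorded in the definition. Hence in either case $(A,B)\in\mathfrak{P}(\mathbb{K})$, and the inclusion $\mathfrak{H}(\mathbb{K})\subseteq\mathfrak{P}(\mathbb{K})$ follows.

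There is essentially no obstacle here: the claim is immediate from the definitions once the case split is made. The one point worth a little care is to invoke the appropriate one of the two equivalent protoconcept conditions in each case, namely $A^{\prime\prime}=B^{\prime}$ when $A^{\prime}=B$, and $A^{\prime}=B^{\prime\prime}$ when $B^{\prime}=A$, rather than attempting to verify a single fixed form throughout. If one insisted on deriving the single form $A^{\prime\prime}=B^{\prime}$ in both cases, one would additionally appeal to the standard Galois-connection identity $A^{\prime}=A^{\prime\prime\prime}$ (and its dual) to pass between the two formulations; splitting into cases avoids even this small step.
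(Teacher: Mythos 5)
Your proof is correct and is precisely the argument the paper leaves implicit (the statement appears as an unproved Observation following the definitions): the case split on $A^{\prime}=B$ versus $B^{\prime}=A$, followed by one application of the derivation operator, yields $A^{\prime\prime}=B^{\prime}$ or $A^{\prime}=B^{\prime\prime}$ respectively, each of which is one of the two equivalent defining conditions for a protoconcept. Your closing remark about invoking $A^{\prime}=A^{\prime\prime\prime}$ only if one insists on a single fixed form is also accurate.
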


\noindent  On $\mathfrak{P}(\mathbb{K})$,  operations $\sqcap, \sqcup, \neg, \lrcorner, \top$ and $\bot$ are defined as follows. For any protoconcepts  $(A_{1},B_{1}),\\(A_{2},B_{2}), (A,B)$,\\$(A_{1},B_{1})\sqcap(A_{2},B_{2}) :=(A_{1}\cap A_{2}, (A_{1}\cap A_{2})^{\prime}),$
$(A_{1},B_{1})\sqcup(A_{2},B_{2}) :=((B_{1}\cap B_{2})^{\prime}, B_{1}\cap B_{2}),$\\
$\neg(A,B) :=( A^{c}, A^{c\prime}),~\lrcorner(A,B) :=( B^{c\prime}, B^{c}),~
\top :=(G,\emptyset)$ and $\bot :=(\emptyset,M)$.

\begin{notation} 
\label{semiproto} {\rm 
$\underline{\mathfrak{P}}(\mathbb{K})$ denotes the 
 {\it algebra of protoconcepts} of a context $\mathbb{K}$, which is the abstract algebra $(\mathfrak{P}(\mathbb{K}),\sqcup,\sqcap,\neg,\lrcorner,\top,\bot)$ of type $(2,2,1,1,0,0)$  formed  by $\mathfrak{P}(\mathbb{K})$. \\
$\underline{\mathfrak{H}}(\mathbb{K})$ denotes  the {\it algebra of semiconcepts} of  $\mathbb{K}$, which is the algebra $(\mathfrak{H}(\mathbb{K}),\sqcup,\sqcap,\neg,\lrcorner,\top,\bot)$. \vskip 2pt
\noindent $\underline{\mathfrak{H}}(\mathbb{K})$ is a subalgebra of $\underline{\mathfrak{P}}(\mathbb{K})$. } \end{notation}

\subsection{\rm{\textbf{Double Boolean algebras}}}

The structure of a dBa was proposed \cite{wille} as an abstraction of  the algebra of protoconcepts of a context. A pure dBa is the result of abstraction of the algebra of semiconcepts.

\begin{definition}
\label{DBA}
{\rm \cite{wille} An abstract algebra $  \textbf{D}:= (D,\sqcup, \sqcap, \neg,\lrcorner,\top,\bot)$ satisfying the following properties is called a  {\it double Boolean algebra} (dBa). For any $x,y,z \in D$,
\begin{multicols}{2}
	\item[(1a)] $ (x \sqcap x ) \sqcap  y = x \sqcap  y $
	\item[(2a)] $ x\sqcap y = y\sqcap  x $
	\item[(3a)] $ x \sqcap ( y \sqcap  z) = (x \sqcap  y) \sqcap  z $
	\item[(4a)] $\neg (x \sqcap  x) = \neg  x $
	\item[(5a)] $ x  \sqcap (x \sqcup y)=x \sqcap  x $
	\item [(6a)]$x \sqcap  (y \vee z ) = (x\sqcap  y)\vee (x \sqcap  z)$
	\item[(7a)] $ x \sqcap (x\vee y)= x \sqcap  x $
	\item [(8a)]$ \neg \neg (x \sqcap  y)= x \sqcap  y$
	\item[(9a)]$ x  \sqcap \neg  x= \bot$
	\item[(10a)] $\neg \bot = \top \sqcap   \top $
	\item[(11a)] $ \neg \top = \bot $
	
	\item[(1b)] $ (x \sqcup x)\sqcup  y = x \sqcup y $
	\item [(2b)]$ x \sqcup   y = y\sqcup   x $
	\item [(3b)]$ x \sqcup (y \sqcup  z) = (x \sqcup  y)\sqcup  z $
	\item [(4b)]$ \lrcorner(x \sqcup   x )= \lrcorner x $
	\item[(5b)] $ x \sqcup  (x \sqcap y) = x \sqcup   x$
	\item[(6b)]$ x \sqcup  (y \wedge z) = (x \sqcup  y) \wedge  (x \sqcup  z) $
	\item[(7b)] $ x\sqcup  (x \wedge  y) =x \sqcup   x $
	\item[(8b)] $ \lrcorner\lrcorner(x \sqcup  y) = x\sqcup  y $
	\item [(9b)]$ x \sqcup \lrcorner x = \top $
	\item [(10b)]$ \lrcorner\top =\bot \sqcup  \bot $
	\item [(11b)]$ \lrcorner\bot =\top $
	
\end{multicols}
\begin{itemize}
\item[(12)] $ (x \sqcap  x) \sqcup (x \sqcap x) = (x \sqcup x) \sqcap (x \sqcup x), $
\end{itemize}
where $ x\vee y := \neg(\neg x \sqcap\neg y)$ and 
$ x \wedge y :=\lrcorner(\lrcorner x \sqcup \lrcorner y)$. \\
A quasi-order relation $\sqsubseteq$ on $D$ is given as follows. For any $x,y \in D$,\\ $x \sqsubseteq y \iff x\sqcap y=x\sqcap x~\mbox{and}~x\sqcup y=y\sqcup y$.
 \vskip 2pt

}
\end{definition}
\begin{definition}{\rm \cite{wille,vormbrock2005semiconcept} \label{contextdefn}
Let $\textbf{D}$ be a dBa.
\begin{enumerate}[{(a)}]
 \item $\textbf{D}$ is  {\it contextual} if and only if  the  quasi-order $\sqsubseteq$ on $\textbf{D}$ is a partial order.
\item $\textbf{D}$ is  {\it fully contextual} if and only if it is contextual and, for each $y\in D_{\sqcap}$ and $x\in D_{\sqcup}$ with $y_{\sqcup}=x_{\sqcap}$, there is a unique $z\in D$ with $z_{\sqcap}=y$ and $z_{\sqcup}=x$.
\item $\textbf{D}$ is called {\it pure} if, for all $x\in D$, either $x\sqcap x=x$ or $x\sqcup x=x$.
\end{enumerate}
 Contextual dBas are also referred to as {\it regular} dBas in literature \cite{breckner2019topological}.}
\end{definition} 
\begin{theorem}
{\rm \cite{wille,vormbrock2005semiconcept} $\underline{\mathfrak{P}}(\mathbb{K})$ is a fully contextual dBa, while 
$\underline{\mathfrak{H}}(\mathbb{K})$ is a  pure dBa.
} 
\end{theorem}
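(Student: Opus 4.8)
The plan is to verify directly that the algebra of protoconcepts $\underline{\mathfrak{P}}(\mathbb{K})$ satisfies all the dBa axioms $(1a)$--$(12)$ together with the additional requirements of Definition~\ref{contextdefn}(b) for full contextuality, and that the subalgebra $\underline{\mathfrak{H}}(\mathbb{K})$ is moreover pure. The whole argument rests on the standard Galois-connection properties of the derivation operators $(\cdot)^{\prime}$: for $A\subseteq G$ and $B\subseteq M$ one has $A\subseteq A^{\prime\prime}$, $B\subseteq B^{\prime\prime}$, $A^{\prime}=A^{\prime\prime\prime}$, $B^{\prime}=B^{\prime\prime\prime}$, and the antitonicity $A_1\subseteq A_2\Rightarrow A_2^{\prime}\subseteq A_1^{\prime}$ (and dually). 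First I would record these facts, and in particular note the characterization of protoconcepts: $(A,B)\in\mathfrak{P}(\mathbb{K})$ iff $A^{\prime}=B^{\prime\prime}$ iff $A^{\prime\prime}=B^{\prime}$, so that for any protoconcept $A^{\prime}=B^{\prime\prime}$ and $B^{\prime}=A^{\prime\prime}$.

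The bulk of the proof is a sequence of routine but careful verifications of the equational axioms. I would proceed by checking each identity on the level of the component sets, using that the $\sqcap$-operation fixes the first coordinate (up to closing the second) and $\sqcup$ fixes the second coordinate. For example, to verify idempotency-type laws such as $(1a)$ and the absorption laws $(5a)$, $(7a)$, one computes that $(A,B)\sqcap(A,B)=(A,(A^{\prime\prime}))$ has first component $A$ and that $(A_1\cap A_2)^{\prime\prime}$ interacts correctly under intersection; the De~Morgan-style negation axioms $(4a)$, $(8a)$, $(9a)$--$(11a)$ follow from $A^{c\prime\prime}=A^{c\prime\prime}$ and the behaviour of complementation inside the derivation operators. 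The distributivity axioms $(6a)$, $(6b)$ and the crucial linking axiom $(12)$ require expanding the derived operations $\vee$ and $\wedge$ via their definitions $x\vee y=\neg(\neg x\sqcap\neg y)$ and $x\wedge y=\lrcorner(\lrcorner x\sqcup\lrcorner y)$; here the identity $(A^{\prime\prime},A^{\prime})$ for the ``closed'' part and $(B^{\prime},B^{\prime\prime})$ for the other part must be shown to coincide appropriately, which is exactly the content of $(12)$.

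For the structural claims I would next identify the images of the two semilattice reducts: $D_{\sqcap}=\{(A,A^{\prime}):A^{\prime\prime}=A\}$ consists of the $\sqcap$-idempotents, which are precisely the pairs with closed first coordinate, and $D_{\sqcup}=\{(B^{\prime},B):B^{\prime\prime}=B\}$ the $\sqcup$-idempotents with closed second coordinate. Contextuality (that $\sqsubseteq$ is antisymmetric) follows because a protoconcept $(A,B)$ is determined by the pair $(A^{\prime\prime},B^{\prime\prime})$ of its closures, and $x\sqsubseteq y$, $y\sqsubseteq x$ force equal closures in both coordinates, hence $A^{\prime}=B$-type equalities pin down $(A,B)$ uniquely. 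For full contextuality I would take $y=(A,A^{\prime})\in D_{\sqcap}$ and $x=(B^{\prime},B)\in D_{\sqcup}$ with $y_{\sqcup}=x_{\sqcap}$ (matching closures) and exhibit the unique protoconcept $z:=(A,B)$ with $z_{\sqcap}=y$ and $z_{\sqcup}=x$, checking it is indeed a protoconcept from the matching-closure hypothesis. Finally, purity of $\underline{\mathfrak{H}}(\mathbb{K})$ is immediate from the definition of semiconcept: if $A^{\prime}=B$ then $(A,B)\sqcup(A,B)=(A,B)$, and if $B^{\prime}=A$ then $(A,B)\sqcap(A,B)=(A,B)$, so every semiconcept is $\sqcap$-idempotent or $\sqcup$-idempotent.

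The main obstacle I anticipate is the linking axiom $(12)$ and the full-contextuality clause, since both require showing that two separately-computed closures actually agree as protoconcepts rather than merely as first or second components; the verification of the purely equational axioms $(1a)$--$(11b)$, while numerous, is essentially mechanical bookkeeping with the Galois operators and I would not expect genuine difficulty there.
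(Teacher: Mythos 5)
Your overall strategy --- direct verification of the axioms, identification of the idempotents, then contextuality, full contextuality, and purity --- is exactly how this result is established in the sources the paper cites (the paper itself states the theorem with references and gives no proof). Your key observation for full contextuality, namely that the matching condition $y_{\sqcup}=x_{\sqcap}$ for $y=(A,A^{\prime})$ and $x=(B^{\prime},B)$ is precisely the protoconcept condition on $(A,B)$, is correct and is the heart of that part of the argument. However, three of your structural claims are wrong as written, and two of them would make the proof fail.

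First, you misidentify the idempotents: since $(A,B)\sqcap(A,B)=(A,A^{\prime})$, one has $D_{\sqcap}=\{(A,A^{\prime}):A\subseteq G\}$ with \emph{no} closedness requirement; imposing $A^{\prime\prime}=A$ cuts $D_{\sqcap}$ down to the formal concepts, turning $\textbf{D}_{\sqcap}$ into the concept lattice rather than the powerset Boolean algebra $\mathcal{P}(G)$, and under that reading your full-contextuality check would quantify only over pairs of concepts and miss most of $D_{\sqcap}\times D_{\sqcup}$ (your actual computation uses generic $(A,A^{\prime})$ and $(B^{\prime},B)$, so it contradicts your stated characterization). Second, your antisymmetry argument rests on the claim that a protoconcept is determined by the pair $(A^{\prime\prime},B^{\prime\prime})$ of its closures; this is false: for any non-closed $A$, the pairs $(A,A^{\prime})$ and $(A^{\prime\prime},A^{\prime})$ are distinct protoconcepts with identical closure pairs. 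The correct argument is direct: $x\sqsubseteq y$ holds iff $A_{1}\subseteq A_{2}$ and $B_{2}\subseteq B_{1}$ (because $x\sqcap y=x\sqcap x$ iff $A_{1}\cap A_{2}=A_{1}$, and dually), so mutual comparability forces equality of both components outright. Third, in the purity step the two cases are swapped: since $\sqcap$ fixes the first coordinate, $A^{\prime}=B$ makes $(A,B)$ a $\sqcap$-idempotent and $B^{\prime}=A$ makes it a $\sqcup$-idempotent, not the other way round. All three errors are repairable, but as written the proofs of contextuality and purity do not go through, and the faulty description of $D_{\sqcap}$ and $D_{\sqcup}$ undermines the otherwise correct full-contextuality argument.
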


\vskip 3pt \noindent In the following, let $\textbf{D}:= (D,\sqcup,\sqcap,\neg,\lrcorner,\top,\bot)$ be a dBa.
\begin{notation} {\rm 
$D_{\sqcap}:=\{x\in D~:~x\sqcap x=x\}$,  $D_{\sqcup}:=\{x\in D~:~x\sqcup x=x\}$, $D_{p}:=D_{\sqcap}\cup D_{\sqcup}$.\\
For $x\in D$, $x_{\sqcap}:=x\sqcap x$ and $x_{\sqcup}:=x\sqcup x$. 
 } \end{notation}

 \begin{proposition} 
\label{pro1}
{\rm \cite{vormbrock}
\begin{enumerate}[{(i)}]
\item $\textbf{D}_{\sqcap}:=(D_{\sqcap},\sqcap,\vee,\neg,\bot,\neg\bot)$ is a Boolean algebra. The partial  order relation in $\textbf{D}_{\sqcap}$ is the restriction of the quasi-order $\sqsubseteq$ on $D$ to $D_{\sqcap}$ and is denoted by $\sqsubseteq_{\sqcap}$.
\item  $\textbf{D}_{\sqcup}:=(D_{\sqcup},\sqcup,\wedge,\lrcorner,\top,\lrcorner\top)$ is a  Boolean algebra. For $\textbf{D}_{\sqcup}$, the partial  order relation   is the restriction of $\sqsubseteq$ to $D_{\sqcup}$ and  is denoted by $\sqsubseteq_{\sqcup}$.
\item For any $x,y\in D$, $x\sqsubseteq y$ if and only if $x\sqcap x\sqsubseteq y\sqcap y$ and $x\sqcup x\sqsubseteq y\sqcup y$, that is, $x_{\sqcap}\sqsubseteq_{\sqcap} y_{\sqcap}$ and $x_{\sqcup}\sqsubseteq_{\sqcup} y_{\sqcup}$.
\end{enumerate}}

\end{proposition}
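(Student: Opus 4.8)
The plan is to treat the three parts in order, with (i) carrying the bulk of the work, (ii) following by the evident $\sqcap/\sqcup$-duality of the axioms in Definition \ref{DBA}, and (iii) then dropping out of two structural identities proved along the way. The backbone is a pair of ``collapsing'' identities that I would establish first: $x \sqcap y = (x\sqcap x)\sqcap(y\sqcap y) = x_{\sqcap}\sqcap y_{\sqcap}$, obtained by repeated use of (1a), (2a), (3a), and its dual $x\sqcup y = x_{\sqcup}\sqcup y_{\sqcup}$ from (1b), (2b), (3b). Taking $y=x$ in the first identity shows $x_{\sqcap}\sqcap x_{\sqcap}=x_{\sqcap}$, so $x_{\sqcap}\in D_{\sqcap}$ and $D_{\sqcap}$ is exactly the image of $x\mapsto x_{\sqcap}$; dually $x_{\sqcup}\in D_{\sqcup}$.

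For (i) I would first check that $\sqcap$ and $\vee$ are well defined on $D_{\sqcap}$. Closure under $\sqcap$ is immediate from the collapsing identity. For $\vee$ the crucial lemma is that $\neg$ maps $D_{\sqcap}$ into itself and is involutive there: using (8a) one gets $\neg\neg a=\neg\neg(a\sqcap a)=a$ for $a\in D_{\sqcap}$, and writing $b=\neg a$, axiom (4a) gives $\neg(b\sqcap b)=\neg b=a$, whence $\neg\neg(b\sqcap b)=b$; comparing with (8a) forces $b\sqcap b=b$, i.e. $\neg a\in D_{\sqcap}$. Since $a\vee b=\neg(\neg a\sqcap\neg b)$, closure of $\vee$ follows, and involutivity yields the De Morgan link $a\sqcap b=\neg(\neg a\vee\neg b)$ on $D_{\sqcap}$, so that every law proved for $\sqcap$ dualises to one for $\vee$. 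I would then read off the lattice axioms by specialising the dBa axioms to idempotents: commutativity, associativity and idempotence of $\sqcap$ from (2a), (3a) and the definition; absorption $a\sqcap(a\vee b)=a\sqcap a=a$ from (7a); distributivity from (6a); and their $\vee$-duals via the De Morgan link. For bounds and complements I would use (10a), (11a) to identify $\neg\bot=\top\sqcap\top=\top_{\sqcap}$ as the candidate top, verify $\bot,\top_{\sqcap}\in D_{\sqcap}$, and compute $\bot\sqcap a=(a\sqcap\neg a)\sqcap a=\bot$ (so $\bot$ is least) together with its dual, while (9a) gives $a\sqcap\neg a=\bot$ and $a\vee\neg a=\neg(\neg a\sqcap\neg\neg a)=\neg(\neg a\sqcap a)=\neg\bot$. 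This exhibits $\textbf{D}_{\sqcap}$ as a complemented distributive bounded lattice, that is, a Boolean algebra.

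It remains, for (i), to match the Boolean order $a\le b\iff a\sqcap b=a$ with the restriction of $\sqsubseteq$. The forward direction is trivial since $a\in D_{\sqcap}$; for the converse the point is that $a\sqcap b=a$ already forces the second defining clause of $\sqsubseteq$: by (5b), $b\sqcup(b\sqcap a)=b\sqcup b$, and $b\sqcap a=a$ turns this into $a\sqcup b=b\sqcup b$. Part (ii) is the verbatim dual, using the (b)-axioms together with $\wedge$, $\lrcorner$, $\top$ and $\lrcorner\top$, with (5a) replacing (5b) in the order-matching step. Finally, (iii) follows by combining the two collapsing identities with the definition of $\sqsubseteq$: the condition $x\sqsubseteq y$ reads $x_{\sqcap}\sqcap y_{\sqcap}=x_{\sqcap}$ and $x_{\sqcup}\sqcup y_{\sqcup}=y_{\sqcup}$, which by (i) and (ii) are precisely $x_{\sqcap}\sqsubseteq_{\sqcap} y_{\sqcap}$ and $x_{\sqcup}\sqsubseteq_{\sqcup} y_{\sqcup}$.

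I expect the main obstacle to be the lemma that $\neg$ restricts to an involution on $D_{\sqcap}$ (and its dual for $\lrcorner$ on $D_{\sqcup}$): this is the one step that is not a direct specialisation of a single axiom, since it requires the combined use of (4a) and (8a), and it is exactly what powers the De Morgan duality that makes the $\vee$-side laws free. The order-coincidence step is the only other place where the two ``halves'' ($\sqcap$ and $\sqcup$) of the structure must be linked, via (5a)/(5b); everything else is routine specialisation of the dBa axioms to idempotent elements.
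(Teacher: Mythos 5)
Your proof is correct; note that there is nothing in the paper to compare it against, since Proposition \ref{pro1} is stated there as a known result quoted from \cite{vormbrock} with no proof given, so your argument is necessarily an independent, self-contained derivation from Wille's axioms. I checked the steps that carry the weight and they all hold. The collapsing identity $x\sqcap y=x_{\sqcap}\sqcap y_{\sqcap}$ does follow from (1a)--(3a), and it simultaneously gives closure of $D_{\sqcap}$ under $\sqcap$ and $x_{\sqcap}\in D_{\sqcap}$. Your lemma that $\neg$ restricts to an involution of $D_{\sqcap}$ -- the genuinely non-routine point, amounting to Proposition \ref{pro2}(i), which the paper also only cites -- is correctly derived: $\neg\neg a=a$ for $a\in D_{\sqcap}$ by (8a); then with $b=\neg a$, axiom (4a) gives $\neg(b\sqcap b)=\neg b=a$, so $b\sqcap b=\neg\neg(b\sqcap b)=\neg a=b$. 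With that, closure under $\vee$, the De Morgan link $a\sqcap b=\neg(\neg a\vee\neg b)$, and the transfer of (6a), (7a), (9a)--(11a) into the bounded distributive complemented lattice axioms on $D_{\sqcap}$ (with $a\vee\neg a=\neg(\neg a\sqcap a)=\neg\bot$ as top) all go through. The order-coincidence step is also right and is indeed the only place the two halves of the structure interact: for $a,b\in D_{\sqcap}$, $a\sqcap b=a$ together with (5b) forces $a\sqcup b=b\sqcup b$, so the Boolean order on $D_{\sqcap}$ is exactly the restriction of $\sqsubseteq$; dually with (5a) for $D_{\sqcup}$. Part (iii) then drops out of the two collapsing identities plus these order coincidences, as you say. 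What your route buys is a fully verifiable proof inside the paper's own axiom system, rather than an appeal to the literature.
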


\begin{proposition}
\label{order pure}
{\rm \cite{BALBIANI2012260}   If $\textbf{D}$ is a pure dBa, the quasi-order $\sqsubseteq$ on $D$  becomes a partial order, that is, every pure dBa is contextual.
}
\end{proposition}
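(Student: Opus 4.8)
The plan is to prove that $\sqsubseteq$ is antisymmetric, since reflexivity and transitivity hold already by virtue of $\sqsubseteq$ being a quasi-order; antisymmetry is precisely what upgrades it to a partial order (i.e.\ makes $\textbf{D}$ contextual). So I would fix $x,y\in D$ with $x\sqsubseteq y$ and $y\sqsubseteq x$ and aim to derive $x=y$.

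First I would decode the two hypotheses. By definition $x\sqsubseteq y$ gives $x\sqcap y=x_{\sqcap}$ and $x\sqcup y=y_{\sqcup}$, while $y\sqsubseteq x$ gives $y\sqcap x=y_{\sqcap}$ and $y\sqcup x=x_{\sqcup}$. Commutativity of $\sqcap$ and $\sqcup$ (axioms (2a) and (2b)) then forces $x_{\sqcap}=y_{\sqcap}$ and $x_{\sqcup}=y_{\sqcup}$; that is, $x$ and $y$ share the same $\sqcap$-shadow and the same $\sqcup$-shadow. (Equivalently, one could pass through Proposition \ref{pro1}(iii) to the Boolean algebras $\textbf{D}_{\sqcap}$ and $\textbf{D}_{\sqcup}$ and invoke antisymmetry of $\sqsubseteq_{\sqcap}$ and $\sqsubseteq_{\sqcup}$.)

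Next I would bring purity into play: for each of $x$ and $y$, either $z=z_{\sqcap}$ or $z=z_{\sqcup}$, yielding four cases. The two \emph{matching} cases are immediate from the previous step: if $x=x_{\sqcap}$ and $y=y_{\sqcap}$ then $x=x_{\sqcap}=y_{\sqcap}=y$, and dually when both equal their $\sqcup$-shadows.

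The crux---and the step I expect to be the only real obstacle---is the mixed case, say $x=x_{\sqcap}\in D_{\sqcap}$ and $y=y_{\sqcup}\in D_{\sqcup}$ (the remaining mixed case being symmetric), where it is not transparent that an element of $D_{\sqcap}$ and one of $D_{\sqcup}$ must coincide. Here axiom (12) supplies the missing bridge between the $\sqcap$- and $\sqcup$-worlds: applied to $x$ it reads $(x_{\sqcap})_{\sqcup}=(x_{\sqcup})_{\sqcap}$. Using $x=x_{\sqcap}$ and $x_{\sqcup}=y_{\sqcup}=y$, the left side simplifies to $x_{\sqcup}=y$; using $x_{\sqcup}=y$ and $y_{\sqcap}=x_{\sqcap}=x$, the right side simplifies to $y_{\sqcap}=x$. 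Hence $y=x$, which closes the mixed case and with it the antisymmetry argument. The only delicate point is the careful bookkeeping of which shadow equals which element; once axiom (12) is invoked, the collapse is forced.
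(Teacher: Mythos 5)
Your proof is correct. The paper itself gives no proof of this proposition---it is quoted from \cite{BALBIANI2012260}---so there is nothing internal to compare against; your argument is the standard one and is complete: reducing antisymmetry to $x_{\sqcap}=y_{\sqcap}$ and $x_{\sqcup}=y_{\sqcup}$ via commutativity (equivalently, Proposition \ref{pro1.5}(iv)), splitting into the four purity cases, and using axiom (12) as the bridge in the mixed case, where $(x_{\sqcap})_{\sqcup}=(x_{\sqcup})_{\sqcap}$ collapses to $y=x$ exactly as you compute.
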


\noindent We state below some results giving further properties of dBas. 
In \cite{kwuida2007prime}, Kwuida obtained the following.
\begin{proposition}
\label{pro1.5} {\rm \cite{kwuida2007prime} 
Let $x,y,a\in D$.
 \begin{enumerate}[{(i)}]
 \item $x\sqcap\bot=\bot$ and $x\sqcup\bot=x\sqcup x$, that is $\bot\sqsubseteq x$.
\item $x\sqcup \top=\top$ and $x\sqcap \top=x\sqcap x$, that is $x\sqsubseteq \top$.
\item $x=y$ implies that $x\sqsubseteq y$ and $y\sqsubseteq x$.
\item $x\sqsubseteq y$ and $y \sqsubseteq x$ if and only if $x\sqcap x=y\sqcap y$ and $x\sqcup x = y\sqcup y$.
\item $x\sqcap y\sqsubseteq x$ and $x\sqcap y\sqsubseteq y.$ Dually $y\sqsubseteq x\sqcup y$ and $x\sqsubseteq x\sqcup y$.
\item $x\sqsubseteq y$ implies $x\sqcap a\sqsubseteq y\sqcap a$ and $x\sqcup a\sqsubseteq y\sqcup a$. 
\end{enumerate}}
\end{proposition}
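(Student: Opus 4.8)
The plan is to derive each of the six items directly from the equational axioms of Definition \ref{DBA}, invoking the Boolean algebra structure of $\textbf{D}_\sqcap$ and $\textbf{D}_\sqcup$ from Proposition \ref{pro1} only for the last, most delicate, item. The one preliminary fact I would establish first and reuse throughout is that every meet is $\sqcap$-idempotent and every join is $\sqcup$-idempotent: from commutativity, associativity [(2a),(3a)] and $(u\sqcap u)\sqcap v = u\sqcap v$ [(1a)] one obtains $(u\sqcap v)\sqcap(u\sqcap v) = u\sqcap v$, so $u\sqcap v\in D_\sqcap$, and dually $u\sqcup v\in D_\sqcup$.

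For (i) I would use (9a) to write $\bot = x\sqcap\neg x$ and then compute $x\sqcap\bot = (x\sqcap x)\sqcap\neg x = x\sqcap\neg x = \bot$ [(3a),(1a),(9a)] and $x\sqcup\bot = x\sqcup(x\sqcap\neg x) = x\sqcup x$ [(5b)]; setting $x=\bot$ in the first identity gives $\bot\sqcap\bot=\bot$, after which unfolding the definition of $\sqsubseteq$ yields $\bot\sqsubseteq x$. Item (ii) is the exact order dual [(9b),(5a),(1b)], with $\top\sqcup\top=\top$ obtained by setting $x=\top$. Item (iii) is immediate, since for $x=y$ both defining equations of $x\sqsubseteq y$ and of $y\sqsubseteq x$ become trivially true identities.

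For (iv), unfolding both relations and using commutativity [(2a),(2b)] to identify the mixed terms reduces the four equations to $x\sqcap x = y\sqcap y$ and $x\sqcup x = y\sqcup y$; for the converse I would compute $x\sqcap y = (x\sqcap x)\sqcap y = (y\sqcap y)\sqcap y = y\sqcap y = x\sqcap x$ [(1a)] and dually $x\sqcup y = y\sqcup y$ [(1b)], and then symmetry gives both relations. For (v) I would check the two defining equations of $x\sqcap y\sqsubseteq x$ directly: $(x\sqcap y)\sqcap x = x\sqcap y = (x\sqcap y)\sqcap(x\sqcap y)$, the last equality being the idempotence fact, and $(x\sqcap y)\sqcup x = x\sqcup x$ [(5b),(2b)]; commutativity of $\sqcap$ gives $x\sqcap y\sqsubseteq y$, and the dual argument [(5a),(1b)] together with $\sqcup$-idempotence gives $x\sqsubseteq x\sqcup y$ and $y\sqsubseteq x\sqcup y$.

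The main obstacle is (vi), because its join condition $(x\sqcap a)\sqcup(y\sqcap a) = (y\sqcap a)\sqcup(y\sqcap a)$ mixes both operations and is not reachable by $\sqcap$-calculation alone. My resolution is to observe that $x\sqcap a$ and $y\sqcap a$ both lie in $D_\sqcap$, where, by Proposition \ref{pro1}(i), $\sqsubseteq$ restricts to the Boolean order $\sqsubseteq_\sqcap$ of $\textbf{D}_\sqcap$, which is determined by meet alone: $u\sqsubseteq_\sqcap v$ iff $u\sqcap v = u$. Using $x\sqsubseteq y$ (that is, $x\sqcap y = x\sqcap x$) together with the idempotence fact, I would compute $(x\sqcap a)\sqcap(y\sqcap a) = (x\sqcap y)\sqcap(a\sqcap a) = (x\sqcap x)\sqcap(a\sqcap a) = x\sqcap a$, which is exactly $x\sqcap a\sqsubseteq_\sqcap y\sqcap a$, hence $x\sqcap a\sqsubseteq y\sqcap a$, with the join condition now holding automatically. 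The claim $x\sqcup a\sqsubseteq y\sqcup a$ is the order dual, carried out inside the Boolean algebra $\textbf{D}_\sqcup$ of Proposition \ref{pro1}(ii), where $u\sqsubseteq_\sqcup v$ iff $u\sqcup v = v$, via $(x\sqcup a)\sqcup(y\sqcup a) = (x\sqcup y)\sqcup(a\sqcup a) = (y\sqcup y)\sqcup(a\sqcup a) = y\sqcup a$.
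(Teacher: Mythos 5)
Your proof is correct: I checked each item against the axioms of Definition \ref{DBA} and the computations go through, including the preliminary idempotence fact $(u\sqcap v)\sqcap(u\sqcap v)=u\sqcap v$ derived from (1a),(2a),(3a). Note, however, that there is no proof in the paper to compare against -- Proposition \ref{pro1.5} is quoted from Kwuida \cite{kwuida2007prime} without proof -- so your attempt supplies an argument where the paper supplies only a citation.

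One substantive comment on your treatment of (vi). Items (i)--(v) in your write-up are purely equational, but for (vi) you route the join condition through Proposition \ref{pro1}, i.e.\ through the Boolean-algebra structure of $\textbf{D}_{\sqcap}$ and $\textbf{D}_{\sqcup}$ and the identification of their orders with restrictions of $\sqsubseteq$. This is legitimate within the paper's ordering (Proposition \ref{pro1} is stated, as a cited result, before Proposition \ref{pro1.5}), but it makes the most elementary lemma of the theory depend on the structure theorem for $\textbf{D}_{\sqcap}$ and $\textbf{D}_{\sqcup}$, whose own proof in the literature is developed alongside exactly these order properties; as a reconstruction of Kwuida's lemma the dependence is top-heavy and, at the level of the original sources, potentially circular. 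It is also avoidable by a two-line computation: from $x\sqcap y=x\sqcap x$ one gets $x\sqcap a=(x\sqcap x)\sqcap a=(x\sqcap y)\sqcap a=(y\sqcap a)\sqcap x$ using (1a),(2a),(3a), hence $(x\sqcap a)\sqcup(y\sqcap a)=(y\sqcap a)\sqcup\bigl((y\sqcap a)\sqcap x\bigr)=(y\sqcap a)\sqcup(y\sqcap a)$ by (2b) and (5b), which is precisely the join condition you obtained ``automatically'' from \ref{pro1}; the meet condition you already proved equationally. Dually, for the $\sqcup$ half, $y\sqcup a=(y\sqcup y)\sqcup a=(x\sqcup y)\sqcup a=(x\sqcup a)\sqcup y$ gives $(x\sqcup a)\sqcap(y\sqcup a)=(x\sqcup a)\sqcap\bigl((x\sqcup a)\sqcup y\bigr)=(x\sqcup a)\sqcap(x\sqcup a)$ by (5a), while the join condition is the direct rearrangement you wrote. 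With these substitutions your proof becomes entirely self-contained at the level of the axioms of Definition \ref{DBA}.
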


\noindent In  \cite{howlader3}, one observed the following.
\begin{proposition}
	\label{pro2}
	{\rm 
	Let $ x,y\in D$.
	\begin{enumerate}[{(i)}]
	\item $\neg x=(\neg x)_{\sqcap}\in D_{\sqcap}$ and $\lrcorner x=(\lrcorner x)_{\sqcup}\in D_{\sqcup}$.
\item $x\sqsubseteq y$  if and only if $ \neg y\sqsubseteq \neg x $ and $ \lrcorner y\sqsubseteq \lrcorner x $.
	\item $\neg\neg x=x\sqcap x$ and $\lrcorner\lrcorner x=x\sqcup x$.
	\item $x\sqcap x$, $x\vee y\in D_{\sqcap}$ and  $x\sqcup x$, $x\wedge y\in D_{\sqcup}$.
	\item $\neg(x\vee y)=\neg x\sqcap\neg y$ and $\neg(x\sqcap y)=\neg x\vee\neg y$.
	\item $\lrcorner(x\wedge y)=\lrcorner x\sqcup\lrcorner y$ and $\lrcorner(x\sqcup y)=\lrcorner x\wedge \lrcorner y$.
     \item $  x\sqsubseteq\lrcorner y$ if and only if $ y\sqsubseteq \lrcorner x $.
	\item $ \neg x\sqsubseteq y$ if and only if  $\neg y\sqsubseteq x $.
	\item $\neg\neg\neg x=\neg x$.
    \end{enumerate}}
\end{proposition}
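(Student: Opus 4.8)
The plan is to prove the nine items not in the listed order but in an order that respects their logical dependencies, exploiting throughout the Boolean algebra structure of $\textbf{D}_{\sqcap}$ and $\textbf{D}_{\sqcup}$ supplied by Proposition \ref{pro1}. I would first settle the purely equational cluster (iii), (ix), (i), (iv). For (iii), (8a) gives $\neg\neg(x\sqcap x)=x\sqcap x$ while (4a) gives $\neg(x\sqcap x)=\neg x$, so $\neg\neg x=\neg\neg(x\sqcap x)=x\sqcap x$; the identity $\lrcorner\lrcorner x=x\sqcup x$ is dual via (4b),(8b). Item (ix) then drops out by applying $\neg$ to (iii): $\neg\neg\neg x=\neg(x\sqcap x)=\neg x$ using (4a). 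For (i), apply (iii) to $\neg x$ to get $\neg x\sqcap\neg x=\neg\neg(\neg x)=\neg\neg\neg x$, which equals $\neg x$ by (ix); hence $\neg x\in D_{\sqcap}$, and dually $\lrcorner x\in D_{\sqcup}$. Finally (iv): that $x\sqcap x\in D_{\sqcap}$ follows directly from (1a)--(3a), and since $x\vee y=\neg(\neg x\sqcap\neg y)$ is a $\neg$-image it lies in $D_{\sqcap}$ by (i); the $\sqcup$-statements are dual.

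Next come the De Morgan laws (v), (vi) and the order-reversal (ii). For the first half of (v), the definition of $\vee$ and (8a) give $\neg(x\vee y)=\neg\neg(\neg x\sqcap\neg y)=\neg x\sqcap\neg y$. For the second half, using the definition of $\vee$ and (iii), $\neg x\vee\neg y=\neg(\neg\neg x\sqcap\neg\neg y)=\neg(x_{\sqcap}\sqcap y_{\sqcap})$, and since $x_{\sqcap}\sqcap y_{\sqcap}=x\sqcap y$ by (1a)--(3a) this equals $\neg(x\sqcap y)$; item (vi) is dual. For (ii), Proposition \ref{pro1}(iii) gives $x\sqsubseteq y$ iff $x_{\sqcap}\sqsubseteq_{\sqcap}y_{\sqcap}$ and $x_{\sqcup}\sqsubseteq_{\sqcup}y_{\sqcup}$; since $\neg$ is the order-reversing complement of the Boolean algebra $\textbf{D}_{\sqcap}$ and $\neg x_{\sqcap}=\neg x$ by (4a), the first conjunct is equivalent to $\neg y\sqsubseteq_{\sqcap}\neg x$, i.e. $\neg y\sqsubseteq\neg x$ (as $\sqsubseteq_{\sqcap}$ is the restriction of $\sqsubseteq$, Proposition \ref{pro1}(i)), and dually the second to $\lrcorner y\sqsubseteq\lrcorner x$.

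The genuinely delicate items are (vii) and (viii), and I expect the main obstacle there to be that $\sqsubseteq$ is only a quasi-order, so $x$ may not be replaced by $x_{\sqcap}$ at will. The key auxiliary fact I would isolate is: for $a\in D_{\sqcap}$ one has $a\sqsubseteq b$ iff $a\sqsubseteq_{\sqcap}b_{\sqcap}$. The forward direction is Proposition \ref{pro1}(iii) together with $a_{\sqcap}=a$; the backward direction uses $b_{\sqcap}\sqsubseteq b$ (Proposition \ref{pro1.5}(v), with $x=y=b$) and transitivity of the quasi-order $\sqsubseteq$. With this in hand, (viii) reads: $\neg x\sqsubseteq y$ iff $\neg x\sqsubseteq_{\sqcap}y_{\sqcap}=\neg\neg y$ by (iii); in the Boolean algebra $\textbf{D}_{\sqcap}$ the complementation law $a\sqsubseteq_{\sqcap}\neg b\Leftrightarrow b\sqsubseteq_{\sqcap}\neg a$ turns this into $\neg y\sqsubseteq_{\sqcap}\neg\neg x=x_{\sqcap}$, which by the auxiliary fact is exactly $\neg y\sqsubseteq x$. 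Item (vii) is the dual statement, worked in $\textbf{D}_{\sqcup}$ using $b\sqsubseteq b_{\sqcup}$ and $\lrcorner\lrcorner x=x_{\sqcup}$. The only real care needed is in this auxiliary lemma and in keeping the two directions of the quasi-order straight; once it is available, (vii) and (viii) are short.
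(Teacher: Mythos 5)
Your proof is correct. One caveat about the comparison itself: the paper does not prove Proposition \ref{pro2} at all --- it is imported verbatim from \cite{howlader3} (``In \cite{howlader3}, one observed the following''), so there is no in-paper argument to measure your proposal against; it has to be judged on its own terms, and on those terms it stands. The equational cluster is handled exactly as the axioms invite: (iii) from (4a)+(8a) and its dual, (ix) by applying $\neg$ to (iii), (i) by applying (iii) to $\neg x$ and invoking (ix), (iv) from (1a)--(3a) and the fact that $x\vee y$ is a $\neg$-image; the De Morgan identities (v), (vi) reduce correctly to (8a)/(8b) and the absorption $x_{\sqcap}\sqcap y_{\sqcap}=x\sqcap y$. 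Item (ii) is legitimately outsourced to Proposition \ref{pro1}: the componentwise criterion \ref{pro1}(iii) plus order-reversal of complementation in the Boolean algebras $\textbf{D}_{\sqcap}$, $\textbf{D}_{\sqcup}$, with (4a)/(4b) translating $\neg(x_{\sqcap})$ back to $\neg x$. Most importantly, you correctly identified the only genuinely delicate point --- that in (vii) and (viii) one cannot freely replace $x$ by $x_{\sqcap}$ because $\sqsubseteq$ is merely a quasi-order --- and your auxiliary lemma (for $a\in D_{\sqcap}$, $a\sqsubseteq b$ iff $a\sqsubseteq_{\sqcap}b_{\sqcap}$) resolves it properly: the forward direction is Proposition \ref{pro1}(iii) with $a_{\sqcap}=a$, and the backward direction correctly combines $b_{\sqcap}\sqsubseteq b$ (Proposition \ref{pro1.5}(v) with $x=y=b$) with transitivity of the quasi-order. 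After that, (viii) is the Boolean equivalence $a\sqsubseteq_{\sqcap}\neg b\Leftrightarrow b\sqsubseteq_{\sqcap}\neg a$ applied to $a=\neg x$, $b=\neg y$ inside $\textbf{D}_{\sqcap}$, and (vii) is its exact dual in $\textbf{D}_{\sqcup}$; both chains consist of equivalences throughout, as required. No gaps.
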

Using the properties of dBas given above, one can show
 \begin{proposition}
 \label{puresub}
{\rm  \cite{wille} $\textbf{D}_{p}:=(D_{p},\sqcup,\sqcap,\neg,\lrcorner,\top,\bot)$ is the largest pure subalgebra of $\textbf{D}$. Moreover, if $\textbf{D}$ is pure, $\textbf{D}_{p}=\textbf{D}$. }
 \end{proposition}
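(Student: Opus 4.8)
The plan is to verify the four assertions implicit in the statement: that $D_p$ is closed under all the dBa operations (so that $\textbf{D}_p$ is a subalgebra, and hence itself a dBa, since the defining axioms (1a)--(12) are identities and are preserved by subalgebras); that $\textbf{D}_p$ is pure; that it contains every pure subalgebra of $\textbf{D}$; and that $\textbf{D}_p=\textbf{D}$ when $\textbf{D}$ is pure. The engine of the argument is Proposition \ref{pro2}(i) together with one short computation in the $\sqcap$- and $\sqcup$-semigroups.

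First I would establish closure. For the unary operations this is immediate from Proposition \ref{pro2}(i): for every $x\in D$ we have $\neg x=(\neg x)_\sqcap\in D_\sqcap\subseteq D_p$ and $\lrcorner x=(\lrcorner x)_\sqcup\in D_\sqcup\subseteq D_p$, so $\neg$ and $\lrcorner$ send $D_p$ into $D_p$. For the binary operations the key lemma is that $x\sqcap y\in D_\sqcap$ and $x\sqcup y\in D_\sqcup$ for \emph{all} $x,y\in D$, strengthening Proposition \ref{pro2}(iv), which records only $x\sqcap x\in D_\sqcap$. To prove $x\sqcap y\in D_\sqcap$ I would use (2a)--(3a) to rewrite $(x\sqcap y)\sqcap(x\sqcap y)$ as $(x\sqcap x)\sqcap(y\sqcap y)$, and then collapse it via the identity (1a): $(x\sqcap x)\sqcap(y\sqcap y)=x\sqcap(y\sqcap y)=(y\sqcap y)\sqcap x=y\sqcap x=x\sqcap y$. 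The dual computation with (1b)--(3b) gives $x\sqcup y\in D_\sqcup$. Thus $\sqcap$ and $\sqcup$ map $D\times D$, and a fortiori $D_p\times D_p$, into $D_p$. Finally, for the constants, $\bot\in D_\sqcap$ and $\top\in D_\sqcup$ follow by substituting $x=\top$ in Proposition \ref{pro1.5}(ii) and $x=\bot$ in Proposition \ref{pro1.5}(i). Hence $D_p$ is a subuniverse, $\textbf{D}_p$ is a subalgebra, and it is a dBa.

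The remaining three points are then short. Purity of $\textbf{D}_p$ is immediate from the definition of $D_p$: any $x\in D_p=D_\sqcap\cup D_\sqcup$ satisfies $x_\sqcap=x$ or $x_\sqcup=x$. For maximality, let $\textbf{E}$ be any pure subalgebra of $\textbf{D}$ with universe $E$; since the operations of $\textbf{E}$ are the restrictions of those of $\textbf{D}$, purity of $\textbf{E}$ forces each $x\in E$ to satisfy $x\sqcap x=x$ or $x\sqcup x=x$ in $\textbf{D}$, i.e. $x\in D_\sqcap\cup D_\sqcup=D_p$, whence $E\subseteq D_p$. Lastly, if $\textbf{D}$ itself is pure the same observation gives $D\subseteq D_p$, and since $D_p\subseteq D$ trivially, $\textbf{D}_p=\textbf{D}$.

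The only step that is not routine bookkeeping is the lemma $x\sqcap y\in D_\sqcap$ (and its dual), so I expect the main, if modest, obstacle to be carrying out that semigroup manipulation cleanly and confirming that the operations of the candidate subalgebra genuinely coincide with the restrictions of the operations of $\textbf{D}$; everything else follows directly from the cited preliminaries.
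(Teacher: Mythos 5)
Your proposal is correct, and it matches the paper's intent exactly: the paper gives no proof of this proposition, citing Wille and remarking only that it follows "using the properties of dBas given above," which is precisely the verification you carry out (closure via Proposition \ref{pro2}(i) and the semigroup computation, purity and maximality from the definition of $D_p$). The one step you flag as non-routine, namely $x\sqcap y\in D_\sqcap$ for all $x,y$, is sound as you prove it, though it also follows in one line from axiom (8a), $\neg\neg(x\sqcap y)=x\sqcap y$, combined with Proposition \ref{pro2}(iii), $\neg\neg z=z\sqcap z$; dually, $x\sqcup y\in D_\sqcup$ follows from (8b) and $\lrcorner\lrcorner z=z\sqcup z$.
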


\noindent For the set $\mathfrak{P}(\mathbb{K})$ of all protoconcepts, Kwuida \cite{kwuida2007prime}  showed that\\ 
  \hspace*{1.5cm}  $(x\sqcup y)\sqcap (x\sqcup \lrcorner y)  \leq x\sqcup x$ and $x\sqcap x \leq (x\sqcap y)\sqcup (x\sqcap\neg y)$, for any $x,y\in \mathfrak{P}(\mathbb{K})$. \\He then redefined Wille's dBa by adding these two properties  as  axioms to  Definition \ref{DBA}.
These new algebras were also termed as `dBas'   by Kwuida. In \cite{howlader3}, it has been  established that Kwuida's class of dBas is equivalent to that defined by Wille. The result (Corollary \ref{cor1} below)  is obtained by proving   a number of intermediate properties of a dBa  that are given in the following theorem.  
\begin{theorem}
\label{algebraic equation}
{\rm \cite{howlader3} 
For all $x,y\in D$,
\begin{enumerate}[{(i)}]
\begin{multicols}{2}
\item $x\sqcap\neg (x\sqcup y)=\bot$.
\item $\neg(x\sqcup y)=\neg(x\sqcup y)\sqcap \neg x$.
\item $x\sqcap y=x\sqcap\neg (x\sqcap \neg y)$.
\item $x\sqcup(y\sqcap \neg x)=x\sqcup (y\sqcap y)$.
\item $(x\sqcap y)\sqcup (x\sqcap \neg y)=(x\sqcap x)\sqcup (x\sqcap x)$.
\item $x\sqcup\lrcorner (x\sqcap y)=\top$.
\item $\lrcorner(x\sqcap y)=\lrcorner(x\sqcap y)\sqcup \lrcorner x$.
\item $x\sqcup y=x\sqcup\lrcorner (x\sqcup \lrcorner y)$.
\item $x\sqcap(y\sqcup \lrcorner x)=x\sqcap (y\sqcup y)$.
\item $(x\sqcup y)\sqcap (x\sqcup \lrcorner y)=(x\sqcup x)\sqcap (x\sqcup x)$.
\end{multicols}
\end{enumerate}}
\end{theorem}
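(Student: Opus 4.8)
The plan is to prove the ten identities by transferring each computation into one of the two Boolean algebras $\textbf{D}_{\sqcap}$ and $\textbf{D}_{\sqcup}$ furnished by Proposition \ref{pro1}, and to halve the labour using the order-dual symmetry of Definition \ref{DBA}: the substitution interchanging $\sqcap\leftrightarrow\sqcup$, $\neg\leftrightarrow\lrcorner$, $\vee\leftrightarrow\wedge$ and $\top\leftrightarrow\bot$ carries each axiom $(n\mathrm{a})$ to $(n\mathrm{b})$ and fixes $(12)$, so it is an involution of the whole equational theory that reverses $\sqsubseteq$. Under it, (vi)--(x) are exactly the duals of (i)--(v); hence it suffices to establish (i)--(v), and the remaining five follow formally.

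For the reductions I would first record the bridging facts that make $\textbf{D}_{\sqcap}$ usable: axiom $(1\mathrm a)$ gives $x\sqcap z=x_{\sqcap}\sqcap z$, a routine consequence of $(1\mathrm a)$--$(3\mathrm a)$ gives $x\sqcap y\in D_{\sqcap}$, and $\neg x,\,x\vee y\in D_{\sqcap}$ by Proposition \ref{pro2}(i),(iv); thus any $\sqcap$--$\neg$--$\vee$ term lands in $D_{\sqcap}$ and, by Proposition \ref{pro1}(i), is governed there by ordinary Boolean laws, with $\neg$ the complement and $\neg\neg x=x_{\sqcap}$ (Proposition \ref{pro2}(iii)). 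Identities (i)--(iii) are then immediate. For (i), $\neg(x\sqcup y)\in D_{\sqcap}$, so $x\sqcap\neg(x\sqcup y)=x_{\sqcap}\sqcap\neg(x\sqcup y)$ is $\bot$ in $\textbf{D}_{\sqcap}$ exactly when $x_{\sqcap}\sqsubseteq_{\sqcap}\neg\neg(x\sqcup y)=(x\sqcup y)_{\sqcap}$, which holds since $x\sqsubseteq x\sqcup y$ (Propositions \ref{pro1.5}(v) and \ref{pro1}(iii)). Identity (ii) is the inclusion $\neg(x\sqcup y)\sqsubseteq_{\sqcap}\neg x$, again from $x\sqsubseteq x\sqcup y$ via Proposition \ref{pro2}(ii); and (iii) is the Boolean tautology $a\wedge b=a\wedge\neg(a\wedge\neg b)$ read off in $\textbf{D}_{\sqcap}$ after the translations above.

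The real content lies in (iv) and (v), and dually in (ix) and (x), because these mix the two joins: the natural computation $(x\sqcap y)\vee(x\sqcap\neg y)=x_{\sqcap}$ takes place under the Boolean join $\vee$ of $\textbf{D}_{\sqcap}$, whereas the identities use the dBa join $\sqcup$. Writing $a=x\sqcap y$ and $b=x\sqcap\neg y\in D_{\sqcap}$, I would reduce both to the single compatibility statement
\[
a\vee b\sqsubseteq a\sqcup b\qquad(a,b\in D_{\sqcap}),
\]
equivalently that the map $\phi\colon D_{\sqcap}\to D_{\sqcup}$, $a\mapsto a_{\sqcup}$, carries Boolean joins to $\sqcup$. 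Indeed the reverse inclusion $a\sqcup b\sqsubseteq(a\vee b)_{\sqcup}$ follows from monotonicity (Proposition \ref{pro1.5}(v),(vi)), while the forward inclusion $(a\vee b)_{\sqcup}\sqsubseteq a\sqcup b$ unwinds, through $\lrcorner(a\sqcup b)=\lrcorner a\wedge\lrcorner b$ and Proposition \ref{pro2}(ii),(vi),(vii), to exactly $a\vee b\sqsubseteq a\sqcup b$. Feeding $(x\sqcap y)\vee(x\sqcap\neg y)=x_{\sqcap}$ into this yields (v), and the analogous absorption argument (using $x\sqcup(x\sqcap y)=x_{\sqcup}$ from $(5\mathrm b)$) yields (iv).

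The step I expect to be the main obstacle is precisely this compatibility $a\vee b\sqsubseteq a\sqcup b$: for the special elements above it reads $x_{\sqcap}\sqsubseteq(x\sqcap y)\sqcup(x\sqcap\neg y)$, which is one of Kwuida's extra inequalities, so deriving it from Wille's axioms alone is the nontrivial heart of the equivalence between the two axiomatizations. Its $\textbf{D}_{\sqcap}$-component is routine via Proposition \ref{pro1}(iii), but its $\textbf{D}_{\sqcup}$-component cannot be obtained from the $\sqcap$- and $\sqcup$-reducts in isolation; here axiom $(12)$, which alone couples the two Boolean reducts through the central elements $(z_{\sqcap})_{\sqcup}=(z_{\sqcup})_{\sqcap}$, must be brought in together with the distributive laws $(6\mathrm a)$--$(6\mathrm b)$. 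Once $a\vee b\sqsubseteq a\sqcup b$ and its dual $a\sqcap b\sqsubseteq a\wedge b$ (for $a,b\in D_{\sqcup}$) are in hand, (iv)--(v) and, by the duality of the first paragraph, (vi)--(x) all follow, completing the proof.
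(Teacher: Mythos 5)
Your overall architecture is sound, and the parts you execute are correct: the operation-swapping involution does fix the axiom set (and reverses $\sqsubseteq$), so (vi)--(x) are formal duals of (i)--(v); items (i)--(iii) do reduce to Boolean computations in $\textbf{D}_{\sqcap}$ exactly as you describe; and your reduction of (iv) and (v) to the single bridge ``$(a\vee b)_{\sqcup}=a\sqcup b$ for $a,b\in D_{\sqcap}$'', hence to the inequality $a\vee b\sqsubseteq a\sqcup b$, is valid. The genuine gap is that this bridge is never proved. You explicitly defer it as ``the main obstacle'' and only conjecture that axiom (12) together with (6a)--(6b) ``must be brought in''; no derivation is given. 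This is not a peripheral detail: for $a=x\sqcap y$, $b=x\sqcap\neg y$ the unproved inequality reads $x\sqcap x\sqsubseteq(x\sqcap y)\sqcup(x\sqcap\neg y)$, which, given the reverse inclusion you did derive from monotonicity, is equivalent to item (v) itself, and is exactly Kwuida's extra inequality that Corollary \ref{cor1}(ii) extracts from this theorem. In other words, the one statement you leave unproved is the entire nontrivial content of the result, so the proposal as written begs the question.

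The gap is fillable, and by more elementary means than you predict: $x\vee y\sqsubseteq x\sqcup y$ holds for \emph{all} $x,y\in D$ and needs neither axiom (12) nor the distributive laws. From $x,y\sqsubseteq x\sqcup y$ (Proposition \ref{pro1.5}(v)), antitonicity (Proposition \ref{pro2}(ii)) gives $\neg(x\sqcup y)\sqsubseteq\neg x$ and $\neg(x\sqcup y)\sqsubseteq\neg y$; two applications of Proposition \ref{pro1.5}(vi), transitivity, and $\neg(x\sqcup y)=\neg(x\sqcup y)\sqcap\neg(x\sqcup y)$ (Proposition \ref{pro2}(i)) yield $\neg(x\sqcup y)\sqsubseteq\neg x\sqcap\neg y$; one more application of Proposition \ref{pro2}(ii) and (iii) gives $x\vee y=\neg(\neg x\sqcap\neg y)\sqsubseteq\neg\neg(x\sqcup y)=(x\sqcup y)\sqcap(x\sqcup y)\sqsubseteq x\sqcup y$. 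This is precisely how the paper proves Proposition \ref{meet join}(i). Note also that the present paper does not reprove Theorem \ref{algebraic equation} --- it recalls it from \cite{howlader3} --- so your key lemma must be made self-contained; with the short monotonicity argument above inserted in place of your appeal to axiom (12), your proof closes.
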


\begin{corollary}
\label{cor1}
{\rm \cite{howlader3}
For all $x,y\in D$, the following hold.
\begin{enumerate}[{(i)}]
\item $ (x\sqcup y)\sqcap (x\sqcup \lrcorner y)\sqsubseteq x\sqcup x.$
\item $ x\sqcap x\sqsubseteq (x\sqcap y)\sqcup (x\sqcap \neg y)$.
\end{enumerate}}
\end{corollary}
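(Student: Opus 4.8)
The plan is to derive both inequalities directly from the identities already established in Theorem \ref{algebraic equation}, specifically its parts (x) and (v), together with the elementary order facts of Proposition \ref{pro1.5}(v). Indeed, once those two identities are in hand, each claim collapses to a one-line observation about the quasi-order $\sqsubseteq$.

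For (i), I would first invoke Theorem \ref{algebraic equation}(x), which gives the identity $(x\sqcup y)\sqcap(x\sqcup\lrcorner y)=(x\sqcup x)\sqcap(x\sqcup x)$. The task then reduces to showing $(x\sqcup x)\sqcap(x\sqcup x)\sqsubseteq x\sqcup x$. This is immediate from Proposition \ref{pro1.5}(v): since $p\sqcap q\sqsubseteq p$ for all $p,q\in D$, taking $p=q=x\sqcup x$ yields $(x\sqcup x)\sqcap(x\sqcup x)\sqsubseteq x\sqcup x$. Combining the identity with this inequality delivers (i).

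For (ii), the argument is dual. Theorem \ref{algebraic equation}(v) provides $(x\sqcap y)\sqcup(x\sqcap\neg y)=(x\sqcap x)\sqcup(x\sqcap x)$, so it suffices to prove $x\sqcap x\sqsubseteq(x\sqcap x)\sqcup(x\sqcap x)$. This follows from the dual half of Proposition \ref{pro1.5}(v), namely $p\sqsubseteq p\sqcup q$, applied with $p=q=x\sqcap x$.

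The only point requiring a word of justification --- hardly an obstacle --- is the substitution of an equal term inside the order relation. Since $\sqsubseteq$ is a quasi-order by Definition \ref{DBA}, and equality implies $\sqsubseteq$ in both directions by Proposition \ref{pro1.5}(iii), transitivity allows us to replace $(x\sqcup y)\sqcap(x\sqcup\lrcorner y)$ by the equal term $(x\sqcup x)\sqcap(x\sqcup x)$ (and similarly in the meet case) without affecting the inequality. I expect no genuine difficulty: all the real content resides in Theorem \ref{algebraic equation}, which is assumed, and the corollary is essentially a bookkeeping consequence of pairing identities (x) and (v) with the two halves of Proposition \ref{pro1.5}(v).
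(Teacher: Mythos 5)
Your proof is correct and follows essentially the same route as the paper: Corollary \ref{cor1} is presented there precisely as a consequence of Theorem \ref{algebraic equation}, with parts (x) and (v) supplying the identities $(x\sqcup y)\sqcap(x\sqcup\lrcorner y)=(x\sqcup x)\sqcap(x\sqcup x)$ and $(x\sqcap y)\sqcup(x\sqcap\neg y)=(x\sqcap x)\sqcup(x\sqcap x)$, after which Proposition \ref{pro1.5}(v) (and its dual half) finishes each inequality exactly as you describe. Your additional remark on substituting equal terms via Proposition \ref{pro1.5}(iii) and transitivity of the quasi-order is a harmless, correct piece of bookkeeping.
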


We shall also require the notion of a {\it dBa homomorphism}.
\begin{definition}\label{DBAHOM}
{\rm Let $\textbf{D}:=(D,\sqcup,\sqcap,\neg,\lrcorner,\top_{\textbf{D}},\bot_{\textbf{D}})$ and $\textbf{M}:=(M,\sqcup,\sqcap,\neg,\lrcorner,\top_{\textbf{M}},\bot_{\textbf{M}})$ be two dBas. A map  $h:D\rightarrow M$ is called a {\it dBa  homomorphism} from $\textbf{D}$ to $\textbf{M}$, if  the following hold for all $a,b\in D$:  $h(a\sqcap b)=h(a)\sqcap h(b)$, $h(a\sqcup b)=h(a)\sqcup h(b)$, $h(\neg a)=\neg h(a)$, $h(\lrcorner a)=\lrcorner h(a)$, and \\$h(\top_{\textbf{D}})=\top_{\textbf{M}}$, $h(\bot_{\textbf{D}})=\bot_{\textbf{M}}$.\\
The dBa homomorphism $h$ is called {\it quasi-injective}, when $x\sqsubseteq y$ if and only if $h(x)\sqsubseteq h(y)$, for all $x,y\in D$.  $\textbf{D}$ is then said to be {\it quasi-embedded into} $\textbf{M}$. \\ If $h$ is an injective dBa homomorphism,  $\textbf{D}$ is said to be {\it embedded into} $\textbf{M}$.\\ 
A   dBa homomorphism from $\textbf{D}$ to $\textbf{M}$  that is quasi-injective and surjective  is called  a {\it dBa  quasi-isomorphism}, and  $\textbf{D}$ is said to be {\it quasi-isomorphic} to $\textbf{M}$.\\
If there is a dBa homomorphism from $\textbf{D}$ to $\textbf{M}$ that is bijective then the map is called a {\it  dBa isomorphism}, and the dBas $\textbf{D},\textbf{M}$ are said to be {\it isomorphic} to each other.
}
\end{definition}
\noindent Note that composition of two dBa homomorphisms is also a dBa homomorphism.


\subsection{\rm{\textbf{Filters, ideals and the prime ideal theorem for dBas}}}
\label{filterideal}
 Kwuida proved the prime ideal theorem for his class of dBas by generalizing the notion of  a prime filter (ideal) of Boolean algebras to define  a {\it primary} filter (ideal) for dBas \cite{kwuida2007prime}. Due to Corollary \ref{cor1}, these notions and the prime ideal  theorem hold for Wille's dBas. Let us give the  basic definitions and results in this regard. Again, let  $\textbf{D}:=(D,\sqcup,\sqcap,\neg,\lrcorner,\top,\bot)$ be a dBa. 
\begin{definition}
{\rm A   {\it filter} in  $\textbf{D}$ is a subset $F$ of $D$ such that $x\sqcap y\in F$ for all $x,y \in F$, and for all $z\in D$ and $ x \in F, x\sqsubseteq z$ implies that $z\in F$. An {\it ideal} in a dBa is defined dually.\\
The {\it filter (ideal) generated by a set} $X$, denoted as $F(X)(I(X))$, is the smallest filter (ideal) in \textbf{D} containing $X$.\\
A  {\it base} for the filter $F$ is a subset $F_{0}(\subseteq D)$ such that $F=\{y\in D : x\sqsubseteq y ~\mbox{for some}~ x\in F_{0}\}$. A base for an ideal is dually defined.\\
A filter $F$ (ideal $I$) is  {\it proper} if and only if  $F\neq D$ ($I\neq D$). \\
 A {\it primary} filter  $F$ (ideal $I$) is a proper filter (ideal)  such that $x\in F~ \mbox{or} ~\neg x \in F ~(x\in I ~\mbox{or}~ \lrcorner x\in I)$, for all $x\in D$.
}
\end{definition} 
\noindent It can be shown that $F(X) =\{x\in D~:~ \sqcap_{i=1}^{n}a_{i}\sqsubseteq x~\mbox{for some}~ a_{i}\in X~\mbox{and}~i=1,\ldots,n \}$, and dually,  $I(X) =\{x\in D~:~ x \sqsubseteq\sqcup_{i=1}^{n}a_{i} ~\mbox{for some}~ a_{i}\in X ~\mbox{and}~i=1,\ldots,n\}$.

 \begin{theorem}[\textbf{Prime ideal theorem for dBas}]
\label{PITDB}
{\rm Let $F$ be a filter and $I$  an ideal  in $\textbf{D}$  such that $F\cap I=\emptyset$. Then there exists a primary filter $G$ and a primary ideal $J$ of $\textbf{D}$ satisfying $F\subseteq G,~I\subseteq J$ and $G\cap J=\emptyset$.}
\end{theorem}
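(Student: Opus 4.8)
My plan is to first extend $(F,I)$ to a maximal disjoint filter--ideal pair via Zorn's lemma, and then to show that maximality forces both components to be primary, the crux being the two \emph{splitting} inequalities of Corollary \ref{cor1}. Concretely, I would let $\mathcal{C}$ be the collection of all pairs $(F',I')$ with $F'$ a filter, $I'$ an ideal, $F\subseteq F'$, $I\subseteq I'$ and $F'\cap I'=\emptyset$, ordered componentwise by inclusion. This is nonempty since $(F,I)\in\mathcal{C}$. Given a chain in $\mathcal{C}$, its componentwise unions are again a filter and an ideal (a nested union of filters is a filter, and dually for ideals), and they remain disjoint, since any common element would already lie inside a single member of the chain. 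Zorn's lemma then supplies a maximal element $(G,J)$ with $F\subseteq G$, $I\subseteq J$ and $G\cap J=\emptyset$.

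Next I would show that $G$ is primary. Fixing $x\in D$ and supposing, for contradiction, that $x\notin G$ and $\neg x\notin G$, maximality forces the filters $F(G\cup\{x\})$ and $F(G\cup\{\neg x\})$ each to meet $J$. Using the explicit description of generated filters together with closure of $G$ under $\sqcap$ and axiom (1a), I would extract $g_1,g_2\in G$ and $j_1,j_2\in J$ with $g_1\sqcap x\sqsubseteq j_1$ and $g_2\sqcap\neg x\sqsubseteq j_2$. Setting $g:=g_1\sqcap g_2\in G$ and $j:=j_1\sqcup j_2\in J$, monotonicity (Proposition \ref{pro1.5}(v),(vi)) yields $g\sqcap x\sqsubseteq j$ and $g\sqcap\neg x\sqsubseteq j$, so both lie in $J$, and hence so does their join $(g\sqcap x)\sqcup(g\sqcap\neg x)$ as $J$ is an ideal. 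Now Corollary \ref{cor1}(ii) gives $g\sqcap g\sqsubseteq(g\sqcap x)\sqcup(g\sqcap\neg x)$, whence $g\sqcap g\in J$ by downward closure; but $g\sqcap g\in G$ since $G$ is a filter, contradicting $G\cap J=\emptyset$.

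The proof that $J$ is primary is dual. Assuming $x\notin J$ and $\lrcorner x\notin J$, maximality forces the ideals $I(J\cup\{x\})$ and $I(J\cup\{\lrcorner x\})$ to meet $G$, producing $g_1,g_2\in G$ and $i\in J$ with $g_1\sqsubseteq i\sqcup x$ and $g_2\sqsubseteq i\sqcup\lrcorner x$ (after amalgamating the finitely many ideal generators into a single $i$ using closure under $\sqcup$ and axiom (1b)). Applying Proposition \ref{pro1.5}(vi) twice gives $g_1\sqcap g_2\sqsubseteq(i\sqcup x)\sqcap(i\sqcup\lrcorner x)$, and Corollary \ref{cor1}(i) bounds the right-hand side by $i\sqcup i\in J$; thus $g_1\sqcap g_2\in J$, while $g_1\sqcap g_2\in G$, again contradicting disjointness. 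Hence $(G,J)$ is the required pair of a primary filter and a primary ideal.

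I expect the Zorn step to be routine; the genuine work lies in the primality arguments. The delicate point is to pass correctly between the quasi-order $\sqsubseteq$ and the operations: one must use that $\sqcap$ is monotone in both arguments, so that a pair of bounds $g_1\sqcap x\sqsubseteq j_1$ and $g_2\sqcap\neg x\sqsubseteq j_2$ can be amalgamated over a single $g\in G$ and a single $j\in J$ \emph{without} assuming that $\sqcap$ is an honest meet for $\sqsubseteq$, which need not hold in an arbitrary dBa. The two inequalities of Corollary \ref{cor1} are exactly what is needed here: they express that $x$ and $\neg x$ jointly bound $g\sqcap g$ from above inside $J$, and dually that $i\sqcup i$ bounds $(i\sqcup x)\sqcap(i\sqcup\lrcorner x)$, which is what converts the disjointness of $G$ and $J$ into the desired contradiction.
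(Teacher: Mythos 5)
Your proof is correct and is essentially the argument the paper itself relies on: the paper does not prove Theorem \ref{PITDB} in the text but cites Kwuida, whose proof is exactly this Zorn's-lemma construction of a maximal disjoint filter--ideal pair followed by the derivation of primality from the two splitting inequalities (axioms in Kwuida's setting, supplied for Wille's dBas by Corollary \ref{cor1}). Your closing observation --- that Corollary \ref{cor1} is precisely what turns maximality into primality --- is the same point the paper makes when it remarks that, due to Corollary \ref{cor1}, Kwuida's prime ideal theorem holds for Wille's dBas.
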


\begin{notation}
\label{Fx and Ix} {\rm 
$\mathcal{F}_{pr}(\textbf{D})$ denotes the set of all  primary filters and  $\mathcal{I}_{pr}(\textbf{D})$, the set of all primary ideals.\\
$\mathcal{F}_{p}(\textbf{D}):=\{F \subseteq D : F ~\mbox{is a  filter of}~\textbf{D}~\mbox{and}~F\cap D_{\sqcap} ~\mbox{is a prime filter in}~ \textbf{D}_{\sqcap}\}$, while \\
$\mathcal{I}_{p}(\textbf{D}):=\{I \subseteq D : I ~\mbox{is an ideal of}~\textbf{D}~\mbox{and}~I\cap D_{\sqcup}~\mbox{ is a prime ideal in}~ \textbf{D}_{\sqcup}\}$. \\
$F_{x}:=\{F\in\mathcal{F}_{p}(\textbf{D})~:~x\in F\}$  and $I_{x}:=\{I\in\mathcal{I}_{p}(\textbf{D})~:~x\in I\}$, for any $x \in D$.\\
$\mathbb{K}(\textbf{D}):=(\mathcal{F}_{p}(\textbf{D}),\mathcal{I}_{p}(\textbf{D}),\Delta)$ is defined as the {\it standard context}, where  $F\Delta I$ if and only if $F\cap I\neq\emptyset$, for any $F\in \mathcal{F}_{p}(\textbf{D}),~I\in \mathcal{I}_{p}(\textbf{D})$. 
}
\end{notation}
\begin{lemma}{\rm \cite{wille}
\label{derivation} 
Let $x\in \textbf{D}$. 
\begin{enumerate}[{(i)}]
 \item  $F_{x}^{\prime}=I_{x_{\sqcap\sqcup}}$ and $I_{x}^{\prime}=F_{x_{\sqcup\sqcap}}$.
\item  $(F_{x})^{c}=F_{\neg x}$ and $(I_{x})^{c}=I_{\lrcorner x}$.
\item $I_{x}\cap I_{y}=I_{x\sqcup y}$ and $I_{x_{\sqcup} }= I_{x}$.
\item $F_{x}\cap F_{y}= F_{x\sqcap y}$ and $F_{x_{\sqcap }}=F_{x}$.
\end{enumerate}
}\end{lemma}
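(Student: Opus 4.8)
The plan is to reduce all four statements to the Boolean algebras $\textbf{D}_{\sqcap}$ and $\textbf{D}_{\sqcup}$ of Proposition \ref{pro1} and to the prime ideal theorem, Theorem \ref{PITDB}. First I would record two facts used throughout: $x_{\sqcap}\sqsubseteq x\sqsubseteq x_{\sqcup}$ for every $x$ (a direct consequence of axioms (1a),(5b) and (1b),(5a)), and, for any filter $F$ and ideal $I$, $x\in F\iff x_{\sqcap}\in F$ and $x\in I\iff x_{\sqcup}\in I$ (forward directions by closure under $\sqcap$, resp.\ $\sqcup$; backward directions by $x_{\sqcap}\sqsubseteq x$, resp.\ $x\sqsubseteq x_{\sqcup}$, together with up/down closure). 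With these, (iv) is direct: if $x,y\in F$ then $x\sqcap y\in F$, and conversely $x\sqcap y\in F$ forces $x,y\in F$ since $x\sqcap y\sqsubseteq x,y$ by Proposition \ref{pro1.5}(v) and $F$ is upward closed; taking $y=x$ gives $F_{x_{\sqcap}}=F_{x}$. Item (iii) is the exact order dual, using closure of ideals under $\sqcup$ and $x,y\sqsubseteq x\sqcup y$.

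For (ii) I would work inside $\textbf{D}_{\sqcap}$. By the fact above, $x\notin F\iff x_{\sqcap}\notin F\cap D_{\sqcap}$; since $F\cap D_{\sqcap}$ is a prime, hence ultra, filter of $\textbf{D}_{\sqcap}$ and $\neg x_{\sqcap}=\neg(x\sqcap x)=\neg x\in D_{\sqcap}$ (axiom (4a) and Proposition \ref{pro2}(i)), the ultrafilter dichotomy yields $x_{\sqcap}\notin F\cap D_{\sqcap}\iff\neg x\in F\cap D_{\sqcap}\iff\neg x\in F$, that is $(F_{x})^{c}=F_{\neg x}$. The identity $(I_{x})^{c}=I_{\lrcorner x}$ is dual, using $\lrcorner x=\lrcorner(x\sqcup x)\in D_{\sqcup}$ in $\textbf{D}_{\sqcup}$.

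For (i) I would first reduce: $F_{x}=F_{x_{\sqcap}}$ by (iv) and $I_{x_{\sqcap\sqcup}}=I_{x_{\sqcap}}$ by (iii) applied to $x_{\sqcap}$, so it suffices to prove $F_{u}^{\prime}=I_{u}$ for $u:=x_{\sqcap}\in D_{\sqcap}$. The inclusion $I_{u}\subseteq F_{u}^{\prime}$ is immediate, since $u\in I$ and $u\in F$ give $u\in F\cap I$, hence $F\Delta I$. The reverse inclusion is the heart of the lemma. Arguing contrapositively, if $u\notin I$ then the principal filter $F(\{u\})=\{z:u\sqsubseteq z\}$ is disjoint from $I$ (otherwise $u\in I$ by downward closure of $I$), so Theorem \ref{PITDB} supplies a primary filter $G\supseteq F(\{u\})$ and a primary ideal $J\supseteq I$ with $G\cap J=\emptyset$; then $u\in G$ and, since $I\subseteq J$, also $G\cap I=\emptyset$.

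The main obstacle is confirming that this $G$ actually belongs to $\mathcal{F}_{p}(\textbf{D})$, i.e.\ that $G\cap D_{\sqcap}$ is a prime filter of $\textbf{D}_{\sqcap}$; without this $G$ is not an object of $\mathbb{K}(\textbf{D})$ and no contradiction with $I\in F_{u}^{\prime}$ can be drawn. I would verify it by showing $G\cap D_{\sqcap}$ is a proper filter of $\textbf{D}_{\sqcap}$ (properness since $\bot\notin G$ as $G$ is proper), and then using the primary dichotomy ``$z\in G$ or $\neg z\in G$'' restricted to $z\in D_{\sqcap}$, together with $\neg z\in D_{\sqcap}$, to exhibit $G\cap D_{\sqcap}$ as an ultrafilter, hence prime. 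Then $G\in\mathcal{F}_{p}(\textbf{D})$ contains $u$ yet misses $I$, contradicting $I\in F_{u}^{\prime}$; so $u\in I$, i.e.\ $I\in I_{u}$. The companion identity $I_{x}^{\prime}=F_{x_{\sqcup\sqcap}}$ follows by the dual argument carried out in $\textbf{D}_{\sqcup}$.
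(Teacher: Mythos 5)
Your proof is correct and follows essentially the standard route: (iii) and (iv) from closure and upward/downward closure of filters and ideals, (ii) from the prime (ultra)filter dichotomy in $\textbf{D}_{\sqcap}$ and $\textbf{D}_{\sqcup}$, and (i) by applying Theorem \ref{PITDB} to the filter generated by $x_{\sqcap}$ (dually, the ideal generated by $x_{\sqcup}$), which is precisely the technique this paper uses for the analogous Proposition \ref{clopensemiconcept}; the lemma itself is only cited from Wille, with no proof given in the paper. The step you flag as the main obstacle, that the primary filter $G$ supplied by Theorem \ref{PITDB} belongs to $\mathcal{F}_{p}(\textbf{D})$, is genuinely needed and your verification of it is sound, though it is already available in the paper as Proposition \ref{comparison of two ideal}, which states $\mathcal{F}_{pr}(\textbf{D})=\mathcal{F}_{p}(\textbf{D})$.
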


\begin{lemma}{\rm 
\label{lema1}
  Let $F$ be a filter  and $I$ be an ideal of  $\textbf{D}$. 
\begin{enumerate}[{(i)}]
\item $F\cap D_{\sqcap}$ and $F\cap D_{\sqcup}$ are filters of the Boolean algebras $\textbf{D}_{\sqcap}$, $\textbf{D}_{\sqcup}$ respectively.
\item Each filter $F_{0}$ of the Boolean algebra $\textbf{D}_{\sqcap}$ is the base of some filter $F$ of $\textbf{D}$ such that $F_{0}=F\cap D_{\sqcap}$. Moreover if $F_{0}$ is a prime filter of $\textbf{D}_{\sqcap}$, $F\in\mathcal{F}_{p}(\textbf{D})$. 
\item $I\cap D_{\sqcap}$ and $I\cap D_{\sqcup}$ are ideals of the Boolean algebras $\textbf{D}_{\sqcap}$, $\textbf{D}_{\sqcup}$ respectively.
\item Each ideal $I_{0}$ of the Boolean algebra $\textbf{D}_{\sqcup}$ is the base of some ideal $I$ of $\textbf{D}$ such that $I_{0}=I\cap D_{\sqcup}$. Moreover if $I_{0}$  is prime, $I\in \mathcal{I}_{p}(\textbf{D})$. 
\end{enumerate}}
\end{lemma}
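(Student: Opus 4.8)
The plan is to verify each clause directly against the filter/ideal axioms, using the Boolean-algebra structure of $\textbf{D}_{\sqcap}$ and $\textbf{D}_{\sqcup}$ from Proposition \ref{pro1} together with the order-theoretic facts in Propositions \ref{pro1.5} and \ref{pro2}. The recurring point to watch is that the meet of $\textbf{D}_{\sqcap}$ is $\sqcap$ whereas the meet of $\textbf{D}_{\sqcup}$ is $\wedge$ (dually, the join of $\textbf{D}_{\sqcup}$ is $\sqcup$ while that of $\textbf{D}_{\sqcap}$ is $\vee$), so the operation under which a filter of $\textbf{D}$ is closed coincides with the relevant Boolean operation in only one of the two traces. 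Throughout I treat $\sqsubseteq_{\sqcap},\sqsubseteq_{\sqcup}$ as the restrictions of the quasi-order $\sqsubseteq$, as guaranteed by Proposition \ref{pro1}.

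For (i), the claim that $F\cap D_{\sqcap}$ is a filter of $\textbf{D}_{\sqcap}$ is routine: closure under $\sqcap$ follows because $F$ is $\sqcap$-closed and $D_{\sqcap}$ is $\sqcap$-closed by Proposition \ref{pro1}(i), and upward closure under $\sqsubseteq_{\sqcap}$ is inherited from that of $F$ under $\sqsubseteq$. The only nonroutine step is $\wedge$-closure of $F\cap D_{\sqcup}$: given $x,y\in F\cap D_{\sqcup}$ we have $x\sqcap y\in F$ and $x\wedge y\in D_{\sqcup}$ (Proposition \ref{pro2}(iv)), and I would deduce $x\wedge y\in F$ from the inequality $x\sqcap y\sqsubseteq x\wedge y$ via upward closure of $F$. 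To prove this inequality, note $x\sqcap y\sqsubseteq x$ and $x\sqcap y\sqsubseteq y$ (Proposition \ref{pro1.5}(v)), whence $\lrcorner x\sqsubseteq\lrcorner(x\sqcap y)$ and $\lrcorner y\sqsubseteq\lrcorner(x\sqcap y)$ by Proposition \ref{pro2}(ii); since $\lrcorner x,\lrcorner y,\lrcorner(x\sqcap y)\in D_{\sqcup}$ (Proposition \ref{pro2}(i)) and $\sqcup$ is the join of the Boolean algebra $\textbf{D}_{\sqcup}$, the element $\lrcorner(x\sqcap y)$ is an upper bound of $\lrcorner x,\lrcorner y$, forcing $\lrcorner x\sqcup\lrcorner y\sqsubseteq\lrcorner(x\sqcap y)$. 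Applying the antitone adjunction of Proposition \ref{pro2}(vii) then gives $x\sqcap y\sqsubseteq\lrcorner(\lrcorner x\sqcup\lrcorner y)=x\wedge y$. Clause (iii) is dual: for $I\cap D_{\sqcap}$ the delicate step is $\vee$-closure, handled by $x\vee y\sqsubseteq x\sqcup y$, which is proved by the same argument with $\neg$ and Proposition \ref{pro2}(ii),(viii) and the Boolean join $\vee$ of $\textbf{D}_{\sqcap}$ in place of their $\lrcorner$- and $\sqcup$-counterparts.

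For (ii), given a filter $F_{0}$ of $\textbf{D}_{\sqcap}$ I would set $F:=\{y\in D : x\sqsubseteq y \mbox{ for some } x\in F_{0}\}$, so that $F_{0}$ is a base of $F$ by construction (and $F_{0}\subseteq F$ by reflexivity of $\sqsubseteq$). That $F$ is a filter of $\textbf{D}$ is checked as follows: upward closure is immediate from transitivity of $\sqsubseteq$; for $\sqcap$-closure, if $x_{i}\sqsubseteq y_{i}$ with $x_{i}\in F_{0}$ then $x_{1}\sqcap x_{2}\in F_{0}$ (as $F_{0}$ is $\sqcap$-closed in $\textbf{D}_{\sqcap}$) and $x_{1}\sqcap x_{2}\sqsubseteq y_{1}\sqcap y_{2}$ by two applications of Proposition \ref{pro1.5}(vi), so $y_{1}\sqcap y_{2}\in F$. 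The identity $F_{0}=F\cap D_{\sqcap}$ is the crux: $F_{0}\subseteq F\cap D_{\sqcap}$ uses $F_{0}\subseteq D_{\sqcap}$ and reflexivity, while $F\cap D_{\sqcap}\subseteq F_{0}$ uses that any $y\in F\cap D_{\sqcap}$ admits $x\in F_{0}$ with $x\sqsubseteq y$, i.e. $x\sqsubseteq_{\sqcap}y$, whence $y\in F_{0}$ by upward closure of $F_{0}$ in $\textbf{D}_{\sqcap}$. The ``moreover'' is then immediate, since if $F_{0}$ is prime then $F\cap D_{\sqcap}=F_{0}$ is prime, which is exactly the defining condition for $F\in\mathcal{F}_{p}(\textbf{D})$. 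Clause (iv) is entirely dual, constructing $I$ as the downward closure of $I_{0}$ and invoking the ideal-theoretic analogues.

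I expect the main obstacle to be clause (i), specifically the inequalities $x\sqcap y\sqsubseteq x\wedge y$ and $x\vee y\sqsubseteq x\sqcup y$: the temptation is to conflate $\sqcap$-closure of $F$ with $\wedge$-closure of $F\cap D_{\sqcup}$, which is false, so one must route through the adjunction/de Morgan behaviour of $\neg,\lrcorner$ and the fact that $\sqcup,\vee$ are \emph{genuine} Boolean joins on the respective traces. The remaining difficulty is purely bookkeeping, namely keeping careful track of which order relation ($\sqsubseteq$, $\sqsubseteq_{\sqcap}$, or $\sqsubseteq_{\sqcup}$) is in play and using Proposition \ref{pro1} to pass freely between $\textbf{D}$ and its two Boolean traces.
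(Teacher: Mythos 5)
Your proof is correct, but it follows a genuinely different route from the paper, which offers no argument of its own for this lemma: it simply notes that (i) and (ii) are proved in \cite{wille} and that (iii) and (iv) follow dually. Your proposal is thus a self-contained verification, and its two nontrivial ingredients are handled soundly. The inequalities you identify as the crux, $x\sqcap y\sqsubseteq x\wedge y$ and $x\vee y\sqsubseteq x\sqcup y$, are precisely part of Proposition \ref{meet join}, which the paper states and proves only later (Section \ref{AIdBa}) by a purely equational chain through Propositions \ref{pro1.5}(vi) and \ref{pro2}(i),(ii); your derivation instead reads $\lrcorner(x\sqcap y)$ as an upper bound of $\lrcorner x,\lrcorner y$ inside the Boolean algebra $\textbf{D}_{\sqcup}$, where $\sqcup$ is a genuine least upper bound for $\sqsubseteq_{\sqcup}$ by Proposition \ref{pro1}(ii), and then transposes via the adjunction of Proposition \ref{pro2}(vii). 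This is shorter and makes the conceptual point explicit, at the modest cost of invoking the order-theoretic reading of the traces rather than pure equations. Likewise, your construction in (ii) and (iv) of $F$ (resp. $I$) as the up-set (resp. down-set) of $F_{0}$ (resp. $I_{0}$), with the identity $F_{0}=F\cap D_{\sqcap}$ checked through the fact that $\sqsubseteq_{\sqcap}$ is the restriction of $\sqsubseteq$, is exactly the construction the paper itself deploys later in the proof of Theorem \ref{primary ideal thorem}; so while the paper outsources the lemma to a citation, your argument stays entirely within its toolkit and would serve as a valid replacement for that citation.
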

\noindent  (i) and (ii) of Lemma \ref{lema1} have been proved in \cite{wille}. (iii) and (iv) for ideals can be proved dually. 
\begin{observation}
{ \rm For each filter $F$ and ideal $I$ of $\textbf{D}$, $F\cap D_{\sqcap}$ is a base of $F$, $I\cap D_{\sqcup}$ is a base of $I$.}
\end{observation}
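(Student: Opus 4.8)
The plan is to verify directly the defining equation of a base in each case: for the filter $F$ I must show that $F = \{y \in D : x \sqsubseteq y \text{ for some } x \in F \cap D_{\sqcap}\}$, and, dually, for the ideal $I$ that $I = \{y \in D : y \sqsubseteq x \text{ for some } x \in I \cap D_{\sqcup}\}$. The governing idea is that every element $y$ of $F$ dominates its own $\sqcap$-idempotent $y_{\sqcap}=y\sqcap y$, which already lies in $F\cap D_{\sqcap}$; dually, every element $y$ of $I$ is dominated by its $\sqcup$-idempotent $y_{\sqcup}=y\sqcup y$, which lies in $I\cap D_{\sqcup}$.

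I would start with the filter statement. One inclusion is immediate from upward closure: if $x\sqsubseteq y$ for some $x\in F\cap D_{\sqcap}\subseteq F$, then $y\in F$ because $F$ is a filter. For the reverse inclusion, take any $y\in F$ and propose the witness $x:=y_{\sqcap}$. Three facts then close the argument: (i) $y_{\sqcap}\in D_{\sqcap}$ by Proposition \ref{pro2}(iv); (ii) $y_{\sqcap}=y\sqcap y\in F$, since a filter is closed under $\sqcap$ (apply the meet-closure condition to the pair $y,y\in F$); and (iii) $y_{\sqcap}\sqsubseteq y$ by Proposition \ref{pro1.5}(v), which gives $x\sqcap y\sqsubseteq x$ and $x\sqcap y\sqsubseteq y$, specialised to $x=y$. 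Hence $y_{\sqcap}\in F\cap D_{\sqcap}$ with $y_{\sqcap}\sqsubseteq y$, placing $y$ in the generated set. Combining the two inclusions yields the claim that $F\cap D_{\sqcap}$ is a base of $F$.

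The ideal statement is proved by the order-dual argument, interchanging $\sqcap$ with $\sqcup$, $D_{\sqcap}$ with $D_{\sqcup}$, and upward closure with downward closure. For $y\in I$ the witness is $y_{\sqcup}=y\sqcup y$, which lies in $D_{\sqcup}$ by Proposition \ref{pro2}(iv), lies in $I$ by the $\sqcup$-closure of the ideal, and satisfies $y\sqsubseteq y_{\sqcup}$ by the dual clause $y\sqsubseteq x\sqcup y$ of Proposition \ref{pro1.5}(v) taken at $x=y$. No genuine obstacle arises here: the only point requiring care is the confirmation that the candidate witnesses actually land in $D_{\sqcap}$ (respectively $D_{\sqcup}$), but this is exactly the content of Proposition \ref{pro2}(iv), so the entire argument reduces to routine bookkeeping with the filter and ideal axioms together with the two already-established propositions.
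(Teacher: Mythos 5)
Your proof is correct and matches the argument the paper intends: the Observation is stated without proof precisely because it follows immediately in the way you describe, taking for each $y\in F$ the witness $y_{\sqcap}\in F\cap D_{\sqcap}$ with $y_{\sqcap}\sqsubseteq y$ (via Proposition \ref{pro2}(iv), Proposition \ref{pro1.5}(v), and the closure axioms of a filter), and dually $y_{\sqcup}$ for ideals. Nothing is missing.
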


In \cite{howlader3}, one observed that  

\begin{proposition}
\label{comparison of two ideal}
{\rm \cite{howlader3}
 \noindent \begin{enumerate}[{(i)}]
\item $\mathcal{F}_{pr}(\textbf{D})$=$\mathcal{F}_{p}(\textbf{D}).$
\item $\mathcal{I}_{pr}(\textbf{D})$=$\mathcal{I}_{p}(\textbf{D})$.
\end{enumerate}}
\end{proposition}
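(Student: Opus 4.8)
The plan is to prove (i) in full and observe that (ii) follows by an exactly dual argument, interchanging the roles of $\sqcap$ and $\sqcup$, of $\neg$ and $\lrcorner$, and of $\bot$ and $\top$ (so that $\textbf{D}_{\sqcup}$ replaces $\textbf{D}_{\sqcap}$). For (i) I would establish the two inclusions $\mathcal{F}_{pr}(\textbf{D}) \subseteq \mathcal{F}_{p}(\textbf{D})$ and $\mathcal{F}_{p}(\textbf{D}) \subseteq \mathcal{F}_{pr}(\textbf{D})$ separately, unwinding in each case the characterization of a prime filter of the Boolean algebra $\textbf{D}_{\sqcap}$: a proper filter $P$ of $\textbf{D}_{\sqcap}$ with $a\in P$ or $\neg a\in P$ for every $a\in D_{\sqcap}$ (recall $\neg$ is the complement in $\textbf{D}_{\sqcap}$ by Proposition \ref{pro1}(i)).

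For the forward inclusion, let $F$ be a primary filter. By Lemma \ref{lema1}(i), $F\cap D_{\sqcap}$ is already a filter of $\textbf{D}_{\sqcap}$, so only properness and primeness remain. Properness holds because $\bot\in F$ would force $F=D$ via $\bot\sqsubseteq x$ (Proposition \ref{pro1.5}(i)), contradicting that $F$ is proper; hence $\bot\notin F\cap D_{\sqcap}$. For primeness I would take any $a\in D_{\sqcap}$ and invoke that $F$ is primary to obtain $a\in F$ or $\neg a\in F$; since both $a$ and $\neg a$ lie in $D_{\sqcap}$ (the latter by Proposition \ref{pro2}(i)), this yields $a\in F\cap D_{\sqcap}$ or $\neg a\in F\cap D_{\sqcap}$, which is precisely the prime condition in $\textbf{D}_{\sqcap}$.

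The converse inclusion carries the one genuine subtlety. Given a filter $F$ with $F\cap D_{\sqcap}$ prime in $\textbf{D}_{\sqcap}$, I must show $F$ is primary, i.e. $x\in F$ or $\neg x\in F$ for every $x\in D$; the difficulty is that $x$ itself need not lie in $D_{\sqcap}$, so the prime condition of $\textbf{D}_{\sqcap}$ cannot be applied to $x$ directly. The key move is to apply it instead to $\neg x$, which always belongs to $D_{\sqcap}$ by Proposition \ref{pro2}(i). Since the complement of $\neg x$ in $\textbf{D}_{\sqcap}$ is $\neg\neg x$, primeness gives $\neg x\in F$ or $\neg\neg x\in F$. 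In the first case we are finished; in the second, I would use $\neg\neg x = x\sqcap x$ (Proposition \ref{pro2}(iii)) together with $x\sqcap x\sqsubseteq x$ (Proposition \ref{pro1.5}(v)) and the upward closure of $F$ to conclude $x\in F$. Properness of $F$ follows as before, since $\bot\in D_{\sqcap}$ and $\bot\in F$ would violate properness of $F\cap D_{\sqcap}$.

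Thus the argument reduces to bookkeeping over the already-established dBa identities; the only step requiring real thought is recognising that primeness must be invoked on $\neg x$ rather than on $x$, and then transporting the conclusion back to $x$ through $\neg\neg x = x\sqcap x\sqsubseteq x$. The dual statement (ii) for ideals introduces no new obstacle.
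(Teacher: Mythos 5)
Your proof is correct, and every step is justifiable from the results quoted in the paper: Lemma \ref{lema1}(i) for the filter part, Proposition \ref{pro2}(i) to place $\neg x$ (and $\neg\neg x$) in $D_{\sqcap}$, and Proposition \ref{pro2}(iii) with Proposition \ref{pro1.5}(v) for the key transport $\neg\neg x = x\sqcap x \sqsubseteq x$ in the converse inclusion, which is indeed the only nontrivial move. Note that the paper itself gives no proof of this proposition --- it is quoted from \cite{howlader3} --- so there is nothing internal to compare against; your argument is the natural one and matches what that reference establishes.
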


\noindent Proposition \ref{comparison of two ideal} implies that  there  is a one-one and onto correspondence between the set of primary filters (ideals) of $\textbf{D}$ and the set of prime filters (ideals) of $\textbf{D}_{\sqcap} (\textbf{D}_{\sqcup}$).

\subsection{{\rm \textbf{Object oriented concept,  semiconcept and  protoconcept of a context}}}

 In the following, let $\mathbb{K}:=(G,M,R)$ be a context, and $A\subseteq G$, $B\subseteq M$. Operators $\lozenge, \square, \blacklozenge, \blacksquare$ are introduced on the power sets of $G,M$ as follows.\vskip 2pt \noindent 
 \hspace*{1.5cm} 
$B_{R}^{\lozenge}:=\{x\in G:R(x)\cap B\neq\emptyset\},~~~~ ~~~~~B_{R}^{\square}:=\{x\in G:R(x)\subseteq B\}$,
\vskip 3pt \noindent
 \hspace*{1.5cm}  $A_{R^{-1}}^{\blacklozenge}:=\{y\in M:R^{-1}(y)\cap A\neq\emptyset\},~~~ A_{R^{-1}}^{\blacksquare}:=\{y\in M:R^{-1}(y)\subseteq A\}$.

\vskip 2pt \noindent  If  the relation involved is clear from the context,  we shall omit the subscripts and denote $B_{R}^{\lozenge}$ by $B^{\lozenge}$, $B_{R}^{\square}$ by $B^{\square}$, and similarly for the case of $A$. 
\vskip 2pt \noindent 
Let us recall the notions of closure and interior operators.

\begin{definition}
{\rm \cite{davey2002introduction} An operator $C$ on the power set $\mathcal{P}(X)$ of a set $X$ is called a {\it closure operator} on $X$, if for all $A,B\in\mathcal{P}(X)$,
\begin{enumerate}
\item[C1] $A\subseteq C(A)$,
\item[C2] $A\subseteq B$ implies $C(A)\subseteq C(B)$, and
\item[C3] $C(C(A))=C(A)$.
\end{enumerate}
\noindent $A\in \mathcal{P}(X)$ is  called {\it closed} if and only if $C(A)=A$. \\
An {\it interior operator}  $I$ on the set $X$  is defined  dually. $A\in \mathcal{P}(X)$ is  {\it open} if and only if  $I(A)=A$.
}
\end{definition} 

\noindent We next list some properties of the operators $\square,\lozenge,\blacklozenge, \blacksquare$. 

\begin{theorem}{\rm \cite{ duntsch2002modal,yao2004comparative}
\label{property of box}
Let $A,A_{1},A_{2}\subseteq G$ and  $B,B_{1},B_{2}\subseteq M$.
\begin{enumerate}[{(i)}]
\item $A_{1}\subseteq A_{2}$ implies that $A_{1}^{\blacksquare}\subseteq A_{2}^{\blacksquare}$ and $A_{1}^{\blacklozenge}\subseteq A_{2}^{\blacklozenge}$.
\item $B_{1}\subseteq B_{2}$ implies that $B_{1}^{\square}\subseteq B_{2}^{\square}$ and $B_{1}^{\lozenge}\subseteq B_{2}^{\lozenge}$.
\item $(B_{1}\cap B_{2})^{\square}=B_{1}^{\square}\cap B_{2}^{\square}$ and $(B_{1}\cup B_{2})^{\lozenge}=B_{1}^{\lozenge}\cup B_{2}^{\lozenge}$ .
\item $(A_{1}\cap A_{2})^{\blacksquare}=A_{1}^{\blacksquare}\cap A_{2}^{\blacksquare}$ and $(A_{1}\cup A_{2})^{\blacklozenge}=A_{1}^{\blacklozenge}\cup A_{2}^{\blacklozenge}$ .
\item $B^{\square}=B^{c\lozenge c}$  and $A^{\blacksquare}=A^{c\blacklozenge c}$ .
\item $A^{\blacksquare}_{R}=A^{c\prime}_{-R}; B^{\square}_{R}= B^{c\prime}_{-R}$ and $A^{\blacklozenge}_{R}=A^{\prime c}_{-R}; B^{\lozenge}_{R}= B^{\prime c}_{-R} $ .
\item $A^{\blacksquare\lozenge\blacksquare}=A^{\blacksquare}$ and $B^{\square\lozenge\square}=B^{\square}$.
\item $A^{\blacklozenge\square\blacklozenge}=A^{\blacklozenge}$ and $B^{\lozenge\square\lozenge}=B^{\lozenge}$.
\item $\blacksquare\lozenge$ is interior operator on $G$ and $\lozenge\blacksquare$ is closure operator on $M$.
\end{enumerate}}
\end{theorem}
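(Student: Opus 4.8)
The plan is to treat the nine items in three groups. Items (i)--(vi) are direct unfoldings of the defining membership conditions, whereas (vii)--(ix) are most economically organized around two monotone Galois adjunctions induced by $R$; I would first isolate these adjunctions and then read off the composite statements as formal consequences, rather than iterating the definitions through three layers by hand.

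First I would dispose of the elementary items. For monotonicity (i)--(ii), enlarging the target set only makes the conditions $R^{-1}(y)\subseteq A$, $R^{-1}(y)\cap A\neq\emptyset$ (and their $R(x)$-analogues) easier to satisfy, so $\blacksquare,\blacklozenge,\square,\lozenge$ are all $\subseteq$-monotone. For (iii)--(iv) I would distribute the quantifiers: $R^{-1}(y)\subseteq A_1\cap A_2$ iff $R^{-1}(y)\subseteq A_1$ and $R^{-1}(y)\subseteq A_2$, giving $(A_1\cap A_2)^{\blacksquare}=A_1^{\blacksquare}\cap A_2^{\blacksquare}$, and $R^{-1}(y)\cap(A_1\cup A_2)\neq\emptyset$ iff $R^{-1}(y)$ meets $A_1$ or $A_2$, giving the $\blacklozenge$-statement; the $\square,\lozenge$ equalities are symmetric. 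For the complement duality (v), I would negate the defining condition: $x\in B^{c\lozenge c}$ iff $R(x)\cap B^{c}=\emptyset$ iff $R(x)\subseteq B$ iff $x\in B^{\square}$, and dually for $A^{\blacksquare}=A^{c\blacklozenge c}$. For the bridge (vi) I would unfold the derivation operator of the complement context $-R$: since $A'_{-R}=\{m:R^{-1}(m)\cap A=\emptyset\}$, complementing in $M$ gives $A^{\prime c}_{-R}=\{m:R^{-1}(m)\cap A\neq\emptyset\}=A^{\blacklozenge}$, while $A^{c\prime}_{-R}=\{m:R^{-1}(m)\subseteq A\}=A^{\blacksquare}$; the two $G$-side identities follow by the same computation with $R(x)$ in place of $R^{-1}(m)$.

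The heart of the argument is two adjunctions. A short biconditional chase gives, for $A\subseteq G$ and $B\subseteq M$,
\[
B^{\lozenge}\subseteq A \iff B\subseteq A^{\blacksquare}, \qquad A^{\blacklozenge}\subseteq B \iff A\subseteq B^{\square},
\]
each reducing to the single statement ``no element outside $A$ is $R$-linked to $B$'' (respectively its dual). Thus $\lozenge\dashv\blacksquare$ and $\blacklozenge\dashv\square$ are monotone Galois connections. The triangle identities of any such connection yield $R\circ L\circ R=R$ and $L\circ R\circ L=L$, which are exactly the threefold compositional equalities in (vii) and (viii). Finally, for (ix) I would invoke the standard fact that for an adjunction $L\dashv R$ the composite $R\circ L$ is a closure operator and $L\circ R$ an interior (kernel) operator: applied to $\lozenge\dashv\blacksquare$ this makes the $M$-operator $\lozenge\blacksquare$ satisfy C1--C3 and the $G$-operator $\blacksquare\lozenge$ satisfy the dual axioms, using monotonicity from (i)--(ii) and idempotence from (vii)--(viii).

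Since every computation is routine, the only real obstacle is bookkeeping. The operators live on two different sorts $\mathcal{P}(G)$ and $\mathcal{P}(M)$, and the threefold composites in (vii)--(ix) must be read as alternating between the sorts, with the inner operator being the $G\to M$ companion of the outer $M\to G$ operators (and conversely). Casting everything through the two adjunctions removes the risk of the sign and complement slips that a direct three-layer unfolding would invite, and it makes the type-matching across $G$ and $M$ transparent.
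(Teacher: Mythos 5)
Your proposal is correct, but note that the paper itself offers no proof of this theorem to compare against: it is imported verbatim as a preliminary, with the proofs residing in the cited sources of D\"untsch--Gediga and Yao, where items such as (vii)--(ix) are established by direct element chasing from (i)--(v). Your reorganization around the two monotone Galois connections is a genuinely different (and cleaner) route: the equivalences $B^{\lozenge}\subseteq A \iff B\subseteq A^{\blacksquare}$ and $A^{\blacklozenge}\subseteq B \iff A\subseteq B^{\square}$ do hold for an arbitrary relation $R$ (both sides of the first say that no element outside $A$ is $R$-related to an element of $B$, as you observe), the unit/counit inequalities $B\subseteq B^{\lozenge\blacksquare}$, $A^{\blacksquare\lozenge}\subseteq A$ and the triangle identities then deliver (vii), (viii) and the C1--C3 axioms in (ix) simultaneously, with monotonicity supplied by (i)--(ii); this buys idempotence and the closure/interior structure in one stroke instead of three separate unfoldings. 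Your closing caveat about sorts is also substantive rather than cosmetic: as literally printed, the second identities in (vii) and (viii), namely $B^{\square\lozenge\square}=B^{\square}$ and $B^{\lozenge\square\lozenge}=B^{\lozenge}$, do not typecheck under the paper's conventions (since $\square$ and $\lozenge$ accept only subsets of $M$, while $B^{\square}, B^{\lozenge}\subseteq G$); the intended readings are $B^{\square\blacklozenge\square}=B^{\square}$ and $B^{\lozenge\blacksquare\lozenge}=B^{\lozenge}$, and your adjunction framework forces exactly this correction, since $RLR=R$ and $LRL=L$ leave no freedom in which operator occupies the middle slot. The only place to be careful is the direction bookkeeping in (ix): with $L=\lozenge$, $R=\blacksquare$ the interior operator on $G$ is $L\circ R$ (written $\blacksquare\lozenge$ in the paper's left-to-right superscript order) and the closure operator on $M$ is $R\circ L$ (written $\lozenge\blacksquare$), which you have stated correctly.
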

 
In 2004, Yao defined {\it object oriented concepts}  \cite{yao2004comparative}.
\begin{definition} \label{objorc}
{\rm \cite{yao2004comparative} $(A,B)$  is an {\it object oriented concept}  of the  context $\mathbb{K}$ if  $A^{\blacksquare}=B$ and $B^{\lozenge}=A$. \\
A partial order  $\leq$  is  given by the following relation defined on the set of all object oriented concepts.  For any object oriented concepts 
$(A_{1},B_{1}),(A_{2},B_{2})$, \\
\hspace*{1.5cm} $(A_{1},B_{1})\leq (A_{2},B_{2})~\mbox{if and only if}~ A_{1}\subseteq A_{2}$ (equivalently, $B_{1}\subseteq B_{2}$).
}
\end{definition}

 
In \cite{howlader2018algebras},  {\it object oriented semiconcepts} were  introduced in order to bring  the notion of negation into the study. 

\begin{definition}
{\rm \cite{howlader2018algebras} $(A, B)$ is an {\it object oriented semiconcept} of $\mathbb{K}$ if $A^{\blacksquare}=B$ or $B^{\lozenge}=A$.
}\end{definition}

\begin{notation} {\rm 
$RO-L(\mathbb{K})$ denotes the set of all object oriented concepts of $\mathbb{K}$, while the set of all object oriented semiconcepts is denoted by $\mathfrak{S}(\mathbb{K})$.
}
\end{notation}
\noindent In \cite{howlader2018algebras}, one observes the following. 
\begin{proposition}\label{obssemi}
{\rm  \noindent 
\begin{enumerate}[{(i)}] \item $(A, B)\in \mathfrak{S}(\mathbb{K}) $  if and only if either $(A, B)=(A, A^{\blacksquare})$ or $(A, B)=(B^{\lozenge}, B).$ 
\item $RO-L(\mathbb{K})\subseteq \mathfrak{S}(\mathbb{K}).$
\item $(A, B)$ is a semiconcept of $\mathbb{K}$ if and only if $(A^{c}, B)$ is an object oriented  semiconcept of the context $\mathbb{K}^{c}$.
\end{enumerate}
}\end{proposition}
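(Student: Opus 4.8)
The plan is to dispatch the three parts in order, the first two being direct unfoldings of the definitions and the third requiring a careful translation between the two relations $R$ and $-R$. For (i), I would argue purely from the definition of an object oriented semiconcept: $(A,B)\in\mathfrak{S}(\mathbb{K})$ precisely when $A^{\blacksquare}=B$ or $B^{\lozenge}=A$. In the first case the equality $B=A^{\blacksquare}$ is exactly the assertion $(A,B)=(A,A^{\blacksquare})$, and in the second case $A=B^{\lozenge}$ is exactly $(A,B)=(B^{\lozenge},B)$; reading the same equalities backwards gives the converse implications, so the stated biconditional holds. For (ii), recall that $(A,B)\in RO-L(\mathbb{K})$ means $A^{\blacksquare}=B$ and $B^{\lozenge}=A$ both hold. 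In particular $A^{\blacksquare}=B$ holds, which is one of the two disjuncts defining an object oriented semiconcept, so $(A,B)\in\mathfrak{S}(\mathbb{K})$ and the inclusion follows.

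The substantive part is (iii), where the main task is to express the operators $\blacksquare$ and $\lozenge$ of $\mathbb{K}^c$ (computed with respect to $-R$) in terms of the derivation operator $'$ of $\mathbb{K}$ (computed with respect to $R$). I would apply the identities of Theorem \ref{property of box}(vi) to the context $\mathbb{K}^c$, exploiting that the complement of $\mathbb{K}^c$ is $\mathbb{K}$ itself since $-(-R)=R$. This gives $X^{\blacksquare}_{-R}=(X^c)'$ and $B^{\lozenge}_{-R}=(B')^c$ for all $X\subseteq G$ and $B\subseteq M$. Taking $X=A^c$ in the first identity yields $(A^c)^{\blacksquare}_{-R}=A'$. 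Hence $(A^c,B)$ is an object oriented semiconcept of $\mathbb{K}^c$, i.e.\ $(A^c)^{\blacksquare}_{-R}=B$ or $B^{\lozenge}_{-R}=A^c$, if and only if $A'=B$ or $(B')^c=A^c$; the second disjunct is equivalent to $B'=A$ after taking complements. This is exactly the condition $A'=B$ or $B'=A$ for $(A,B)$ to be a semiconcept of $\mathbb{K}$, so the equivalence is established.

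The only genuine obstacle is the bookkeeping in (iii): one must keep track of the double complement $-(-R)=R$ when invoking Theorem \ref{property of box}(vi) for $\mathbb{K}^c$, and of the complementation of the object component when moving between $A$ and $A^c$. Once the translation identities are correctly instantiated, the argument reduces to a single substitution.
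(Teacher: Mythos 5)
Your proof is correct. The paper states Proposition \ref{obssemi} without proof (it is recalled from \cite{howlader2018algebras}), so there is no in-paper argument to compare against; your route — unfolding the definition of an object oriented semiconcept for (i) and (ii), and instantiating Theorem \ref{property of box}(vi) in the context $\mathbb{K}^{c}$ using $-(-R)=R$ for (iii) — is precisely the translation mechanism the paper itself relies on for the analogous statements (e.g.\ Proposition \ref{relation box and prime}, and the identity $O^{\blacksquare}_{\nabla^{-1}}=O^{c\prime}_{\Delta^{-1}}$ in the proof of Theorem \ref{topcontext}), so it is essentially the intended argument.
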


\noindent Operations $\sqcap, \sqcup, \lrcorner, \neg,\top,\bot$ are defined in $\mathfrak{S}(\mathbb{K})$ as follows.  
Let $(A_{1}, B_{1}), (A_{2}, B_{2}), (A,B)$ be any object oriented semiconcepts in $\mathfrak{S}(\mathbb{K})$.\\
$(A, B)\sqcap (C, D):=(A\cup C, (A\cup C)^{\blacksquare})$,
$(A, B)\sqcup (C, D):=((B\cap D)^{\lozenge}, B\cap D)$,\\
$\lrcorner(A, B):=(B^{c\lozenge}, B^c),~\neg(A, B):=(A^c, A^{c\blacksquare}),~\top :=(\emptyset, \emptyset),~\bot :=(G, M).$

\begin{notation}{\rm $\mathcal{S}(\mathbb{K})$ denotes the algebra formed by $\mathfrak{S}(\mathbb{K})$ with respect to the above operations.
}\end{notation}

\noindent Recall the algebra of semiconcepts $\underline{\mathfrak{H}}(\mathbb{K})$ (cf. Notation \ref{semiproto}).
\begin{theorem} \label{dualsemiconcept}
{\rm \cite{howlader2020} 
\noindent \begin{enumerate}[{(i)}]
 \item $\mathcal{S}(\mathbb{K}):=(\mathfrak{S}(\mathbb{K}),\sqcup,\sqcap,\neg,\lrcorner,\top,\bot)$ is a pure dBa.
 \item $\underline{\mathfrak{H}}(\mathbb{K})$ is  isomorphic to $\mathcal{S}(\mathbb{K}^{c})$.
\end{enumerate}
}\end{theorem}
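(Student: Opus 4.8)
The plan is to establish part (ii) first and then derive part (i) as an immediate consequence, thereby avoiding any direct verification of the dBa axioms for $\mathcal{S}(\mathbb{K})$. For part (ii) I would consider the map $\phi:\mathfrak{H}(\mathbb{K})\to\mathfrak{S}(\mathbb{K}^{c})$ defined by $\phi(A,B):=(A^{c},B)$. By Proposition \ref{obssemi}(iii), $(A,B)$ is a semiconcept of $\mathbb{K}$ if and only if $(A^{c},B)$ is an object oriented semiconcept of $\mathbb{K}^{c}$; since $X\mapsto X^{c}$ is an involution on $\mathcal{P}(G)$, this shows at once that $\phi$ is a well-defined bijection, with inverse $(C,D)\mapsto(C^{c},D)$.

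The core of the argument is to verify that $\phi$ respects all six operations, for which the key tool is Theorem \ref{property of box}(vi). Substituting $R$ by $-R$ there (so that $-R$ is replaced by $R$), one obtains the translation identities $A^{\blacksquare}_{-R}=A^{c\prime}_{R}$ and $B^{\lozenge}_{-R}=B^{\prime c}_{R}$, which convert the operators $\blacksquare,\lozenge$ computed in $\mathbb{K}^{c}$ into the derivation operator of $\mathbb{K}$. Using these, each operation is checked directly from the definitions: for instance $\phi((A_{1},B_{1})\sqcap(A_{2},B_{2}))=(A_{1}^{c}\cup A_{2}^{c},(A_{1}\cap A_{2})^{\prime})$, while $(A_{1}^{c},B_{1})\sqcap(A_{2}^{c},B_{2})=(A_{1}^{c}\cup A_{2}^{c},(A_{1}^{c}\cup A_{2}^{c})^{\blacksquare})$, and the identity gives $(A_{1}^{c}\cup A_{2}^{c})^{\blacksquare}=(A_{1}\cap A_{2})^{\prime}$, so the two agree; the computations for $\sqcup$, $\neg$, $\lrcorner$ are entirely analogous (each reducing, via the same identity, to a complement of a derivation), and $\phi$ trivially sends $\top=(G,\emptyset)$ to $(\emptyset,\emptyset)$ and $\bot=(\emptyset,M)$ to $(G,M)$. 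Hence $\phi$ is a bijective homomorphism of algebras of type $(2,2,1,1,0,0)$, i.e.\ an isomorphism, which is part (ii).

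Part (i) then follows without further work. Applying part (ii) with $\mathbb{K}$ replaced by $\mathbb{K}^{c}$ and using $(\mathbb{K}^{c})^{c}=\mathbb{K}$, we obtain that $\mathcal{S}(\mathbb{K})$ is isomorphic to $\underline{\mathfrak{H}}(\mathbb{K}^{c})$. Since $\underline{\mathfrak{H}}(\mathbb{K}^{c})$ is a pure dBa (by the theorem that the algebra of semiconcepts of any context is a pure dBa), and since purity together with the dBa axioms are first-order properties preserved under isomorphism, $\mathcal{S}(\mathbb{K})$ is a pure dBa as well. Note that this route is logically sound because the proof of (ii) sketched above uses only Proposition \ref{obssemi}(iii), Theorem \ref{property of box}(vi), and the definitions of the operations, and never presupposes that $\mathcal{S}(\mathbb{K})$ is a dBa.

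The main obstacle I anticipate is bookkeeping rather than conceptual: one must track carefully the context ($\mathbb{K}$ versus $\mathbb{K}^{c}$, and $R$ versus $-R$) in which each occurrence of $\blacksquare$, $\lozenge$, and the derivation $(\cdot)^{\prime}$ is computed, and in particular get the direction of the substitution in Theorem \ref{property of box}(vi) correct. Once that identity is applied with the right relation, every required equality between the two sides of a homomorphism condition collapses to an elementary set-theoretic identity, so no genuine difficulty remains.
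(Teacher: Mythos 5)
Your proposal is correct, and for part (ii) it uses exactly the map the paper points to: the paper's only remark on the proof is that the isomorphism in (ii) ``is obtained using Proposition \ref{obssemi}(iii)'', i.e.\ the complementation map $(A,B)\mapsto(A^{c},B)$, and your verification via Theorem \ref{property of box}(vi) with $R$ replaced by $-R$ (so that, e.g., $(A_{1}^{c}\cup A_{2}^{c})^{\blacksquare}_{-R}=(A_{1}\cap A_{2})^{\prime}_{R}$ handles $\sqcap$, and $B^{\lozenge}_{-R}=B^{\prime c}_{R}$ handles $\sqcup$ and $\lrcorner$) is precisely the required bookkeeping. Where you genuinely depart from the paper is in the treatment of (i): the theorem is imported from the cited source, which establishes that $\mathcal{S}(\mathbb{K})$ is a pure dBa independently (by direct verification on object oriented semiconcepts), and the paper presents (i) and (ii) as two separate facts. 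You instead derive (i) from (ii) by transport of structure, applying (ii) to $\mathbb{K}^{c}$ and using $(\mathbb{K}^{c})^{c}=\mathbb{K}$ together with the known fact that $\underline{\mathfrak{H}}(\mathbb{K}^{c})$ is a pure dBa. This shortcut is sound for exactly the reason you flag: your proof of (ii) exhibits $\phi$ as a bijective homomorphism of algebras of type $(2,2,1,1,0,0)$ without presupposing that $\mathcal{S}(\mathbb{K}^{c})$ is a dBa, and both the equational dBa axioms (including those involving the term-defined $\vee$ and $\wedge$) and the purity disjunction are preserved under such isomorphisms. What your route buys is economy --- none of the axioms need to be re-checked on $\mathfrak{S}(\mathbb{K})$; what the paper's organization buys is that (i) stands on its own, without routing through the semiconcept algebra of the complementary context.
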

\noindent The  dBa isomorphism in Theorem \ref{dualsemiconcept}(ii) above   is obtained using  Proposition \ref{obssemi}(iii). As mentioned in Section \ref{sec:Introduction}, it was shown in \cite{howlader2018algebras} that $\underline{\mathfrak{H}}(\mathbb{K})$ is {\it dually} isomorphic to the algebra of object oriented semiconcepts. This is because the operations on the set of object oriented semiconcepts considered in \cite{howlader2018algebras} are dual to those defined above. As we would like to prove representation results and such results  are usually obtained in terms of isomorphisms (and not dual isomorphisms), we consider the operations in the form given above.
\vskip 2pt
With reference to $\mathfrak{S}(\mathbb{K})$ and the quasi-order $\sqsubseteq$ given in Definition \ref{DBA}, we get the following.
\begin{proposition}
\label{order object-semi}
{\rm  For any $(A,B),(C,D)\in \mathfrak{S}(\mathbb{K})$, $(A,B)\sqsubseteq (C,D)$  if and only if $C\subseteq A $ and $D\subseteq B$. }
\end{proposition}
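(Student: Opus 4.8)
The plan is to unfold the definition of the quasi-order $\sqsubseteq$ from Definition \ref{DBA} and compute the two resulting equations directly using the operations $\sqcap$ and $\sqcup$ on $\mathfrak{S}(\mathbb{K})$. By definition, $(A,B)\sqsubseteq(C,D)$ holds precisely when $(A,B)\sqcap(C,D)=(A,B)\sqcap(A,B)$ and $(A,B)\sqcup(C,D)=(C,D)\sqcup(C,D)$, and since each of these is an equality of pairs, I would treat it componentwise. The whole argument is then a chain of equivalences, giving both directions of the stated biconditional at once.

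First I would handle the meet equation. Using the definition of $\sqcap$, we have $(A,B)\sqcap(C,D)=(A\cup C,(A\cup C)^{\blacksquare})$, whereas $(A,B)\sqcap(A,B)=(A\cup A,(A\cup A)^{\blacksquare})=(A,A^{\blacksquare})$. Equating the two pairs componentwise, the first component gives $A\cup C=A$, that is $C\subseteq A$, and the second gives $(A\cup C)^{\blacksquare}=A^{\blacksquare}$. The key observation is that the second condition is redundant: once $A\cup C=A$, the equality $(A\cup C)^{\blacksquare}=A^{\blacksquare}$ holds automatically. Hence the meet equation is equivalent to $C\subseteq A$ alone.

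Dually, I would treat the join equation. From the definition of $\sqcup$, $(A,B)\sqcup(C,D)=((B\cap D)^{\lozenge},B\cap D)$, and $(C,D)\sqcup(C,D)=((D\cap D)^{\lozenge},D\cap D)=(D^{\lozenge},D)$. Equating componentwise, the second components give $B\cap D=D$, that is $D\subseteq B$, and the first components give $(B\cap D)^{\lozenge}=D^{\lozenge}$, which again follows automatically once $B\cap D=D$. So the join equation is equivalent to $D\subseteq B$ alone. Combining the two equivalences yields $(A,B)\sqsubseteq(C,D)$ if and only if $C\subseteq A$ and $D\subseteq B$, as claimed.

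There is no genuine obstacle here; the statement is a routine unfolding. The only point that warrants care is verifying that the two second-component (respectively first-component) conditions are subsumed by the corresponding set-inclusion conditions, so that the characterization cleanly reduces to the two inclusions $C\subseteq A$ and $D\subseteq B$ without any extra hypotheses on the operators $\blacksquare$ and $\lozenge$.
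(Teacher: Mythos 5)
Your proof is correct: unfolding Definition \ref{DBA} and computing $\sqcap$ and $\sqcup$ componentwise, with the observation that the $\blacksquare$- and $\lozenge$-component equalities follow automatically from $A\cup C=A$ and $B\cap D=D$, is exactly the routine argument intended; the paper states Proposition \ref{order object-semi} without proof precisely because it reduces to this definitional check.
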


We next define and give some properties of  {\it object oriented protoconcepts} which were introduced in  \cite{howlader2020}. 

\begin{definition}
\label{proto}
{\rm \cite{howlader2020} $(A, B)$ is an {\it object oriented protoconcept} of $\mathbb{K}$ if $A^{\blacksquare\lozenge}=B^{\lozenge}$. 
}\end{definition}

\begin{notation}
{\rm $\mathfrak{R}(\mathbb{K})$  denotes the set of all object oriented protoconcepts. 
}\end{notation}
\noindent  Observe  that an object oriented semiconcept is an object oriented protoconcept, that is $\mathfrak{S}(\mathbb{K})\subseteq\mathfrak{R}(\mathbb{K})$. Moreover, the following yields an equivalent definition of object oriented protoconcepts.
\begin{observation}
\label{equivalent def of proto}
{\rm $A^{\blacksquare\lozenge}=B^{\lozenge}$ if and only if $A^{\blacksquare}=B^{\lozenge\blacksquare}$.  
 }
 \end{observation}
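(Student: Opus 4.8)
The plan is to prove both implications by applying one of the operators $\blacksquare$ or $\lozenge$ to both sides of the given equation and then collapsing the resulting length-three composite by an idempotency (``triangle'') identity. The two facts I will rely on are the triangle identities for the pair $(\lozenge,\blacksquare)$, namely $A^{\blacksquare\lozenge\blacksquare}=A^{\blacksquare}$ and $B^{\lozenge\blacksquare\lozenge}=B^{\lozenge}$, the first of which is Theorem~\ref{property of box}(vii), together with the monotonicity of the four operators (Theorem~\ref{property of box}(i),(ii)). Note that the composites are well typed: $\blacksquare,\blacklozenge$ map $\mathcal{P}(G)$ to $\mathcal{P}(M)$ while $\square,\lozenge$ map $\mathcal{P}(M)$ to $\mathcal{P}(G)$, so both $A^{\blacksquare\lozenge}$ and $B^{\lozenge}$ are subsets of $G$, and both $A^{\blacksquare}$ and $B^{\lozenge\blacksquare}$ are subsets of $M$.

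For the forward direction I assume $A^{\blacksquare\lozenge}=B^{\lozenge}$ and apply $\blacksquare$ to both sides, obtaining $A^{\blacksquare\lozenge\blacksquare}=B^{\lozenge\blacksquare}$; since $A^{\blacksquare\lozenge\blacksquare}=A^{\blacksquare}$, this yields $A^{\blacksquare}=B^{\lozenge\blacksquare}$. For the converse I assume $A^{\blacksquare}=B^{\lozenge\blacksquare}$ and apply $\lozenge$ to both sides, obtaining $A^{\blacksquare\lozenge}=B^{\lozenge\blacksquare\lozenge}$; since $B^{\lozenge\blacksquare\lozenge}=B^{\lozenge}$, this gives $A^{\blacksquare\lozenge}=B^{\lozenge}$. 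Thus the equivalence is immediate once the two triangle identities are in hand.

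The only point needing care is the identity $B^{\lozenge\blacksquare\lozenge}=B^{\lozenge}$ used in the converse. If it is not quoted directly, I would derive it from the Galois adjunction $\lozenge\dashv\blacksquare$, that is, the equivalence $B^{\lozenge}\subseteq A \iff B\subseteq A^{\blacksquare}$, which unwinds directly from the definitions (both sides say that no element outside $A$ is $R$-related to any element of $B$). Taking $A:=B^{\lozenge}$ in this equivalence gives the unit $B\subseteq B^{\lozenge\blacksquare}$, whence $B^{\lozenge}\subseteq B^{\lozenge\blacksquare\lozenge}$ by monotonicity of $\lozenge$; taking $B:=A^{\blacksquare}$ gives the counit $A^{\blacksquare\lozenge}\subseteq A$, and substituting $A:=B^{\lozenge}$ there gives $B^{\lozenge\blacksquare\lozenge}\subseteq B^{\lozenge}$. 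The two inclusions give the required identity. Since the whole argument is a short symbol manipulation, there is no genuine obstacle; the only thing to get right is the bookkeeping of the domains and codomains so that each composite is well formed.
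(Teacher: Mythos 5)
Your proof is correct, and it is essentially the justification the paper intends: the Observation is stated without proof, and the natural argument is exactly yours, namely apply $\blacksquare$ (resp.\ $\lozenge$) to both sides and collapse the triple composites using the triangle identities of the Galois adjunction $\lozenge\dashv\blacksquare$, which underlies Theorem~\ref{property of box}(vii)--(ix). Your extra care over the identity $B^{\lozenge\blacksquare\lozenge}=B^{\lozenge}$ is well placed: the paper's Theorem~\ref{property of box}(viii) as printed reads $B^{\lozenge\square\lozenge}=B^{\lozenge}$, which does not even typecheck (since $B^{\lozenge}\subseteq G$ while $\square$ acts on subsets of $M$) and is evidently a misprint for the identity you need; your derivation of it from the unit $B\subseteq B^{\lozenge\blacksquare}$, the counit $A^{\blacksquare\lozenge}\subseteq A$, and monotonicity is correct and makes the argument self-contained. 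Alternatively, you could have quoted Theorem~\ref{property of box}(ix), since $\lozenge\blacksquare$ being a closure operator on $M$ and $\blacksquare\lozenge$ an interior operator on $G$ already supplies the unit and counit inclusions.
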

 
\noindent 
A characterization of  object oriented protoconcepts  of $\mathbb{K}$ was established in \cite{howlader2020},  using a notion of `approximation' by object oriented concepts.

\vskip 2pt
\noindent  Similar to  Propositions \ref{obssemi}(iii) and \ref{order object-semi}, one obtains
\begin{proposition}
\label{relation box and prime}
{\rm $(A,B)$ is a protoconcept of $\mathbb{K}$  if and only if $(A^{c},B)$ is an object oriented protoconcept of $\mathbb{K}^{c}$.}
\end{proposition}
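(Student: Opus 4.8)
The plan is to reduce the defining condition for an object oriented protoconcept of $\mathbb{K}^{c}$ to the defining condition for a protoconcept of $\mathbb{K}$, by rewriting the operators $\blacksquare$ and $\lozenge$ taken with respect to $-R$ in terms of the derivation operator $\prime$ taken with respect to $R$. The tool for this is Theorem \ref{property of box}(vi), which holds for an arbitrary context and hence applies to $\mathbb{K}^{c}=(G,M,-R)$ as well. Reading its identities with $-R$ in place of $R$ and using $-(-R)=R$, I obtain $X^{\blacksquare}_{-R}=X^{c\prime}_{R}$ for $X\subseteq G$ and $Y^{\lozenge}_{-R}=Y^{\prime c}_{R}$ for $Y\subseteq M$.

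First I would apply the first identity with $X=A^{c}$: since $(A^{c})^{c}=A$, the two complements cancel and $(A^{c})^{\blacksquare}_{-R}=A^{\prime}_{R}$. Applying the second identity to $A^{\prime}_{R}\subseteq M$ then gives $(A^{c})^{\blacksquare\lozenge}_{-R}=(A^{\prime}_{R})^{\prime c}_{R}=(A^{\prime\prime}_{R})^{c}$. On the other side, the second identity applied directly to $B$ yields $B^{\lozenge}_{-R}=(B^{\prime}_{R})^{c}$.

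Putting the two computations together, $(A^{c},B)$ is an object oriented protoconcept of $\mathbb{K}^{c}$, that is $(A^{c})^{\blacksquare\lozenge}_{-R}=B^{\lozenge}_{-R}$, precisely when $(A^{\prime\prime}_{R})^{c}=(B^{\prime}_{R})^{c}$. Taking complements of both sides, this is equivalent to $A^{\prime\prime}_{R}=B^{\prime}_{R}$, which is exactly the statement that $(A,B)$ is a protoconcept of $\mathbb{K}$. Since every step is an equivalence, both directions of the biconditional follow at once.

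I expect the only point requiring care to be the bookkeeping of relations: one must consistently track whether each operator is taken with respect to $R$ or $-R$, and check that the complement in the first component $A^{c}$ cancels correctly against the complements introduced by Theorem \ref{property of box}(vi). There is no substantial obstacle beyond this, as the result is a direct translation parallel to the semiconcept version recorded in Proposition \ref{obssemi}(iii).
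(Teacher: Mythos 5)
Your proof is correct and follows exactly the route the paper intends: the paper states this proposition without an explicit proof, deferring to the analogy with Proposition \ref{obssemi}(iii), and that analogy is precisely the operator translation via Theorem \ref{property of box}(vi) (read with $-R$ in place of $R$) that you carry out. Your key computations $(A^{c})^{\blacksquare\lozenge}_{-R}=(A^{\prime\prime}_{R})^{c}$ and $B^{\lozenge}_{-R}=(B^{\prime}_{R})^{c}$ are accurate, and since every step is an equivalence the biconditional follows in one pass.
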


\noindent $\mathfrak{R}(\mathbb{K})$ is closed with respect to the operations $\sqcup,\sqcap,\neg,\lrcorner,\top,\bot$ defined on  $\mathfrak{S}(\mathbb{K})$ and we have

\begin{notation}{\rm $\underline{\mathfrak{R}}(\mathbb{K})$ denotes the algebra formed by $\mathfrak{R}(\mathbb{K})$ with respect to these operations.
}\end{notation}

\begin{proposition}
\label{largest sub algebra}
{\rm
\noindent\begin{enumerate}[{(i)}]
    \item $\mathcal{S}(\mathbb{K})$ is a subalgebra of $\underline{\mathfrak{R}}(\mathbb{K})$.
    \item $\mathcal{S}(\mathbb{K})=\underline{\mathfrak{R}}(\mathbb{K})_{p}$.
\end{enumerate}}
\end{proposition}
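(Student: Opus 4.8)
The plan is to dispatch part~(i) from facts already in hand and then to obtain part~(ii) by computing the $\sqcap$- and $\sqcup$-idempotents of $\underline{\mathfrak{R}}(\mathbb{K})$ directly.

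For part~(i) I would first note that the inclusion $\mathfrak{S}(\mathbb{K})\subseteq\mathfrak{R}(\mathbb{K})$ has already been observed, and that the operations $\sqcup,\sqcap,\neg,\lrcorner$ and the constants $\top,\bot$ defining $\mathcal{S}(\mathbb{K})$ are given by exactly the same formulas as the corresponding operations of $\underline{\mathfrak{R}}(\mathbb{K})$; hence the operations of $\mathcal{S}(\mathbb{K})$ are precisely the restrictions to $\mathfrak{S}(\mathbb{K})$ of those of $\underline{\mathfrak{R}}(\mathbb{K})$. It then remains only to confirm that $\mathfrak{S}(\mathbb{K})$ is closed under these operations and contains $\top,\bot$. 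Closure is transparent from the shapes of the outputs: $(A,B)\sqcap(C,D)$ and $\neg(A,B)$ always have the form $(X,X^{\blacksquare})$, while $(A,B)\sqcup(C,D)$ and $\lrcorner(A,B)$ have the form $(Y^{\lozenge},Y)$, each of which is an object oriented semiconcept; moreover $\top=(\emptyset,\emptyset)$ and $\bot=(G,M)$ satisfy $\emptyset^{\lozenge}=\emptyset$ and $G^{\blacksquare}=M$ respectively, so both lie in $\mathfrak{S}(\mathbb{K})$. Alternatively one may simply invoke Theorem~\ref{dualsemiconcept}(i), by which $\mathcal{S}(\mathbb{K})$ is already a dBa under these operations, so closure is automatic. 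This gives~(i).

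For part~(ii) I take for granted that $\underline{\mathfrak{R}}(\mathbb{K})$ is a dBa (the analogue for protoconcepts of Theorem~\ref{dualsemiconcept}(i)), so that $\underline{\mathfrak{R}}(\mathbb{K})_{p}$ is meaningful via Proposition~\ref{puresub}. The crux is then to identify its underlying set. Taking any $z=(A,B)\in\mathfrak{R}(\mathbb{K})$ and unwinding the definitions, I would compute $z\sqcap z=(A,A^{\blacksquare})$ and $z\sqcup z=(B^{\lozenge},B)$. Thus $z\sqcap z=z$ holds precisely when $A^{\blacksquare}=B$, and $z\sqcup z=z$ precisely when $B^{\lozenge}=A$, so that the underlying set of $\underline{\mathfrak{R}}(\mathbb{K})_{p}$, namely $\{z\in\mathfrak{R}(\mathbb{K}):z\sqcap z=z\}\cup\{z\in\mathfrak{R}(\mathbb{K}):z\sqcup z=z\}$, equals $\{(A,B)\in\mathfrak{R}(\mathbb{K}):A^{\blacksquare}=B \text{ or } B^{\lozenge}=A\}$. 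Because $\mathfrak{S}(\mathbb{K})\subseteq\mathfrak{R}(\mathbb{K})$, the side-condition ``$\in\mathfrak{R}(\mathbb{K})$'' is automatically satisfied by every pair with $A^{\blacksquare}=B$ or $B^{\lozenge}=A$, so this set is exactly $\mathfrak{S}(\mathbb{K})$ by the defining condition of an object oriented semiconcept. Finally, since $\underline{\mathfrak{R}}(\mathbb{K})_{p}$ carries the restricted operations on this set and, by part~(i), $\mathcal{S}(\mathbb{K})$ carries the same operations on the same set, the two algebras coincide, yielding~(ii).

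I do not expect a genuine obstacle in either part: the argument is essentially a bookkeeping of definitions together with the two idempotent computations. The only points that require care are keeping the descriptions of $\{z:z\sqcap z=z\}$ and $\{z:z\sqcup z=z\}$ exact, and using the inclusion $\mathfrak{S}(\mathbb{K})\subseteq\mathfrak{R}(\mathbb{K})$ to discharge the membership condition so that the union of idempotents is literally $\mathfrak{S}(\mathbb{K})$ and not merely a subset of it.
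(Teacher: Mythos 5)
Your proposal is correct and takes essentially the same route as the paper: the paper's ``proof'' is only the remark that (ii) follows easily from the definitions of object oriented semiconcepts and of the set $\mathfrak{R}(\mathbb{K})_{p}$, and your computation of the idempotents ($z\sqcap z=(A,A^{\blacksquare})$, $z\sqcup z=(B^{\lozenge},B)$) combined with the inclusion $\mathfrak{S}(\mathbb{K})\subseteq\mathfrak{R}(\mathbb{K})$ is exactly that argument written out. Part (i) likewise matches the paper's implicit justification, namely that the operations on $\mathfrak{R}(\mathbb{K})$ are given by the same formulas as those on $\mathfrak{S}(\mathbb{K})$, which is closed under them by Theorem~\ref{dualsemiconcept}(i).
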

\noindent (ii) of the proposition can be easily proved using definitions of object oriented semiconcepts and the set $\mathfrak{R}(\mathbb{K})_{p}$.
Now recall the algebra of protoconcepts $\underline{\mathfrak{P}}(\mathbb{K})$ (cf. Notation \ref{semiproto}). One obtains
\begin{theorem}
\label{object-proto and proto}
{\rm 
\noindent \begin{enumerate}[{(i)}]
\item $\underline{\mathfrak{R}}(\mathbb{K}):=(\mathfrak{R}(\mathbb{K}),\sqcup,\sqcap,\neg,\lrcorner,\top,\bot)$ is  a dBa.
\item $\underline{\mathfrak{P}}(\mathbb{K})$   is isomorphic to $\underline{\mathfrak{R}}(\mathbb{K}^{c})$.
\end{enumerate}}
\end{theorem}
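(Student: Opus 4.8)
The plan is to prove part (ii) first and then obtain part (i) as an immediate consequence, thereby avoiding a direct verification of all the dBa axioms (Definition \ref{DBA}). For (ii), I would define the map $\varphi\colon\mathfrak{P}(\mathbb{K})\to\mathfrak{R}(\mathbb{K}^{c})$ by $\varphi(A,B):=(A^{c},B)$. By Proposition \ref{relation box and prime}, $(A,B)$ is a protoconcept of $\mathbb{K}$ if and only if $(A^{c},B)$ is an object oriented protoconcept of $\mathbb{K}^{c}$; since this is a biconditional and complementation on the first coordinate is an involution, $\varphi$ is a well-defined bijection with inverse $(C,D)\mapsto(C^{c},D)$.

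The technical core is a translation of operators between the two contexts. Writing $(\cdot)^{\prime}$ for the derivation operators of $\mathbb{K}$ and $(\cdot)^{\blacksquare},(\cdot)^{\lozenge}$ for the necessity and possibility operators of $\mathbb{K}^{c}$ (that is, those computed with respect to $-R$), I would apply Theorem \ref{property of box}(vi) with the roles of $R$ and $-R$ interchanged (legitimate since $-(-R)=R$) to obtain the identities $A^{\blacksquare}=(A^{c})^{\prime}$ and $B^{\lozenge}=(B^{\prime})^{c}$ for all $A\subseteq G$, $B\subseteq M$. With these in hand, each of the six operations transfers by a short set-theoretic computation: for instance $\varphi((A_{1},B_{1})\sqcap(A_{2},B_{2}))=((A_{1}\cap A_{2})^{c},(A_{1}\cap A_{2})^{\prime})=(A_{1}^{c}\cup A_{2}^{c},((A_{1}^{c}\cup A_{2}^{c})^{c})^{\prime})=(A_{1}^{c},B_{1})\sqcap(A_{2}^{c},B_{2})$, the last equality using $A^{\blacksquare}=(A^{c})^{\prime}$; the verifications for $\sqcup$, $\neg$ and $\lrcorner$ are entirely analogous, using $B^{\lozenge}=(B^{\prime})^{c}$ for the join and the involutivity of complementation for the two unary operations, while $\varphi(G,\emptyset)=(G^{c},\emptyset)=(\emptyset,\emptyset)$ and $\varphi(\emptyset,M)=(\emptyset^{c},M)=(G,M)$ match the constants $\top,\bot$ of $\underline{\mathfrak{R}}(\mathbb{K}^{c})$. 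Since the left-hand sides already lie in $\mathfrak{R}(\mathbb{K}^{c})$ by Proposition \ref{relation box and prime}, these identities simultaneously confirm that $\mathfrak{R}(\mathbb{K}^{c})$ is closed under the six operations; hence $\varphi$ is an isomorphism of algebras of type $(2,2,1,1,0,0)$, which is (ii).

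For (i), I would invoke the fact (stated earlier in the preliminaries) that $\underline{\mathfrak{P}}(\mathbb{L})$ is a (fully contextual) dBa for every context $\mathbb{L}$. Applying (ii) with $\mathbb{K}^{c}$ in place of $\mathbb{K}$ and using $(\mathbb{K}^{c})^{c}=\mathbb{K}$ yields an algebra isomorphism $\underline{\mathfrak{P}}(\mathbb{K}^{c})\cong\underline{\mathfrak{R}}(\mathbb{K})$. As all of the dBa axioms are equational identities and are therefore preserved under algebra isomorphism, and as $\underline{\mathfrak{P}}(\mathbb{K}^{c})$ is a dBa, it follows at once that $\underline{\mathfrak{R}}(\mathbb{K})$ is a dBa. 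Note that this argument does not use (i), so there is no circularity in proving (ii) first.

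The only genuinely delicate point is the operator-translation step: one must keep careful track of which relation ($R$ or $-R$) each of the operators $(\cdot)^{\prime}$, $(\cdot)^{\blacksquare}$, $(\cdot)^{\lozenge}$ refers to, and correctly combine Theorem \ref{property of box}(vi) with the double complementation $(A^{c})^{c}=A$. Once that bookkeeping is fixed, the operation-by-operation checks are routine and both parts follow cleanly; I expect no further obstacles.
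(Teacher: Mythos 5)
Your proposal is correct and takes essentially the same approach as the paper: part (ii) is obtained from the bijection $(A,B)\mapsto(A^{c},B)$ supplied by Proposition \ref{relation box and prime} together with the operator translation of Theorem \ref{property of box}(vi), and part (i) is then transported along this isomorphism from the known dBa structure of the protoconcept algebra. This transfer argument is precisely the mechanism the paper itself uses to deduce, from (ii), that $\underline{\mathfrak{R}}(\mathbb{K})$ is a (fully contextual) dBa.
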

\noindent Theorem \ref{object-proto and proto}(ii) is obtained by using Proposition \ref{relation box and prime}. From  Theorem \ref{object-proto and proto}(ii) we get 

\begin{corollary}
{\rm $\underline{\mathfrak{R}}(\mathbb{K})$ is a fully contextual dBa.}
\end{corollary}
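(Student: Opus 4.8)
The plan is to deduce the corollary from Theorem \ref{object-proto and proto}(ii) together with the fact (stated in the unnumbered theorem following Definition \ref{contextdefn}) that the algebra of protoconcepts of any context is a fully contextual dBa. The two facts that make this work are that the complement operation on contexts is an involution, and that full contextuality is invariant under dBa isomorphism. First I would observe that $(\mathbb{K}^{c})^{c}=\mathbb{K}$, since $-(-R)=R$. Applying Theorem \ref{object-proto and proto}(ii) to the context $\mathbb{K}^{c}$ in place of $\mathbb{K}$ then yields that $\underline{\mathfrak{P}}(\mathbb{K}^{c})$ is isomorphic to $\underline{\mathfrak{R}}((\mathbb{K}^{c})^{c})=\underline{\mathfrak{R}}(\mathbb{K})$. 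Since $\underline{\mathfrak{P}}(\mathbb{K}^{c})$ is a fully contextual dBa, it only remains to transport this property across the isomorphism.

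Next I would verify that full contextuality is preserved under a dBa isomorphism $h$ from $\textbf{D}$ to $\textbf{M}$. Because $h$ preserves $\sqcap$ and $\sqcup$, it preserves the quasi-order $\sqsubseteq$ in both directions, restricts to bijections $D_{\sqcap}\to M_{\sqcap}$ and $D_{\sqcup}\to M_{\sqcup}$, and satisfies $h(x_{\sqcap})=h(x)_{\sqcap}$ and $h(x_{\sqcup})=h(x)_{\sqcup}$ for all $x$. Contextuality (antisymmetry of $\sqsubseteq$) then transfers from $\textbf{M}$ to $\textbf{D}$: if $x\sqsubseteq y$ and $y\sqsubseteq x$ in $\textbf{D}$, the images are $\sqsubseteq$-comparable both ways in $\textbf{M}$, hence equal, and injectivity of $h$ forces $x=y$. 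For the ``full'' clause, given $y\in D_{\sqcap}$ and $x\in D_{\sqcup}$ with $y_{\sqcup}=x_{\sqcap}$, the images satisfy $h(y)\in M_{\sqcap}$, $h(x)\in M_{\sqcup}$ and $h(y)_{\sqcup}=h(x)_{\sqcap}$; full contextuality of $\textbf{M}$ supplies a unique $w\in M$ with $w_{\sqcap}=h(y)$ and $w_{\sqcup}=h(x)$, and $z:=h^{-1}(w)$ is then the required element, its uniqueness following from the bijectivity of $h$ together with the fact that $h$ preserves $\sqcap$ and $\sqcup$.

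Finally, applying this to the isomorphism between $\underline{\mathfrak{P}}(\mathbb{K}^{c})$ and $\underline{\mathfrak{R}}(\mathbb{K})$ shows that $\underline{\mathfrak{R}}(\mathbb{K})$ is fully contextual. I do not expect a serious obstacle: the only point demanding care is confirming that both defining clauses of full contextuality in Definition \ref{contextdefn}(b) are phrased entirely in terms of the dBa signature and the derived quasi-order, so that any dBa isomorphism automatically carries them over. The involutivity $(\mathbb{K}^{c})^{c}=\mathbb{K}$ is precisely what lets me convert the statement about $\mathbb{K}^{c}$ in Theorem \ref{object-proto and proto}(ii) into the desired statement about $\mathbb{K}$.
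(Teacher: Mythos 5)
Your proposal is correct and is essentially the paper's own (implicit) argument: the paper derives the corollary directly from Theorem \ref{object-proto and proto}(ii) applied to $\mathbb{K}^{c}$, using $(\mathbb{K}^{c})^{c}=\mathbb{K}$ and the fact that $\underline{\mathfrak{P}}(\mathbb{K}^{c})$ is fully contextual. Your explicit verification that full contextuality transfers across a dBa isomorphism simply fills in a step the paper leaves tacit.
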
 
Again we have
\begin{proposition}
\label{order object-proto}
{\rm  For any $(A,B),(C,D)\in \mathfrak{R}(\mathbb{K})$, $(A,B)\sqsubseteq (C,D)$  if and only if $C\subseteq A $ and $D\subseteq B$. 
}
\end{proposition}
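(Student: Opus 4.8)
The plan is to unwind the definition of the quasi-order $\sqsubseteq$ (Definition \ref{DBA}) directly in terms of the concrete operations $\sqcap$ and $\sqcup$ on $\mathfrak{R}(\mathbb{K})$, exactly as one does for $\mathfrak{S}(\mathbb{K})$ in Proposition \ref{order object-semi}. Recall that $(A,B)\sqsubseteq(C,D)$ means $(A,B)\sqcap(C,D)=(A,B)\sqcap(A,B)$ \emph{and} $(A,B)\sqcup(C,D)=(C,D)\sqcup(C,D)$. So the first step is to compute the four relevant products from the defining formulas $(A,B)\sqcap(C,D)=(A\cup C,(A\cup C)^{\blacksquare})$ and $(A,B)\sqcup(C,D)=((B\cap D)^{\lozenge},B\cap D)$.

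Carrying out these computations gives, for the meet condition, $(A,B)\sqcap(C,D)=(A\cup C,(A\cup C)^{\blacksquare})$ and $(A,B)\sqcap(A,B)=(A,A^{\blacksquare})$, while for the join condition $(A,B)\sqcup(C,D)=((B\cap D)^{\lozenge},B\cap D)$ and $(C,D)\sqcup(C,D)=(D^{\lozenge},D)$. The second step is to observe that the meet equation $(A\cup C,(A\cup C)^{\blacksquare})=(A,A^{\blacksquare})$ holds if and only if $A\cup C=A$, i.e. $C\subseteq A$: the forward direction reads off the first coordinate, and the backward direction notes that $A\cup C=A$ automatically forces agreement of the second coordinates, since $\blacksquare$ is then applied to equal sets. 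Dually, the join equation collapses to $B\cap D=D$, i.e. $D\subseteq B$.

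The key point --- and essentially the only thing requiring any care --- is that each pair-equality reduces to a single set-equality, because in both $\sqcap$ and $\sqcup$ one coordinate is functionally determined by the other (via $\blacksquare$ and $\lozenge$ respectively), so the determined coordinate contributes no extra constraint. Once this is noted, conjoining the two equivalences yields $(A,B)\sqsubseteq(C,D)$ if and only if $C\subseteq A$ and $D\subseteq B$, as required. I anticipate no substantial obstacle: the argument is purely a matter of substituting the explicit operations into the definition of $\sqsubseteq$ and simplifying, and it runs in complete parallel to the proof of Proposition \ref{order object-semi}, the only difference being that the pairs now range over the larger set $\mathfrak{R}(\mathbb{K})$ of object oriented protoconcepts rather than over $\mathfrak{S}(\mathbb{K})$.
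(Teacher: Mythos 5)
Your proof is correct: unwinding $\sqsubseteq$ via the explicit formulas for $\sqcap$ and $\sqcup$, and noting that the second coordinate of each operation is determined by the first (via $\blacksquare$, resp.\ $\lozenge$), reduces the two pair-equalities to $A\cup C=A$ and $B\cap D=D$, which is exactly the stated condition. This is precisely the routine verification the paper intends — it states the proposition without proof, in parallel with Proposition \ref{order object-semi} — so your argument coincides with the paper's approach.
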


A detailed example was given in [18] to motivate and illustrate both the notions of object oriented semiconcepts and object oriented protoconcepts. We re-present the example here to demonstrate that a fully contextual dBa need not be pure and vice versa.

\begin{example}
 \label{motivation}
{\rm Let $G:=\{q_{1},q_{2},q_{3},q_{4},q_{5},q_{6}\}$ be a  set of objects and consider a set of properties $S:=\{s_{1},s_{2},s_{3},s_{4},s_{5},s_{6},s_{7},s_{8},s_{9},s_{10},s_{11}\}$. 
Table \ref{context} below represents the context $\mathbb{K}:=(G, S,\Gamma)$, where  $(i,j)$-th cell containing a cross indicates  that  the object $q_{i}$ is related to the property $s_{j}$ by  $\Gamma$.
  \begin{table}[ht]
\begin{center}
\caption {Context $\mathbb{K}$} \label{context}
\begin{tabular}{|l|l|l|l|l|l|l|l|l|l|l|l|}
\hline
   & $s_{1}$ & $s_{2}$ & $s_{3}$ & $s_{4}$ & $s_{5}$ & $s_{6}$ & $s_{7}$ & $s_{8}$ & $s_{9}$& $s_{10}$& $s_{11}$ \\
   \hline
   $q_{1}$ &$\times$& &$\times$& &$\times$& &$\times$& &  &$\times$ &  \\
   \hline
   $q_{2}$ & &$\times$ & $\times$& $\times$& &$\times$ & &$\times$ &  & & \\
   \hline
   $q_{3}$ & $\times$&$\times$ & &$\times$ & &$\times$ & & & $\times$ & & \\
   \hline
   $q_{4}$ &$\times$ & & $\times$& & $\times$& & & &  & & \\
   \hline
   $q_{5}$ & & & & & $\times$& & &$\times$ & $\times$ & & $\times$\\
   \hline
   $q_{6}$& & & & & & & & & & &$\times$\\
   \hline
\end{tabular}
\end{center}
\end{table}\\}
 
 \end{example}
  Consider   $A_{1}:=\{q_{1},q_{2},q_{4},q_{6}\}$ and $B_{1}:=\{s_{3}\}$. Then $A_{1}^{\blacksquare\lozenge}=B_{1}^{\lozenge}$, so that $(A_{1},B_{1})$ is an object oriented protoconcept of $\mathbb{K}$. Observe that $B_{1}^{\lozenge}=\{q_{1},q_{2},q_{4}\}$ and so $B_{1}^{\lozenge}\neq A_{1}$; $A_{1}^{\blacksquare}=\{s_{3},s_{7},s_{10}\}$, so $A_{1}^{\blacksquare}\neq B_{1}$. This means  $(A_{1},B_{1})\in \mathfrak{R}(\mathbb{K})$ but $(A_{1},B_{1})\notin \mathfrak{S}(\mathbb{K})$. Now we consider the semiconcepts $(A_{1}, A_{1}^{\blacksquare})$ and $(B_{1}^{\lozenge}, B_{1})$. Since $(A_{1}, B_{1})$ is an object oriented protoconcept, the following equations hold: $(A_{1}, A_{1}^{\blacksquare})_{\sqcup}=(A_{1}^{\blacksquare\lozenge},A_{1}^{\blacksquare})=(B_{1}^{\lozenge},B_{1}^{\lozenge\blacksquare})=(B_{1}^{\lozenge},B_{1})_{\sqcap}$. Let us assume that there exists a semiconcept $(A,B)\in  \mathfrak{S}(\mathbb{K}) $ such that $(A,B)_{\sqcap}=(A,A^{\blacksquare})=(A_{1}, A_{1}^{\blacksquare})$ and $(A,B)_{\sqcup}=(B^{\lozenge}, B)=(B_{1}^{\lozenge}, B_{1})$, which implies that  $A=A_{1}$ and $B=B_{1}$ --  a contradiction. Therefore the subalgebra $\mathcal{S}(\mathbb{K})$ of $\underline{\mathfrak{R}}(\mathbb{K})$ is a pure dBa but not fully contextual. On the other hand, the fully contextual dBa  $\underline{\mathfrak{R}}(\mathbb{K})$  is not  pure, as $(A_{1}, B_{1})_{\sqcap}\neq (A_{1}, B_{1})$ and $(A_{1}, B_{1})_{\sqcup}\neq (A_{1}, B_{1})$.

\begin{observation}
\noindent
{\rm 
The class $\textbf{K}$  of fully contextual dBas does not form  a variety \cite{MR648287}:  Example \ref{motivation} gives  $\underline{\mathfrak{R}}(\mathbb{K})\in \textbf{K}$ such that the subalgebra  $\mathcal{S}(\mathbb{K})\notin \textbf{K}$.
}
\end{observation}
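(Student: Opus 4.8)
The plan is to appeal to Birkhoff's HSP theorem \cite{MR648287}, according to which any variety of algebras of a fixed signature must be closed under the formation of subalgebras. Since all the structures under consideration are dBas of the common type $(2,2,1,1,0,0)$, it therefore suffices to exhibit a single fully contextual dBa that possesses a subalgebra failing to be fully contextual; the existence of such a pair immediately shows that $\textbf{K}$ is not closed under the operator \textbf{S}, and hence that $\textbf{K}$ cannot be a variety. Note that only the ``closed under subalgebras'' half of Birkhoff's theorem is needed here, so no discussion of closure under homomorphic images or products is required.

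The witnessing pair is already available in Example \ref{motivation}. First I would recall that $\underline{\mathfrak{R}}(\mathbb{K})$, the algebra of object oriented protoconcepts of the context $\mathbb{K}$ of that example, is fully contextual by the Corollary following Theorem \ref{object-proto and proto}, so $\underline{\mathfrak{R}}(\mathbb{K})\in\textbf{K}$. Next, by Proposition \ref{largest sub algebra}(i) the algebra $\mathcal{S}(\mathbb{K})$ of object oriented semiconcepts is a subalgebra of $\underline{\mathfrak{R}}(\mathbb{K})$ with respect to all the operations $\sqcup,\sqcap,\neg,\lrcorner,\top,\bot$. It remains only to certify that this subalgebra lies outside $\textbf{K}$, which is exactly the content verified in Example \ref{motivation}: taking $A_{1}:=\{q_{1},q_{2},q_{4},q_{6}\}$ and $B_{1}:=\{s_{3}\}$ yields an object oriented protoconcept $(A_{1},B_{1})\in\mathfrak{R}(\mathbb{K})\setminus\mathfrak{S}(\mathbb{K})$; the two semiconcepts $(A_{1},A_{1}^{\blacksquare})$ and $(B_{1}^{\lozenge},B_{1})$ satisfy $(A_{1},A_{1}^{\blacksquare})_{\sqcup}=(B_{1}^{\lozenge},B_{1})_{\sqcap}$, yet no semiconcept $(A,B)\in\mathfrak{S}(\mathbb{K})$ has both $(A,B)_{\sqcap}=(A_{1},A_{1}^{\blacksquare})$ and $(A,B)_{\sqcup}=(B_{1}^{\lozenge},B_{1})$ --- for such a pair would force $A=A_{1}$ and $B=B_{1}$, hence $(A_{1},B_{1})\in\mathfrak{S}(\mathbb{K})$, a contradiction. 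Thus $\mathcal{S}(\mathbb{K})$ is pure but not fully contextual, so $\mathcal{S}(\mathbb{K})\notin\textbf{K}$.

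Assembling these observations completes the argument: $\textbf{K}$ contains $\underline{\mathfrak{R}}(\mathbb{K})$ but not its subalgebra $\mathcal{S}(\mathbb{K})$, so $\textbf{K}$ is not closed under subalgebras and therefore is not a variety. I do not anticipate any genuine obstacle, since the substantive computation --- the failure of full contextuality for $\mathcal{S}(\mathbb{K})$ --- is precisely what is carried out in Example \ref{motivation}. The only conceptual point to make explicit is that closure under \textbf{S} is a \emph{necessary} condition for a class to be a variety, so a single counterexample to subalgebra-closure suffices, and no universally quantified defining identities need be ruled out directly.
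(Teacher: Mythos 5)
Your proposal is correct and follows essentially the same route as the paper: the Observation rests entirely on Example \ref{motivation} (which shows $\mathcal{S}(\mathbb{K})$ is a subalgebra of the fully contextual dBa $\underline{\mathfrak{R}}(\mathbb{K})$ yet fails full contextuality) together with the fact that varieties are closed under subalgebras. Your explicit invocation of the \textbf{S}-closure half of Birkhoff's theorem and of Proposition \ref{largest sub algebra}(i) merely makes precise what the paper leaves implicit in citing \cite{MR648287}.
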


\subsection{\rm{\textbf{Representation results}}}\label{repres}

Let us recollect in this section, existing representation results related to arbitrary dBas as well as contextual, fully contextual and pure dBas. In the process, we also indicate the kind of representation results that are obtained in this work.
Recall Definition \ref{DBAHOM},
the standard context $\mathbb{K}(\textbf{D}):=(\mathcal{F}_{p}(\textbf{D}),\mathcal{I}_{p}(\textbf{D}),\Delta)$ for any dBa $\textbf{D}$, and 
 the sets   $F_x, I_x$ defined  in Notation \ref{Fx and Ix}.
The following theorem is proved by Wille in  \cite{wille}.

\begin{theorem} 
\label{protoembedding}
{\rm  \cite{wille} The map $h: D \rightarrow \mathfrak{P}(\mathbb{K}(\textbf{D}))$ defined by $h(x):=(F_{x},I_{x})$ for all $x\in D$, is a quasi-injective dBa homomorphism from \textbf{D} to $\underline{\mathfrak{P}}(\mathbb{K}(\textbf{D}))$.
}\end{theorem}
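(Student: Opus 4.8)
The plan is to establish the three requirements in turn: that $h$ is well-defined (its values are genuine protoconcepts of $\mathbb{K}(\textbf{D})$), that it commutes with all the dBa operations, and that it is quasi-injective. Throughout, the computational engine is Lemma \ref{derivation}, which translates the context derivations into algebra: $F_x'=I_{x_{\sqcap\sqcup}}$, $I_x'=F_{x_{\sqcup\sqcap}}$, together with $F_x\cap F_y=F_{x\sqcap y}$, $I_x\cap I_y=I_{x\sqcup y}$, $F_x^c=F_{\neg x}$, $I_x^c=I_{\lrcorner x}$, and the absorptions $F_{x_\sqcap}=F_x$, $I_{x_\sqcup}=I_x$. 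I will also freely use that $(\cdot)_\sqcap$ fixes $D_\sqcap$ and $(\cdot)_\sqcup$ fixes $D_\sqcup$, and that $\neg x\in D_\sqcap$, $\lrcorner x\in D_\sqcup$ (Proposition \ref{pro2}(i)).

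For well-definedness I would verify the protoconcept identity $F_x''=I_x'$. Applying Lemma \ref{derivation}(i) twice yields $F_x''=F_{(x_{\sqcap\sqcup})_{\sqcup\sqcap}}$ and $I_x'=F_{x_{\sqcup\sqcap}}$. Since $x_{\sqcap\sqcup}\in D_\sqcup$ it is fixed by $(\cdot)_\sqcup$, so $(x_{\sqcap\sqcup})_{\sqcup\sqcap}=(x_{\sqcap\sqcup})_\sqcap$; axiom (12) identifies $x_{\sqcap\sqcup}$ with $x_{\sqcup\sqcap}$, and the latter, lying in $D_\sqcap$, is fixed by $(\cdot)_\sqcap$. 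Hence $F_x''=F_{x_{\sqcup\sqcap}}=I_x'$, so $(F_x,I_x)\in\mathfrak{P}(\mathbb{K}(\textbf{D}))$.

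For the homomorphism property, each equation reduces to a coordinatewise check. For $\sqcap$: $h(x\sqcap y)=(F_{x\sqcap y},I_{x\sqcap y})$ while $h(x)\sqcap h(y)=(F_x\cap F_y,(F_x\cap F_y)')$; the first coordinates agree by Lemma \ref{derivation}(iv), and for the second one notes $x\sqcap y\in D_\sqcap$ (a short computation from axioms (1a)--(3a) giving $(x\sqcap y)\sqcap(x\sqcap y)=x\sqcap y$), whence $(F_x\cap F_y)'=F_{x\sqcap y}'=I_{(x\sqcap y)_{\sqcap\sqcup}}=I_{x\sqcap y}$. The case of $\sqcup$ is dual; the operations $\neg,\lrcorner$ follow from $F_x^c=F_{\neg x}$, $I_x^c=I_{\lrcorner x}$ combined with $\neg x\in D_\sqcap$, $\lrcorner x\in D_\sqcup$. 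Finally $h(\top)=\top$ and $h(\bot)=\bot$: every primary filter is proper and upward closed under $\sqsubseteq$ with $x\sqsubseteq\top$, so it contains $\top$, giving $F_\top=\mathcal{F}_{p}(\textbf{D})$; and no primary ideal contains $\top$ (it would otherwise be improper), giving $I_\top=\emptyset$, so $h(\top)=(G,\emptyset)=\top$, and dually for $\bot$.

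For quasi-injectivity, because $h$ is a homomorphism the defining conditions $h(x)\sqcap h(y)=h(x)\sqcap h(x)$ and $h(x)\sqcup h(y)=h(y)\sqcup h(y)$ unwind coordinatewise to $F_x\subseteq F_y$ and $I_y\subseteq I_x$; thus $h(x)\sqsubseteq h(y)$ iff $F_x\subseteq F_y$ and $I_y\subseteq I_x$. The forward implication from $x\sqsubseteq y$ is immediate from upward closure of filters and downward closure of ideals under $\sqsubseteq$. The converse is the crux and the main obstacle: by Proposition \ref{pro1}(iii), $x\sqsubseteq y$ is equivalent to $x_\sqcap\sqsubseteq_\sqcap y_\sqcap$ in $\textbf{D}_\sqcap$ and $x_\sqcup\sqsubseteq_\sqcup y_\sqcup$ in $\textbf{D}_\sqcup$, so I would argue contrapositively. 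If $x_\sqcap\not\sqsubseteq_\sqcap y_\sqcap$, Boolean prime-filter separation in $\textbf{D}_\sqcap$ produces a prime filter containing $x_\sqcap$ but not $y_\sqcap$; via Lemma \ref{lema1}(ii) and Proposition \ref{comparison of two ideal} this lifts to a primary filter $F$ of $\textbf{D}$ with $x\in F$ and $y\notin F$ (using $F_{x_\sqcap}=F_x$), contradicting $F_x\subseteq F_y$. The $\sqcup$-part is handled dually, separating ideals with the prime ideal theorem (Theorem \ref{PITDB}) and using $I_y\subseteq I_x$. Guaranteeing that enough primary filters and ideals exist to witness any failure of the order is where the genuine content lies; the remaining steps are bookkeeping via Lemma \ref{derivation}.
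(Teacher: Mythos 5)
Your proof is correct. The paper itself cites this theorem from Wille without reproducing a proof, but its own proof of the directly analogous representation result (Theorem \ref{embedding of dBa}, for clopen object oriented protoconcepts of $\mathbb{K}_{pr}^{T}(\textbf{D})$) uses exactly your strategy: coordinatewise verification of well-definedness and the homomorphism identities via Lemma \ref{derivation}, and quasi-injectivity by contraposition through Proposition \ref{pro1}(iii), with Boolean prime-filter/prime-ideal separation in $\textbf{D}_{\sqcap}$ and $\textbf{D}_{\sqcup}$ lifted to primary filters/ideals of $\textbf{D}$ via Lemma \ref{lema1} and Proposition \ref{comparison of two ideal}.
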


\vskip 2pt
\noindent Using Theorems \ref{protoembedding} and \ref{object-proto and proto} and Lemma \ref{derivation}(ii),  one gets a representation result for dBas in terms of object oriented protoconcepts also.

\begin{theorem}
\label{object-proto}
{\rm \cite{howlader2020} For a  dBa $\textbf{D}$, the map $h: D \rightarrow \mathfrak{R}(\mathbb{K}^{c}(\textbf{D}))$ defined by $h(x):=(F_{\neg x},I_{x})$ for any $x \in D$, is a quasi-injective dBa homomorphism from \textbf{D} to $\underline{\mathfrak{R}}(\mathbb{K}^{c}(\textbf{D}))$.
}\end{theorem}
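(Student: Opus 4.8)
The plan is to realise $h$ as a composition of two maps that are already available, rather than to verify the homomorphism and quasi-injectivity conditions directly. The first ingredient is Wille's representation: by Theorem \ref{protoembedding}, the map $h_{1}\colon D\rightarrow \mathfrak{P}(\mathbb{K}(\textbf{D}))$ given by $h_{1}(x):=(F_{x},I_{x})$ is a quasi-injective dBa homomorphism into $\underline{\mathfrak{P}}(\mathbb{K}(\textbf{D}))$. The second ingredient is the isomorphism $\varphi\colon \underline{\mathfrak{P}}(\mathbb{K}(\textbf{D}))\rightarrow \underline{\mathfrak{R}}(\mathbb{K}^{c}(\textbf{D}))$ of Theorem \ref{object-proto and proto}(ii). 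Recall that this isomorphism is produced from Proposition \ref{relation box and prime}, so it acts as $\varphi(A,B):=(A^{c},B)$: indeed Proposition \ref{relation box and prime} guarantees that $(A^{c},B)$ is an object oriented protoconcept of $\mathbb{K}^{c}(\textbf{D})$ whenever $(A,B)$ is a protoconcept of $\mathbb{K}(\textbf{D})$. My proposal is simply to set $h:=\varphi\circ h_{1}$ and identify it with the stated map.

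The core step is the explicit computation of this composition, which is where Lemma \ref{derivation}(ii) enters. For each $x\in D$ we have $\varphi(h_{1}(x))=\varphi(F_{x},I_{x})=((F_{x})^{c},I_{x})$, and by Lemma \ref{derivation}(ii) the complement $(F_{x})^{c}$ equals $F_{\neg x}$. Hence $\varphi(h_{1}(x))=(F_{\neg x},I_{x})=h(x)$, so $h$ is literally the composite $\varphi\circ h_{1}$. The only point requiring a little care here is to confirm that the isomorphism of Theorem \ref{object-proto and proto}(ii) is the concrete complementation-in-the-first-coordinate map and not merely an abstract isomorphism; this is exactly what the construction from Proposition \ref{relation box and prime} supplies, and it is the detail I would pin down first.

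It then remains to argue that the composite inherits both required properties. Since $\varphi$ is in particular a dBa homomorphism and composition of dBa homomorphisms is again a dBa homomorphism (as noted after Definition \ref{DBAHOM}), $h$ is a dBa homomorphism from $\textbf{D}$ to $\underline{\mathfrak{R}}(\mathbb{K}^{c}(\textbf{D}))$. For quasi-injectivity, one chains the two equivalences: for $x,y\in D$, quasi-injectivity of $h_{1}$ gives $x\sqsubseteq y \iff h_{1}(x)\sqsubseteq h_{1}(y)$, while $\varphi$, being a dBa isomorphism, preserves and reflects the quasi-order $\sqsubseteq$, so $h_{1}(x)\sqsubseteq h_{1}(y)\iff \varphi(h_{1}(x))\sqsubseteq \varphi(h_{1}(y))=h(x)\sqsubseteq h(y)$. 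Combining these yields $x\sqsubseteq y\iff h(x)\sqsubseteq h(y)$, which is precisely quasi-injectivity. There is no genuine obstacle beyond the bookkeeping of the isomorphism's explicit form; the entire proof is a clean factorisation, so I would keep it to a few lines referencing Theorems \ref{protoembedding} and \ref{object-proto and proto} together with Lemma \ref{derivation}(ii).
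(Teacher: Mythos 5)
Your proposal is correct and is precisely the argument the paper intends: the text preceding Theorem \ref{object-proto} derives it by combining Theorem \ref{protoembedding}, the isomorphism $\underline{\mathfrak{P}}(\mathbb{K}(\textbf{D}))\cong\underline{\mathfrak{R}}(\mathbb{K}^{c}(\textbf{D}))$ of Theorem \ref{object-proto and proto}(ii) (which, via Proposition \ref{relation box and prime}, is indeed the complementation-in-the-first-coordinate map), and Lemma \ref{derivation}(ii) giving $(F_{x})^{c}=F_{\neg x}$. Your factorisation $h=\varphi\circ h_{1}$, together with the observation that a dBa isomorphism preserves and reflects $\sqsubseteq$, fills in the bookkeeping exactly as the paper's sketch envisages.
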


\noindent In this work we show that, in fact, the map $h$ of Theorem \ref{object-proto} is a quasi-embedding into a subalgebra of $\underline{\mathfrak{R}}(\mathbb{K}^{c}(\textbf{D}))$. Moreover,  if $\textbf{D}$ is a  contextual dBa, this quasi-embedding  turns into an embedding.

\vskip 2pt 
Let $\textbf{D}$ be a finite dBa. Wille \cite{wille} has a representation result for this special case.  It may be observed that by Lemma \ref{lema1} and  Proposition \ref{comparison of two ideal}, the elements of $\mathcal{F}_{pr}(\textbf{D})$ in this case are just  the primary filters whose bases are  the principal filters of the Boolean algebra $\textbf{D}_{\sqcap}$ generated by its atoms, while the elements of $\mathcal{I}_{pr}(\textbf{D})$ are the primary ideals whose bases are the principal ideals of the Boolean algebra $\textbf{D}_{\sqcup}$ generated by its coatoms. In other words,  $F\in \mathcal{F}_{pr}(\textbf{D})$ if and only if $F=\{x\in D~:~ a\sqsubseteq x\}$ for some atom $a\in \mathcal{A}(\textbf{D}_{\sqcap})$, the set of all atoms of the Boolean algebra $\textbf{D}_{\sqcap}$.  $I\in \mathcal{I}_{pr}(\textbf{D})$  if and only if $I=\{x\in D~:~ x\sqsubseteq b\}$ for some $b\in \mathcal{C}(\textbf{D}_{\sqcup})$, the set of all coatoms of the Boolean algebra $\textbf{D}_{\sqcup}$. The  result  proved in \cite{wille} is as follows.

\begin{theorem}
\label{finite cas rep}
{\rm \cite{wille} Let $\textbf{D}$ be a finite dBa and $\mathbb{K}_{\mathcal{AC}}:=(\mathcal{A}(\textbf{D}_{\sqcap}),\mathcal{C}(\textbf{D}_{\sqcup}), \sqsubseteq)$. Then the map $h: D \rightarrow \mathfrak{P}(\mathbb{K}_{\mathcal{AC}})$ defined by $h(x):=(\{a\in \mathcal{A}(\textbf{D}_{\sqcap})~:~ a\sqsubseteq x\}, \{b\in \mathcal{C}(\textbf{D}_{\sqcup})~:~ x\sqsubseteq b\} )$ for all $x\in D$, is a quasi-injective dBa homomorphism from $\textbf{D}$ to $\underline{\mathfrak{P}}(\mathbb{K}_{\mathcal{AC}})$. Moreover, $\textbf{D}_{p}$ is isomorphic to $\underline{\mathfrak{H}}(\mathbb{K}_{\mathcal{AC}})$. }
\end{theorem}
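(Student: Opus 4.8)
The plan is to deduce the statement from Wille's representation for arbitrary dBas (Theorem \ref{protoembedding}) by identifying $\mathbb{K}_{\mathcal{AC}}$ with the standard context $\mathbb{K}(\textbf{D})$ up to context isomorphism. First I would use the characterisation of primary filters and ideals in the finite case recalled just before the statement (via Lemma \ref{lema1} and Proposition \ref{comparison of two ideal}): every primary filter has the form $\widehat{F}_a := \{x \in D : a \sqsubseteq x\}$ for a unique atom $a \in \mathcal{A}(\textbf{D}_{\sqcap})$, and every primary ideal has the form $\widehat{I}_b := \{x \in D : x \sqsubseteq b\}$ for a unique coatom $b \in \mathcal{C}(\textbf{D}_{\sqcup})$. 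Hence $\alpha : a \mapsto \widehat{F}_a$ and $\beta : b \mapsto \widehat{I}_b$ are bijections $\mathcal{A}(\textbf{D}_{\sqcap}) \to \mathcal{F}_{p}(\textbf{D})$ and $\mathcal{C}(\textbf{D}_{\sqcup}) \to \mathcal{I}_{p}(\textbf{D})$. I would then check that $(\alpha,\beta)$ is a context isomorphism from $\mathbb{K}_{\mathcal{AC}}$ to $\mathbb{K}(\textbf{D})$: the only point to verify is $a \sqsubseteq b \iff \widehat{F}_a \Delta \widehat{I}_b$, that is $\widehat{F}_a \cap \widehat{I}_b \neq \emptyset$. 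If $a \sqsubseteq b$ then $a \in \widehat{F}_a \cap \widehat{I}_b$; conversely any $z$ in the intersection gives $a \sqsubseteq z \sqsubseteq b$, and transitivity of the quasi-order $\sqsubseteq$ yields $a \sqsubseteq b$.

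Next I would record the routine fact that a context isomorphism $(\alpha,\beta)$ induces, via the direct-image maps $A \mapsto \alpha(A)$ and $B \mapsto \beta(B)$, a dBa isomorphism $\underline{\mathfrak{P}}(\mathbb{K}_{\mathcal{AC}}) \to \underline{\mathfrak{P}}(\mathbb{K}(\textbf{D}))$ that restricts to an isomorphism $\underline{\mathfrak{H}}(\mathbb{K}_{\mathcal{AC}}) \to \underline{\mathfrak{H}}(\mathbb{K}(\textbf{D}))$; this holds because the image maps commute with the derivation operators and hence preserve protoconcepts, semiconcepts, and each of $\sqcap,\sqcup,\neg,\lrcorner,\top,\bot$. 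Under this identification, a short computation shows that the map $h$ of the statement is exactly $x \mapsto (\alpha^{-1}(F_x), \beta^{-1}(I_x))$ with $(F_x,I_x)$ Wille's pair: indeed $\widehat{F}_a \in F_x$ iff $x \in \widehat{F}_a$ iff $a \sqsubseteq x$, so $\alpha^{-1}(F_x) = \{a : a \sqsubseteq x\}$, and dually $\beta^{-1}(I_x) = \{b : x \sqsubseteq b\}$. Since Theorem \ref{protoembedding} makes $x \mapsto (F_x,I_x)$ a quasi-injective dBa homomorphism into $\underline{\mathfrak{P}}(\mathbb{K}(\textbf{D}))$ and the induced isomorphism preserves $\sqsubseteq$ and all operations, $h$ is a quasi-injective dBa homomorphism into $\underline{\mathfrak{P}}(\mathbb{K}_{\mathcal{AC}})$. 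This settles the first claim.

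For the second claim it suffices, by the same transport, to show that the restriction of $x \mapsto (F_x,I_x)$ to $D_{p}$ is a dBa isomorphism onto $\underline{\mathfrak{H}}(\mathbb{K}(\textbf{D}))$. That this restriction lands in $\mathfrak{H}(\mathbb{K}(\textbf{D}))$ is immediate from Lemma \ref{derivation}: for $x \in D_{\sqcap}$ one has $x_{\sqcap}=x$, so $F_x' = I_{x_{\sqcap\sqcup}} = I_{x_{\sqcup}} = I_x$, making $(F_x,I_x)$ a semiconcept of the form $(A,A')$; dually, for $x \in D_{\sqcup}$, $I_x' = F_{x_{\sqcup\sqcap}} = F_{x_{\sqcap}} = F_x$, a semiconcept of the form $(B',B)$. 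Injectivity follows because $\textbf{D}_{p}$ is pure, hence contextual by Proposition \ref{order pure}, so $\sqsubseteq$ is antisymmetric on $D_{p}$; quasi-injectivity then forces $h(x)=h(y) \Rightarrow x=y$ for $x,y \in D_{p}$. As $\textbf{D}_{p}$ is a subalgebra (Proposition \ref{puresub}), the restriction is a dBa homomorphism, so it remains only to prove surjectivity onto $\mathfrak{H}(\mathbb{K}(\textbf{D}))$.

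Surjectivity is the main obstacle, and it is where finiteness is genuinely used. A semiconcept of $\mathbb{K}(\textbf{D})$ of the form $(A,A')$ has $A \subseteq \mathcal{F}_{p}(\textbf{D})$; writing $A = \{\widehat{F}_a : a \in S\}$ for some $S \subseteq \mathcal{A}(\textbf{D}_{\sqcap})$, I would set $x := \bigvee_{a \in S} a$, the join taken in the finite Boolean algebra $\textbf{D}_{\sqcap}$, so that $x \in D_{\sqcap}$. Using that the order of $\textbf{D}_{\sqcap}$ is the restriction of $\sqsubseteq$ (Proposition \ref{pro1}(i)) together with the atom representation of a finite Boolean algebra, one obtains $a \sqsubseteq x \iff a \in S$ for atoms $a$, whence $F_x = \{\widehat{F}_a : a \sqsubseteq x\} = A$ and therefore $(F_x,I_x) = (F_x, F_x') = (A,A')$. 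Dually, every semiconcept $(B',B)$ arises from some $x \in D_{\sqcup}$ through the coatoms of $\textbf{D}_{\sqcup}$, and since every semiconcept has one of these two forms, the restriction is onto $\mathfrak{H}(\mathbb{K}(\textbf{D}))$. Combining injectivity, surjectivity, and the homomorphism property gives an isomorphism $\textbf{D}_{p} \cong \underline{\mathfrak{H}}(\mathbb{K}(\textbf{D}))$, and transporting along $(\alpha,\beta)$ yields $\textbf{D}_{p} \cong \underline{\mathfrak{H}}(\mathbb{K}_{\mathcal{AC}})$, as required.
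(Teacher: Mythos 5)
Your proposal is correct. The paper itself gives no proof of this theorem (it is quoted from Wille), but the reduction you carry out is precisely the one the paper sets up in the paragraph preceding the statement: the bijections atom\,$\mapsto$\,principal primary filter and coatom\,$\mapsto$\,principal primary ideal give a context isomorphism between $\mathbb{K}_{\mathcal{AC}}$ and $\mathbb{K}(\textbf{D})$, and your transport of Theorem \ref{protoembedding} along it, together with the atom/coatom join--meet argument for surjectivity of the restriction to $D_{p}$ onto the semiconcepts (where finiteness is genuinely used), is sound and complete.
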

\noindent From Theorems  \ref{finite cas rep},  \ref{object-proto and proto}, and  \ref{dualsemiconcept}, we have a representation theorem for finite dBas in terms of object oriented protoconcepts and object oriented semiconcepts.  

\begin{corollary}
\label{finite rep object}
{\rm  If $\textbf{D}$ is a finite dBa, the map $h: D \rightarrow \mathfrak{R}(\mathbb{K}^{c}_{\mathcal{AC}})$ defined by $h(x):=(\{a\in \mathcal{A}(\textbf{D}_{\sqcap})~:~ a\not{\sqsubseteq} x\}, \{b\in \mathcal{C}(\textbf{D}_{\sqcup})~:~ x\sqsubseteq b\} )$ for all $x\in D$, is a quasi-injective dBa homomorphism from $\textbf{D}$ to $\underline{\mathfrak{R}}(\mathbb{K}^{c}_{\mathcal{AC}})$. Moreover, $\textbf{D}_{p}$ is isomorphic to $\underline{\mathcal{S}}(\mathbb{K}^{c}_{\mathcal{AC}})$. }
\end{corollary}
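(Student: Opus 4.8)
The plan is to obtain both assertions purely by composing the finite representation result of Theorem \ref{finite cas rep} with the complementation isomorphisms of Theorems \ref{object-proto and proto}(ii) and \ref{dualsemiconcept}(ii); no fresh computation about atoms or coatoms is needed beyond what Theorem \ref{finite cas rep} already supplies.

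First I would recall from Theorem \ref{finite cas rep} the quasi-injective dBa homomorphism $g\colon D\to\underline{\mathfrak{P}}(\mathbb{K}_{\mathcal{AC}})$ given by $g(x)=(\{a\in\mathcal{A}(\textbf{D}_{\sqcap}):a\sqsubseteq x\},\{b\in\mathcal{C}(\textbf{D}_{\sqcup}):x\sqsubseteq b\})$. Next I would invoke Theorem \ref{object-proto and proto}(ii), whose underlying isomorphism $\varphi\colon\underline{\mathfrak{P}}(\mathbb{K}_{\mathcal{AC}})\to\underline{\mathfrak{R}}(\mathbb{K}^{c}_{\mathcal{AC}})$ sends a protoconcept $(A,B)$ to $(A^{c},B)$ — this is exactly the correspondence of Proposition \ref{relation box and prime}. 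Setting $h:=\varphi\circ g$, and using that the composition of a quasi-injective dBa homomorphism with a dBa isomorphism is again a quasi-injective dBa homomorphism (the isomorphism preserves both the operations and the quasi-order $\sqsubseteq$ in both directions), $h$ is a quasi-injective dBa homomorphism from $\textbf{D}$ to $\underline{\mathfrak{R}}(\mathbb{K}^{c}_{\mathcal{AC}})$. I would then simply read off $h(x)=\varphi(g(x))$: the first component becomes $\{a\in\mathcal{A}(\textbf{D}_{\sqcap}):a\sqsubseteq x\}^{c}=\{a\in\mathcal{A}(\textbf{D}_{\sqcap}):a\not\sqsubseteq x\}$ while the second is unchanged, which is precisely the formula claimed in the statement.

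For the ``moreover'' part, Theorem \ref{finite cas rep} already gives an isomorphism $\textbf{D}_{p}\cong\underline{\mathfrak{H}}(\mathbb{K}_{\mathcal{AC}})$, and Theorem \ref{dualsemiconcept}(ii) gives $\underline{\mathfrak{H}}(\mathbb{K}_{\mathcal{AC}})\cong\mathcal{S}(\mathbb{K}^{c}_{\mathcal{AC}})$; composing these two isomorphisms yields $\textbf{D}_{p}\cong\underline{\mathcal{S}}(\mathbb{K}^{c}_{\mathcal{AC}})$, as required.

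The main obstacle — really the only point needing care — is bookkeeping: confirming that the complementation map of Propositions \ref{relation box and prime} and \ref{obssemi}(iii) acts on the first coordinate alone, so that the first component $\{a:a\not\sqsubseteq x\}$ of $h$ arises from set-complementation and not from any change in the relation of $\mathbb{K}^{c}_{\mathcal{AC}}$, and that quasi-injectivity is transported intact across the isomorphisms. Both are routine once the explicit forms of $\varphi$ and of the semiconcept isomorphism are unwound.
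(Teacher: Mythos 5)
Your proposal is correct and follows exactly the route the paper intends: the corollary is stated there as a direct consequence of Theorems \ref{finite cas rep}, \ref{object-proto and proto} and \ref{dualsemiconcept}, i.e.\ composing Wille's finite representation with the complementation isomorphisms $(A,B)\mapsto(A^{c},B)$, which is precisely your $h=\varphi\circ g$ and your composite isomorphism for $\textbf{D}_{p}$. Your write-up merely makes explicit the bookkeeping (the complement acting on the first coordinate, and quasi-injectivity being transported across isomorphisms) that the paper leaves implicit.
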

\noindent We shall show that the representation result for dBas obtained in this work yields the above  as a special case when $\textbf{D}$ is finite.
\vskip 2pt For a pure dBa,   the following is established in \cite{BALBIANI2012260}.
\begin{theorem} 
\label{semiconceptembedding}
{\rm \cite{BALBIANI2012260} If $\textbf{D}:=(D,\sqcup,\sqcap,\neg,\lrcorner,\top,\bot)$ is a pure dBa, the map $h:D \rightarrow \mathfrak{H}(\mathbb{K}(\textbf{D}))$ defined by $h(x):=(F_{x},I_{x})$ for all $x\in D$, is  an injective dBa homomorphism from \textbf{D} to $\underline{\mathfrak{H}}(\mathbb{K}(\textbf{D}))$.
}\end{theorem}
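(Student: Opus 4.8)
The plan is to build on the known protoconcept representation in Theorem \ref{protoembedding} and upgrade it in two respects: first, to check that for a \emph{pure} dBa the image of $h$ consists of genuine semiconcepts, so that $h$ actually lands in $\underline{\mathfrak{H}}(\mathbb{K}(\textbf{D}))$ rather than merely in $\underline{\mathfrak{P}}(\mathbb{K}(\textbf{D}))$; and second, to sharpen the quasi-injectivity already provided by Theorem \ref{protoembedding} into outright injectivity, exploiting the fact that $\sqsubseteq$ is a partial order on every pure dBa.

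First I would verify that $(F_x,I_x)$ is a semiconcept for every $x\in D$. Since $\textbf{D}$ is pure, $D=D_\sqcap\cup D_\sqcup$, so I split into two cases. If $x\in D_\sqcap$, then $x_\sqcap=x$, and hence by Lemma \ref{derivation}(i) together with Lemma \ref{derivation}(iii) we get $F_x^\prime=I_{x_{\sqcap\sqcup}}=I_{x_\sqcup}=I_x$, so $(F_x,I_x)$ satisfies $F_x^\prime=I_x$ and is a semiconcept. Dually, if $x\in D_\sqcup$, then $x_\sqcup=x$, and by Lemma \ref{derivation}(i) together with Lemma \ref{derivation}(iv) we get $I_x^\prime=F_{x_{\sqcup\sqcap}}=F_{x_\sqcap}=F_x$, so $(F_x,I_x)$ is again a semiconcept. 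Thus $h$ maps $D$ into $\mathfrak{H}(\mathbb{K}(\textbf{D}))$.

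Next, the homomorphism property comes essentially for free. By Theorem \ref{protoembedding}, $h$ is a dBa homomorphism from $\textbf{D}$ into $\underline{\mathfrak{P}}(\mathbb{K}(\textbf{D}))$, and by Notation \ref{semiproto}, $\underline{\mathfrak{H}}(\mathbb{K}(\textbf{D}))$ is a subalgebra of $\underline{\mathfrak{P}}(\mathbb{K}(\textbf{D}))$ carrying the same operations. Since the previous step shows that the whole image of $h$ lies in $\mathfrak{H}(\mathbb{K}(\textbf{D}))$, the corestriction of $h$ is a dBa homomorphism $\textbf{D}\to\underline{\mathfrak{H}}(\mathbb{K}(\textbf{D}))$, with no further computation needed.

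Finally, for injectivity, suppose $h(x)=h(y)$. Then trivially $h(x)\sqsubseteq h(y)$ and $h(y)\sqsubseteq h(x)$, so the quasi-injectivity from Theorem \ref{protoembedding} yields $x\sqsubseteq y$ and $y\sqsubseteq x$. By Proposition \ref{order pure}, the quasi-order $\sqsubseteq$ is a genuine partial order on the pure dBa $\textbf{D}$, which forces $x=y$. I expect the main obstacle to be the case analysis of the first step, namely correctly tracking the iterated subscripts $x_{\sqcap\sqcup}$ and $x_{\sqcup\sqcap}$ and invoking the matching parts of Lemma \ref{derivation}; by contrast, the homomorphism and injectivity claims fall out almost immediately from results already in hand.
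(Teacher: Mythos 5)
Your proof is correct. Note first that the paper does not actually prove Theorem \ref{semiconceptembedding}: it is quoted in the preliminaries from \cite{BALBIANI2012260}, so there is no in-paper proof to compare against. Your argument is, however, a valid and self-contained reconstruction from results the paper does state, and each step checks out: since $\textbf{D}$ is pure, $D=D_\sqcap\cup D_\sqcup$, and the case split with Lemma \ref{derivation} is handled correctly --- for $x\in D_\sqcap$ one gets $F_x^\prime=I_{x_{\sqcap\sqcup}}=I_{x_\sqcup}=I_x$ by parts (i) and (iii), and for $x\in D_\sqcup$ one gets $I_x^\prime=F_{x_{\sqcup\sqcap}}=F_{x_\sqcap}=F_x$ by parts (i) and (iv), so $h$ indeed lands in $\mathfrak{H}(\mathbb{K}(\textbf{D}))$. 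The homomorphism property does transfer along the corestriction, because $\underline{\mathfrak{H}}(\mathbb{K}(\textbf{D}))$ is a subalgebra of $\underline{\mathfrak{P}}(\mathbb{K}(\textbf{D}))$ carrying the same operations (Notation \ref{semiproto}), and injectivity follows exactly as you say: $h(x)=h(y)$ gives $h(x)\sqsubseteq h(y)$ and $h(y)\sqsubseteq h(x)$ by reflexivity of the quasi-order, quasi-injectivity from Theorem \ref{protoembedding} pulls these back to $x\sqsubseteq y$ and $y\sqsubseteq x$, and Proposition \ref{order pure} supplies antisymmetry. The distinctive feature of your route is that it treats Wille's protoconcept representation (Theorem \ref{protoembedding}) as a black box and merely upgrades it --- checking only membership of the image in $\mathfrak{H}$ and sharpening quasi-injectivity to injectivity --- rather than re-verifying the homomorphism equations for $\sqcap,\sqcup,\neg,\lrcorner,\top,\bot$ from scratch as a direct proof would; this is what makes the argument short, at the price of inheriting whatever is needed to prove Theorem \ref{protoembedding} itself.
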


\noindent  From Lemma \ref{derivation}(ii),  Theorems \ref{dualsemiconcept} and  \ref{semiconceptembedding}, we then have a representation result in terms of object oriented semiconcepts.
\begin{theorem}
\label{representation pdba}
{\rm \cite{howlader2020} For a pure dBa $\textbf{D}$, the map $h:D \rightarrow \mathfrak{S}(\mathbb{K}^{c}(\textbf{D}))$ defined by $h(x):=(F_{\neg x},I_{x})$ for all $x\in D$ is an injective dBa homomorphism from \textbf{D} to $\mathcal{S}(\mathbb{K}^{c}(\textbf{D}))$.
}\end{theorem}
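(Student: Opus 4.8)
The plan is to obtain $h$ as a composite of two maps that have already been established in the excerpt, rather than verifying the homomorphism conditions for $h$ directly. Since $\textbf{D}$ is pure, Theorem \ref{semiconceptembedding} supplies an injective dBa homomorphism $h_{1}\colon D\to\mathfrak{H}(\mathbb{K}(\textbf{D}))$ given by $h_{1}(x):=(F_{x},I_{x})$, i.e.\ an embedding of $\textbf{D}$ into $\underline{\mathfrak{H}}(\mathbb{K}(\textbf{D}))$. Applying Theorem \ref{dualsemiconcept}(ii) with $\mathbb{K}:=\mathbb{K}(\textbf{D})$ furnishes a dBa isomorphism $\varphi\colon\underline{\mathfrak{H}}(\mathbb{K}(\textbf{D}))\to\mathcal{S}(\mathbb{K}^{c}(\textbf{D}))$. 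Composing, $h:=\varphi\circ h_{1}$ is a map from $D$ into $\mathfrak{S}(\mathbb{K}^{c}(\textbf{D}))$; being the composite of a dBa homomorphism with a dBa isomorphism, it is itself a dBa homomorphism (recall the remark after Definition \ref{DBAHOM}), and being the composite of two injective maps it is injective. This already gives the desired conclusion at the structural level.

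The remaining task is to identify the composite $\varphi\circ h_{1}$ with the explicit formula $h(x)=(F_{\neg x},I_{x})$ claimed in the statement. First I would pin down the isomorphism $\varphi$ at the level of underlying pairs: the bijection behind Theorem \ref{dualsemiconcept}(ii) is exactly the one supplied by Proposition \ref{obssemi}(iii), namely $(A,B)\mapsto(A^{c},B)$, which sends a semiconcept of $\mathbb{K}(\textbf{D})$ to an object oriented semiconcept of $\mathbb{K}^{c}(\textbf{D})$. Hence for each $x\in D$,
\[
h(x)=\varphi(h_{1}(x))=\varphi(F_{x},I_{x})=\bigl((F_{x})^{c},\,I_{x}\bigr).
\]
Finally, Lemma \ref{derivation}(ii) gives $(F_{x})^{c}=F_{\neg x}$, so $h(x)=(F_{\neg x},I_{x})$, matching the stated map precisely.

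I expect no genuine obstacle here, since the theorem is designed to be a direct corollary of the three cited results; the only point requiring care is the bookkeeping of which context plays the role of $\mathbb{K}$ and which of $\mathbb{K}^{c}$, so that the isomorphism of Theorem \ref{dualsemiconcept}(ii) lands in $\mathcal{S}(\mathbb{K}^{c}(\textbf{D}))$ as written. I would also verify that the first component of $h_{1}(x)$ is transformed by $\varphi$ (complementation on the object side) while the second component is left unchanged, which is exactly the asymmetry recorded in Proposition \ref{obssemi}(iii); getting this asymmetry right is what makes $\neg x$ appear in the first coordinate and $I_{x}$ survive unaltered in the second. Once these identifications are in place, injectivity and the homomorphism property are inherited automatically from $h_{1}$ and $\varphi$, completing the argument.
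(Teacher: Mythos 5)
Your proposal is correct and follows exactly the route the paper intends: the paper derives this theorem precisely ``from Lemma \ref{derivation}(ii), Theorems \ref{dualsemiconcept} and \ref{semiconceptembedding}'', i.e.\ by composing Balbiani's embedding $x\mapsto(F_{x},I_{x})$ with the isomorphism $\underline{\mathfrak{H}}(\mathbb{K}(\textbf{D}))\cong\mathcal{S}(\mathbb{K}^{c}(\textbf{D}))$ induced by $(A,B)\mapsto(A^{c},B)$, and then rewriting $(F_{x})^{c}=F_{\neg x}$. Your bookkeeping of the asymmetry (complementation only in the first coordinate) is exactly the content of Proposition \ref{obssemi}(iii), so nothing is missing.
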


A dBa $\textbf{D}$ is  {\it complete} \cite{vormbrock2005semiconcept} if and only if the Boolean algebras $\textbf{D}_{\sqcup}$ and $\textbf{D}_{\sqcap}$ are complete. Vormbrock \cite{vormbrock2005semiconcept}  proved that any complete pure dBa $\textbf{D}$ for which   $\textbf{D}_{\sqcap}$ and $\textbf{D}_{\sqcup}$ are atomic Boolean algebras, is isomorphic to the algebra of semiconcepts of some context. Furthermore, any complete fully contextual dBa for which   $\textbf{D}_{\sqcap}$ and $\textbf{D}_{\sqcup}$ are atomic Boolean algebras is isomorphic to the  algebra of protoconcepts of some context.
However, all dBas are clearly not complete -- one may simply consider  Boolean algebras that are not complete. What about an isomorphism theorem of the above kind for  (fully contextual/pure) dBas  in general? An attempt in this regard was made in \cite{breckner2019topological}, for contextual dBas. However, a counterexample to the proof of the representation result of \cite{breckner2019topological} was given in \cite{howlader3}. In this paper, we establish  isomorphism theorems for fully contextual and  pure dBas in terms of algebras of certain protoconcepts and semiconcepts (respectively) of contexts equipped with topologies.

\section{Some further  results on dBas}
\label{AIdBa}
\noindent 
In this section, we derive some further  results on dBas that will be used in Sections \ref{RT}  and \ref{CE}.     In particular, we  give a relationship between Boolean algebras and dBas in Theorem \ref{Boolean and dBa} below. To prove this theorem, we require the following.

\begin{proposition}
\label{meet join}
{\rm  Let $\textbf{D}:=(D,\sqcup,\sqcap,\neg,\lrcorner,\top,\bot)$ be a dBa. For any  $x,y\in D$, the following hold.
 \begin{enumerate}[{(i)}]
\item $x\sqcap y\sqsubseteq x\vee y\sqsubseteq x\sqcup y$.
\item $x\sqcap y\sqsubseteq x\wedge y\sqsubseteq x\sqcup y$.
\end{enumerate}}
\end{proposition}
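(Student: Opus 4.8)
The plan is to reduce the four inequalities to the two Boolean algebras $\textbf{D}_{\sqcap}$ and $\textbf{D}_{\sqcup}$ of Proposition \ref{pro1} and to test them against the order characterisation in Proposition \ref{pro1}(iii). First I would record the elementary facts used throughout. From axioms (1a), (4a), (8a) together with Proposition \ref{pro2}(iii),(iv) one gets $x\sqcap y\in D_{\sqcap}$ with $x\sqcap y=x_{\sqcap}\sqcap y_{\sqcap}$ and $x\vee y=x_{\sqcap}\vee y_{\sqcap}$, so that inside the Boolean algebra $\textbf{D}_{\sqcap}$ the elements $x\sqcap y$ and $x\vee y$ are exactly the meet and the join of $x_{\sqcap},y_{\sqcap}$; dually, $x\wedge y=x_{\sqcup}\wedge y_{\sqcup}$ and $x\sqcup y=x_{\sqcup}\sqcup y_{\sqcup}$ are the meet and join of $x_{\sqcup},y_{\sqcup}$ in $\textbf{D}_{\sqcup}$. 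I would also note the bridge comparabilities $x_{\sqcap}\sqsubseteq x\sqsubseteq x_{\sqcup}$, obtained simply by specialising Proposition \ref{pro1.5}(v) to $y=x$.

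With these in hand the four inequalities split into two kinds. The two \emph{within-algebra} inequalities, $x\sqcap y\sqsubseteq x\vee y$ of part (i) and $x\wedge y\sqsubseteq x\sqcup y$ of part (ii), are immediate: in any Boolean algebra the meet lies below the join, so $x\sqcap y\sqsubseteq_{\sqcap}x\vee y$ in $\textbf{D}_{\sqcap}$ and $x\wedge y\sqsubseteq_{\sqcup}x\sqcup y$ in $\textbf{D}_{\sqcup}$; since by Proposition \ref{pro1}(i),(ii) these orders are the restrictions of $\sqsubseteq$ to $D_{\sqcap}$ and $D_{\sqcup}$, the inequalities hold in $\textbf{D}$.

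The \emph{cross} inequalities $x\vee y\sqsubseteq x\sqcup y$ and $x\sqcap y\sqsubseteq x\wedge y$ carry the real content, and the main obstacle is that their two sides live in different Boolean algebras, so lattice monotonicity cannot be applied directly. My plan is to route through an intermediate element. For $x\vee y\sqsubseteq x\sqcup y$: by Proposition \ref{pro1.5}(v) we have $x\sqsubseteq x\sqcup y$ and $y\sqsubseteq x\sqcup y$; pushing these down with Proposition \ref{pro1}(iii) gives $x_{\sqcap}\sqsubseteq_{\sqcap}(x\sqcup y)_{\sqcap}$ and $y_{\sqcap}\sqsubseteq_{\sqcap}(x\sqcup y)_{\sqcap}$, and since $x\vee y=x_{\sqcap}\vee y_{\sqcap}$ is the least upper bound of $x_{\sqcap},y_{\sqcap}$ in $\textbf{D}_{\sqcap}$, we obtain $x\vee y\sqsubseteq_{\sqcap}(x\sqcup y)_{\sqcap}$. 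As $x\vee y$ and $(x\sqcup y)_{\sqcap}$ both lie in $D_{\sqcap}$, this reads $x\vee y\sqsubseteq(x\sqcup y)_{\sqcap}$ in $\textbf{D}$; combining it with the bridge comparability $(x\sqcup y)_{\sqcap}\sqsubseteq x\sqcup y$ and the transitivity of the quasi-order $\sqsubseteq$ yields $x\vee y\sqsubseteq x\sqcup y$. The inequality $x\sqcap y\sqsubseteq x\wedge y$ is the exactly dual argument: from $x\sqcap y\sqsubseteq x$ and $x\sqcap y\sqsubseteq y$ one gets $(x\sqcap y)_{\sqcup}\sqsubseteq_{\sqcup}x_{\sqcup}\wedge y_{\sqcup}=x\wedge y$, and then $x\sqcap y\sqsubseteq(x\sqcap y)_{\sqcup}\sqsubseteq x\wedge y$.

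Since part (ii) is the order-dual of part (i) under the symmetry $\sqcap\leftrightarrow\sqcup$, $\neg\leftrightarrow\lrcorner$, $\vee\leftrightarrow\wedge$ of the dBa axioms, it suffices to write out (i) in detail and invoke duality for (ii). As a sanity check and alternative route, one may transport the statement along Wille's quasi-injective homomorphism $h$ of Theorem \ref{protoembedding}: since $h$ preserves $\sqcap,\sqcup,\neg,\lrcorner$ (hence the derived $\vee,\wedge$) and reflects $\sqsubseteq$, the four inequalities reduce to the corresponding ones in $\underline{\mathfrak{P}}(\mathbb{K}(\textbf{D}))$, where, writing protoconcepts as pairs $(A_i,B_i)$ and using $B_i\subseteq A_i^{\prime}$, they become elementary set-theoretic containments.
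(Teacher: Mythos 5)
Your proof is correct, and it takes a genuinely different route from the paper's. The paper proves (i) by direct manipulation of negations inside the dBa: it starts from $\neg x\sqcap\neg y\sqsubseteq\neg x,\neg y$ (Proposition \ref{pro1.5}(v)), contraposes with Proposition \ref{pro2}(ii), applies the monotonicity law Proposition \ref{pro1.5}(vi), and cleans up with Proposition \ref{pro2}(i) and axioms (1a),(3a) to get $x\sqcap y\sqsubseteq x\vee y$; the second inequality is obtained by contraposing twice around $\neg(x\sqcup y)$ and ending with $\neg\neg(x\sqcup y)=(x\sqcup y)\sqcap(x\sqcup y)\sqsubseteq x\sqcup y$. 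You instead factor everything through the Boolean skeletons of Proposition \ref{pro1}: after identifying $x\sqcap y$ and $x\vee y$ as the meet and join of $x_{\sqcap},y_{\sqcap}$ in $\textbf{D}_{\sqcap}$ (and dually in $\textbf{D}_{\sqcup}$), the within-algebra inequalities are pure Boolean lattice facts, and the cross inequalities follow from the order factorization Proposition \ref{pro1}(iii), the universal property of join/meet, the bridge $z_{\sqcap}\sqsubseteq z\sqsubseteq z_{\sqcup}$, and transitivity of the quasi-order. Your argument is more structural and modular: once Proposition \ref{pro1} is granted, it makes transparent \emph{why} the chain holds (the element $(x\sqcup y)_{\sqcap}$ you route through is exactly the element $\neg\neg(x\sqcup y)$ the paper reaches by calculation). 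The paper's proof, by contrast, stays closer to the raw axioms and the negation identities of Propositions \ref{pro1.5} and \ref{pro2}, so it does not lean on the full Boolean-algebra structure theorem; this self-containedness is what it buys. Your closing remark that (ii) follows by duality agrees with the paper, and your alternative sanity check via Theorem \ref{protoembedding} is legitimate (not circular within this paper, since that theorem is quoted from Wille), though it invokes much heavier machinery than the statement needs.
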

\begin{proof}
The proof of (ii) is dual to the proof of (i), and we only  prove (i). For any $x,y\in D$, $\neg x\sqcap \neg y\sqsubseteq \neg x$ and $\neg x\sqcap \neg y\sqsubseteq \neg y.$ So by Proposition \ref{pro2}(ii), $\neg\neg x\sqsubseteq \neg(\neg x\sqcap\neg y)$ and $\neg\neg y\sqsubseteq \neg(\neg x\sqcap\neg y).$ Then Proposition \ref{pro1.5}(vi) gives $\neg\neg x\sqcap \neg\neg y\sqsubseteq \neg(\neg x\sqcap\neg y)\sqcap\neg\neg y$ and $\neg\neg y\sqcap \neg(\neg x\sqcap\neg y)\sqsubseteq\neg(\neg x\sqcap\neg y)\sqcap \neg(\neg x\sqcap\neg y).$ Therefore $\neg\neg x\sqcap \neg\neg y\sqsubseteq\neg(\neg x\sqcap\neg y)\sqcap \neg(\neg x\sqcap\neg y).$  By Proposition \ref{pro2}(i), $\neg\neg x\sqcap \neg\neg y\sqsubseteq\neg(\neg x\sqcap\neg y)$,  that is $(x\sqcap x)\sqcap (y\sqcap y)\sqsubseteq x\vee y.$ By axiom $(1a)$ and $(3a)$, $x\sqcap y\sqsubseteq x\vee y$.

\noindent We know that $x,y\sqsubseteq x\sqcup y.$ Proposition \ref{pro2}(ii) gives $\neg(x\sqcup y)\sqsubseteq \neg x,\neg y.$ Therefore by  Proposition \ref{pro1.5}(vi), $\neg (x\sqcup y)\sqcap\neg y\sqsubseteq \neg x\sqcap\neg y$ and $\neg (x\sqcup y)\sqcap\neg (x\sqcup y)\sqsubseteq \neg (x\sqcup y)\sqcap\neg y.$ So $\neg (x\sqcup y)\sqcap \neg (x\sqcup y)\sqsubseteq \neg x\sqcap\neg y.$ By  Proposition \ref{pro2}(i),  $\neg(x\sqcup y)\sqsubseteq \neg x\sqcap\neg y,$ and by Proposition \ref{pro2}(ii),  $\neg(\neg x\sqcap\neg y)\sqsubseteq \neg\neg (x\sqcup y)=(x\sqcup y)\sqcap (x\sqcup y)\sqsubseteq x\sqcup y.$ Hence $x\vee y\sqsubseteq x\sqcup y.$
\end{proof}

\begin{theorem}
\label{Boolean and dBa}
{\rm Any Boolean algebra $(D,\sqcap,\sqcup,\neg,\top,\bot)$ forms a fully contextual as well as pure dBa $\textbf{D}:=(D,\sqcap,\sqcup,\neg,\lrcorner,\top,\bot),$ where for all $a\in D$, $\lrcorner a:=\neg a$. On the other hand, a dBa $\textbf{D}:=(D,\sqcap,\sqcup,\neg,\lrcorner,\top,\bot)$ forms a Boolean algebra $(D,\sqcap,\sqcup,\neg,\top,\bot),$ if for all $a\in D$, $\neg a=\lrcorner a$ and $\neg\neg a=a$.  }
\end{theorem}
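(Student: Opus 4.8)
The plan is to verify the statement in two directions, each amounting to checking definitional axioms.

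For the first direction, I would start with a Boolean algebra $(D,\sqcap,\sqcup,\neg,\top,\bot)$ and set $\lrcorner a := \neg a$ for all $a\in D$. The key observation is that in this setting the derived operations collapse: since $x\vee y := \neg(\neg x\sqcap\neg y)$ is exactly the Boolean join $x\sqcup y$ (by De Morgan), and dually $x\wedge y := \lrcorner(\lrcorner x\sqcup\lrcorner y) = \neg(\neg x\sqcup\neg y)$ is the Boolean meet $x\sqcap y$. So $\vee$ coincides with $\sqcup$ and $\wedge$ with $\sqcap$. With this, I would run through axioms (1a)--(12) of Definition \ref{DBA}. Most are immediate from idempotence of $\sqcap,\sqcup$ in a Boolean algebra (so $x\sqcap x = x$, $x\sqcup x = x$), commutativity, associativity, and the absorption laws. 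For instance (5a) $x\sqcap(x\sqcup y)=x=x\sqcap x$ is Boolean absorption; (6a) becomes the Boolean distributive law once $\vee=\sqcup$; (9a) $x\sqcap\neg x=\bot$ and (8a) $\neg\neg(x\sqcap y)=x\sqcap y$ are Boolean facts; (12) reduces to $x=x$. The purity and full contextuality I would check directly: since $x\sqcap x=x$ for every $x$, we have $D_{\sqcap}=D$ (and likewise $D_{\sqcup}=D$), so every element satisfies $x\sqcap x=x$, giving purity; and for full contextuality, given $y\in D_{\sqcap}=D$ and $x\in D_{\sqcup}=D$ with $y_{\sqcup}=x_{\sqcap}$, i.e. $y=x$, the unique $z$ with $z_{\sqcap}=y$, $z_{\sqcup}=x$ is simply $z=y=x$, and contextuality follows from the quasi-order $\sqsubseteq$ being genuine equality here.

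For the converse, I assume $\textbf{D}$ is a dBa in which $\neg a=\lrcorner a$ and $\neg\neg a=a$ for all $a\in D$. The first thing I would establish is that $\sqcap$ and $\sqcup$ are idempotent, hence every element is fixed by both. From $\neg\neg a=a$ and Proposition \ref{pro2}(iii), $\neg\neg a = a\sqcap a$, so $a=a\sqcap a$; dually $\lrcorner\lrcorner a = a\sqcup a$ by the same proposition, and since $\lrcorner a=\neg a$ we get $\lrcorner\lrcorner a=\neg\neg a=a$, so $a=a\sqcup a$. Thus $D_{\sqcap}=D_{\sqcup}=D$. Now Proposition \ref{pro1} tells us $\textbf{D}_{\sqcap}=(D,\sqcap,\vee,\neg,\bot,\neg\bot)$ is already a Boolean algebra on all of $D$, and $\textbf{D}_{\sqcup}=(D,\sqcup,\wedge,\lrcorner,\top,\lrcorner\top)$ is too. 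It remains to reconcile these two Boolean structures so that $(D,\sqcap,\sqcup,\neg,\top,\bot)$ is a single Boolean algebra. Using Proposition \ref{meet join}, since $x\sqcap x=x$ and $x\sqcup x=x$ the inequalities $x\sqcap y\sqsubseteq x\vee y\sqsubseteq x\sqcup y$ and $x\sqcap y\sqsubseteq x\wedge y\sqsubseteq x\sqcup y$ combined with the hypotheses should force $x\vee y=x\sqcup y$ and $x\wedge y=x\sqcap y$; here I would use $\neg=\lrcorner$ to show the antisymmetry of $\sqsubseteq$ (pure dBas are contextual by Proposition \ref{order pure}, so $\sqsubseteq$ is a partial order) and then squeeze the derived operations onto $\sqcup,\sqcap$. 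Once $\vee=\sqcup$ and $\wedge=\sqcap$, the Boolean algebra $\textbf{D}_{\sqcap}$ on $D$ has join $\sqcup$ and meet $\sqcap$ with complement $\neg$, bounds $\bot$ and $\top=\neg\bot$, which is exactly the claimed Boolean algebra.

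The main obstacle I anticipate is the converse direction, specifically proving $x\vee y=x\sqcup y$ (equivalently $x\wedge y=x\sqcap y$) rather than merely the quasi-order inequalities of Proposition \ref{meet join}. The inequalities give $x\vee y\sqsubseteq x\sqcup y$ and need a reverse comparison; I expect to obtain the reverse by applying $\neg$ and the involution $\neg\neg a=a$ together with Proposition \ref{pro2}(v)--(vi) (the De Morgan-type laws $\neg(x\vee y)=\neg x\sqcap\neg y$), and then invoke antisymmetry of the now-partial order $\sqsubseteq$. Care is needed because $\sqsubseteq$ is a priori only a quasi-order; the hypothesis $\neg=\lrcorner$ plus idempotence is what upgrades it, and verifying this cleanly—so that $x\vee y$ and $x\sqcup y$ are equal as elements and not merely $\sqsubseteq$-equivalent—is the delicate point of the argument.
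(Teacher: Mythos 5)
Your proposal is correct, and its forward direction is essentially the paper's (the paper simply records that the axiom check is routine and that idempotence of $\sqcap,\sqcup$ gives $D_{\sqcap}=D_{\sqcup}=D$, hence purity and full contextuality); one wording slip there: on a Boolean algebra $\sqsubseteq$ is the Boolean order, not equality --- what you need, and what holds, is that $x\sqsubseteq y$ and $y\sqsubseteq x$ force $x=y$.

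For the converse you take a genuinely different route of assembly. The paper proves antisymmetry of $\sqsubseteq$ by hand (Propositions \ref{pro1.5}(iv) and \ref{pro2}(iii)) and then rebuilds the Boolean structure directly from the dBa axioms: absorption from (5a),(5b) of Definition \ref{DBA}, the identity $y\sqcup z=y\vee z$ by showing $y,z\sqsubseteq y\vee z$ and using monotonicity (Proposition \ref{pro1.5}(vi)) together with $(y\vee z)\sqcup(y\vee z)=y\vee z$, distributivity from (6a),(6b), and complementation from (9a),(9b). You instead note that idempotence gives $D_{\sqcap}=D_{\sqcup}=D$, so Proposition \ref{pro1} already supplies a Boolean algebra $(D,\sqcap,\vee,\neg,\bot,\neg\bot)$ on all of $D$, purity plus Proposition \ref{order pure} supplies antisymmetry for free, and the whole task reduces to identifying $\vee$ with $\sqcup$ (and $\neg\bot$ with $\top$, which is immediate from (10a) and idempotence). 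Your mechanism for the one nontrivial inequality $x\sqcup y\sqsubseteq x\vee y$ also differs from the paper's, and it does work: with $\lrcorner=\neg$, Propositions \ref{pro2}(v),(vi) and \ref{meet join}(ii) give $\neg(x\vee y)=\neg x\sqcap\neg y\sqsubseteq\neg x\wedge\neg y=\neg(x\sqcup y)$, and the ``if and only if'' of Proposition \ref{pro2}(ii) (whose two conditions collapse into one since $\neg=\lrcorner$) flips this to $x\sqcup y\sqsubseteq x\vee y$; antisymmetry then yields equality. What your route buys is economy: absorption, distributivity and complementation never have to be re-verified, since the Boolean structure is transferred wholesale from $\textbf{D}_{\sqcap}$. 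What the paper's route buys is an explicit display of which dBa axioms are responsible for each Boolean law, rather than leaning on the imported fact that $\textbf{D}_{\sqcap}$ is Boolean.
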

\begin{proof}
It is easy to see that in a Boolean algebra  $(D,\sqcap,\sqcup,\neg,\top,\bot)$, if we set   $\lrcorner a:=\neg a$ for all $a\in D$, $(D,\sqcap,\sqcup,\neg,\lrcorner,\top,\bot)$ forms a dBa. The dBa is also pure, due to the idempotence of the operators $\sqcap$ and $\sqcup$ in a Boolean algebra. Further, note that in this case $D_\sqcap=D_\sqcup=D$, and the dBa is fully contextual as well. We also have 
$\neg\neg a=a$ for all $a\in D$. \\
Now 
let $\textbf{D}$ be a dBa such that for all $a\in D$, $\neg a=\lrcorner a$ and $\neg\neg a=a$. Let $x,y\in D$ such that  $x\sqsubseteq y$ and $y\sqsubseteq x$. By Proposition \ref{pro1.5}(iv),  $x\sqcap x=y\sqcap y$ and $x\sqcup x=y\sqcup y$. Using Proposition \ref{pro2}(iii), $\neg\neg x=\neg\neg y$ and so $x=y$. Therefore $(D,\sqsubseteq)$ is a partially ordered set. From Definition \ref{DBA}(2a and 2b) it follows that $\sqcap,\sqcup$ is commutative, while Definition \ref{DBA}(3a and 3b) gives that $\sqcap,\sqcup$ is associative. Using Definition \ref{DBA}(5a) and Proposition \ref{pro2}(iii), $x\sqcap (x\sqcup y)=x\sqcap x=\neg\neg x$. So $x\sqcap (x\sqcup y)= x$. Again using Definition \ref{DBA}(5b) and Proposition \ref{pro2}(iii), $x\sqcup (x\sqcap y)= x$. Therefore $(D,\sqcap,\sqcup,\neg,\top,\bot,\sqsubseteq)$ is a bounded complemented lattice. To show it is a distributive lattice, let $x,y,z\in D.$ Proposition \ref{meet join} implies that $x\sqcap y\sqsubseteq x\wedge y $ and $x\vee y\sqsubseteq x\sqcup y$. Using Proposition \ref{pro1.5}(v) and Proposition \ref{pro2}(ii),  $\neg y\sqcap\neg z\sqsubseteq \neg y,\neg z$. So $\neg\neg y\sqsubseteq \neg(\neg y\sqcap\neg z)$ and $\neg\neg z\sqsubseteq \neg (\neg y\sqcap \neg z)$. Therefore $y\sqsubseteq \neg(\neg y\sqcap\neg z)=y\vee z$ and $ z\sqsubseteq \neg (\neg y\sqcap \neg z)=y\vee z$. Proposition \ref{pro1.5}(vi) gives $y\sqcup z \sqsubseteq y\vee z$, as $(y\vee z)\sqcup (y\vee z)=\lrcorner\lrcorner (y\vee z)=\neg\neg(y\vee z)=y\vee z$. So $y\sqcup z=y\vee z$. Dually we can show that $y\sqcap z=y\wedge z$. From Definition \ref{DBA}(6a and 6b) it follows that $(D,\sqcap,\sqcup,\neg,\top,\bot,\sqsubseteq)$ is a complemented distributive lattice and hence a Boolean algebra.
\end{proof}

For each dBa $\textbf{D}$ there is a pure subalgebra $\textbf{D}_{p}$, by Proposition \ref{puresub}. In the case of fully contextual dBas, these subalgebras play a special role.
\begin{theorem}
\label{iso}
{\rm Let $\textbf{D}$ and $\textbf{M}$ be fully contextual dBas. Then $\textbf{D}$ is isomorphic to $\textbf{M}$ if and only if $\textbf{D}_{p}$ is isomorphic to  $\textbf{M}_{p}$. Moreover, every dBa isomorphism  from $\textbf{D}_{p}$  to  $\textbf{M}_{p}$ can be uniquely extended to a dBa isomorphism from $\textbf{D}$ to  $\textbf{M}$. }
\end{theorem}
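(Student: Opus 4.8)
The plan is to prove both directions separately, with the nontrivial content residing in recovering a full dBa isomorphism from an isomorphism between the pure subalgebras. The easy direction is immediate: if $\varphi:\textbf{D}\to\textbf{M}$ is a dBa isomorphism, then since dBa homomorphisms preserve $\sqcap$ and $\sqcup$, they preserve the conditions $x\sqcap x=x$ and $x\sqcup x=x$, so $\varphi$ maps $D_{\sqcap}$ bijectively onto $M_{\sqcap}$ and $D_{\sqcup}$ onto $M_{\sqcup}$; hence its restriction to $D_p=D_{\sqcap}\cup D_{\sqcup}$ is a dBa isomorphism $\textbf{D}_p\to\textbf{M}_p$. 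First I would record this and then turn to the substantive converse.

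For the converse, suppose $\psi:\textbf{D}_p\to\textbf{M}_p$ is a dBa isomorphism. Since $D_p=D_{\sqcap}\cup D_{\sqcup}$ and $\psi$ preserves $\sqcap,\sqcup$, it restricts to Boolean isomorphisms $\psi_{\sqcap}:\textbf{D}_{\sqcap}\to\textbf{M}_{\sqcap}$ and $\psi_{\sqcup}:\textbf{D}_{\sqcup}\to\textbf{M}_{\sqcup}$ (recall Proposition \ref{pro1}). The plan is to use full contextuality of $\textbf{M}$ to define the extension. Given any $z\in D$, set $y:=z_{\sqcap}=\psi(z_{\sqcap})$-preimage data, more precisely put $u:=\psi(z_{\sqcap})\in M_{\sqcap}$ and $v:=\psi(z_{\sqcup})\in M_{\sqcup}$. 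The key compatibility I must verify is $u_{\sqcup}=v_{\sqcap}$ in $\textbf{M}$, which will follow because $z_{\sqcap\sqcup}=z_{\sqcup\sqcap}$ holds in $\textbf{D}$ (an instance of axiom (12) combined with Proposition \ref{pro2}) and $\psi$ preserves the derived operations $\vee,\wedge$; since $\psi$ agrees with $\psi_{\sqcap}$ and $\psi_{\sqcup}$ and these interact correctly with $\sqcup,\sqcap$ across $D_{\sqcap}$ and $D_{\sqcup}$, the element $\psi(z_{\sqcap\sqcup})$ equals both $u_{\sqcup}$ and $v_{\sqcap}$. By full contextuality of $\textbf{M}$ (Definition \ref{contextdefn}(b)) there is then a \emph{unique} $w\in M$ with $w_{\sqcap}=u$ and $w_{\sqcup}=v$; I define $\Psi(z):=w$. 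Contextuality of $\textbf{D}$ guarantees $z$ is itself the unique element with the given $z_{\sqcap},z_{\sqcup}$, so $\Psi$ is well defined, and symmetry of the construction (using $\psi^{-1}$ and full contextuality of $\textbf{D}$) produces a two-sided inverse, giving bijectivity.

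The remaining work is to check that $\Psi$ is a dBa homomorphism, and here I would exploit the fact that each operation of a dBa is determined by its action on the $\sqcap$- and $\sqcup$-parts. Concretely, for $a,b\in D$ one has $(a\sqcap b)_{\sqcap}=a_{\sqcap}\sqcap b_{\sqcap}$ and $(a\sqcap b)_{\sqcup}=(a\sqcap b)_{\sqcap\sqcup}$ (since $a\sqcap b\in D_{\sqcap}$ by Proposition \ref{pro2}(iv)), with dual formulas for $\sqcup$, while $\neg a=\neg(a_{\sqcap})$ and $\lrcorner a=\lrcorner(a_{\sqcup})$ lie in $D_{\sqcap},D_{\sqcup}$ by Proposition \ref{pro2}(i); the constants $\top,\bot$ already lie in $D_p$. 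Thus for each operation I compute the $\sqcap$- and $\sqcup$-parts of both sides, reduce everything to $\psi_{\sqcap},\psi_{\sqcup}$ applied inside the Boolean algebras $\textbf{M}_{\sqcap},\textbf{M}_{\sqcup}$, and invoke uniqueness in Definition \ref{contextdefn}(b) to conclude the two elements coincide. That $\Psi$ extends $\psi$ is clear, since for $z\in D_p$ either $z=z_{\sqcap}$ or $z=z_{\sqcup}$ and the defining pair collapses. Uniqueness of the extension follows because any isomorphism extending $\psi$ must send $z$ to an element with the prescribed $\sqcap$- and $\sqcup$-parts, of which there is only one by full contextuality. The main obstacle I anticipate is verifying the compatibility condition $u_{\sqcup}=v_{\sqcap}$ and, relatedly, confirming homomorphism of the \emph{join} and \emph{meet} operations $\sqcup,\sqcap$ across the partition of $D_p$; this requires careful bookkeeping of how the mixed expression $z_{\sqcap\sqcup}$ is controlled by $\psi_{\sqcap}$ and $\psi_{\sqcup}$ simultaneously, and is where full contextuality (as opposed to mere contextuality) is genuinely used.
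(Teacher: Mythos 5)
Your proposal is correct and follows essentially the same route as the paper's proof: the extension sends $z$ to the unique $w\in M$ with $w_{\sqcap}=\psi(z_{\sqcap})$ and $w_{\sqcup}=\psi(z_{\sqcup})$, with compatibility supplied by axiom (12), the homomorphism checks reduced to $\sqcap$- and $\sqcup$-parts plus the uniqueness clause of full contextuality, and the unary operations and constants handled via the fact that $\Psi$ extends $\psi$ on $D_{p}$. The only cosmetic difference is that you obtain bijectivity from a symmetrically constructed two-sided inverse using $\psi^{-1}$ and full contextuality of $\textbf{D}$, whereas the paper proves injectivity and surjectivity separately from the same ingredients.
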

\begin{proof}
Let  $f$ be a dBa isomorphism from  $\textbf{D}$ to $\textbf{M}$. We show that $f\vert_{D_{p}}$, the restriction of $f$ to $\textbf{D}_{p}$,  is the required dBa isomorphism from $\textbf{D}_{p}$  to  $\textbf{M}_{p}$. For that, it is enough to prove that $f\vert_{D_{p}}(D_{p})= M_{p}$. Let $x\in D_{p}$. Then either $x\sqcap x=x$ or $x\sqcup x=x$, which implies that either $f\vert_{D_{p}}(x)\sqcap f\vert_{D_{p}}(x)=f(x)\sqcap f(x)=f(x\sqcap x)=f(x)=f\vert_{D_{p}}(x)$ or $f\vert_{D_{p}}(x)\sqcup f\vert_{D_{p}}(x)=f(x)\sqcup f(x)=f(x\sqcup x)=f(x)=f\vert_{D_{p}}(x)$, as $f$ is a homomorphism. So $f\vert_{D_{p}}(D_{p})\subseteq M_{p}$. Conversely, let $y\in M_{p}$. As $f$ is surjective,  there exists a $c\in D$ such that $f(c)=y$. Now $f(c\sqcap c)=y\sqcap y$ and $f(c\sqcup c )=y\sqcup y$. Since  $y\in M_{p}$, either $y\sqcap y=y$ or $y\sqcup y=y$, which implies that either $f(c\sqcap c)=y=f(c)$ or $f(c\sqcup c )=y=f(c)$. So either $c\sqcap c=c$ or $c\sqcup c=c$, as $f$ is injective. So $c\in D_{p}$. Therefore  $M_{p}=f\vert_{D_{p}}(D_{p})$.

Let $\textbf{D}_{p}$ be isomorphic to  $\textbf{M}_{p}$ and $h$ be a dBa isomorphism from  $\textbf{D}_{p}$ to  $\textbf{M}_{p}$. Let $x\in D$. Then $x\sqcap x\in D_{\sqcap}\subseteq D_{p}$, $x\sqcup x\in D_{\sqcup}\subseteq D_{p}$ and $(x\sqcap x)\sqcup (x\sqcap x)=(x\sqcup x)\sqcap (x\sqcup x)$ by Definition \ref{DBA}(12). As $h$ is a dBa isomorphism, $h(x\sqcap x)\in M_{\sqcap}\subseteq M_{p}$, $h(x\sqcup x)\in M_{\sqcup}\subseteq M_{p}$ and $h(x\sqcap x)\sqcup h(x\sqcap x)=h(x\sqcup x)\sqcap h(x\sqcup x)$. As $\textbf{M}$ is fully contextual,   there exists a unique $c^{x}\in M$ such that $c^{x}_{\sqcap}= h(x\sqcap x)$ and $c^{x}_{\sqcup}= h(x\sqcup x)$. Let us define a map $f: D\rightarrow M$ by $f(x):=c^{x}$, for all $x\in D$. From the definition of $c^{x}$, it follows that $f$ is well-defined. We show that $f$ is the required dBa isomorphism.  \\
(i) Let $x,y\in D$ and $f(x)=c^{x}, f(y)=c^{y}$. We will show that $f(x\sqcap y)=c^{x\sqcap y}=c^{x}\sqcap c^{y}$. Now $c^{x}_{\sqcap}=h(x\sqcap x)$, $c^{x}_{\sqcup}=h(x\sqcup x)$ and $c^{y}_{\sqcap}=h(y\sqcap y)$, $c^{y}_{\sqcup}=h(y\sqcup y)$. So $c^{x}_{\sqcap}\sqcap c^{y}_{\sqcap}=h(x\sqcap x)\sqcap h(y\sqcap y)=h(x\sqcap y)$, using Definition \ref{DBA}(1a) and the fact that $h$ is a homomorphism. This gives $h(x\sqcap y)\sqcap h(x\sqcap y)=(c^{x}_{\sqcap}\sqcap c^{y}_{\sqcap})_{\sqcap}$  and  $h(x\sqcap y)\sqcup h(x\sqcap y)=(c^{x}_{\sqcap}\sqcap c^{y}_{\sqcap})_{\sqcup}$. By the uniqueness property of $c^{x\sqcap y}$, $c^{x\sqcap y}=c^{x}_{\sqcap}\sqcap c^{y}_{\sqcap}=c^{x}\sqcap c^{y}$ (again, using Definition \ref{DBA}(1a)). Hence $f(x\sqcap y)=f(x)\sqcap f(y)$.\\
Dually, one gets $f(x\sqcup y)=f(x)\sqcup f(y)$.\\
(ii) In order to establish that $f$ preserves the other dBa operations, we first show that for all $x\in D_{p}$, $f(x)=h(x)$, that is, $f\vert_{D_{p}}=h$. Let $x\in D_{p}$. Then either $x\sqcap x=x$ or $x\sqcup x=x$. Let us assume that $x\sqcap x=x$. Then, using (i),  $f(x)=f(x\sqcap x)=f(x)\sqcap f(x)=c^{x}\sqcap c^{x}=c^{x}_{\sqcap}=h(x\sqcap x)=h(x)$ and if $x\sqcup x=x$, $f(x)=f(x\sqcup x)=f(x)\sqcup f(x)=c^{x}\sqcup c^{x}=c^{x}_{\sqcup}=h(x\sqcup x)=h(x)$. Therefore $f$ is an extension of $h$.\\
 Now by Propositions \ref{pro1.5} and \ref{pro2}, it follows that $\top,\bot, \neg x,\lrcorner x\in D_{p}$ for all $x\in D$. So, using Definition \ref{DBA}(4a) and (ii),  $f(\neg x)=f(\neg(x\sqcap x))= h(\neg(x\sqcap x))=\neg h(x\sqcap x)=\neg f(x\sqcap x)=\neg (f(x)\sqcap f(x))=\neg f(x)$, as $h$ is a dBa homomorphism and $f\vert_{D_{p}}=h$. Similarly, one gets $f(\lrcorner x)=\lrcorner f(x), f(\top)=\top$, and $f(\bot)=\bot$. So $f$ is a dBa homomorphism.\\ 
(iii)  $f$ is injective: let $x,y\in D$, such that $c^{x}= c^{y}$. Then $h(x\sqcap x)=h(y\sqcap y)$ and $h(x\sqcup x)=h(y\sqcup y)$. So $x\sqcap x=y\sqcap y$ and $x\sqcup x=y\sqcup y$, as $h$ is injective. By Proposition \ref{pro1.5}(iv), $x\sqsubseteq y$ and $y\sqsubseteq x$. $\sqsubseteq$ is a partial order, as $\textbf{D}$ is a contextual dBa. So $x=y$.\\
(iv) To show $f$ is surjective, let $a\in M$. Since $h$ is a dBa isomorphism from $\textbf{D}_{p}$ to $\textbf{M}_{p}$, as done above, one can find a unique $x^{a}\in D$ such that $x^{a}_{\sqcap}=h^{-1}(a\sqcap a)$ and $x^{a}_{\sqcup}=h^{-1}(a\sqcup a)$. So $c^{x^{a}}_{\sqcap}=h(x^{a}_{\sqcap})=a\sqcap a$ and $c^{x^{a}}_{\sqcup}=h(x^{a}_{\sqcup})=a\sqcup a$. By Proposition \ref{pro1.5}(iv), $c^{x^{a}}\sqsubseteq a$ and $a\sqsubseteq c^{x^{a}}$. As $\textbf{D}$ is a contextual dBa, $\sqsubseteq$ is a partial order on $D$. So $f(x^{a})=c^{x^{a}}=a$. Hence $f$ is surjective.\\
 Therefore $f$ is a dBa isomorphism from $\textbf{D}$ to $\textbf{M}$ such that $h=f\vert_{D_{p}}$.
 
 If possible, assume that there exists another dBa isomorphism $f_{1}$ from $\textbf{D}$
 to $\textbf{M}$ such that $f_{1}\vert_{D_{p}}=h$. Let $x\in D$. Then $f(x)\sqcap f(x)=f(x\sqcap x)=h(x\sqcap x)=f_{1}(x\sqcap x)=f_{1}(x)\sqcap f_{1}(x)$. Dually, one can show that $f(x)\sqcup f(x)=f_{1}(x)\sqcup f_{1}(x)$. So $f(x)\sqsubseteq f_{1}(x)$ and $f_{1}(x)\sqsubseteq f(x)$, which implies that $f(x)=f_{1}(x)$, as $\textbf{D}$ is contextual.
\end{proof}

 In the following, let $\textbf{D}:=(D,\sqcup,\sqcap,\neg,\lrcorner,\top,\bot)$ be a dBa.

\begin{notation} 
\label{finite boolean join and meet}
{\rm For a non-empty finite subset $B$ of $ D$, \\ $\sqcap B:=\sqcap_{a\in B}a$, $\sqcup B:=\sqcup_{a\in B}a$, $\vee B:=\vee_{a\in B}a$, $\wedge B:=\wedge_{a\in B}a$.}
\end{notation} 
\begin{note}
\label{finite-B-meet}
{\rm Using Proposition \ref{pro2}(iv)  and induction on the cardinality of $B$, one can show that for each non-empty finite subset $B$ of $D$, $\vee B\in D_{\sqcap}$ and $\wedge B\in D_{\sqcup}$.}
\end{note}
\begin{proposition}
\label{relation of join and meet}
{\rm For each non-empty  finite subset $B$ of $ D,$  the following hold. 
\begin{enumerate}[{(i)}]
\item $\sqcap B\sqsubseteq \vee B\sqsubseteq \sqcup B$.
\item $\sqcap B\sqsubseteq \wedge B\sqsubseteq \sqcup B$.
\end{enumerate}}
\end{proposition}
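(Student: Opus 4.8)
The plan is to induct on the cardinality of $B$, with Proposition~\ref{meet join} serving as the engine of the inductive step. Before starting, I would record that the derived operations $\vee$ and $\wedge$ are commutative and associative on all of $D$: commutativity is immediate from that of $\sqcap,\sqcup$, while associativity follows from Proposition~\ref{pro2}(v), since $(x\vee y)\vee z=\neg(\neg(x\vee y)\sqcap\neg z)=\neg(\neg x\sqcap\neg y\sqcap\neg z)$ by axiom $(3a)$, an expression symmetric in $x,y,z$; the case of $\wedge$ is dual, using Proposition~\ref{pro2}(vi). Consequently $\vee B$ and $\wedge B$ are well-defined, and for any decomposition $B=B'\cup\{a\}$ with $a\notin B'$ one may write $\sqcap B=(\sqcap B')\sqcap a$, $\vee B=(\vee B')\vee a$, $\sqcup B=(\sqcup B')\sqcup a$, and $\wedge B=(\wedge B')\wedge a$. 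The base case $|B|=1$ is trivial, as all four combined elements coincide and $\sqsubseteq$ is reflexive (Proposition~\ref{pro1.5}(iii)).

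The one auxiliary fact I would establish separately is the monotonicity of $\vee$ and $\wedge$ in each argument: if $x\sqsubseteq y$ then $x\vee z\sqsubseteq y\vee z$ and $x\wedge z\sqsubseteq y\wedge z$. This follows by a short chain: from $x\sqsubseteq y$, Proposition~\ref{pro2}(ii) gives $\neg y\sqsubseteq\neg x$, then Proposition~\ref{pro1.5}(vi) gives $\neg y\sqcap\neg z\sqsubseteq\neg x\sqcap\neg z$, and a further application of Proposition~\ref{pro2}(ii) yields $\neg(\neg x\sqcap\neg z)\sqsubseteq\neg(\neg y\sqcap\neg z)$, i.e. $x\vee z\sqsubseteq y\vee z$; the case of $\wedge$ is dual, using $\lrcorner$ in place of $\neg$.

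For the inductive step of (i), assuming $\sqcap B'\sqsubseteq\vee B'\sqsubseteq\sqcup B'$ for $|B'|=n\geq 1$, I would chain $\sqcap B=(\sqcap B')\sqcap a\sqsubseteq(\sqcap B')\vee a\sqsubseteq(\vee B')\vee a=\vee B$, where the first relation is Proposition~\ref{meet join}(i) applied to $\sqcap B'$ and $a$, and the second is monotonicity of $\vee$ together with the induction hypothesis $\sqcap B'\sqsubseteq\vee B'$. Dually, $\vee B=(\vee B')\vee a\sqsubseteq(\vee B')\sqcup a\sqsubseteq(\sqcup B')\sqcup a=\sqcup B$, the first relation again by Proposition~\ref{meet join}(i) and the second by Proposition~\ref{pro1.5}(vi) with the hypothesis $\vee B'\sqsubseteq\sqcup B'$; transitivity of the quasi-order $\sqsubseteq$ then delivers (i). Part (ii) is obtained by the entirely analogous chains $\sqcap B=(\sqcap B')\sqcap a\sqsubseteq(\sqcap B')\wedge a\sqsubseteq(\wedge B')\wedge a=\wedge B$ and $\wedge B=(\wedge B')\wedge a\sqsubseteq(\wedge B')\sqcup a\sqsubseteq(\sqcup B')\sqcup a=\sqcup B$, invoking Proposition~\ref{meet join}(ii) and monotonicity of $\wedge$ and $\sqcup$. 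There is no genuine obstacle in this argument; the only points that demand care are the well-definedness of $\vee B$ and $\wedge B$ (hence the associativity remarks above) and the monotonicity of the derived operations, both of which reduce directly to the cited propositions.
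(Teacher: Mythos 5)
Your proof is correct and follows exactly the route the paper takes: induction on the cardinality of $B$ with Proposition~\ref{meet join} driving the inductive step, the paper simply leaving the details (well-definedness of $\vee B$, $\wedge B$ and the monotonicity of the derived operations via Propositions~\ref{pro2}(ii) and~\ref{pro1.5}(vi)) to the reader. You have filled in precisely those details, so there is nothing to criticize.
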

\begin{proof}
The proof of (ii) is similar to (i). (i) can be easily proved using induction on the cardinality of $B$ and Proposition \ref{meet join}.
\end{proof}

\begin{theorem}
\label{primary ideal thorem}
{\rm The following hold for $\textbf{D}$.
\begin{enumerate}[{(i)}]
\item If $I$ is a proper ideal in $\textbf{D}$ then there exists a primary ideal $I^{1}$ in $\textbf{D}$ such that $I\subseteq I^{1}$.
\item If $F$ is a proper filter in $\textbf{D}$ then there exists a primary filter $F^{1}$ in $\textbf{D}$ such that $F\subseteq F^{1}$.
\end{enumerate}}
\end{theorem}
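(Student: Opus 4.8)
The plan is to prove (i) and obtain (ii) by the dual argument. The key idea is to descend from $\textbf{D}$ to its associated Boolean algebra $\textbf{D}_{\sqcup}$, invoke the classical prime ideal theorem for Boolean algebras there, and then lift the resulting prime ideal back to a primary ideal of $\textbf{D}$ using Lemma \ref{lema1}(iv) together with Proposition \ref{comparison of two ideal}(ii). Although Theorem \ref{PITDB} is available, I do not expect to apply it directly: it would require pairing $I$ with a disjoint filter of $\textbf{D}$, and exhibiting such a filter robustly turns out to need essentially the same Boolean-algebra construction, so the descent-and-lift argument is cleaner.

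For (i), let $I$ be a proper ideal of $\textbf{D}$. First I would note that $\top\notin I$: since $x\sqsubseteq\top$ for every $x$ by Proposition \ref{pro1.5}(ii) and $I$ is a down-set, $\top\in I$ would force $I=D$. Next, by Lemma \ref{lema1}(iii) the set $I\cap D_{\sqcup}$ is an ideal of the Boolean algebra $\textbf{D}_{\sqcup}$, and it is proper precisely because its top element $\top$ lies outside it. Applying the prime ideal theorem for Boolean algebras, I extend $I\cap D_{\sqcup}$ to a prime ideal $P$ of $\textbf{D}_{\sqcup}$. By Lemma \ref{lema1}(iv), $P$ is the base of some ideal $I^{1}$ of $\textbf{D}$ with $P=I^{1}\cap D_{\sqcup}$ and $I^{1}\in\mathcal{I}_{p}(\textbf{D})$; Proposition \ref{comparison of two ideal}(ii) then guarantees that $I^{1}$ is in fact primary. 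It remains to verify $I\subseteq I^{1}$: since $I\cap D_{\sqcup}$ is a base of $I$ (by the Observation following Lemma \ref{lema1}) and $I\cap D_{\sqcup}\subseteq P$, every $y\in I$ satisfies $y\sqsubseteq b$ for some $b\in I\cap D_{\sqcup}\subseteq P$, whence $y\in I^{1}$. Setting $I^{1}$ as above completes the proof of (i).

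Statement (ii) is handled dually, working in $\textbf{D}_{\sqcap}$: a proper filter $F$ omits $\bot$ by Proposition \ref{pro1.5}(i), so by Lemma \ref{lema1}(i) the trace $F\cap D_{\sqcap}$ is a proper filter of $\textbf{D}_{\sqcap}$, which extends to a prime filter $Q$; lifting $Q$ via Lemma \ref{lema1}(ii) and applying Proposition \ref{comparison of two ideal}(i) yields the required primary filter $F^{1}\supseteq F$. The one point that needs care — the only place where the quasi-order structure of dBas genuinely intervenes — is the containment $I\subseteq I^{1}$ (resp. $F\subseteq F^{1}$): it is not enough that $P$ extends $I\cap D_{\sqcup}$ at the Boolean level, and the bridge is precisely the fact that the Boolean trace $I\cap D_{\sqcup}$ is a base of $I$, so that membership in $I$ is detected by the order below elements of $D_{\sqcup}$. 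Everything else reduces to bookkeeping with the already-established correspondences between ideals (filters) of $\textbf{D}$ and those of $\textbf{D}_{\sqcup}$ ($\textbf{D}_{\sqcap}$).
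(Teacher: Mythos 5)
Your proof is correct and takes essentially the same route as the paper's: both descend to the Boolean trace $I\cap D_{\sqcup}$ (resp. $F\cap D_{\sqcap}$), extend it to a prime ideal (filter) by the classical Boolean prime ideal theorem, lift back to $\textbf{D}$ via the base construction, and identify the lift as primary through Proposition \ref{comparison of two ideal}. The only cosmetic difference is that you cite Lemma \ref{lema1}(iv) for the lifting step where the paper re-derives that construction inline, and your containment $I\subseteq I^{1}$ via the base property is exactly the paper's argument using $x\sqsubseteq x\sqcup x\in I\cap D_{\sqcup}$.
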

\begin{proof}
(i) Let $I$ be a proper ideal in $\textbf{D}.$ Then by Lemma \ref{lema1}(iii),  $I\cap D_{\sqcup}$ is an ideal in the Boolean algebra $\textbf{D}_{\sqcup}$. Since $I$ is proper in $\textbf{D}$, $I\cap D_{\sqcup}$  must also be proper in $\textbf{D}_{\sqcup}$. (For, if not, $\top\in I\cap D_{\sqcup}$  and so $\top \in I$, which is not possible in $\textbf{D}_{\sqcup}$.)  Then there exists a prime ideal $I_{0}$ in $\textbf{D}_{\sqcup}$ such that $I\cap D_{\sqcup}\subseteq I_{0}$. We define $I^{1}:=\{x\in D~:~x\sqsubseteq y ~\mbox{for some}~ y\in I_{0}\},$ and 
let $x_{1},x_{2}\in I^{1}.$ So there are $y_{1},y_{2}\in I_{0}$ such that $x_{1}\sqsubseteq y_{1}, x_{2}\sqsubseteq y_{2}$. Using Proposition \ref{pro1.5}(vi), we get 
$x_{1}\sqcup x_{2}\sqsubseteq y_{1}\sqcup y_{2}$. Since $y_{1},y_{2}\in I_{0}$ and $I_{0}$ is an ideal in $\textbf{D}_{\sqcup}$,  $y_{1}\sqcup y_{2}\in I_{0}$. Thus $x_{1}\sqcup x_{2}\in I^{1}$. Let $x\in D$ such that $x\sqsubseteq x_{0}$ for some $x_{0}\in I^{1}$. Then there exists $y\in I_{0}$ such that $x\sqsubseteq x_{0}\sqsubseteq y$ and so $x\in I^{1}$. Therefore $I_{0}$ is an ideal in $\textbf{D}$.

 Now we will show that $I^{1}\cap D_{\sqcup}= I_{0}$. Let $x\in I^{1}\cap D_{\sqcup}.$ Then $x\in I^{1}$ implies that there exists $y\in I_{0}\subseteq D_{\sqcup}$ such that $x\sqsubseteq y$. Since $x,y\in D_{\sqcup}$ and $I_{0}$ is an ideal of $\textbf{D}_{\sqcup}$, by Proposition \ref{pro1} we get $x\in I_{0}.$ Therefore $I^{1}\cap D_{\sqcup}\subseteq I_{0}.$ By definition  of $I^{1}$ it is clear that $I_{0}\subseteq I^{1}.$ Therefore $I^{1}\cap D_{\sqcup}=I_{0}.$ Thus by Proposition \ref{comparison of two ideal} it follows that $I^{1}$ is a primary ideal. Now we claim that $I\subseteq I^{1}$. Let $x\in I.$ Then $x\sqcup x\in I$, as $I$ is an ideal. Therefore $x\sqcup x\in I\cap D_{\sqcup}\subseteq I_{0}$. This implies that $x\sqcup x\in I_{0}$. So $x\sqcup x\in I^{1}$. Since $x\sqsubseteq x\sqcup x$ and $I^{1}$ is an ideal, $x\in I^{1}.$ Hence $I\subseteq I^{1}$.
 \vskip 2pt 
\noindent   Dually one can prove (ii).
\end{proof}


\begin{proposition}
\label{inv-img-prim}
{\rm Let $\textbf{M}$ and $\textbf{D}$ be two dBas, and let $h:\textbf{M}\rightarrow \textbf{D}$ be a dBa homomorphism. The following hold.
\begin{enumerate}[{(i)}]
\item If $a\sqsubseteq b$ then $h(a)\sqsubseteq h(b)$, for all $a,b\in M.$
\item If $I$ is a primary ideal in $\textbf{D}$ then $h^{-1}(I)$ is primary ideal in $\textbf{M}.$
\item If $F$ is a primary filter in $\textbf{D}$ then $h^{-1}(F)$ is a primary filter in $\textbf{M}.$
\end{enumerate}
Moreover, if $h$ is a dBa quasi-isomorphism, the following hold.
\begin{enumerate}
\item[{(iv)}] If $I$ is a primary ideal in $\textbf{M}$ then $h(I)$ is a primary ideal in $\textbf{D}$.
\item[{(v)}] If $F$ is a primary filter in $\textbf{M}$ then $h(F)$ is a primary filter in $\textbf{D}$. 
\end{enumerate}}
\end{proposition}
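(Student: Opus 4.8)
The plan is to handle the five claims in order, observing that (iii) is dual to (ii) and (v) is dual to (iv), so that it suffices to give the arguments for (i), (ii) and (iv). For (i), I would simply unfold the definition of the quasi-order from Definition \ref{DBA}: $a\sqsubseteq b$ means $a\sqcap b=a\sqcap a$ and $a\sqcup b=b\sqcup b$. Applying $h$ and using that it preserves $\sqcap$ and $\sqcup$ gives $h(a)\sqcap h(b)=h(a)\sqcap h(a)$ and $h(a)\sqcup h(b)=h(b)\sqcup h(b)$, which is precisely $h(a)\sqsubseteq h(b)$.

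For (ii), I would first check that $h^{-1}(I)$ is an ideal: closure under $\sqcup$ holds because $h(x\sqcup y)=h(x)\sqcup h(y)\in I$ whenever $h(x),h(y)\in I$, and downward closure follows from (i), since $z\sqsubseteq x$ with $x\in h^{-1}(I)$ gives $h(z)\sqsubseteq h(x)\in I$, whence $h(z)\in I$. For properness I would use that a proper ideal cannot contain $\top$: by Proposition \ref{pro1.5}(ii) every element lies below $\top$, so $\top\in I$ would force $I=D$; since $h(\top_{\textbf{M}})=\top_{\textbf{D}}\notin I$, we get $\top_{\textbf{M}}\notin h^{-1}(I)$, so $h^{-1}(I)\neq M$. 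For primariness, given any $x\in M$ I would apply the primariness of $I$ to $h(x)$: either $h(x)\in I$, giving $x\in h^{-1}(I)$, or $\lrcorner h(x)=h(\lrcorner x)\in I$, giving $\lrcorner x\in h^{-1}(I)$, where I use that $h$ preserves $\lrcorner$. Part (iii) follows by the dual argument, interchanging $\sqcup,\lrcorner,\top$ with $\sqcap,\neg,\bot$ and using Proposition \ref{pro1.5}(i).

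The substantive part is (iv), where the extra hypotheses that $h$ is surjective and quasi-injective are genuinely needed. Closure of $h(I)$ under $\sqcup$ is routine, since $h(x)\sqcup h(y)=h(x\sqcup y)\in h(I)$ for $x,y\in I$. The crux is downward closure: given $a=h(x)\in h(I)$ with $x\in I$ and $c\sqsubseteq a$ in $\textbf{D}$, I would use surjectivity to write $c=h(z)$ for some $z\in M$; then $h(z)\sqsubseteq h(x)$, and \emph{quasi-injectivity} yields $z\sqsubseteq x$, so $z\in I$ and $c\in h(I)$. I expect this to be the main obstacle, because the quasi-order need not be antisymmetric, and one cannot recover the order on $M$ from that on $D$ without the full strength of quasi-injectivity. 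Properness demands the same care: if $\top_{\textbf{D}}=h(x)\in h(I)$ with $x\in I$, then for every $z\in M$ we have $h(z)\sqsubseteq\top_{\textbf{D}}=h(x)$ by Proposition \ref{pro1.5}(ii), hence $z\sqsubseteq x$ by quasi-injectivity and $z\in I$, forcing $I=M$ and contradicting properness.

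Primariness of $h(I)$ then follows as in (ii): any $y\in D$ equals $h(z)$ for some $z\in M$ by surjectivity, and applying primariness of $I$ to $z$, together with $h(\lrcorner z)=\lrcorner y$, gives $y\in h(I)$ or $\lrcorner y\in h(I)$. Finally, (v) is the dual of (iv), with $\sqcap,\neg,\bot$ in place of $\sqcup,\lrcorner,\top$ and Proposition \ref{pro1.5}(i) in place of (ii).
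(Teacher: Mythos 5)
Your proposal is correct and follows essentially the same route as the paper's proof: (i) by unfolding the quasi-order and applying the homomorphism, (ii)--(iii) via preimages using (i) for downward closure and $h(\top)=\top$ for properness, and (iv)--(v) using surjectivity to lift elements and quasi-injectivity to transfer the order back, with primariness handled by preservation of $\lrcorner$ (resp.\ $\neg$). The only deviation is cosmetic: for properness in (iv) you conclude $I=M$ directly, while the paper derives $\top\in I$ first; the two arguments are interchangeable.
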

\begin{proof}
(i) Let $a,b\in M$ such that $a\sqsubseteq b.$ Then $a\sqcap b=a\sqcap a$ and $a\sqcup b=b\sqcup b.$ Therefore $h(a)\sqcap h(b)=h(a\sqcap b)=h(a\sqcap a)=h(a)\sqcap h(a)$ and $h(a\sqcup b)=h(b\sqcup b)=h(b)\sqcup h(b).$ Hence $h(a)\sqsubseteq h(b).$\\
The proof of (iii) is dual to the proof of (ii) and the proof of (v) is dual to that of (iv). We prove (ii) and (iv).\\ 
(ii) Let $I$ be a primary ideal in $\textbf{D},$ and let $a,b\in h^{-1}(I).$ Then $h(a\sqcup b)=h(a)\sqcup h(b)\in I$, as $h(a),h(b)\in I$ and $I$ is an ideal. Therefore $a\sqcup b\in h^{-1}(I)$. Now let $a\in h^{-1}(I)$ and $x\sqsubseteq a$ for some $x\in M.$ Then by (1), $h(x)\sqsubseteq h(a)$. Therefore $h(x)\in I,$ which implies $x\in h^{-1}(I)$. If possible, suppose $h^{-1}(I)=M.$ Then $\top_{M}\in h^{-1}(I)$ implies that $\top_{D}=h(\top_{M})\in I$, which is not possible. Hence $h^{-1}(I)$ is a proper ideal in $\textbf{M}$. Now let $x\in M.$ Then either $h(x)\in I$ or $\lrcorner h(x)\in I$, as $I$ is primary. That is, either $ h(x)\in I$ or $  h(\lrcorner x)\in I.$ So either $x\in h^{-1}(I)$ or $\lrcorner x\in h^{-1}(I)$. Hence $h^{-1}(I)$ is a primary ideal.\\
\noindent (iv) Let $I$ be a primary ideal in $\textbf{M}$ and $h(a),h(b)\in h(I)$ for some $a,b\in I$. Since $I$ is an ideal and $a,b\in I$, $h(a)\sqcup h(b)=h(a\sqcup b)\in h(I)$. Now let $x\sqsubseteq h(a)$ for some $x\in D$. As $h$ is surjective, there exists a $d\in M$ such that $h(d)=x$. So $h(d)\sqsubseteq h(a)$. Therefore $d\sqsubseteq a$, as $h$ is quasi-injective. So $d\in I$. Therefore $x=h(d)\in h(I)$. If possible, suppose $h(I)=D$. Then $\top\in h(I)$, and there exists $y\in I$ such that $h(y)=\top=h(\top),$ which implies that $h(\top)\sqsubseteq h(y)$. So $\top\sqsubseteq y$, as $h$ is quasi-injective. Then $\top\in I$, which is a contradiction. Hence $h(I)$ is a proper ideal in $\textbf{D}$. Now let $z\in D$. As $h$ is surjective, there exists $e\in M$ such that $h(e)=z$. As $I$ is a primary ideal, either $e\in I$ or $\lrcorner e\in I$. Therefore $z=h(e)\in h(I)$ or $\lrcorner z=\lrcorner h(e)=h(\lrcorner e)\in h(I)$. Hence $h(I)$ is a primary ideal.
\end{proof}

\section{Contexts on topological spaces}
\label{TC}
As mentioned in Section \ref{sec:Introduction}, for proving the topological representation theorem for (fully contextual/pure) dBas, we enhance the standard context defined by Wille by adding topologies on the sets $\mathcal{F}_{pr}(\textbf{D})$ of all  primary filters and  $\mathcal{I}_{pr}(\textbf{D})$ of all primary ideals. The resulting structure is an instance of a  {\it context on topological spaces} (CTS), that we define now.  It will be shown in the sequel that the structure is, in fact, an instance of a  special kind of CTS, denoted as  ``CTSCR" (Definition  \ref{topcon}  below). 

\begin{definition}
\label{cntx on top}
{\rm $\mathbb{K}^{T}:=((G,\rho),(M,\tau),R)$ is called a {\it context on topological spaces} (CTS) if
\begin{enumerate}[{(a)}]
\item $(G,\rho)$ and $(M,\tau)$ are topological spaces, and
\item $\mathbb{K}:=(G,M,R)$ is a context.
\end{enumerate}}  
\end{definition}

\noindent It may be noted that a CTS  is  a generalization of the  topological context   of \cite{Hartung1992}.  
 Utilizing the presence of the topologies in a CTS, special kinds of object oriented protoconcepts and object oriented semiconcepts of a CTS shall be considered now.
\begin{definition}
{\rm A {\it clopen object oriented protoconcept} $(A,B)$  of a CTS $\mathbb{K}^{T}$ is an object oriented protoconcept of $\mathbb{K}$ such that $A$ is clopen (closed and open) in $(G,\rho)$ and $B$ is clopen in $(M,\tau)$. 
\vskip 2pt \noindent The set of all clopen object oriented protoconcepts of $\mathbb{K}^{T}$ is denoted by $\mathfrak{R}^{T}(\mathbb{K}^{T})$.}
\end{definition}
\noindent Similarly,
\begin{definition}
{\rm A {\it clopen object oriented semiconcept} $(A,B)$  of $\mathbb{K}^{T}$ is an object oriented semiconcept of $\mathbb{K}$ such that $A$ is clopen in $(G,\rho)$ and $B$ is clopen in $(M,\tau)$.
\vskip 2pt \noindent 
$\mathfrak{S}^{T}(\mathbb{K}^{T})$ denotes the set of all clopen object oriented semiconcepts of $\mathbb{K}^{T}$.}
\end{definition}

Let us give an example.

\begin{example} \label{exampleCTS} {\rm Consider the CTS $\mathbb{K}^{T}:=((G,\tau),(M,\rho),R),$ where  $G:=\{a,b,c,d,e\}$, $M:=\{1,2,3,4\}$, $\tau:=\{\{a,b,c\},\{d,e\},G,\emptyset\}$, $\rho:=\{\{2\},\{1,3,4\},M,\emptyset\}$ and  $R$:=$\{(a,2),(b,2),(c,4),(d,1),(d,4),(e,1), \\(e,3)\}$.
 $(\{a,b,c\},\{2\})$ is a clopen object oriented semiconcept (hence also an object oriented protoconcept) of $\mathbb{K}^{T}$. Note that $\{a,b,c\}^{\blacksquare} = \{2\}$.  }  
\end{example}
\begin{observation}
\label{psocsemicocept}
{\rm For the subset  $A:=\{a,b\}$ of $G$ in Example \ref{exampleCTS}, $(A, A^{\blacksquare})$ is an object oriented semiconcept of $\mathbb{K}:=(G,M,R)$, but not a clopen object oriented semiconcept of the CTS $\mathbb{K}^{T}$. 
}
\end{observation}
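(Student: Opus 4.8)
The plan is to verify the two assertions by direct computation, since the statement concerns a single explicit pair in a finite context. First I would compute $A^{\blacksquare}$ for $A=\{a,b\}$ using the definition $A_{R^{-1}}^{\blacksquare}=\{y\in M:R^{-1}(y)\subseteq A\}$. Reading off the relation $R$, the converse fibres are $R^{-1}(1)=\{d,e\}$, $R^{-1}(2)=\{a,b\}$, $R^{-1}(3)=\{e\}$ and $R^{-1}(4)=\{c,d\}$. Only $R^{-1}(2)$ is contained in $A$, so $A^{\blacksquare}=\{2\}$, and hence the pair under consideration is $(A,A^{\blacksquare})=(\{a,b\},\{2\})$.

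For the first assertion, that $(A,A^{\blacksquare})$ is an object oriented semiconcept of $\mathbb{K}:=(G,M,R)$, I would simply invoke Proposition \ref{obssemi}(i): any pair of the form $(A,A^{\blacksquare})$ lies in $\mathfrak{S}(\mathbb{K})$. Equivalently, one observes directly that the defining condition $A^{\blacksquare}=B$ holds with $B=A^{\blacksquare}$, so no further work is needed here.

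For the second assertion, I would turn to the topology $\tau=\{\{a,b,c\},\{d,e\},G,\emptyset\}$ on $G$. By definition, a clopen object oriented semiconcept of $\mathbb{K}^{T}$ requires its first component to be clopen in $(G,\tau)$. The open sets of $(G,\tau)$ are exactly $\emptyset,\{a,b,c\},\{d,e\},G$, and since $\tau$ is closed under complementation each of these is also closed; thus the clopen subsets of $G$ are precisely $\emptyset,\{a,b,c\},\{d,e\},G$. Since $A=\{a,b\}$ is not among them --- indeed it is neither open nor closed --- $A$ is not clopen in $(G,\tau)$, and therefore $(A,A^{\blacksquare})$ fails to be a clopen object oriented semiconcept of $\mathbb{K}^{T}$.

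There is no genuine obstacle here; the entire content is bookkeeping with the relation $R$ and the finite topology $\tau$. The only points demanding care are to compute $A^{\blacksquare}$ from the converse fibres $R^{-1}(y)$ rather than from the fibres $R(x)$, and to test both openness and closedness of $A$ (here both fail, so the conclusion is immediate). This observation serves its intended purpose of showing that the clopen restriction in the definition of $\mathfrak{S}^{T}(\mathbb{K}^{T})$ is a genuine one: passing from $\mathfrak{S}(\mathbb{K})$ to its clopen counterpart can strictly shrink the collection of semiconcepts.
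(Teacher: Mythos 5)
Your verification is correct and is exactly the direct computation the paper leaves implicit in stating this observation: $A^{\blacksquare}=\{2\}$ from the converse fibres, Proposition \ref{obssemi}(i) gives membership in $\mathfrak{S}(\mathbb{K})$, and $\{a,b\}$ is not among the clopen sets $\emptyset,\{a,b,c\},\{d,e\},G$ of $(G,\tau)$, so the pair cannot lie in $\mathfrak{S}^{T}(\mathbb{K}^{T})$. Nothing further is needed.
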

Different kinds of {\it homomorphisms of CTS} are defined as follows.
\begin{definition}
\label{hom of ctx on top}
{\rm Let $\mathbb{K}^{T}_{1}:=((X_{1},\tau_1),(Y_{1},\rho_1),R_{1})$ and $\mathbb{K}^{T}_{2}:=((X_{2},\tau_{2}),(Y_{2},\rho_{2}),R_{2})$ be two CTS. A {\it  CTS-homomorphism} $(\alpha,\beta):\mathbb{K}^{T}_{1}\rightarrow \mathbb{K}^{T}_{2}$ consists of a pair of maps such that
\begin{enumerate}[(a)]
 \item $(\alpha,\beta):\mathbb{K}_{1}\rightarrow \mathbb{K}_{2}$, is a context homomorphism, and 
\item  $\alpha:X_{1}\rightarrow X_{2}$ and $\beta:Y_{1}\rightarrow Y_{2}$  are  continuous functions.
\end{enumerate}
\noindent Denote $f:= (\alpha,\beta)$. If $\alpha$ and $\beta$ are injective, the homomorphism $f:\mathbb{K}^{T}_{1}\rightarrow \mathbb{K}^{T}_{2}$    is called a {\it CTS-embedding}.\\ A  CTS-embedding $f$ is called a {\it CTS-isomorphism}  if $\alpha$ and $\beta$ are  surjective. \\ If $f:\mathbb{K}^{T}_{1}\rightarrow \mathbb{K}^{T}_{2}$ is a CTS-isomorphism  and $\alpha$, $\beta$ are homeomorphisms, $f$ is called a {\it CTS-homeomorphism}. We say $\mathbb{K}^{T}_{1}$  {\it is homeomorphic to} $\mathbb{K}^{T}_{2}$.}
\end{definition}
\noindent It is easy to see the following.
        \begin{proposition}
        \label{invhomeo}
        {\rm If $(\alpha,\beta):\mathbb{K}^{T}_{1}\rightarrow \mathbb{K}^{T}_{2}$ is a CTS-homeomorphism then  $(\alpha^{-1},\beta^{-1}):\mathbb{K}^{T}_{2}\rightarrow \mathbb{K}^{T}_{1}$ is also  a CTS-homeomorphism.}
        \end{proposition}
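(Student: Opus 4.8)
The plan is to verify, one clause at a time, each requirement in the definition of a CTS-homeomorphism (Definition \ref{hom of ctx on top}) for the pair $(\alpha^{-1},\beta^{-1})$, exploiting throughout that $\alpha$ and $\beta$ are already homeomorphisms between the underlying spaces. The clauses split cleanly into a single ``contextual'' point and several purely topological points, and only the first requires any argument.

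First I would establish that $(\alpha^{-1},\beta^{-1})$ is a context homomorphism from $\mathbb{K}_{2}$ to $\mathbb{K}_{1}$. Since $\alpha$ and $\beta$ are homeomorphisms they are in particular bijections, so the CTS-homeomorphism $(\alpha,\beta)$ restricts to a context isomorphism from $\mathbb{K}_{1}$ to $\mathbb{K}_{2}$; Proposition \ref{contx inv iso} then yields directly that $(\alpha^{-1},\beta^{-1})$ is a context isomorphism, hence a context homomorphism, from $\mathbb{K}_{2}$ to $\mathbb{K}_{1}$. If one prefers a self-contained check, given $u\in X_{2}$ and $v\in Y_{2}$ one writes $u=\alpha(\alpha^{-1}(u))$ and $v=\beta(\beta^{-1}(v))$ and applies the defining equivalence $xR_{1}y\iff\alpha(x)R_{2}\beta(y)$ with $x:=\alpha^{-1}(u)$ and $y:=\beta^{-1}(v)$, obtaining $uR_{2}v\iff\alpha^{-1}(u)R_{1}\beta^{-1}(v)$, which is exactly the homomorphism condition for the reversed pair.

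Next I would dispatch the topological clauses, which are immediate from standard facts about homeomorphisms. Because $\alpha\colon X_{1}\to X_{2}$ is a homeomorphism, its inverse $\alpha^{-1}\colon X_{2}\to X_{1}$ is continuous, and likewise $\beta^{-1}$ is continuous; this makes $(\alpha^{-1},\beta^{-1})$ a CTS-homomorphism. As inverses of bijections, $\alpha^{-1}$ and $\beta^{-1}$ are again bijective, so the homomorphism is simultaneously a CTS-embedding and a CTS-isomorphism. Finally, the inverse of $\alpha^{-1}$ is $\alpha$ itself, which is continuous, so $\alpha^{-1}$ is a homeomorphism, and the same holds for $\beta^{-1}$. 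All clauses of Definition \ref{hom of ctx on top} being met, $(\alpha^{-1},\beta^{-1})$ is a CTS-homeomorphism.

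I do not expect any genuine obstacle in this argument: the only point carrying content is the reversal of the context-homomorphism condition, and that is handled either by invoking Proposition \ref{contx inv iso} or by the one-line bijection computation above, while every topological assertion follows from the elementary fact that the inverse of a homeomorphism is a homeomorphism. The proposition is therefore essentially a bookkeeping verification, which is presumably why the authors state it with ``it is easy to see''.
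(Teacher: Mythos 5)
Your proof is correct and is exactly the routine verification the paper leaves implicit: the paper states this proposition with only ``It is easy to see the following'' and gives no proof, the intended argument being precisely your combination of Proposition \ref{contx inv iso} (or the one-line bijection substitution) for the context condition with the standard fact that inverses of homeomorphisms are homeomorphisms. Nothing is missing.
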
 
       \noindent  $(\alpha^{-1},\beta^{-1})$ in Proposition \ref{invhomeo} is  called  the {\it inverse} of $(\alpha,\beta)$.
        \vskip 2pt
        Let $(X,\rho)$ and $(Y,\tau)$ be two topological spaces,  and $R$ be a binary relation between $X$ and $Y$. In \cite{berge1997topological, guide2006infinite}, a relation  $R$ is called  a {\it many-valued mapping } or {\it correspondence} from $X$ into $Y$, as $R$ maps each $x\in X$ to a subset of $Y$. $X^{*}:=\{x\in X: R(x)\neq\emptyset\}$ is called the {\it domain} of the mapping and $Y^{*}:=\cup_{x\in X}R(x)$, the {\it range} or set of values of $R$. The {\it lower inverse} for $R$ of a subset $B$  of $Y$  is defined by $R^{-}(B):=\{x\in X: R(x)\cap B\neq\emptyset\}$. On the other hand, its  {\it upper inverse} is defined by $R^{+}(B):=\{x\in X: R(x)\subseteq B\}.$ Next we recall the definition of {\it continuity} of a many-valued mapping (relation) \cite{guide2006infinite}.
    In the following, let $(X,\rho)$  and $(Y,\tau)$ be two topological spaces and $R$ be a binary relation between $X$ and $Y$.
       
        \begin{definition}
\label{continuty def}
{\rm \cite{guide2006infinite} 
For any $x_{0}\in X$, a relation $R$ is {\it lower semi-continuous at} $x_{0}$ if for each open set $O$ in $(Y,\tau)$ with  $R(x_{0})\cap O\neq\emptyset,$ there exists a neighbourhood $U(x_{0})$ in $(X,\rho)$ such that \[x\in U(x_{0})~\mbox{implies that}~ R(x)\cap O\neq\emptyset.\]

\noindent $R$ is {\it upper semi-continuous at} $x_{0} \in X$, if for each open set $O$ in $(Y,\tau)$ containing $R(x_{0})$ there is a neighbourhood $U(x_{0})$  in $(X,\rho)$ such that \[x\in U(x_{0})~\mbox{implies that}~ R(x)\subseteq O.\]

\noindent $R$ is {\it continuous at} $x_{0} \in X$, if it is both lower and upper semi-continuous at $x_{0}$. 
$R$ is {\it lower semi-continuous in $X$} if $R$ is lower semi-continuous at each point of $X$. 
$R$ is {\it upper semi-continuous in $X$} if $R$ is upper semi-continuous at each point of $X$, and  $R$ is {\it continuous} in $X$ if $R$ is both lower and upper semi-continuous in $X$.}
\end{definition}

\begin{observation}
\label{luinv-img}
{\rm Let us recall the definitions of $\square,\lozenge,\blacksquare,\blacklozenge$ given in Section \ref{dba} and the definitions of upper and lower inverse of a relation $R$ given above. 
\begin{enumerate}[{(i)}]
    \item It can be seen that $B^{\lozenge}=R^{-}(B), B^{\square}=R^{+}(B)$ and $A^{\blacklozenge}=(R^{-1})^{-}(A),A^{\blacksquare}=(R^{-1})^{+}(A).$
\item If $R$ is a function, $B^{\lozenge}=B^{\square}=R^{-1}(B)$. Moreover, if R is a bijection, $A^{\blacklozenge}=A^{\blacksquare}=R(A)$.
\end{enumerate}}
\end{observation}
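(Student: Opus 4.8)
The plan is to treat this as a pure definition-unfolding exercise, since both claims follow by directly comparing the set-builder descriptions of the operators $\lozenge,\square,\blacklozenge,\blacksquare$ with those of the upper and lower inverses, under the identification $X=G$, $Y=M$ between the correspondence framework and the context $\mathbb{K}=(G,M,R)$.

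For part (i) I would first recall that, by definition, $B^{\lozenge}=\{x\in G:R(x)\cap B\neq\emptyset\}$ and $B^{\square}=\{x\in G:R(x)\subseteq B\}$, whereas the lower and upper inverses are $R^{-}(B)=\{x\in X:R(x)\cap B\neq\emptyset\}$ and $R^{+}(B)=\{x\in X:R(x)\subseteq B\}$. These set-builder expressions are literally identical, so $B^{\lozenge}=R^{-}(B)$ and $B^{\square}=R^{+}(B)$ hold by inspection. For the remaining two equalities I would view the converse relation $R^{-1}$ as a correspondence from $M$ into $G$, so that $(R^{-1})^{-}(A)=\{y\in M:R^{-1}(y)\cap A\neq\emptyset\}$ and $(R^{-1})^{+}(A)=\{y\in M:R^{-1}(y)\subseteq A\}$; comparing these with the definitions of $A^{\blacklozenge}$ and $A^{\blacksquare}$ gives $A^{\blacklozenge}=(R^{-1})^{-}(A)$ and $A^{\blacksquare}=(R^{-1})^{+}(A)$ immediately.

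For part (ii) I would exploit that a function assigns to each $x$ a single value, so $R(x)$ is a singleton and in particular nonempty. Hence the two conditions $R(x)\cap B\neq\emptyset$ and $R(x)\subseteq B$ both reduce to the single condition that the value of $R$ at $x$ lies in $B$, i.e. that $x$ belongs to the preimage $R^{-1}(B)$; this yields $B^{\lozenge}=B^{\square}=R^{-1}(B)$. When $R$ is moreover a bijection, its converse $R^{-1}$ is again a single-valued function, so applying the same reasoning to $R^{-1}$ collapses $A^{\blacklozenge}$ and $A^{\blacksquare}$ to a common set; I would then identify this set with the forward image $R(A)$ by observing that, for a bijection, the value of $R^{-1}$ at $y$ lying in $A$ is equivalent to $y\in R(A)$.

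I do not expect any genuine obstacle here, as the statement is essentially a bookkeeping observation recorded for later use. The only point demanding care is the notational overloading: one must read $R(x)$ as a singleton set when $R$ is single-valued, distinguish the converse relation $R^{-1}$ from an inverse function, and interpret $R^{-1}(B)$ as a preimage while $R(A)$ is a forward image. Keeping these readings straight — together with the remark that single-valuedness forces $R(x)\neq\emptyset$, so that the ``meets $B$'' and ``contained in $B$'' conditions truly coincide — is all that the argument requires.
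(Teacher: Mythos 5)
Your proposal is correct and matches the paper's treatment: the paper records this as an observation with no written proof (``It can be seen that\ldots''), precisely because it follows by the same definition-unfolding you carry out, including the key point that single-valuedness makes $R(x)$ a nonempty singleton so that the ``meets $B$'' and ``contained in $B$'' conditions coincide.
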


\noindent We now  define a CTSCR.

\begin{definition}\label{topcon}
{\rm A   {\it CTSCR} is a CTS $\mathbb{K}^{T}:=((G,\rho),(M,\tau),R)$  where  $R$ and $R^{-1}$  are continuous in $G$ and $M$ respectively.}
\end{definition}

\noindent It will be shown in Theorem \ref{clopen dba} below that the set of all clopen object oriented protoconcepts of a CTSCR forms a fully contextual dBa, while the set of all clopen object oriented semiconcepts forms a pure dBa. 

\vskip 2pt 
We shall  give an example of a CTSCR, but before that let us note the following results which will be useful while demonstrating semi-continuity and continuity of relations and their converses.
\begin{theorem}
{\rm \cite{guide2006infinite} 
\bit  \item[{\bf I.}] The following are equivalent. 
\label{upper semi-continuty}
\begin{enumerate}[{(i)}]
\item $R$ is upper semi-continuous in $X$. 
\item For each open set $O$ in $(Y,\tau)$,  $O^{\square}$ is open in $(X,\rho)$.
\item For each closed set $A$ in $(Y,\tau)$,    $A^{\lozenge}$ is closed in $(X,\rho)$.
\end{enumerate}
\item[{\bf II.}] The following are equivalent. 
\label{lower semi-continuty}
\begin{enumerate}[{(i)}]
\item $R$ is lower semi-continuous in $X$.
\item For each open set $O$ in $(Y,\tau)$,  $O^{\lozenge}$ is open in $(X,\rho)$.
\item  For each closed set $B$ in $(Y,\tau)$,  $B^{\square}$ is closed in $(X,\rho)$.
\end{enumerate}
\eit}
\end{theorem}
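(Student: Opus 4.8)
The plan is to prove each of the two parts as a short cycle of implications, relying on two simple facts. The first is the elementary criterion that a subset of a topological space is open exactly when it is a neighbourhood of each of its points; this is what converts the pointwise semi-continuity condition into a global openness statement. The second is the complementation identity $B^{\square}=B^{c\lozenge c}$ from Theorem \ref{property of box}(v), which (after taking complements and substituting) yields the two rewritings $(A^{c})^{\square}=A^{\lozenge c}$ and $(A^{c})^{\lozenge}=A^{\square c}$; these let one pass between the open-set formulation and the closed-set formulation for free. Since Part II is the ``lower'' mirror of Part I — obtained by interchanging $\square$ with $\lozenge$ and replacing the containment $R(x_0)\subseteq O$ by the intersection condition $R(x_0)\cap O\neq\emptyset$ throughout — I would write out Part I in full and then remark that Part II follows by the identical argument with these substitutions.

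For Part I, I would first establish (i) $\Leftrightarrow$ (ii). Assuming $R$ is upper semi-continuous, let $O$ be open and pick any $x_0\in O^{\square}$, so $R(x_0)\subseteq O$. Upper semi-continuity at $x_0$ (Definition \ref{continuty def}) supplies a neighbourhood $U(x_0)$ with $R(x)\subseteq O$, i.e. $x\in O^{\square}$, for every $x\in U(x_0)$; hence $U(x_0)\subseteq O^{\square}$ and $x_0$ is an interior point, so $O^{\square}$ is open. For the converse, given $x_0$ and an open $O\supseteq R(x_0)$, the hypothesis makes $O^{\square}$ an open set containing $x_0$, and it serves directly as the required neighbourhood since membership in $O^{\square}$ is by definition the condition $R(x)\subseteq O$.

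The equivalence (ii) $\Leftrightarrow$ (iii) I would then obtain purely formally from $(A^{c})^{\square}=A^{\lozenge c}$. If (ii) holds and $A$ is closed, then $A^{c}$ is open, so $(A^{c})^{\square}=A^{\lozenge c}$ is open and therefore $A^{\lozenge}$ is closed, giving (iii). Conversely, if (iii) holds and $O$ is open, then $O^{c}$ is closed, so $(O^{c})^{\lozenge}$ is closed and its complement $(O^{c})^{\lozenge c}=O^{\square}$ is open, giving (ii). Part II is proved verbatim, using instead the dual identity $(A^{c})^{\lozenge}=A^{\square c}$ in the closed-set step: a closed $B$ yields $(B^{c})^{\lozenge}=B^{\square c}$ open, hence $B^{\square}$ closed, and so on.

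I do not anticipate a serious obstacle; the content is bookkeeping with the definitions and the complementation identities. The one place demanding attention is the direction (i) $\Rightarrow$ (ii): the neighbourhood $U(x_0)$ furnished by semi-continuity need not itself be open, yet it still certifies $x_0$ as an interior point of $O^{\square}$, so openness must be concluded from the ``neighbourhood of each point'' criterion rather than from $U(x_0)$ being open outright. Keeping the quantifier structure of Definition \ref{continuty def} aligned with the interior-point argument is the only subtlety.
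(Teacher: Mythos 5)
Your proof is correct, but there is nothing in the paper to compare it against: the theorem is quoted verbatim from the cited reference \cite{guide2006infinite} (Aliprantis--Border) and the paper supplies no proof of it. Your argument is the standard, self-contained one: the equivalence (i) $\Leftrightarrow$ (ii) via the ``neighbourhood of each of its points'' criterion (with the correct observation that $O^{\square}$ itself serves as the required neighbourhood in the converse direction), and (ii) $\Leftrightarrow$ (iii) purely formally from the complementation identity $B^{\square}=B^{c\lozenge c}$ of Theorem \ref{property of box}(v), whose two rewritten forms $(A^{c})^{\square}=A^{\lozenge c}$ and $(A^{c})^{\lozenge}=A^{\square c}$ you state and use correctly. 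The subtlety you flag --- that the neighbourhood $U(x_{0})$ furnished by Definition \ref{continuty def} need not be open, so openness of $O^{\square}$ must be concluded from the interior-point criterion rather than by writing $O^{\square}$ as a union of the sets $U(x_{0})$ --- is indeed the only point requiring care, and your handling of it, as well as the mirrored treatment of Part II, is sound.
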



\begin{corollary}
\label{continuty of R}
{\rm The following are equivalent.
\begin{enumerate}[{(i)}]
\item $R$ is continuous in $X$.
\item if $B$ is open in $(Y,\tau)$ then both $B^{\lozenge}$ and $B^{\square}$  are  open in $(X,\rho)$.
\item if $B$ is closed in $(Y,\tau)$ then both $B^{\lozenge}$ and $B^{\square}$  are  closed in $(X,\rho)$.
\end{enumerate}}

\end{corollary}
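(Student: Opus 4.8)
The plan is to derive the corollary directly from Definition \ref{continuty def} together with the two equivalences recorded in Theorem \ref{upper semi-continuty} (parts I and II). By Definition \ref{continuty def}, $R$ is continuous in $X$ precisely when it is simultaneously upper and lower semi-continuous in $X$, so the entire task reduces to conjoining the conditions supplied by the two halves of that theorem. No genuinely new argument is needed; the proof is a bookkeeping exercise in matching each operator to the correct semi-continuity notion.

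For the equivalence of (i) and (ii) I would use the open-set characterizations. Part I gives that $R$ is upper semi-continuous if and only if $O^{\square}$ is open in $(X,\rho)$ for every open $O$ in $(Y,\tau)$, while Part II gives that $R$ is lower semi-continuous if and only if $O^{\lozenge}$ is open for every open $O$. Conjoining these two statements shows that $R$ is continuous if and only if, for every open $B$, both $B^{\square}$ and $B^{\lozenge}$ are open, which is exactly condition (ii). The one point that requires care is keeping the pairing straight: upper semi-continuity governs the behaviour of the necessity operator $\square$ on open sets, whereas lower semi-continuity governs the possibility operator $\lozenge$.

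For the equivalence of (i) and (iii) I would argue identically, but with the closed-set characterizations. Part I yields that upper semi-continuity is equivalent to $A^{\lozenge}$ being closed for every closed $A$, and Part II yields that lower semi-continuity is equivalent to $B^{\square}$ being closed for every closed $B$. Taking the conjunction, $R$ is continuous if and only if both $B^{\lozenge}$ and $B^{\square}$ are closed for every closed $B$, which is condition (iii).

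Since there are no analytic steps beyond invoking the cited theorem, I expect no real obstacle; the only thing that can go wrong is transposing which semi-continuity notion controls $\lozenge$ and which controls $\square$, so the main care lies in tracking the two operators against the two parts of Theorem \ref{upper semi-continuty}. As an alternative to the closed-set route one could pass through complements using Theorem \ref{property of box}(v), namely $B^{\square}=B^{c\lozenge c}$, to deduce (iii) from (ii) directly; however, routing through the already-established parts I and II is cleaner and avoids the extra complementation step.
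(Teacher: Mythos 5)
Your proposal is correct and is essentially the paper's own (implicit) argument: the corollary is stated without proof precisely because it follows by conjoining, via the definition of continuity, the open-set characterizations in parts I(ii) and II(ii) and the closed-set characterizations in parts I(iii) and II(iii) of the preceding theorem, with the operator--semi-continuity pairing exactly as you have it. No gap to report.
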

\noindent For the converse $R^{-1}$ of  $R$, one can similarly derive 
\begin{theorem}
\label{conv upper semi-continuty}
{\rm \noindent \bit \item[{\bf I.}] The following are equivalent. 
\begin{enumerate}[{(i)}]
\item $R^{-1}$ is lower semi-continuous in $Y$.
 \item For each open set $O$ in $(X,\rho)$, $O^{\blacklozenge}$ is open in $(Y,\tau)$.
 \item For each closed set $O$ in $(X,\rho)$, $O^{\blacksquare}$ is closed in $(Y,\tau)$.
 \end{enumerate}
\item[{\bf II.}]  The following are equivalent. 
\begin{enumerate}[{(i)}]
\item $R^{-1}$ is upper semi-continuous in $Y$. 
\item For each open set $O$ in $(X,\rho)$, $O^{\blacksquare}$ is open in $(Y,\tau)$.
\item For each closed set $A$ in $(X,\rho)$, $A^{\blacklozenge}$ is closed in $(Y,\tau)$.
\end{enumerate}
\eit}
\end{theorem}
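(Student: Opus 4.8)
The plan is to deduce this theorem from Theorems \ref{upper semi-continuty} and \ref{lower semi-continuty} by an appeal to symmetry, viewing the converse $R^{-1}$ as a binary relation from $Y$ to $X$ in its own right. The pivotal observation is Observation \ref{luinv-img}(i): for every $A\subseteq X$ we have $A^{\blacklozenge}=(R^{-1})^{-}(A)$ and $A^{\blacksquare}=(R^{-1})^{+}(A)$, so that $\blacklozenge$ and $\blacksquare$ are precisely the lower and upper inverse operators of $R^{-1}$. In other words, $\blacklozenge$ plays for $R^{-1}$ exactly the role that $\lozenge$ plays for $R$, and $\blacksquare$ the role of $\square$, once the two topological spaces $(X,\rho)$ and $(Y,\tau)$ are interchanged. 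Since the converse of $R^{-1}$ is again $R$, the entire situation is invariant under the exchange $(X,\rho,R,\lozenge,\square)\leftrightarrow(Y,\tau,R^{-1},\blacklozenge,\blacksquare)$, and the cited theorems can simply be re-read in this mirrored form.

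First I would transfer the lower semi-continuity equivalences of Theorem \ref{lower semi-continuty} to the relation $R^{-1}$ in place of $R$, with $X$ and $Y$ swapped and $\lozenge,\square$ replaced by $\blacklozenge,\blacksquare$. This at once yields that $R^{-1}$ is lower semi-continuous in $Y$ if and only if $O^{\blacklozenge}$ is open in $(Y,\tau)$ for every open $O$ in $(X,\rho)$, which is Part I of the present statement. Symmetrically, transferring the upper semi-continuity equivalences of Theorem \ref{upper semi-continuty} to $R^{-1}$ gives that $R^{-1}$ is upper semi-continuous in $Y$ if and only if $O^{\blacksquare}$ is open in $(Y,\tau)$ for every open $O$ in $(X,\rho)$, which is Part II. The equivalent closed-set formulations in each case follow by passing to complements, using the complementation identity $A^{\blacksquare}=A^{c\blacklozenge c}$ of Theorem \ref{property of box}(v): if $A$ is closed in $(X,\rho)$ then $A^{c}$ is open, and openness of $A^{c\blacklozenge}=A^{\blacksquare c}$ is exactly closedness of $A^{\blacksquare}$ in $(Y,\tau)$.

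There is no genuine obstacle here beyond careful bookkeeping; the only real pitfall is keeping track of which operator ($\blacklozenge$ or $\blacksquare$) and which topology belong to each clause, and matching every open-set condition to its correct closed-set dual. I would verify this pairing against the complementation identity rather than transcribe the roles mechanically, since the dualisation sends upper semi-continuity (the $\blacksquare$-open condition) to the $\blacklozenge$-closed condition and lower semi-continuity (the $\blacklozenge$-open condition) to the $\blacksquare$-closed condition. Should a self-contained argument be preferred over the symmetry appeal, it suffices to repeat the open-neighbourhood proofs underlying Theorems \ref{upper semi-continuty} and \ref{lower semi-continuty} verbatim, reading every occurrence of $R(x_{0})$ for $x_{0}\in X$ as $R^{-1}(y_{0})$ for $y_{0}\in Y$.
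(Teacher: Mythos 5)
Your proposal is correct and takes essentially the same route as the paper: the paper offers no separate argument for this theorem, introducing it only with ``one can similarly derive'', which is precisely your symmetry argument --- re-read the cited semicontinuity characterisations of $R$ with $(X,\rho)$ and $(Y,\tau)$ interchanged, $R$ replaced by $R^{-1}$, and $\lozenge,\square$ replaced by $\blacklozenge,\blacksquare$ via Observation \ref{luinv-img}(i).

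The pitfall you flag is, however, worth dwelling on, because the printed statement of the theorem falls into it. Your dualisation via $A^{\blacksquare}=A^{c\blacklozenge c}$ gives the crossed pairing: lower semi-continuity of $R^{-1}$ (the $\blacklozenge$-open condition) is equivalent to ``$A^{\blacksquare}$ closed for every closed $A$'', and upper semi-continuity (the $\blacksquare$-open condition) is equivalent to ``$A^{\blacklozenge}$ closed for every closed $A$'' --- matching the crossing already present in the theorems for $R$, where upper semi-continuity pairs the $\square$-open condition with the $\lozenge$-closed one. The statement as printed does not cross: Part I(iii) (reading $A^{\blacklozenge}$ for the misprinted $O^{\blacklozenge}$) keeps the operator $\blacklozenge$ of clause (ii), and Part II(iii) keeps $\blacksquare$. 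As printed, Part II(iii) is in fact equivalent to lower, not upper, semi-continuity of $R^{-1}$, so the printed equivalences fail in general; the version your argument establishes is the correct one. The misprint is harmless downstream, since the paper later invokes only the clause-(ii) characterisations (in Theorem \ref{topcontext}) and the combined Corollary \ref{convers continuty}, both of which agree with your corrected pairing.
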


\begin{corollary}
\label{convers continuty}
{\rm  The following are equivalent. 
\begin{enumerate}[{(i)}]
\item $R^{-1}$ is continuous in $Y$.
\item if $A$ is open in $(X,\rho)$ then both $A^{\blacklozenge}$ and $A^{\blacksquare}$  are  open in $(Y,\tau)$.
\item if $A$ is closed in $(X,\rho)$ then both $A^{\blacklozenge}$ and $A^{\blacksquare}$ are  closed in $(Y,\tau)$.
\end{enumerate}} 
\end{corollary}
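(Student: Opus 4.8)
The plan is to derive the corollary directly from the two parts of Theorem~\ref{conv upper semi-continuty} together with the definition of continuity. Recall from Definition~\ref{continuty def} that $R^{-1}$ is continuous in $Y$ precisely when it is simultaneously lower semi-continuous and upper semi-continuous in $Y$. Thus the whole argument reduces to splitting condition (i) into these two halves and characterizing each half via the operators $\blacklozenge$ and $\blacksquare$ respectively.

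First I would establish (i) $\Leftrightarrow$ (ii). By Definition~\ref{continuty def}, (i) is equivalent to the conjunction of ``$R^{-1}$ is lower semi-continuous in $Y$'' and ``$R^{-1}$ is upper semi-continuous in $Y$''. Applying Part I of Theorem~\ref{conv upper semi-continuty}, the first conjunct is equivalent to: for each open set $A$ in $(X,\rho)$, $A^{\blacklozenge}$ is open in $(Y,\tau)$. Applying Part II, the second conjunct is equivalent to: for each open set $A$ in $(X,\rho)$, $A^{\blacksquare}$ is open in $(Y,\tau)$. The conjunction of these two statements is exactly condition (ii), so (i) $\Leftrightarrow$ (ii).

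Next, I would establish (i) $\Leftrightarrow$ (iii) in the same manner, but invoking the closed-set characterizations. Here the first conjunct (lower semi-continuity) is equivalent, by Part I, to: for each closed set $A$ in $(X,\rho)$, $A^{\blacklozenge}$ is closed in $(Y,\tau)$; and the second conjunct (upper semi-continuity) is equivalent, by Part II, to: for each closed set $A$ in $(X,\rho)$, $A^{\blacksquare}$ is closed in $(Y,\tau)$. Their conjunction is precisely (iii), giving (i) $\Leftrightarrow$ (iii). Combining the two chains yields the equivalence of (i), (ii), and (iii).

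Since every ingredient is already in hand, there is no genuine obstacle here; the only point requiring a little care is the bookkeeping that the conjunction of a $\blacklozenge$-condition and a $\blacksquare$-condition, obtained from the two separate parts of Theorem~\ref{conv upper semi-continuty}, matches clause for clause the single combined condition stated in (ii) (and, dually, in (iii)). The argument is entirely parallel to the proof of Corollary~\ref{continuty of R}, with $R$ replaced by $R^{-1}$ and the pair $(\lozenge,\square)$ replaced by $(\blacklozenge,\blacksquare)$.
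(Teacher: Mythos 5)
Your proposal is correct and matches the paper's (implicit) derivation: the paper states this as an immediate corollary of the two parts of Theorem~\ref{conv upper semi-continuty}, exactly as you do, by splitting continuity of $R^{-1}$ into lower and upper semi-continuity and taking the conjunction of the open-set (respectively closed-set) characterizations. The bookkeeping point you flag — that a universally quantified conjunction distributes into the two separate clauses — is indeed the only content, so no further argument is needed.
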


Now let us give an example of a CTSCR.
\begin{example}
\label{exmple of topocxt}
{\rm Let $(X,\tau_{1})$ and $(Y,\tau_{2})$ be two non-empty totally disconnected spaces, and $C$ be a fixed non-empty clopen subset of $(Y,\tau_{2}).$  Then $\mathbb{K}_{+}^{T}:=((X,\tau_{1}),(Y,\tau_{2}),R),$ where $xRy$ if and only if $y\in C$, is a CTSCR.} 
\end{example}
\begin{proof}
One needs to show that $R$ and $R^{-1}$ are   continuous. For establishing continuity of  $R$, utilizing Corollary \ref{continuty of R}, we just show that for any open set $O$ in $(Y,\tau_{2})$, $O^{\square},O^{\lozenge}$ are open in $(X,\tau_{1})$. Observe that for any $B\subseteq Y$,
\[B^{\square} = \begin{cases} X &\mbox{if } C\subseteq B\\
\emptyset & \mbox{if } C\nsubseteq B \end{cases} ~\mbox{and}~ B^{\lozenge} = \begin{cases} X &\mbox{if } B\nsubseteq C^{c}\\
\emptyset & \mbox{if } B\subseteq C^{c} \end{cases} \] 
Therefore for any open set $O$ in $(Y,\tau_{2})$, $O^{\square}$ and $O^{\lozenge}$ are always open  in $(X,\tau_{1})$ and hence $R$ is continuous. Now consider the case of $R^{-1}$. From the definition of $R$ it is clear that \[R^{-1}(y)=\begin{cases}\emptyset &\mbox{if} ~y\notin C\\X &\mbox{if} ~y\in C\end{cases}\] 
So for any $A\subseteq X$,\[A^{\blacksquare}=\begin{cases}C^{c}&\mbox{if}~A\subset X\\Y & \mbox{if}~A=X\end{cases} ~\mbox{and}~ A^{\blacklozenge}=C\]  
Thus for any open set $O$ of $(X,\tau_{1})$,   $O^{\blacksquare},  O^{\blacklozenge}$ are open in $(Y,\tau_{2})$, making $R^{-1}$ continuous by Corollary \ref{convers continuty}.
\end{proof}

For CTSCRs, we rename the maps given in Definition \ref{hom of ctx on top}.
 \begin{definition}
 \label{topocxthomeo}
  {\rm 
 In case of CTSCRs, a CTS-homomorphism, CTS-embedding, CTS-isomorphism and CTS-homeomorphism are respectively called  {\it CTSCR-homomorphism}, {\it CTSCR-embedding}, {\it CTSCR-isomorphism},
and  {\it CTSCR-homeomorphism}.}

\end{definition}

\begin{theorem}
\label{clopen dba}
{\rm Let $\mathbb{K}^{T} :=((X,\tau_{1}),(Y,\tau_{2}),R)$ be a  CTSCR. 
\begin{enumerate}[{(i)}]
\item $\underline{\mathfrak{R}}^{T}(\mathbb{K}^{T}):=(\mathfrak{R}^{T}(\mathbb{K}^{T}),\sqcup,\sqcap,\neg,\lrcorner,\top,\bot)$ is a fully contextual dBa. 
\item $\mathcal{S}^{T}(\mathbb{K}^{T}):=(\mathfrak{S}^{T}(\mathbb{K}^{T}),\sqcup,\sqcap,\neg,\lrcorner,\top,\bot)$ is a pure dBa. 
\end{enumerate}}
\end{theorem}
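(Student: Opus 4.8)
The plan is to realize each algebra as a substructure of one already known to be a dBa of the required type, and then to recover the extra structural property (full contextuality, respectively purity) by exploiting the clopenness that continuity of $R$ and $R^{-1}$ provides. The role of the CTSCR hypothesis is exactly to supply both halves of this clopenness.

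For (i), I would first verify that $\mathfrak{R}^{T}(\mathbb{K}^{T})$ is closed under all six operations, so that it is a subalgebra of the dBa $\underline{\mathfrak{R}}(\mathbb{K})$ of Theorem \ref{object-proto and proto}(i) and hence itself a dBa (the dBa axioms of Definition \ref{DBA} are identities, so any subset closed under the operations is again a dBa). That the results are object oriented protoconcepts is already guaranteed by Theorem \ref{object-proto and proto}(i); what must be added is that the component sets stay clopen. Here each operation is matched to the appropriate continuity corollary: $\sqcap$ and $\neg$ produce the sets $(A\cup C)^{\blacksquare}$ and $A^{c\blacksquare}$ (the remaining components $A\cup C$ and $A^{c}$ being clopen trivially), and these are clopen by continuity of $R^{-1}$ (Corollary \ref{convers continuty}); dually, $\sqcup$ and $\lrcorner$ produce $(B\cap D)^{\lozenge}$ and $B^{c\lozenge}$, clopen by continuity of $R$ (Corollary \ref{continuty of R}); while $\top=(\emptyset,\emptyset)$ and $\bot=(X,Y)$ have clopen components outright. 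Since the quasi-order $\sqsubseteq$ restricts from the contextual $\underline{\mathfrak{R}}(\mathbb{K})$, antisymmetry is inherited, so $\underline{\mathfrak{R}}^{T}(\mathbb{K}^{T})$ is contextual.

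The delicate step is full contextuality, which cannot simply be inherited (subalgebras of fully contextual dBas need not be fully contextual). Given $y\in(\mathfrak{R}^{T})_{\sqcap}$ and $x\in(\mathfrak{R}^{T})_{\sqcup}$ with $y_{\sqcup}=x_{\sqcap}$, I would write $y=(A,A^{\blacksquare})$ and $x=(B^{\lozenge},B)$, where, crucially, $A$ and $B$ are clopen precisely because $y$ and $x$ are clopen object oriented semiconcepts. Computing $y_{\sqcup}=(A^{\blacksquare\lozenge},A^{\blacksquare})$ and $x_{\sqcap}=(B^{\lozenge},B^{\lozenge\blacksquare})$, the hypothesis $y_{\sqcup}=x_{\sqcap}$ forces $A^{\blacksquare\lozenge}=B^{\lozenge}$, which by Observation \ref{equivalent def of proto} says exactly that $z:=(A,B)$ is an object oriented protoconcept; its components are clopen, so $z\in\mathfrak{R}^{T}(\mathbb{K}^{T})$, and one checks directly that $z_{\sqcap}=y$ and $z_{\sqcup}=x$. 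Uniqueness follows because any rival $z'$ satisfies $z'_{\sqcap}=z_{\sqcap}$ and $z'_{\sqcup}=z_{\sqcup}$, whence $z\sqsubseteq z'$ and $z'\sqsubseteq z$ by Proposition \ref{pro1.5}(iv), so $z=z'$ by contextuality. The one point worth stressing is that the witness $z$ automatically lands in the clopen subalgebra, because $A$ and $B$ are inherited clopen from $y$ and $x$; this is what makes full contextuality survive the passage to the clopen part.

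For (ii) the argument is shorter. The same operation-by-operation continuity checks show $\mathfrak{S}^{T}(\mathbb{K}^{T})$ is closed under the operations, hence a subalgebra of the pure dBa $\mathcal{S}(\mathbb{K})$ of Theorem \ref{dualsemiconcept}(i), and therefore a dBa. Purity is then inherited directly, since it is a condition quantified over all elements: for any $(A,B)\in\mathfrak{S}^{T}(\mathbb{K}^{T})\subseteq\mathfrak{S}(\mathbb{K})$, purity of $\mathcal{S}(\mathbb{K})$ gives $(A,B)\sqcap(A,B)=(A,B)$ or $(A,B)\sqcup(A,B)=(A,B)$, which is exactly the defining condition of Definition \ref{contextdefn}(c). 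I expect the only genuine work across both parts to be the bookkeeping in the closure arguments, namely correctly pairing each operation with continuity of $R$ or of $R^{-1}$, which is precisely what requires a CTSCR rather than a bare CTS.
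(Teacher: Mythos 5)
Your proposal is correct and follows essentially the same route as the paper: closure under the operations via Corollaries \ref{continuty of R} and \ref{convers continuty}, contextuality via the order characterization (Proposition \ref{order object-proto}), full contextuality by exhibiting the witness $(A,D)$ built from the clopen components of the given pair (your $z=(A,B)$ is the paper's $(A,D)$ up to relabeling), and purity of the clopen semiconcept algebra inherited from $\mathcal{S}(\mathbb{K})$. Your explicit remarks that the dBa axioms are identities (so subalgebras are automatically dBas) and that the full-contextuality witness automatically has clopen components merely spell out points the paper leaves implicit ("routine to verify", "clearly unique").
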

\begin{proof}
(i) Let $(A,B)$ and $(C,D)$ belong to $\mathfrak{R}^{T}(\mathbb{K}^{T}).$ Then $(A,B)\sqcap (C,D)=(A\cup C, (A\cup C)^{\blacksquare}), \neg (A,B)=(A^{c},A^{c\blacksquare})$ and $(A,B)\sqcup (C,D)=((B\cap D)^{\lozenge}, B\cap D ), \lrcorner (A,B)=(B^{c\lozenge}, B^{c})$. Since $A,C$ are clopen in $(X,\tau_{1})$, $A\cup C$ and $A^{c}$ are also clopen in $(X,\tau_{1})$. Similarly $B,D$ are clopen in $(Y,\tau_{2})$ implies that $B\cap D$ and $B^{c}$ are clopen in $(Y,\tau_{2})$. Since $R^{-1}$ is continuous, 
by Corollary \ref{convers continuty}, $ (A\cup C)^{\blacksquare}$ and $A^{c\blacksquare}$ are closed and open in $(Y,\tau_{2})$. Similarly, continuity of $R$ implies that $(B\cap D)^{\lozenge}$ and $B^{c\lozenge}$ are clopen in $(X,\tau_{1})$, by Corollary \ref{continuty of R}. Therefore $\mathfrak{R}^{T}(\mathbb{K}^{T})$  is closed under $\sqcup,\sqcap,\neg,\lrcorner$. It is clear that $\top=(\emptyset,\emptyset)$ and $\bot=(G,M)$ both belong to $\mathfrak{R}^{T}(\mathbb{K}^{T})$. It is routine to verify that the set of  all clopen object oriented protoconcepts satisfies the axioms of dBa with respect to $\sqcup,\sqcap,\neg,\lrcorner,\top,\bot$. Since $\mathfrak{R}^{T}(\mathbb{K}^{T})\subseteq \mathfrak{R}(\mathbb{K})$,  by Proposition \ref{order object-proto}, $\sqsubseteq $ is a partial order on $\mathfrak{R}^{T}(\mathbb{K}^{T}).$ Therefore $\underline{\mathfrak{R}}^{T}(\mathbb{K}^{T})$ is a contextual dBa. Next we  show that $\underline{\mathfrak{R}}^{T}(\mathbb{K}^{T})$ is fully contextual. Let $(A,B), (C,D)\in \mathfrak{R}^{T}(\mathbb{K}^{T}) $  such that $(A,B)=(A,B)_{\sqcap}=(A, A^{\blacksquare})$ and $(C,D)=(C,D)_{\sqcup}=(D^{\lozenge}, D)$. Let us also assume that $(A,B)_{\sqcup}=(C,D)_{\sqcap}$.  Then $(A^{\blacksquare\lozenge}, A^{\blacksquare})=(D^{\lozenge}, D^{\lozenge\blacksquare})$, which implies that $A^{\blacksquare\lozenge}=D^{\lozenge}$. So $(A,D)\in \mathfrak{R}^{T}(\mathbb{K}^{T})$ such that $(A, D)_{\sqcap}=(A,B)$ and $(A,D)_{\sqcup}=(C,D)$ and it is clearly unique. \\
(ii) Similar to the proof in (i), it can be shown that $\mathfrak{S}^{T}(\mathbb{K}^{T})$ is closed with respect to the operations $\sqcup,\sqcap,\neg,\lrcorner,\top,\bot$ and forms a pure dBa. 
\end{proof}
\begin{corollary}
\label{ScxttoB}
{\rm Let $\mathbb{K}^{T}:=((X,\tau_{1}),(Y, \tau_{2}), R)$ be a CTSCR such that $R$ is a  homeomorphism from $(X,\tau_{1})$ to $(Y, \tau_{2})$. Then $\underline{\mathfrak{R}}^{T}(\mathbb{K}^{T})=\mathcal{S}^{T}(\mathbb{K}^{T})$ is a Boolean algebra.}
\end{corollary}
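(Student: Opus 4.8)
The plan is to prove the corollary in two stages: first show that the two underlying sets coincide, $\mathfrak{R}^{T}(\mathbb{K}^{T})=\mathfrak{S}^{T}(\mathbb{K}^{T})$, and then verify that the resulting dBa is actually a Boolean algebra by appealing to the second half of Theorem \ref{Boolean and dBa}.

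For the first stage, I would exploit that $R$ is a bijection (being a homeomorphism), so by Observation \ref{luinv-img}(ii) the four operators collapse: $B^{\lozenge}=B^{\square}=R^{-1}(B)$ for every $B\subseteq Y$, and $A^{\blacklozenge}=A^{\blacksquare}=R(A)$ for every $A\subseteq X$. Using this, I would show that for any pair $(A,B)$ with $A\subseteq X$ and $B\subseteq Y$, both defining conditions reduce to the single equation $R(A)=B$. Indeed, the object oriented protoconcept condition $A^{\blacksquare\lozenge}=B^{\lozenge}$ (Definition \ref{proto}) becomes $R^{-1}(R(A))=R^{-1}(B)$, i.e. $A=R^{-1}(B)$, which is $R(A)=B$ since $R$ is bijective; and each disjunct of the semiconcept condition $A^{\blacksquare}=B$ or $B^{\lozenge}=A$ likewise reduces to $R(A)=B$. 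Hence $\mathfrak{R}(\mathbb{K})=\mathfrak{S}(\mathbb{K})$ at the level of object oriented concepts. Since $R$ is a homeomorphism, $A$ is clopen in $X$ if and only if $B=R(A)$ is clopen in $Y$, so restricting to clopen pairs gives $\mathfrak{R}^{T}(\mathbb{K}^{T})=\mathfrak{S}^{T}(\mathbb{K}^{T})=\{(A,R(A)) : A \text{ clopen in } X\}$, and the two algebras coincide.

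For the second stage, I would take an arbitrary element $(A,B)=(A,R(A))$ of this common algebra and compute the two negations. Using $B^{c}=R(A)^{c}=R(A^{c})$ (valid because $R$ is a bijection) together with the collapsed operators, one gets $\neg(A,B)=(A^{c},A^{c\blacksquare})=(A^{c},R(A^{c}))=(A^{c},B^{c})$, while $\lrcorner(A,B)=(B^{c\lozenge},B^{c})=(R^{-1}(B^{c}),B^{c})=(A^{c},B^{c})$; thus $\neg(A,B)=\lrcorner(A,B)$. A second application yields $\neg\neg(A,B)=\neg(A^{c},B^{c})=(A,B)$. Since Theorem \ref{clopen dba}(i) already guarantees that $\underline{\mathfrak{R}}^{T}(\mathbb{K}^{T})$ is a dBa, the two identities $\neg a=\lrcorner a$ and $\neg\neg a=a$ let me invoke Theorem \ref{Boolean and dBa} to conclude that $\underline{\mathfrak{R}}^{T}(\mathbb{K}^{T})=\mathcal{S}^{T}(\mathbb{K}^{T})$ is a Boolean algebra.

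The computations are short and routine once the operator collapse of Observation \ref{luinv-img}(ii) is in hand; the only point needing a little care is the identity $R(A)^{c}=R(A^{c})$, which holds precisely because $R$ is a bijection (surjectivity gives $\subseteq$, injectivity gives $\supseteq$). It is this bijectivity, rather than continuity per se, that drives the entire argument, with continuity only being used implicitly to guarantee that the clopen pairs form the dBa supplied by Theorem \ref{clopen dba}.
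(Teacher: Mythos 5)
Your proposal is correct and follows essentially the same route as the paper's own proof: both use Observation \ref{luinv-img}(ii) to collapse the operators to $R$ and $R^{-1}$, deduce $R(A)=B$ from the protoconcept condition to get $\mathfrak{R}^{T}(\mathbb{K}^{T})=\mathfrak{S}^{T}(\mathbb{K}^{T})$, then verify $\neg(A,B)=\lrcorner(A,B)=(A^{c},B^{c})$ and $\neg\neg(A,B)=(A,B)$ (via the bijectivity identity $R(A^{c})=(R(A))^{c}$) and invoke Theorem \ref{Boolean and dBa}. The only cosmetic difference is your explicit remark that the homeomorphism preserves clopenness, which the paper does not need since $(A,B)\in\mathfrak{R}^{T}(\mathbb{K}^{T})$ already has both components clopen.
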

\begin{proof}
Since $R$ is a bijection, it follows from Observation \ref{luinv-img}(ii) that for all $A\subseteq X$ and $B\subseteq Y$, $A^{\blacksquare}=R(A)$ and $B^{\lozenge}=R^{-1}(B)$, respectively. Let $(A,B)\in \mathfrak{R}^{T}(\mathbb{K}^{T})$. Then $A^{\blacksquare\lozenge}=B^{\lozenge}$, which is equivalent to $R^{-1}(R(A))=R^{-1}(B)$. As $R$ is a bijection, $R(A)=B$, which implies that $(A,R(A))=(A, A^{\blacksquare})=(A,B)\in \mathfrak{S}^{T}(\mathbb{K}^{T})$. So $\mathfrak{S}^{T}(\mathbb{K}^{T})=\mathfrak{R}^{T}(\mathbb{K}^{T})$. From Theorem \ref{clopen dba} it follows that $\underline{\mathfrak{R}}^{T}(\mathbb{K}^{T})=\mathcal{S}^{T}(\mathbb{K}^{T})$ is a fully contextual pure dBa. Let $(A,B)\in \mathfrak{R}^{T}(\mathbb{K}^{T})$. As  $R$ and $R^{-1}$ are bijections, we get the following equalities. $\neg (A,B)=(A^{c}, R(A^{c}))=(A^{c},(R(A))^{c})=((R^{-1}(B))^{c}, B^{c})=(R^{-1}(B^{c}),B^{c})=\lrcorner (A, B)$ and $\neg\neg(A,B)=(A,B)$. By Theorem \ref{Boolean and dBa}, $\underline{\mathfrak{R}}^{T}(\mathbb{K}^{T})$ is a Boolean algebra.
\end{proof}

\noindent 
Theorem \ref{injecmor} below gives us a relation between  the algebras $\underline{\mathfrak{R}}^{T}(\mathbb{K}_{1}^{T}),\underline{\mathfrak{R}}^{T}(\mathbb{K}_{2}^{T})$ and $\mathcal{S}^{T}(\mathbb{K}_{1}^{T}),\mathcal{S}^{T}(\mathbb{K}_{2}^{T})$ when  the CTSCRs $\mathbb{K}^{T}_{1}, \mathbb{K}^{T}_{2}$  are  homeomorphic. For that, we use the following results about isomorphic contexts. 

  \begin{proposition}
\label{obe1}
{\rm Let $\mathbb{K}_{1}:=(X_{1},Y_{1},R_{1}),\mathbb{K}_{2}:=(X_{2},Y_{2},R_{2})$ be two contexts and $(\alpha,\beta)$  a context isomorphism from $ \mathbb{K}_{1}$ to $\mathbb{K}_{2}$. Then the following hold.
\begin{enumerate}[{(i)}]
\item For $A\subseteq X_1$, $\alpha(A)^\blacksquare=\beta(A^\blacksquare)$ and $\beta(A^\blacklozenge)=\alpha(A)^\blacklozenge.$
\item For $B\subseteq Y_1$, $\alpha(B^\square)=\beta(B)^\square$ and $\beta(B)^\lozenge=\alpha(B^\lozenge).$
\end{enumerate}}
\end{proposition}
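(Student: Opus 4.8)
The plan is to reduce all four identities to a single \emph{transport lemma} describing how the relational neighbourhoods $R(x)$ and $R^{-1}(y)$ are carried by the isomorphism, namely that for every $x\in X_1$ and every $y\in Y_1$,
\[
R_2(\alpha(x))=\beta(R_1(x))\quad\text{and}\quad R_2^{-1}(\beta(y))=\alpha(R_1^{-1}(y)).
\]
Granting this, each of the four equations falls out by unwinding the definitions of $\blacksquare,\blacklozenge,\square,\lozenge$ (read in the appropriate context) and invoking the elementary facts that a bijection both preserves and reflects inclusions and (non)emptiness of intersections.

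To establish the transport lemma, fix $x\in X_1$ and take any $y_2\in Y_2$; by surjectivity of $\beta$ write $y_2=\beta(y_1)$. The defining biconditional of a context isomorphism gives $\alpha(x)R_2\beta(y_1)\iff xR_1 y_1$, so $y_2\in R_2(\alpha(x))$ holds exactly when $y_1\in R_1(x)$, i.e. when $y_2\in\beta(R_1(x))$; this yields the first equality. For the second, fix $y\in Y_1$ and represent an arbitrary $x_2\in X_2$ as $x_2=\alpha(x_1)$ using surjectivity of $\alpha$; then $\alpha(x_1)R_2\beta(y)\iff x_1R_1 y$ shows that $x_2\in R_2^{-1}(\beta(y))$ iff $x_2\in\alpha(R_1^{-1}(y))$.

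For part (i), fix $A\subseteq X_1$ and an arbitrary element $\beta(y)$ of $Y_2$ (with $y\in Y_1$). By the transport lemma $R_2^{-1}(\beta(y))=\alpha(R_1^{-1}(y))$, and since $\alpha$ is injective it reflects inclusions, so $\alpha(R_1^{-1}(y))\subseteq\alpha(A)\iff R_1^{-1}(y)\subseteq A$. Reading off $\blacksquare$ in each context and using injectivity of $\beta$, this reads $\beta(y)\in\alpha(A)^{\blacksquare}\iff y\in A^{\blacksquare}\iff\beta(y)\in\beta(A^{\blacksquare})$, which is $\alpha(A)^{\blacksquare}=\beta(A^{\blacksquare})$. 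The $\blacklozenge$-identity is the same argument with inclusion replaced by nonempty intersection: as $\alpha$ is injective it satisfies $\alpha(R_1^{-1}(y))\cap\alpha(A)=\alpha(R_1^{-1}(y)\cap A)$ and carries nonempty sets to nonempty sets, so the left side is nonempty iff $R_1^{-1}(y)\cap A$ is. Part (ii) is wholly dual, using $R_2(\alpha(x))=\beta(R_1(x))$ in place of its converse and interchanging the roles of $\alpha$ and $\beta$ (so that injectivity of $\beta$ now does the reflecting and preserving).

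I expect no real obstacle: the substance is the transport lemma, which is immediate from $gR_1m\iff\alpha(g)R_2\beta(m)$. The only care needed is bookkeeping with the two directions of the bijections -- \emph{surjectivity} of $\alpha,\beta$ is used to write arbitrary elements of $X_2,Y_2$ as images, while \emph{injectivity} guarantees that inclusions, intersections, and emptiness are faithfully reflected; dropping either direction would invalidate the equivalences.
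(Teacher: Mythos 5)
Your proposal is correct; the difference from the paper's own proof is one of organization rather than substance. The paper proves each identity by a direct double-inclusion element chase: it takes, say, $a\in\alpha(A)^{\blacksquare}$, pulls it back through $\beta^{-1}$, translates the relational condition via $xR_{1}y\iff\alpha(x)R_{2}\beta(y)$, then repeats the chase in the other direction using $\alpha^{-1}$; part (ii) is dismissed as ``similar''. You instead isolate the transport lemma $R_{2}(\alpha(x))=\beta(R_{1}(x))$ and $R_{2}^{-1}(\beta(y))=\alpha(R_{1}^{-1}(y))$, after which all four identities follow uniformly from standard facts about injections (preservation and reflection of inclusions, of intersections, and of non-emptiness). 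The mathematical content is identical --- both arguments amount to the defining biconditional plus bijectivity --- so nothing is gained in generality, but your factoring buys uniformity (four identities become one argument run twice) and transparency about where each half of bijectivity enters: surjectivity to write arbitrary elements of $X_{2},Y_{2}$ as images, injectivity to reflect the set-theoretic conditions. One small gloss worth noting: in your transport lemma the equivalence ``$y_{1}\in R_{1}(x)$ iff $y_{2}\in\beta(R_{1}(x))$'' already uses injectivity of $\beta$ in the right-to-left direction; since your closing remark flags precisely this role of injectivity, the argument is complete.
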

\begin{proof} (i) Let $A\subseteq X_1$. We will show that $\alpha(A)^\blacksquare=\beta(A^\blacksquare)$. Let $a\in\alpha(A)^\blacksquare $, $x\in X_1$ and $xR_{1}\beta^{-1}(a)$. Then $\alpha(x)R_{2} a$, as $f$ is a context isomorphism. Therefore $\alpha(x)\in \alpha(A).$ So $x\in A$, as $\alpha$ is a bijection. This implies that $\beta^{-1}(a)\in A^{\blacksquare}.$ Hence $a\in \beta(A^\blacksquare)$. For the other direction, let $b\in\beta(A^\blacksquare).$ Then $\beta^{-1}(b)\in A^{\blacksquare}$. Let $x^\prime\in X_{2}$ and $x^\prime R_{2} b.$ This means $\alpha^{-1}(x^\prime)R_{1}\beta^{-1}(b)$, as $f$ is a context isomorphism. So $\alpha^{-1}(x^\prime)\in A,$ and $x^\prime\in \alpha(A)$, which implies that $b\in \alpha(A)^{\blacksquare}$. 

\noindent Next let us show that $\beta(A^\blacklozenge)=\alpha(A)^\blacklozenge$. Let $y\in \beta(A^\blacklozenge).$ Then $\beta^{-1}(y)\in A^{\blacklozenge}$, which implies that $R_{1}^{-1}(\beta^{-1}(y))\cap A\neq\emptyset$. So there exists $x\in A$ such that $xR_{1}\beta^{-1}(y)$, which gives $\alpha(x)R_{2}y$, as $f$ is a context isomorphism. This implies that $R^{-1}_{2}(y)\cap \alpha(A)\neq\emptyset$ and hence $y\in \alpha(A)^\blacklozenge$. On the other hand, let $y_{0}\in \alpha(A)^\blacklozenge.$ Then $R^{-1}_{2}(y_{0})\cap \alpha(A)\neq \emptyset.$  So there exists $\alpha(x)\in \alpha(A)$ such that $\alpha(x)R_{2}y_{0}$. As $f$ is a context isomorphism, $xR_{1}\beta^{-1}(y_{0}).$ This implies that $R_{1}^{-1}(\beta^{-1}(y_{0}))\cap A\neq \emptyset$. So $\beta^{-1}(y_{0})\in A^{\blacklozenge}$, whence $y_{0}\in \beta(A^{\blacklozenge}).$\\
(ii) This can be proved similarly.
\end{proof}
 
\begin{definition}
\label{topoisocxt}
{\rm Let  $\mathbb{K}^{T}_{1}:=((X_{1},\tau),(Y_{1},\rho),R_{1})$ and  $\mathbb{K}^{T}_{2}:=((X_{2},\tau_{1}),(Y_{2},\rho_{1}),R_{2})$ be two CTSCRs and $f:=(\alpha,\beta)$ be a CTSCR-homeomorphism from $\mathbb{K}^{T}_{1}$ to $\mathbb{K}^{T}_{2}$. The map $f_{\alpha\beta}:\mathfrak{R}^{T}(\mathbb{K}^{T}_{2})\rightarrow\mathfrak{R}^{T}(\mathbb{K}^{T}_{1})$ is defined as $f_{\alpha\beta}((A,B)):=(\alpha^{-1}(A),\beta^{-1}(B)) $ 
for all $(A,B)\in\mathfrak{R}^{T}(\mathbb{K}^{T}_{2})$.} 
\end{definition}

\begin{theorem}
\label{injecmor}
{\rm If $f:=(\alpha,\beta)$ is a CTSCR-homeomorphism then  $f_{\alpha\beta}$ is a dBa isomorphism from $\underline{\mathfrak{R}}^{T}(\mathbb{K}^{T}_{2})$  to $\underline{\mathfrak{R}}^{T}(\mathbb{K}^{T}_{1})$. Moreover, the restriction of  $f_{\alpha\beta}$ on $\mathcal{S}^{T}(\mathbb{K}^{T}_{2})$ gives a dBa isomorphism from $\mathcal{S}^{T}(\mathbb{K}^{T}_{2})$ to $\mathcal{S}^{T}(\mathbb{K}^{T}_{1})$.
} 
\end{theorem}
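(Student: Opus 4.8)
The plan is to verify in turn that $f_{\alpha\beta}$ is well-defined (that is, lands in $\mathfrak{R}^{T}(\mathbb{K}^{T}_{1})$), that it is a dBa homomorphism, and that it is bijective; the moreover part then follows by restriction. The key tool throughout is Proposition \ref{obe1}, applied not to $(\alpha,\beta)$ itself but to its inverse. Since $f$ is a CTSCR-homeomorphism, $(\alpha,\beta)$ is in particular a context isomorphism from $\mathbb{K}_{1}$ to $\mathbb{K}_{2}$, so by Proposition \ref{contx inv iso} the pair $(\alpha^{-1},\beta^{-1})$ is a context isomorphism from $\mathbb{K}_{2}$ to $\mathbb{K}_{1}$. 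Feeding this inverse isomorphism into Proposition \ref{obe1} yields, for every $A\subseteq X_{2}$ and $B\subseteq Y_{2}$, the transport identities $(\alpha^{-1}(A))^{\blacksquare}=\beta^{-1}(A^{\blacksquare})$ and $(\beta^{-1}(B))^{\lozenge}=\alpha^{-1}(B^{\lozenge})$, where the operators on the left are taken with respect to $R_{1}$ and those on the right with respect to $R_{2}$. These two identities are the engine of the whole argument.

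For well-definedness, fix $(A,B)\in\mathfrak{R}^{T}(\mathbb{K}^{T}_{2})$. As homeomorphisms, $\alpha$ and $\beta$ are continuous, so $\alpha^{-1}(A)$ and $\beta^{-1}(B)$ are clopen in $(X_{1},\tau)$ and $(Y_{1},\rho)$ respectively. To check the protoconcept condition in $\mathbb{K}_{1}$, I would chain the transport identities: $(\alpha^{-1}(A))^{\blacksquare\lozenge}=(\beta^{-1}(A^{\blacksquare}))^{\lozenge}=\alpha^{-1}(A^{\blacksquare\lozenge})=\alpha^{-1}(B^{\lozenge})=(\beta^{-1}(B))^{\lozenge}$, where the middle equality uses the defining condition $A^{\blacksquare\lozenge}=B^{\lozenge}$ of $(A,B)$ in $\mathbb{K}_{2}$. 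Hence $(\alpha^{-1}(A),\beta^{-1}(B))\in\mathfrak{R}^{T}(\mathbb{K}^{T}_{1})$.

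For the homomorphism property, I would compute the image of each operation, using that the bijections $\alpha^{-1},\beta^{-1}$ commute with union, intersection and complementation, together with the transport identities. For example, $f_{\alpha\beta}((A,B)\sqcap(C,D))=(\alpha^{-1}(A\cup C),\beta^{-1}((A\cup C)^{\blacksquare}))=(\alpha^{-1}(A)\cup\alpha^{-1}(C),(\alpha^{-1}(A)\cup\alpha^{-1}(C))^{\blacksquare})$, which equals $f_{\alpha\beta}((A,B))\sqcap f_{\alpha\beta}((C,D))$; the cases of $\sqcup,\neg,\lrcorner$ are entirely analogous, while $f_{\alpha\beta}(\top)=\top$ and $f_{\alpha\beta}(\bot)=\bot$ follow from $\alpha^{-1}(\emptyset)=\beta^{-1}(\emptyset)=\emptyset$ together with $\alpha^{-1}(X_{2})=X_{1}$ and $\beta^{-1}(Y_{2})=Y_{1}$. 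For bijectivity I would avoid a direct surjectivity argument and instead invoke Proposition \ref{invhomeo}: the pair $(\alpha^{-1},\beta^{-1})$ is itself a CTSCR-homeomorphism $\mathbb{K}^{T}_{2}\to\mathbb{K}^{T}_{1}$, so the associated map $g_{\alpha^{-1}\beta^{-1}}:\mathfrak{R}^{T}(\mathbb{K}^{T}_{1})\to\mathfrak{R}^{T}(\mathbb{K}^{T}_{2})$ is well-defined by the first part, and since $\alpha\circ\alpha^{-1}$, $\beta\circ\beta^{-1}$ and their reverses are identities, it is a two-sided inverse of $f_{\alpha\beta}$. Thus $f_{\alpha\beta}$ is a dBa isomorphism.

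For the moreover part, it suffices to observe that $f_{\alpha\beta}$ sends clopen object oriented semiconcepts to clopen object oriented semiconcepts: if $(A,B)\in\mathfrak{S}^{T}(\mathbb{K}^{T}_{2})$ with $B=A^{\blacksquare}$, the transport identity gives $\beta^{-1}(B)=(\alpha^{-1}(A))^{\blacksquare}$, so $f_{\alpha\beta}((A,B))=(\alpha^{-1}(A),(\alpha^{-1}(A))^{\blacksquare})\in\mathfrak{S}^{T}(\mathbb{K}^{T}_{1})$, and the case $A=B^{\lozenge}$ is symmetric. The restriction is therefore a dBa homomorphism into $\mathcal{S}^{T}(\mathbb{K}^{T}_{1})$ and is bijective with inverse the corresponding restriction of $g_{\alpha^{-1}\beta^{-1}}$. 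I expect the only delicate point to be the bookkeeping in the protoconcept verification, namely keeping straight which operators refer to $R_{1}$ and which to $R_{2}$ as the identities are chained; once the inverse-isomorphism form of Proposition \ref{obe1} is in hand, the rest reduces to routine substitution.
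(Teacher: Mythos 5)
Your proposal is correct, and its core --- the transport identities from Proposition \ref{obe1} applied to the inverse context isomorphism $(\alpha^{-1},\beta^{-1})$, the chained verification of the protoconcept condition, and the coordinatewise homomorphism computations --- coincides with the paper's own proof. You diverge at two points, both legitimately. First, for bijectivity the paper argues injectivity directly (from $\alpha,\beta$ being bijections) and surjectivity by exhibiting $(\alpha(A),\beta(B))$ as a preimage via Proposition \ref{obe1}; you instead invoke Proposition \ref{invhomeo} to get the CTSCR-homeomorphism $(\alpha^{-1},\beta^{-1})$ and observe that $g_{\alpha^{-1}\beta^{-1}}$ is a two-sided inverse. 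These carry the same mathematical content, but your packaging is tidier: the well-definedness argument is reused verbatim on the inverse rather than being repeated in mirrored form. Second, and more substantively, for the ``moreover'' clause the paper simply cites Theorem \ref{iso}, which requires knowing that both $\underline{\mathfrak{R}}^{T}(\mathbb{K}^{T}_{i})$ are fully contextual (Theorem \ref{clopen dba}) and that the pure part of $\underline{\mathfrak{R}}^{T}(\mathbb{K}^{T})$ is exactly $\mathcal{S}^{T}(\mathbb{K}^{T})$ (the topological analogue of Proposition \ref{largest sub algebra}); you instead check directly that $f_{\alpha\beta}$ and its inverse carry clopen object oriented semiconcepts to clopen object oriented semiconcepts via the same transport identities. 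Your route is more elementary and self-contained --- it never needs full contextuality or the identification of the pure subalgebra --- at the cost of one extra (routine) case analysis on the two semiconcept conditions $B=A^{\blacksquare}$ and $A=B^{\lozenge}$.
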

\begin{proof}
As $f:=(\alpha,\beta)$ is a CTSCR-isomorphism from   
$\mathbb{K}^{T}_{1}$ to $\mathbb{K}^{T}_{2}$, $f$ is a context isomorphism and $\alpha, \beta$ are continuous  functions on $X_{1},X_{2}$ respectively.   Let $(A,B)\in\mathfrak{R}^{T}(\mathbb{K}^{T}_{2}).$ Then $A^{\blacksquare\lozenge}=B^{\lozenge}$. By Proposition \ref{contx inv iso}, $(\alpha^{-1},\beta^{-1})$ is a context isomorphism. So, using Proposition \ref{obe1}, $\alpha^{-1}(A)^{\blacksquare\lozenge}=\beta^{-1}(A^{\blacksquare})^{\lozenge}=\alpha^{-1}(A^{\blacksquare\lozenge})=\alpha^{-1}(B^{\lozenge})=\beta^{-1}(B)^{\lozenge}.$ Therefore $(\alpha^{-1}(A),\beta^{-1}(B))$ is an object oriented protoconcept of the context $\mathbb{K}_{1}$ and $(\alpha^{-1}(A),\beta^{-1}(B))\in \mathfrak{R}^{T}(\mathbb{K}^{T}_{1})$,  as $\alpha,\beta$ are continuous. Hence $f_{\alpha\beta}$ is a well-defined map. Let $x:=(A_{1},B_{1}),y:=(A_{2},B_{2})\in\mathfrak{R}^{T}(\mathbb{K}^{T}_{2}) $. Using Proposition \ref{obe1} we get, 
\begin{align*}
f_{\alpha\beta}(x\sqcap y)=&(\alpha^{-1}(A_{1}\cup A_{2}),\beta^{-1}((A_{1}\cup A_{2})^{\blacksquare}))\\
=&(\alpha^{-1}(A_{1}\cup A_{2}),\alpha^{-1}(A_{1}\cup A_{2})^{\blacksquare})\\
=&(\alpha^{-1}(A_{1})\cup\alpha^{-1}(A_{2}),(\alpha^{-1}(A_{1})\cup\alpha^{-1}(A_{2}))^{\blacksquare})\\
=&f_{\alpha\beta}(x)\sqcap f_{\alpha\beta}(y).
\end{align*}
 Dually, $f_{\alpha\beta}(x\sqcup y)=f_{\alpha\beta}(x)\sqcup f_{\alpha\beta}(y)$. $f_{\alpha\beta}(\lrcorner x)=(\alpha^{-1}(B_{1}^{c\lozenge}),\beta^{-1}(B_{1}^{c}))=(\beta^{-1}(B_{1}^{c})^{\lozenge},\beta^{-1}(B_{1}^{c}))=\lrcorner f_{\alpha\beta}(x)$. Similarly, $f_{\alpha\beta}(\neg x)=\neg f_{\alpha\beta}(x)$. Since $\alpha $ and $\beta$ are bijective,  $f_{\alpha\beta}(\top)=\top$ and $f_{\alpha\beta}(\bot)=\bot$. Therefore $f_{\alpha\beta}$ is a dBa homomorphism. If $f_{\alpha\beta}(x)=f_{\alpha\beta}(y)$ then $\alpha^{-1}(A_{1})=\alpha^{-1}(A_{2})$ and $\beta^{-1}(B_{1})=\beta^{-1}(B_{2})$. Therefore  $A_{1}=A_{2}$ and $B_{1}=B_{2}$, as $\alpha,\beta$ are bijections, which implies that $x=y.$ So $f_{\alpha\beta}$ is an injective dBa homomorphism.\\
Let $(A,B)\in\mathfrak{R}^{T}(\mathbb{K}^{T}_{1}).$ Then $A^{\blacksquare\lozenge}=B^{\lozenge}$. Therefore using  Proposition \ref{obe1}, $\alpha(A)^{\blacksquare\lozenge}=\beta(B)^{\lozenge}$. Since $\alpha,\beta$  are homeomorphisms, $(\alpha(A),\beta(B))\in\mathfrak{R}^{T}(\mathbb{K}^{T}_{2}) $. So $f_{\alpha\beta}((\alpha(A),\beta(B)))=(\alpha^{-1}(\alpha(A)),\beta^{-1}(\beta(B)))=(A,B)$, as $\alpha,\beta$  are bijective. Hence $f_{\alpha\beta}$ is a dBa isomorphism. \\
Second part of the theorem follows from Theorem \ref{iso}.
\end{proof} 

In the next theorem, we show that the property of being a CTSCR is invariant under CTS-homeomorphisms. 
\begin{theorem}
\label{compatibility}
{\rm Let $\mathbb{K}^{T}_{1}:=((X_{1},\tau_{1}),(Y_{1},\rho_1),R_{1})$ and  $\mathbb{K}^{T}_{2}:=((X_{2},\tau_{2}),(Y_{2},\rho_{2}),R_{2})$ be two CTS such that $\mathbb{K}^{T}_{1}$ is homeomorphic to $\mathbb{K}^{T}_{2}$.  Then $\mathbb{K}^{T}_{1}$  is a CTSCR if and only if $\mathbb{K}^{T}_{2}$ is a CTSCR.}
\end{theorem}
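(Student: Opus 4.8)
The plan is to reduce to a single direction and then transport continuity, as characterized by Corollaries \ref{continuty of R} and \ref{convers continuty}, across the homeomorphism by means of the operator-transport identities of Proposition \ref{obe1}. By Proposition \ref{invhomeo} the inverse of a CTS-homeomorphism is again a CTS-homeomorphism, so it suffices to prove that if $\mathbb{K}^{T}_{1}$ is a CTSCR then so is $\mathbb{K}^{T}_{2}$; the converse then follows by running the same argument with the CTS-homeomorphism $(\alpha^{-1},\beta^{-1}):\mathbb{K}^{T}_{2}\rightarrow\mathbb{K}^{T}_{1}$.

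So assume $\mathbb{K}^{T}_{1}$ is a CTSCR, i.e. $R_{1}$ and $R_{1}^{-1}$ are continuous, and let $f:=(\alpha,\beta)$ be the given CTS-homeomorphism. Then $(\alpha,\beta):\mathbb{K}_{1}\rightarrow\mathbb{K}_{2}$ is a context isomorphism and $\alpha,\beta$ are homeomorphisms. To prove $R_{2}$ continuous, by Corollary \ref{continuty of R} it is enough to show that for every open set $O$ in $(Y_{2},\rho_{2})$ both $O^{\square}$ and $O^{\lozenge}$ (computed in $\mathbb{K}_{2}$) are open in $(X_{2},\tau_{2})$. Since $\beta$ is continuous, $\beta^{-1}(O)$ is open in $(Y_{1},\rho_{1})$, so continuity of $R_{1}$ together with Corollary \ref{continuty of R} gives that $(\beta^{-1}(O))^{\square}$ and $(\beta^{-1}(O))^{\lozenge}$ (computed in $\mathbb{K}_{1}$) are open in $(X_{1},\tau_{1})$. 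Now apply Proposition \ref{obe1}(ii) with $B:=\beta^{-1}(O)$, so that $\beta(B)=O$: this yields $O^{\square}=\alpha((\beta^{-1}(O))^{\square})$ and $O^{\lozenge}=\alpha((\beta^{-1}(O))^{\lozenge})$. As $\alpha$ is a homeomorphism, hence an open map, both sets are open in $(X_{2},\tau_{2})$, so $R_{2}$ is continuous.

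The argument for $R_{2}^{-1}$ is symmetric, using Corollary \ref{convers continuty} and Proposition \ref{obe1}(i). Given an open set $O'$ in $(X_{2},\tau_{2})$, the set $\alpha^{-1}(O')$ is open in $(X_{1},\tau_{1})$, and continuity of $R_{1}^{-1}$ together with Corollary \ref{convers continuty} gives that $(\alpha^{-1}(O'))^{\blacksquare}$ and $(\alpha^{-1}(O'))^{\blacklozenge}$ are open in $(Y_{1},\rho_{1})$. Setting $A:=\alpha^{-1}(O')$ in Proposition \ref{obe1}(i), with $\alpha(A)=O'$, gives $O'^{\blacksquare}=\beta((\alpha^{-1}(O'))^{\blacksquare})$ and $O'^{\blacklozenge}=\beta((\alpha^{-1}(O'))^{\blacklozenge})$, both open in $(Y_{2},\rho_{2})$ since $\beta$ is an open map. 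Hence $R_{2}^{-1}$ is continuous, so $\mathbb{K}^{T}_{2}$ is a CTSCR.

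The computations are routine once the right substitutions are made; I do not expect a genuine difficulty here. The one point to watch is the bookkeeping: the operators on the two sides of the identities in Proposition \ref{obe1} are computed in \emph{different} contexts, and each of $\alpha,\beta$ is used in two distinct roles — as a continuous map, to pull an open set back along the homeomorphism, and as an open map, to push the resulting open set forward. Keeping these roles and the ambient context straight is the main thing to get right.
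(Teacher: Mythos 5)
Your proof is correct and follows essentially the same route as the paper's: pull an open set back along $\beta$ (resp.\ $\alpha$), apply continuity of $R_{1}$ (resp.\ $R_{1}^{-1}$) via Corollary \ref{continuty of R} (resp.\ Corollary \ref{convers continuty}), and push forward with the open map $\alpha$ (resp.\ $\beta$), using the transport identities of Proposition \ref{obe1}. The only differences are cosmetic: you make explicit the reduction to one direction via Proposition \ref{invhomeo} and write out the $R_{2}^{-1}$ case, both of which the paper leaves as ``similar.''
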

\begin{proof}
Let $f:=(\alpha,\beta)$ from $\mathbb{K}^{T}_{1}$ to $\mathbb{K}^{T}_{2}$ be a CTS-homeomorphism, and suppose that  $\mathbb{K}^{T}_{1}$  is a CTSCR. 
We show that $R_{2}$  and $R^{-1}_{2},$  are continuous. 
Let $O$ be an open set in $(Y_{2}, \rho_{2}).$ By Corollary \ref{continuty of R}, it is sufficient to show that   $O^{\square}$ and $O^{\lozenge}$ are open in $(X_{2},\tau_{2})$. $\beta^{-1}(O)$ is open in $(Y_{1},\tau_{1})$, as $\beta$ is continuous and $O$ is open in $(Y_{2},\rho_{2})$. Now continuity of $R_{1}$ implies that $\beta^{-1}(O)^{\square}$ and $\beta^{-1}(O)^{\lozenge}$ are open  in $(X_{1},\tau_{1})$. Since $\alpha $ is a homeomorphism from $(X_{1},\tau_{1})$ to $(X_{2},\tau_{2})$, $\alpha(\beta^{-1}(O)^{\square})$ and $\alpha(\beta^{-1}(O)^{\lozenge})$ are open in $(X_{2},\tau_{2})$. By Proposition \ref{obe1}(ii),  $\alpha(\beta^{-1}(O)^{\lozenge})=\beta(\beta^{-1}(O))^{\lozenge}=O^{\lozenge}$ and $\alpha(\beta^{-1}(O)^{\square})=\beta(\beta^{-1}(O))^{\square}=O^{\square}.$ Hence $O^{\square}$ and $O^{\lozenge}$ are open in $(X_{2},\tau_{2})$.  Thus $R_{2}$ is continuous.
 
The proof of continuity of $R_{2}^{-1}$ is similar,  making use of the continuity of $R_{1}^{-1}$ and the fact that $\beta$ is homeomorphism.
\end{proof}

\noindent It may be remarked that because of Theorem \ref{compatibility} and Theorem \ref{clopen dba}, either one of $ \mathbb{K}^{T}_{1}, \mathbb{K}^{T}_{2}$  in Theorem \ref{injecmor} could have been taken   to be a CTS and the other as a CTSCR.

\vskip 2pt Theorem \ref{clopen dba} establishes that given a CTSCR, one obtains a fully contextual and a pure dBa. What about the converse -- that is, given a (fully contextual/pure) dBa $\textbf{D}$, can one define an ``appropriate''   CTSCR corresponding to $\textbf{D}$?  We address this question in the next section, and  obtain representation results for  dBas in terms of the corresponding  CTSCRs.
 
\section{Representation theorem for  dBas}
\label{RT}

Let $\textbf{D}$ be any dBa. In this section, we shall work with the complement of the standard context $\mathbb{K}(\textbf{D}):=(\mathcal{F}_{p}(\textbf{D}),\mathcal{I}_{p}(\textbf{D}),\Delta),$ namely $\mathbb{K}^{c}(\textbf{D}):=(\mathcal{F}_{p}(\textbf{D}),\mathcal{I}_{p}(\textbf{D}),\nabla),$ where $\nabla$ is the complement of $\Delta$. So for $F\in \mathcal{F}_{p}(\textbf{D})$ and $I\in \mathcal{I}_{p}(\textbf{D})$, $F \nabla I$ if and only in $F\cap I=\emptyset$. By  Proposition \ref{comparison of two ideal} (cf. Section \ref{filterideal}), $\mathbb{K}^{c}(\textbf{D})=(\mathcal{F}_{pr}(\textbf{D}),\mathcal{I}_{pr}(\textbf{D}),\nabla)$. The context $\mathbb{K}^{c}(\textbf{D})$ will henceforth be denoted by $\mathbb{K}_{pr}(\textbf{D})$.
We shall define  a CTS $\mathbb{K}_{pr}^{T}(\textbf{D})$ based on the context $\mathbb{K}_{pr}(\textbf{D})$. 
\vskip 2pt
\noindent  Recall the  sets $F_{x},I_{x}$ given in Notation \ref{Fx and Ix}. 
 \begin{proposition}
\label{clopensemiconcept}
{\rm 
For any $x\in D$, the  following hold.
\begin{enumerate}[{(i)}]
\item (a) $F_{x}^{\blacksquare}= I_{\neg x}$, (b)  $F_{x}^{\blacklozenge}= I_{\lrcorner (x\sqcap x)}$.
\item (a) $I_{x}^{\square}= F_{\lrcorner x}$, (b) $I_{x}^{\lozenge}= F_{\neg (x\sqcup x)}$.
\end{enumerate}}
\end{proposition}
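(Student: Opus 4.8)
The plan is to unfold the two FCA operators on the specific context $\mathbb{K}_{pr}(\textbf{D})=(\mathcal{F}_{pr}(\textbf{D}),\mathcal{I}_{pr}(\textbf{D}),\nabla)$, prove the two ``box''-type equalities (i)(a) and (ii)(a) directly from the prime ideal theorem, and then obtain the two ``diamond''-type equalities (i)(b) and (ii)(b) purely formally by complementation. First I would record what the operators mean here. Since $\nabla^{-1}(I)=\{F:F\cap I=\emptyset\}$ for a primary ideal $I$, for a set $A$ of primary filters one has $A^{\blacksquare}=\{I:\text{every primary }F\text{ with }F\cap I=\emptyset\text{ satisfies }F\in A\}$ and $A^{\blacklozenge}=\{I:\exists F\in A,\ F\cap I=\emptyset\}$; dually, using $\nabla(F)=\{I:F\cap I=\emptyset\}$, for a set $B$ of primary ideals, $B^{\square}=\{F:\text{every primary }I\text{ with }F\cap I=\emptyset\text{ satisfies }I\in B\}$ and $B^{\lozenge}=\{F:\exists I\in B,\ F\cap I=\emptyset\}$. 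Throughout I would lean on the two facts that in a primary filter exactly one of $x,\neg x$ occurs and in a primary ideal exactly one of $x,\lrcorner x$ occurs, which are precisely the identities $(F_x)^{c}=F_{\neg x}$ and $(I_x)^{c}=I_{\lrcorner x}$ of Lemma \ref{derivation}(ii).

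For (i)(a), I must show that a primary ideal $I$ lies in $F_x^{\blacksquare}$ iff $\neg x\in I$. The forward implication is immediate: if $\neg x\in I$ and $F$ is any primary filter with $F\cap I=\emptyset$, then $\neg x\notin F$, so primariness forces $x\in F$, i.e.\ $F\in F_x$; hence $\nabla^{-1}(I)\subseteq F_x$. The statement (ii)(a), that $I_x^{\square}=F_{\lrcorner x}$, is exactly the order dual, proved by interchanging the roles of filters and ideals, of $\neg$ and $\lrcorner$, and of $x\in I$ with $x\in F$.

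The only nontrivial content, and the step I expect to be the main obstacle, is the converse of (i)(a) (and dually of (ii)(a)), where I have to manufacture a witnessing primary filter. I would argue contrapositively: assuming $\neg x\notin I$, consider the principal filter $F(\{\neg x\})=\{y:\neg x\sqsubseteq y\}$. This is disjoint from $I$, for if some $y\in I$ had $\neg x\sqsubseteq y$ then downward-closedness of the ideal would force $\neg x\in I$, a contradiction. The prime ideal theorem (Theorem \ref{PITDB}) applied to the filter $F(\{\neg x\})$ and the ideal $I$ then yields a primary filter $G\supseteq F(\{\neg x\})$ and a primary ideal $J\supseteq I$ with $G\cap J=\emptyset$; in particular $G\cap I=\emptyset$, so $G\in\nabla^{-1}(I)$, while $\neg x\in G$ gives $x\notin G$ by Lemma \ref{derivation}(ii), so $G\notin F_x$. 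Thus $\nabla^{-1}(I)\not\subseteq F_x$, establishing the contrapositive. The dual construction for (ii)(a) uses the principal ideal $I(\{\lrcorner x\})=\{y:y\sqsubseteq\lrcorner x\}$, which is disjoint from $F$ whenever $\lrcorner x\notin F$, followed by the same appeal to Theorem \ref{PITDB}.

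Finally I would deduce (i)(b) and (ii)(b) formally, without any further use of filters and ideals. From Theorem \ref{property of box}(v) we have $A^{\blacklozenge}=A^{c\blacksquare c}$, so using $(F_x)^{c}=F_{\neg x}$ (Lemma \ref{derivation}(ii)), then (i)(a) applied to $\neg x$, then $(I_y)^{c}=I_{\lrcorner y}$, we get $F_x^{\blacklozenge}=(F_{\neg x}^{\blacksquare})^{c}=(I_{\neg\neg x})^{c}=I_{\lrcorner\neg\neg x}$, and $\neg\neg x=x\sqcap x$ by Proposition \ref{pro2}(iii) yields $F_x^{\blacklozenge}=I_{\lrcorner(x\sqcap x)}$. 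Symmetrically, from $B^{\lozenge}=B^{c\square c}$ together with $(I_x)^{c}=I_{\lrcorner x}$, (ii)(a) applied to $\lrcorner x$, $(F_y)^{c}=F_{\neg y}$, and $\lrcorner\lrcorner x=x\sqcup x$ (Proposition \ref{pro2}(iii)), we obtain $I_x^{\lozenge}=(I_{\lrcorner x}^{\square})^{c}=(F_{\lrcorner\lrcorner x})^{c}=F_{\neg\lrcorner\lrcorner x}=F_{\neg(x\sqcup x)}$, completing the proof.
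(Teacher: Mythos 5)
Your proposal is correct. The two ``box'' identities (i)(a) and (ii)(a) are proved exactly as in the paper: the forward inclusion via primariness (a primary filter disjoint from an ideal containing $\neg x$ must contain $x$), and the converse by applying the prime ideal theorem (Theorem \ref{PITDB}) to the filter $F(\{\neg x\})$ and the ideal $I$ --- the paper phrases this as a contradiction, you as a contrapositive, but the construction and the witnessing primary filter are identical. Where you genuinely diverge is in the ``diamond'' parts (i)(b) and (ii)(b): the paper proves (i)(b) directly, by a second application of Theorem \ref{PITDB} to the filter generated by $x\sqcap x$, whereas you deduce both (b) parts purely formally from the (a) parts, chaining $A^{\blacklozenge}=A^{c\blacksquare c}$ (Theorem \ref{property of box}(v)) with $(F_x)^c=F_{\neg x}$, $(I_y)^c=I_{\lrcorner y}$ (Lemma \ref{derivation}(ii)) and $\neg\neg x=x\sqcap x$, $\lrcorner\lrcorner x=x\sqcup x$ (Proposition \ref{pro2}(iii)); I checked the chain $F_x^{\blacklozenge}=(F_{\neg x}^{\blacksquare})^c=(I_{\neg\neg x})^c=I_{\lrcorner(x\sqcap x)}$ and its dual, and both are sound since all three ingredients are available in the paper for the context $\mathbb{K}_{pr}(\textbf{D})$. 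Your route buys economy: the prime ideal theorem is invoked only once per box identity, and the proposition's content is reduced to a single nontrivial fact plus bookkeeping with the complementation identities. The paper's route buys local self-containment: each item has its own direct filter/ideal-theoretic argument, readable without tracking which earlier identity is applied to which substituted element.
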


\begin{proof}
 
 
(i)(a). 
Let $I\in I_{\neg x}$ and $F\cap I=\emptyset$, for some $F\in\mathcal{F}_{pr}(\textbf{D})$. As $\neg x\in I$, we get  $\neg x\notin F$. So $x\in F$, $F$ being a primary filter. $F\nabla I$ then implies that $F\in F_{x}$. Therefore $I_{\neg x}\subseteq F_{x}^{\blacksquare}$.\\
On the other hand,  let $I\in F_{x}^{\blacksquare}$, and if possible, suppose $\neg x\notin I$. Then $I\cap F(\{\neg x\})=\emptyset,$ where $F(\{\neg x\})$ is the filter generated by $\neg x$, as otherwise $\neg x\in I$. Therefore by Theorem \ref{PITDB}, there exists a primary filter $F$ containing $F(\{\neg x\})$  and such that $F\cap I=\emptyset$. This implies that  $\neg x\in F$, that is $x\notin F$ -- which  contradicts the assumption that $I\in F_{x}^{\blacksquare}.$ Hence $F_{x}^{\blacksquare}=I_{\neg x}.$
\\
(i)(b). Let $I\in I_{\lrcorner(x \sqcap x)}.$ Then  $x\sqcap x\notin I$. Let $F_{0}$ be the filter generated by $x\sqcap x$. $F_{0}\cap I=\emptyset$, as otherwise $x\sqcap x\in I$. Therefore by Theorem \ref{PITDB}, there is a primary filter $F$ containing  $F_{0}$ such that $F\cap I=\emptyset$ and $x\in F$. Hence $I_{\lrcorner(x\sqcap x)}\subseteq F_{x}^{\blacklozenge}$.\\
 Now suppose $I\in F_{x}^{\blacklozenge}$. Then there exists $F_{1}\in F_{x}$ such that $I\cap F_{1}=\emptyset$. Since $x\in F_{1}$, $x\sqcap x\in F_{1}$ so $x\sqcap x\notin I$ and hence $\lrcorner(x\sqcap x)\in I$.\\
 The proofs of (ii)(a) and (b)  are similar to the above.
\end{proof}

\begin{definition}
\label{spec}
{\rm A topology $\mathcal{J}$ on $\mathcal{I}_{pr}(\textbf{D})$ is defined by taking $\mathcal{B}:=\{I_{x} : x\in D\}$ as a subbase for the closed sets of $\mathcal{J}.$ Similarly, a topology $\mathcal{T}$ on $\mathcal{F}_{pr}(\textbf{D})$ is defined by taking $\mathcal{B}_{0}:=\{F_{ x}:x\in D\}$ as a subbase for the closed sets of $\mathcal{T}.$}
\end{definition}

\begin{note}
\label{clopen subbase}
{\rm   For each $x\in D$, $I_{x}$ is  clopen in $(\mathcal{I}_{pr}(\textbf{D}),\mathcal{J})$ and $F_{x}$ is clopen in $(\mathcal{F}_{pr}(\textbf{D}),\mathcal{T})$. Moreover, 
for each open set $O$  in $(\mathcal{F}_{pr}(\textbf{D}),\mathcal{T})$, $O^{c}=\cap_{j\in J}\cup_{a\in D_{j}}F_{a}$, where $D_{j}$ is some finite subset of $D$ for each $j$ belonging to some index set $ J$. Therefore 
 $O=\cup_{j\in J}\cap_{a\in D_{j}}F^{c}_{a}=\cup_{j\in J}\cap_{a\in D_{j}}F_{\neg a}$.\\
 Similarly, for each open set $O$ in $(\mathcal{I}_{pr}(\textbf{D}),\mathcal{J})$, $O=\cup_{j\in J}\cap_{a\in D_{j}}I_{\lrcorner a}$.  }
\end{note}

\noindent Let us recall 
 \begin{theorem}[\textbf{Alexander's Subbase Lemma} \cite{MR946626}]
 \label{ASBT}
{\rm  Let $(X,\tau)$ be a topological space and $S_0$ a subbase of closed sets of $(X,\tau)$. If every family of closed sets in $S_0$ with finite intersection property has a non-empty intersection, then  $(X,\tau)$ is compact.} 
\end{theorem}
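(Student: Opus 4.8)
The plan is to prove Alexander's Subbase Lemma through the finite intersection property (FIP) characterization of compactness, using Zorn's Lemma in the classical way. Recall that $(X,\tau)$ is compact if and only if every family of closed sets with the FIP has non-empty intersection. So I would argue by contradiction: assume there is a family $\mathcal{C}$ of closed sets in $(X,\tau)$ having the FIP but with $\bigcap\mathcal{C}=\emptyset$, and aim to extract from it a subfamily of $S_0$ that still has the FIP yet empty intersection, contradicting the hypothesis.

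First I would enlarge $\mathcal{C}$ to a maximal object. Consider the poset, ordered by inclusion, of all families of closed sets that have the FIP and empty total intersection. Every chain $\{\mathcal{C}_i\}$ has its union as an upper bound: a finite subfamily of $\bigcup_i\mathcal{C}_i$ lies entirely inside a single $\mathcal{C}_i$ by the chain condition, so the union inherits the FIP, while its intersection is contained in each $\bigcap\mathcal{C}_i=\emptyset$. Zorn's Lemma then yields a maximal such family $\mathcal{M}$. Maximality gives the key closure property: for every closed set $C\notin\mathcal{M}$ the family $\mathcal{M}\cup\{C\}$ must violate the FIP (adjoining a set cannot restore a non-empty intersection), so there are finitely many $M_1,\dots,M_n\in\mathcal{M}$ with $C\cap M_1\cap\cdots\cap M_n=\emptyset$.

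Next I would locate a candidate point. The subfamily $\mathcal{M}\cap S_0$ has the FIP, being a subfamily of $\mathcal{M}$, so by the hypothesis on $S_0$ its intersection is non-empty; pick $x\in\bigcap(\mathcal{M}\cap S_0)$. The crux is to show $x\in M$ for every $M\in\mathcal{M}$, which would force $x\in\bigcap\mathcal{M}=\emptyset$, the desired contradiction. Suppose instead $x\notin M$ for some $M\in\mathcal{M}$. Since the complements of the members of $S_0$ form an open subbase, the open neighbourhood $X\setminus M$ of $x$ contains a basic neighbourhood $(X\setminus S_1)\cap\cdots\cap(X\setminus S_k)$ with each $S_j\in S_0$ and $x\notin S_j$; equivalently $M\subseteq S_1\cup\cdots\cup S_k$. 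Because $x\notin S_j$ while $x\in\bigcap(\mathcal{M}\cap S_0)$, none of the $S_j$ can lie in $\mathcal{M}$, so the maximality property supplies, for each $j$, a finite intersection $N_j$ of members of $\mathcal{M}$ with $S_j\cap N_j=\emptyset$.

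The main obstacle is the final finite-combinatorial step that converts these local emptiness relations into a contradiction with the FIP of $\mathcal{M}$. I would form $M\cap N_1\cap\cdots\cap N_k$, itself a finite intersection of members of $\mathcal{M}$, which is non-empty by the FIP; any point $y$ in it lies in $M\subseteq S_1\cup\cdots\cup S_k$, hence $y\in S_j$ for some $j$, while also $y\in N_j$, giving $y\in S_j\cap N_j=\emptyset$, a contradiction. This proves $x\in M$ for all $M\in\mathcal{M}$, contradicting $\bigcap\mathcal{M}=\emptyset$, and therefore $(X,\tau)$ is compact. The only genuine subtlety is the bookkeeping of the several finite families produced by maximality together with the subbasic decomposition, so that they assemble into a single finite intersection that violates the FIP.
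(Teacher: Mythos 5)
Your proof is correct, but note that the paper itself offers no proof to compare against: Theorem \ref{ASBT} is simply recalled as a known result with a citation to \cite{MR946626}, so the comparison here is between your argument and the literature. Your route is the classical Zorn's Lemma proof (essentially Kelley's proof of Alexander's subbase theorem, transposed to the closed-set/FIP formulation used in the paper): you correctly reduce compactness to the finite intersection property, produce a maximal family $\mathcal{M}$ of closed sets with the FIP and empty total intersection, and exploit the two key consequences of maximality --- that adjoining any closed set $C\notin\mathcal{M}$ must destroy the FIP (since it cannot restore a non-empty intersection), and that a point $x\in\bigcap(\mathcal{M}\cap S_0)$, which exists by the hypothesis on $S_0$, must then lie in every $M\in\mathcal{M}$. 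The final combinatorial step is assembled correctly: from $M\subseteq S_1\cup\cdots\cup S_k$ and the witnesses $N_j$ with $S_j\cap N_j=\emptyset$ you get that $M\cap N_1\cap\cdots\cap N_k$ is a finite intersection of members of $\mathcal{M}$ that is forced to be empty, contradicting the FIP. The only implicit conventions (finite subfamilies taken non-empty, the empty intersection read as $X$) are harmless and do not affect the argument. For what it is worth, the source the paper cites proves the lemma by a quite different, very short nonstandard-analysis argument, whereas yours is the standard self-contained set-theoretic proof; either is a legitimate way to justify the theorem as stated.
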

\noindent The following can now be established.
   \begin{theorem}
   \label{property of topspe}
{\rm $(\mathcal{F}_{pr}(\textbf{D}),\mathcal{T})$ and $(\mathcal{I}_{pr}(\textbf{D}),\mathcal{J})$
 are compact and totally disconnected topological spaces. Hence both are Hausdorff spaces.}
 \end{theorem}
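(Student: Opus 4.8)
The plan is to treat the two spaces symmetrically, obtaining total disconnectedness (and, with it, the Hausdorff property) directly from the clopen subbase, and obtaining compactness through Alexander's Subbase Lemma (Theorem \ref{ASBT}). I would carry out the details for $(\mathcal{F}_{pr}(\textbf{D}),\mathcal{T})$; the argument for $(\mathcal{I}_{pr}(\textbf{D}),\mathcal{J})$ is dual, replacing $F_{x}$, $\sqcap$, Lemma \ref{derivation}(iv) and Theorem \ref{primary ideal thorem}(ii) by $I_{x}$, $\sqcup$, Lemma \ref{derivation}(iii) and Theorem \ref{primary ideal thorem}(i) respectively, and using that no primary ideal contains $\top$.

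For total disconnectedness and the Hausdorff property, I would first note that distinct primary filters are separated by a subbasic clopen set. Given $F\neq G$ in $\mathcal{F}_{pr}(\textbf{D})$, their symmetric difference is non-empty, so there is some $x\in D$ lying in exactly one of them; say $x\in F\setminus G$ (the other case is symmetric). Then $F\in F_{x}$ while $G\notin F_{x}$, and by Note \ref{clopen subbase} together with Lemma \ref{derivation}(ii) the set $F_{x}$ is clopen with complement $F_{\neg x}$. Hence $F_{x}$ and $F_{\neg x}$ are disjoint open sets with $F\in F_{x}$ and $G\in F_{\neg x}$, which shows the space is Hausdorff. This same clopen separation forces $F$ and $G$ into different connected components, since any connected set meeting both would be split by the relatively clopen sets $F_{x}$ and $F_{\neg x}$; thus every component is a singleton and the space is totally disconnected.

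The substantive step is compactness, for which I would verify the hypothesis of Alexander's Subbase Lemma against the closed subbase $\mathcal{B}_{0}=\{F_{x}:x\in D\}$. Let $\{F_{x_{i}}:i\in\Lambda\}$ be a subfamily with the finite intersection property. By Lemma \ref{derivation}(iv), any finite intersection equals $F_{x_{i_{1}}\sqcap\cdots\sqcap x_{i_{n}}}$, and its non-emptiness means $x_{i_{1}}\sqcap\cdots\sqcap x_{i_{n}}$ belongs to some primary filter. I would let $F_{0}$ be the filter generated by $\{x_{i}:i\in\Lambda\}$ and argue it is proper: were it improper, then $\bot\in F_{0}$, so $x_{i_{1}}\sqcap\cdots\sqcap x_{i_{n}}\sqsubseteq\bot$ for some finite subset; but then $F_{x_{i_{1}}\sqcap\cdots\sqcap x_{i_{n}}}\subseteq F_{\bot}=\emptyset$ (no proper, hence no primary, filter contains $\bot$, since $\bot\sqsubseteq z$ for all $z$ by Proposition \ref{pro1.5}(i)), contradicting the finite intersection property. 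Being proper, $F_{0}$ extends to a primary filter $F^{1}$ by Theorem \ref{primary ideal thorem}(ii); since each $x_{i}\in F_{0}\subseteq F^{1}$, we get $F^{1}\in\bigcap_{i}F_{x_{i}}$, so the intersection is non-empty. Alexander's Subbase Lemma then yields compactness.

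The main obstacle is precisely this compactness verification: one must convert the purely topological finite intersection property into the algebraic statement that the filter generated by the $x_{i}$ is proper, which is where Lemma \ref{derivation}(iv) (to read finite intersections as a single $F_{\,\cdot\,}$) and the observation $F_{\bot}=\emptyset$ are essential, and then invoke the primary-filter extension result (Theorem \ref{primary ideal thorem}(ii), itself resting on the prime ideal theorem, Theorem \ref{PITDB}). Once compactness and total disconnectedness are established, the final clause follows, since, as already shown, the clopen separation of distinct points immediately delivers the Hausdorff property.
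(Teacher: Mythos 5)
Your proof is correct and follows essentially the same route as the paper: compactness via Alexander's Subbase Lemma applied to the closed subbase $\{F_{x}:x\in D\}$, using Lemma \ref{derivation}(iv), the fact that $F_{\bot}=\emptyset$, and the primary-filter extension result (Theorem \ref{primary ideal thorem}) to show the generated filter is proper and extends to a point of the intersection; and total disconnectedness (hence Hausdorffness) by separating distinct primary filters with the clopen sets $F_{x}$. The only cosmetic difference is that you make the complement $F_{\neg x}$ and the component-splitting argument explicit, where the paper appeals directly to its clopen-separation definition of total disconnectedness.
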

\begin{proof}
We prove  for $(\mathcal{F}_{pr}(\textbf{D}), \mathcal{T})$; the other case follows dually.\\ To show that $(\mathcal{F}_{pr}(\textbf{D}), \mathcal{T})$ is compact, we consider the subbase $\mathcal{B}_{0}$ for $\mathcal{T}$ given in Definition \ref{spec} and a family $\mathfrak{C}\subseteq\mathcal{B}_{0} $ having the finite intersection property. We show that $\cap\mathfrak{C}\neq \emptyset$ and use Theorem \ref{ASBT}. \\
Let $S:=\{a\in D: F_{a}\in\mathfrak{C} \}$ and $F(S)$ be the filter generated by $S$. We claim that $F(S)\neq D$. 
Indeed, if not, $\bot\in F(S)$ -- which implies that $a_{1}\sqcap \ldots \sqcap a_{n}\sqsubseteq \bot$ for some $a_{1},\ldots, a_{n}$ in $S.$  Therefore $F_{a_{1}\sqcap\ldots\sqcap a_{n}}=\emptyset$, as $F_{\bot}=\emptyset.$ So $\cap^{n}_{i=1}F_{a_{i}}=F_{\sqcap^{n}_{i=1}a_{i}}=\emptyset$, which is a contradiction. \\
As $F(S)\neq D$, by Theorem \ref{primary ideal thorem}, there exists $F\in \mathcal{F}_{pr}(\textbf{D})$ such that $S\subseteq F.$ 
Hence $F\in \cap\mathfrak{C}$, that is $\cap\mathfrak{C}\neq \emptyset$. \\
Now let $x\in D.$ $F_{x}$ is clopen  in $\mathcal{F}_{pr}(\textbf{D})$. Let $F_{1},F_{2}\in \mathcal{F}_{pr}(\textbf{D})$  and $F_{1}\neq F_{2}.$ Then either $F_{1}\nsubseteq F_{2}$ or $F_{2}\nsubseteq F_{1}$. Without loss of generality, suppose  $F_{1}\nsubseteq F_{2}$. So there is $x\in F_{1}$ but $x\notin F_{2}$, implying that  $F_{1}\in F_{x}$ and $F_{2}\notin F_{x}.$  Hence $(\mathcal{F}_{pr}(\textbf{D}),\mathcal{T})$ is totally disconnected.
\end{proof}
Corresponding to the dBa \textbf{D}, we next consider a CTS $\mathbb{K}_{pr}^{T}(\textbf{D}):=((\mathcal{F}_{pr}(\textbf{D}),\mathcal{T}),(\mathcal{I}_{pr}(\textbf{D}),\mathcal{J}),\nabla)$ that is based on the context $\mathbb{K}_{pr}(\textbf{D})$ (mentioned at the beginning of this section) and topologies $\mathcal{T}$ and $\mathcal{J}$ as given in Definition \ref{spec}. It will be shown that $\mathbb{K}^{T}_{pr}(\textbf{D})$ is a CTSCR (Theorem \ref{topcontext}  below). To prove this, we shall use {\it Rado's Selection Principle}.  	
			
 \begin{theorem}[\textbf{Rado's Selection Principle} \cite{rado1949axiomatic}]{\rm  Let $I$ be an arbitrary index set and $S$ be any set. Let $\mathcal{U}:=\{A_{i}:~i\in I\}$ be a family of non-empty finite subsets of a set $S$ and $ \mathfrak{I}$ be the collection of all non-empty finite subsets of $I$.  Further,  for each $J\in \mathfrak{I}$, let there be a function $f_{J}: J\rightarrow\cup_{i\in J} A_{i}~\mbox{such that}~f_{J}(i)\in A_{i}~\mbox{for all}~ i\in J.$ Then there exists a function $f:I\rightarrow \cup_{i\in I}A_{i}~\mbox{such that}~ f(i)\in A_{i}~\mbox{for all}~i\in I$, and for every $J\in \mathfrak{I} $ there is some $K\in \mathfrak{I}$ with $J\subseteq K$ such that $f_{K}(j)=f(j)$ for all $j\in J$.}
   \end{theorem}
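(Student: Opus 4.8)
The plan is to deduce the principle from Tychonoff's theorem applied to a product of finite discrete spaces. First I would form the product space $X := \prod_{i \in I} A_i$, giving each finite non-empty set $A_i$ the discrete topology. Each $A_i$ is then a non-empty compact Hausdorff space, so by Tychonoff's theorem $X$ is a non-empty compact (Hausdorff) space, and its points are precisely the functions $g$ with $g(i) \in A_i$ for all $i \in I$. Thus any point extracted from $X$ automatically satisfies the selection condition $f(i) \in A_i$; the real work lies in arranging that the extracted point is, on each finite index set, realised by one of the prescribed functions $f_J$.

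To that end, for each $J \in \mathfrak{I}$ I would define
\[
E_J := \{\, g \in X : g|_J = f_K|_J \text{ for some } K \in \mathfrak{I} \text{ with } J \subseteq K \,\}.
\]
The key observation is that $E_J$ is closed (indeed clopen): since $J$ is finite and each $A_i$ is discrete, the product $\prod_{i \in J} A_i$ is finite, so $E_J$ is a finite union of basic cylinder sets, each determined by fixing the finitely many coordinates in $J$. I would then verify the finite intersection property for the family $\{E_J : J \in \mathfrak{I}\}$. Given $J_1, \dots, J_n \in \mathfrak{I}$, put $J := J_1 \cup \dots \cup J_n \in \mathfrak{I}$ and choose any $g \in X$ that agrees with $f_J$ on $J$ (possible since $X \neq \emptyset$). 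For each $m$, taking $K := J \supseteq J_m$ witnesses $g \in E_{J_m}$, because $g|_{J_m} = f_J|_{J_m}$. Hence $g \in \bigcap_{m=1}^{n} E_{J_m}$, and the intersection is non-empty.

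With the finite intersection property established, compactness of $X$ gives $\bigcap_{J \in \mathfrak{I}} E_J \neq \emptyset$, and I would pick any $f$ in this intersection. Being a point of $X$, $f$ satisfies $f(i) \in A_i$ for all $i \in I$. For each $J \in \mathfrak{I}$, membership $f \in E_J$ supplies a $K \in \mathfrak{I}$ with $J \subseteq K$ and $f|_J = f_K|_J$, that is, $f_K(j) = f(j)$ for all $j \in J$ --- exactly the required compatibility clause. This completes the argument.

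I expect the main obstacle to be the correct design of the sets $E_J$ rather than any single computation: the prescribed finite selections $f_J$ need \emph{not} be mutually compatible, so one cannot simply take a coherent limit of them. The existential quantifier ``$\exists K \supseteq J$'' inside $E_J$ is precisely what absorbs this incoherence --- it lets the finite intersection property be checked using the single selection $f_J$ on the union $J$ --- while finiteness of each $A_i$ is exactly what keeps $E_J$ closed so that Tychonoff/compactness can be invoked. An alternative route yielding the same result would replace Tychonoff by the ultrafilter lemma: extend the filter on $\mathfrak{I}$ generated by the up-sets $\{\,K : J \subseteq K\,\}$ to an ultrafilter $\mathcal{V}$, and define $f(i)$ to be the unique value in the finite set $A_i$ whose fibre $\{\,K \ni i : f_K(i) = f(i)\,\}$ belongs to $\mathcal{V}$.
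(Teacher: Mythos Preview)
Your argument is correct. The compactness approach via Tychonoff is a standard and clean proof of Rado's Selection Principle: the sets $E_J$ are well chosen, their closedness follows exactly as you say from the finiteness of each $A_i$, and the finite intersection property is verified correctly by passing to the union $J = J_1 \cup \cdots \cup J_n$ and using $f_J$ itself as the witness $K$.

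Note, however, that the paper does not actually prove this theorem; it is merely \emph{stated} (with a citation to Rado's original paper) and then invoked as a black box inside the proof of Theorem~\ref{topcontext}. So there is no ``paper's own proof'' to compare against. Your proposal supplies a self-contained justification where the paper is content to cite the literature, and the Tychonoff route you take (or the equivalent ultrafilter variant you sketch at the end) is precisely the kind of argument one finds in standard treatments of the principle.
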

   
\noindent   Let us fix some notations for the dBa $\textbf{D}$. 
\begin{notation}{\rm 
$\mathcal{U}_{0}:=\{D_{i}\}_{ i\in J}$ denotes the family of all non-empty finite subsets $D_{i}$ of $D$, indexed over a set $J$. 
$\mathfrak{J}_{0}$ denotes
the set of all non-empty finite subsets of $J$.
 For each $i\in J$, \\$D_{i\sqcap}:=\{a\sqcap a~:~ a\in D_{i}\}$ and $D_{i\sqcup}:=\{a\sqcup a~:~ a\in D_{i}\}$. Note that $D_{i\sqcap}\subseteq D_{\sqcap}$ and $D_{i\sqcup}\subseteq D_{\sqcup}$.\\
 For each $E\in \mathfrak{J}_{0}$, \\$\mathfrak{F}_{E}:=\{f:E\rightarrow \cup_{j\in J}D_{i\sqcap}~\mbox{such that}~f(j)\in D_{j\sqcap}\}$ and \\
 $\mathfrak{G}_{E}:=\{g:E\rightarrow \cup_{j\in J}D_{i\sqcup}~\mbox{such that}~ g(j)\in D_{j\sqcup}\}.$ \\$\sqcap f(E):=\sqcap_{j\in E}f(j)$ for each $f\in \mathfrak{F}_{E} $, and $\sqcup g(E):=\sqcup_{j\in E}g(j)$ for each $g\in \mathfrak{G}_{E} $. \\
 Note that $\sqcap f(E)\in D_{\sqcap}$ and $\sqcup g(E)\in D_{\sqcup}$ for each $f\in  \mathfrak{F}_{E} $ and $g\in  \mathfrak{G}_{E} $ respectively. \\
   $\mathcal{D}_{E}:=\{\sqcap f(E)~:~ f\in \mathfrak{F}_{E}\}$,  $ \mathcal{B}_{E}:=\{\sqcup g(E)~:~ g\in \mathfrak{G}_{E}\}$.
}
\end{notation}
   \begin{observation}
   \label{require to show topological context}
  {\rm \noindent 
   \begin{enumerate}[{(i)}]
   \item  $\mathfrak{F}_{E}$ is non-empty and finite, as both $E$ and $\cup_{j\in E}D_{i\sqcap}$ are non-empty and finite. Similarly, $\mathfrak{G}_{E}$ is non-empty and finite.  
  \item $\mathcal{D}_{E} (\mathcal{B}_{E})$ is a non-empty and finite subset of $D_{\sqcap}(D_{\sqcup})$, as  $\mathfrak{F}_{E}(\mathfrak{G}_{E})$ is non-empty and finite.
   \end{enumerate}}
   \end{observation}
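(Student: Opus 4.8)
The plan is to recognise both parts as elementary counting and closure facts, since the Observation merely repackages finiteness and non-emptiness that are already built into the definitions of $\mathfrak{F}_{E}$, $\mathfrak{G}_{E}$, $\mathcal{D}_{E}$ and $\mathcal{B}_{E}$.

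First I would treat (i). By definition, $\mathfrak{F}_{E}$ is exactly the collection of choice functions that pick, for each index $j\in E$, an element of $D_{j\sqcap}$; thus $\mathfrak{F}_{E}$ is in natural bijection with the Cartesian product $\prod_{j\in E}D_{j\sqcap}$. Now $E\in\mathfrak{J}_{0}$ is a non-empty finite subset of $J$, and for each $j$ the set $D_{j\sqcap}=\{a\sqcap a : a\in D_{j}\}$ is the image of the non-empty finite set $D_{j}$ under the map $a\mapsto a\sqcap a$, hence itself non-empty and finite. A finite product of non-empty finite sets is non-empty and finite, so $\mathfrak{F}_{E}$ is non-empty and finite. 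The argument for $\mathfrak{G}_{E}$ is identical, replacing $\sqcap$ by $\sqcup$ and $D_{j\sqcap}$ by $D_{j\sqcup}$.

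Next I would deduce (ii) from (i). The set $\mathcal{D}_{E}$ is by definition the image of $\mathfrak{F}_{E}$ under the map $f\mapsto\sqcap f(E)$; as the image of a non-empty finite set it is non-empty and finite. It remains to check $\mathcal{D}_{E}\subseteq D_{\sqcap}$. Fix $f\in\mathfrak{F}_{E}$; then $\sqcap f(E)=\sqcap_{j\in E}f(j)$ is a finite meet of elements $f(j)\in D_{j\sqcap}\subseteq D_{\sqcap}$. By Proposition \ref{pro1}(i) the reduct $\textbf{D}_{\sqcap}$ is a Boolean algebra whose meet is $\sqcap$, so $D_{\sqcap}$ is closed under finite meets; hence $\sqcap f(E)\in D_{\sqcap}$. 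The dual argument, using that $\textbf{D}_{\sqcup}$ is a Boolean algebra under $\sqcup$, shows $\mathcal{B}_{E}$ is a non-empty finite subset of $D_{\sqcup}$.

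There is no substantive obstacle here: the content is purely that a finite product of non-empty finite sets is non-empty and finite, together with closure of $D_{\sqcap}$ and $D_{\sqcup}$ under finite meets and joins. The only point worth stating explicitly is this closure, which is immediate from Proposition \ref{pro1}; alternatively it can be verified by hand using axioms (1a), (2a) and (3a) of Definition \ref{DBA} to show $a\sqcap b\in D_{\sqcap}$ whenever $a,b\in D_{\sqcap}$, followed by induction on the cardinality of $E$.
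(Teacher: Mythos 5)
Your proof is correct and follows essentially the same route as the paper, which treats this as an elementary observation: $\mathfrak{F}_{E}$ is the set of choice functions on a finite family of non-empty finite sets $D_{j\sqcap}$, and $\mathcal{D}_{E}$ is its image under $f\mapsto\sqcap f(E)$, with membership in $D_{\sqcap}$ coming from closure of $D_{\sqcap}$ under $\sqcap$ (which the paper simply asserts in the preceding Notation and which you correctly ground in Proposition \ref{pro1}(i)). Nothing is missing; your write-up just makes explicit the counting and closure facts the paper leaves implicit.
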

Let us recall  Notation \ref{finite boolean join and meet} and introduce 
   \begin{notation}
    {\rm For each $E\in\mathfrak{J}_{0}$, $a_{E}:=\vee \mathcal{D}_{E}=\vee_{f\in \mathfrak{F}_{E}}\sqcap f(E)$ and $b_{E}:=\wedge \mathcal{B}_{E}=\wedge_{g\in \mathfrak{G}_{E}}\sqcup g(E)$.}
   \end{notation}

   Recall Proposition \ref{pro1} stating that $\textbf{D}_{\sqcap}:=(D_{\sqcap},\sqcap,\vee,\neg,\bot,\neg\bot)$ and  $\textbf{D}_{\sqcup}:=(D_{\sqcup},\sqcup,\wedge,\lrcorner,\top,\lrcorner\top)$ are Boolean algebras, so that $\vee$ gives the least upper bound operator in $\textbf{D}_{\sqcap}$ and  $\wedge$ gives the greatest lower bound operator in $\textbf{D}_{\sqcup}$. Therefore, $\sqcap f(E)\sqsubseteq a_{E}$ for any $f\in \mathfrak{F}_{E}$ and $b_{E}\sqsubseteq \sqcup g(E) $ for all $g\in \mathfrak{G}_{E} $. 
    \begin{theorem}
   \label{topcontext}
  {\rm  $\mathbb{K}_{pr}^{T}(\textbf{D})$ is a CTSCR.}
   \end{theorem}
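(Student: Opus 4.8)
The plan is to check the two defining conditions of a CTSCR (Definition \ref{topcon}) separately: that $\nabla$ is continuous in $\mathcal{F}_{pr}(\textbf{D})$ and that $\nabla^{-1}$ is continuous in $\mathcal{I}_{pr}(\textbf{D})$. By Corollary \ref{continuty of R} the former amounts to showing that for every closed set $B$ of $(\mathcal{I}_{pr}(\textbf{D}),\mathcal{J})$ both $B^{\lozenge}$ and $B^{\square}$ are closed in $(\mathcal{F}_{pr}(\textbf{D}),\mathcal{T})$, and dually, by Corollary \ref{convers continuty}, the latter amounts to showing that for every closed set $A$ of $(\mathcal{F}_{pr}(\textbf{D}),\mathcal{T})$ both $A^{\blacklozenge}$ and $A^{\blacksquare}$ are closed in $(\mathcal{I}_{pr}(\textbf{D}),\mathcal{J})$. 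The first thing I would record is a convenient description of the closed sets. By Note \ref{clopen subbase} every open set of $\mathcal{T}$ has the form $\bigcup_{j}\bigcap_{a\in D_{j}}F_{\neg a}$ with each $D_{j}$ finite; since $\bigcap_{a\in D_{j}}F_{\neg a}=F_{\sqcap_{a\in D_{j}}\neg a}$ by Lemma \ref{derivation}(iv), every open set is in fact a union of subbasic sets $F_{x}$, so every closed set of $\mathcal{T}$ is an intersection $\bigcap_{k}F_{x_{k}}$ of subbasic closed sets; dually every closed set of $\mathcal{J}$ is an intersection $\bigcap_{k}I_{x_{k}}$.

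Two of the four verifications are then routine, because $\square$ and $\blacksquare$ distribute over arbitrary intersections (the finite case being Theorem \ref{property of box}(iii),(iv), and the general case being immediate from the fact that $R(x)\subseteq\bigcap_{k}B_{k}$ holds if and only if $R(x)\subseteq B_{k}$ for every $k$). Using Proposition \ref{clopensemiconcept}(ii)(a) and (i)(a) I would compute $\big(\bigcap_{k}I_{x_{k}}\big)^{\square}=\bigcap_{k}I_{x_{k}}^{\square}=\bigcap_{k}F_{\lrcorner x_{k}}$ and $\big(\bigcap_{k}F_{x_{k}}\big)^{\blacksquare}=\bigcap_{k}F_{x_{k}}^{\blacksquare}=\bigcap_{k}I_{\neg x_{k}}$, both of which are intersections of subbasic closed sets and hence closed. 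This settles $B^{\square}$ and $A^{\blacksquare}$.

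The genuine obstacle is $B^{\lozenge}$ and $A^{\blacklozenge}$, since $\lozenge$ and $\blacklozenge$ distribute over unions but not over the (typically infinite) intersections that now appear. Here I would invoke the prime ideal theorem. For $B=\bigcap_{k}I_{x_{k}}$, unwinding the definition of $\lozenge$ shows that a primary filter $F$ lies in $B^{\lozenge}$ exactly when there is a primary ideal $I$ with $x_{k}\in I$ for all $k$ and $F\cap I=\emptyset$. Writing $I_{0}$ for the ideal generated by $\{x_{k}\}$, the existence of such an $I$ is equivalent to $F\cap I_{0}=\emptyset$: one direction is clear since then $I_{0}\subseteq I$, and for the other, $I_{0}$ is proper (else $\top\in F\cap I_{0}$) and Theorem \ref{PITDB} applied to the pair $F,I_{0}$ yields a primary ideal $J\supseteq I_{0}$ disjoint from $F$, which is the required witness. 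Consequently $B^{\lozenge}=\{F:F\cap I_{0}=\emptyset\}=\bigcap_{z\in I_{0}}(F_{z})^{c}=\bigcap_{z\in I_{0}}F_{\neg z}$ by Lemma \ref{derivation}(ii), an intersection of subbasic closed sets, hence closed. The computation for $A^{\blacklozenge}$ with $A=\bigcap_{k}F_{x_{k}}$ is exactly dual: with $F_{0}$ the filter generated by $\{x_{k}\}$, one obtains $A^{\blacklozenge}=\{I:I\cap F_{0}=\emptyset\}=\bigcap_{z\in F_{0}}I_{\lrcorner z}$, again closed.

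Together these four facts give the continuity of $\nabla$ and of $\nabla^{-1}$, so $\mathbb{K}_{pr}^{T}(\textbf{D})$ is a CTSCR. The only non-formal ingredient is the existence of the separating primary ideal (respectively filter) in the $\lozenge$ (respectively $\blacklozenge$) case, which is where the work concentrates. I have extracted it from the prime ideal theorem; alternatively one can produce the separating element by a compactness argument, assembling the locally chosen elements $\sqcap f(E)$ and $\sqcup g(E)$ into a global choice via Rado's Selection Principle, which is evidently the route for which the family $\mathcal{U}_{0}$ and the quantities $a_{E},b_{E}$ were prepared above.
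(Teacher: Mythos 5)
Your proof is correct, and it takes a genuinely different route from the paper's. The paper works with the open-set formulations of semicontinuity: the easy half is that $\blacklozenge$ (and $\lozenge$) distribute over unions of subbasic clopen sets, while the hard half --- showing $O^{\blacksquare}$ is open in $\mathcal{J}$ for $O$ open in $\mathcal{T}$ --- is proved by exhibiting $O^{\blacksquare}$ as a union of subbasic sets $I_{a_{E}}$, where the elements $a_{E}=\vee_{f\in\mathfrak{F}_{E}}\sqcap f(E)$ are joins of locally chosen elements; the crucial inclusion requires manufacturing a primary filter that meets every member of an arbitrary family of finite sets $D_{j}$ while remaining disjoint from a given primary ideal, a simultaneous-choice problem that the paper resolves with Rado's Selection Principle before invoking Theorem \ref{PITDB}. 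You instead use the closed-set formulations (Corollaries \ref{continuty of R} and \ref{convers continuty}) together with the preliminary reduction, via Note \ref{clopen subbase} and Lemma \ref{derivation}(ii),(iv), that every closed set is an intersection of subbasic closed sets $F_{x_{k}}$ (respectively $I_{x_{k}}$); then $\square$ and $\blacksquare}$ pass through arbitrary intersections and Proposition \ref{clopensemiconcept} settles two of the four verifications outright, while for $\lozenge$ and $\blacklozenge$ your key equivalence --- a primary ideal containing all $x_{k}$ and disjoint from $F$ exists if and only if $F$ is disjoint from the ideal generated by $\{x_{k}\}$ --- dissolves the choice problem, so a single application of Theorem \ref{PITDB} suffices and Rado's principle, along with the whole apparatus of $\mathfrak{F}_{E}$, $\mathfrak{G}_{E}$, $a_{E}$, $b_{E}$, is never needed. (The two hard steps are in fact complementation-equivalent: $(A^{\blacklozenge})^{c}=(A^{c})^{\blacksquare}$, so your $A^{\blacklozenge}$ claim is literally the paper's $O^{\blacksquare}$ claim in closed-set dress; what changes is that describing membership in the closed set as ``contains every $x_{j}$'' rather than ``meets every $D_{j}$'' removes the need to make infinitely many coordinated choices.) What each approach buys: yours is shorter and concentrates all non-constructive content in the prime ideal theorem; the paper's yields explicit presentations of the upper inverses as unions of subbasic clopens, but since that extra structure is not used elsewhere in the paper, your simplification comes at essentially no cost. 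One very minor point of care, glossed over equally by both arguments: the degenerate closed set equal to the whole space corresponds to an empty generating family, which is harmless here since the whole space is itself subbasic ($\mathcal{F}_{pr}(\textbf{D})=F_{\top}$ and $\mathcal{I}_{pr}(\textbf{D})=I_{\bot}$).

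Correction to a typo in the above: the phrase ``$\square$ and $\blacksquare}$'' should of course read ``$\square$ and $\blacksquare$''.
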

  \begin{proof}
  We  prove that $\nabla^{-1}$ is (i) upper semi-continuous and (ii) lower semi-continuous. Proofs similar to those of (i) and (ii) will imply that $\nabla $ has these two properties as well.\\
  (i)   Using Theorem \ref{conv upper semi-continuty} (\textbf{II}),  it is sufficient to show that for each open set $O$ in $\mathcal{T}$, $O^{\blacksquare}$ is open in $\mathcal{J}$. Suppose $O$ is an open set in $\mathcal{T}$. Then from Theorem \ref{property of box}(vi) it follows that $O^{\blacksquare}_{\nabla^{-1}}=O^{c\prime}_{\Delta^{-1}}$. 
   $O^{c}$  is closed in $\mathcal{T}$, therefore $O^c=\cap_{j\in J}\cup_{a\in D_{j}}F_{a}$. To show $O^{c\prime}_{\Delta^{-1}}$ is open, we will show that $O^{c\prime}_{\Delta^{-1}}$ is the union of some open sets $I_{a}$, for which we need to find elements $a$ in D.  Let $B:=\{a_{E}~:~ E\in\mathfrak{J}_{0}\}$.
   We show that $O_{\Delta^{-1}}^{c\prime}=\cup_{a\in B}I_{a}$.

    Let $I\in \cup_{a\in B}I_{a}.$ Then there exists $a_{E_{0}}\in B$ such that $a_{E_{0}}\in I$.
     Let $F\in O^{c}$ -- that is   if and only if $F\in\cup_{a\in D_{j}}F_{a}$ for all $j\in J$, which is equivalent to $F\cap D_{j}\neq\emptyset$ for all $j\in J.$ Since $F$ is a filter, $F\cap D_{j}\neq\emptyset$  if and only if $F\cap D_{j\sqcap}\neq\emptyset$ for all $j\in J.$ So  $F\in O^{c}$ implies that $F\cap D_{j\sqcap}\neq\emptyset$ for all $j\in E_{0}$.
    As $F\cap D_{j\sqcap}$ is finite, we choose and fix $b_{j}\in F\cap D_{j\sqcap}$ for each $j\in E_{0}$
      and define a function $f$ from $E_{0}$ to  $\cup_{j\in E_{0}}D_{j\sqcap}$ by $f(j):=b_{j}$ for all $j\in E_{0}.$ Then  $f\in \mathfrak{F}_{E_{0}}$.  
      Since $F$ is a filter in \textbf{D}, $\sqcap f(E_{0})\in F$.
      As pointed out before the theorem, $\sqcap f(E_{0})\sqsubseteq a_{E_{0}}$. So $a_{E_{0}}\in F$, as $F$ is a filter in \textbf{D}. Therefore $F\cap I\neq\emptyset$, which implies that $I \in O_{\Delta^{-1}}^{c\prime}$. Thus $\cup_{a\in B}I_{a}\subseteq O_{\Delta^{-1}}^{c\prime}$.
  
   The other direction, that is $O_{\Delta^{-1}}^{c\prime}\subseteq \cup_{a\in B}I_{a} $, is proved by contraposition. Let $I$ be a primary ideal of $\textbf{D}$ such that $a\notin I$ for all $a\in B$.
By Observation \ref{require to show topological context}, $\mathfrak{F}_{E}\neq\emptyset$ for each $E\in \mathfrak{J}_{0}$. Now we show that there exists a function $f_{E}\in \mathfrak{F}_{E}$ such that  $\sqcap f_{E}(E)\notin I$.
   
   \noindent  Indeed, if possible suppose that $\sqcap f(E)\in I$ for all $f\in \mathfrak{F}_{E}$. Then $\sqcup_{f\in \mathfrak{F}_{E}}\sqcap f(E)\in I$, as $I$ is an ideal and $\mathfrak{F}_{E}$ finite. Now by Proposition  \ref{relation of join and meet}, $a_{E}:=\vee \mathcal{D}_{E}=\vee_{f\in \mathfrak{F}_{E}}\sqcap f(E)\sqsubseteq\sqcup_{f\in \mathfrak{F}_{E}}\sqcap f(E) $. So $a_{E}\in I$, which is a contradiction as $a_{E}\in B$. 

\noindent Therefore by Rado's Selection Principle, there exists a function $f_{0}$ from $J$ to $\cup_{j\in J}D_{j\sqcap}$ such that $f_{0}(j)\in D_{j\sqcap}$, and for each $E\in \mathfrak{J}_{0}$ there exists $K\in \mathfrak{J}_{0}$ with $E\subseteq K$ such that $f_{0}(j)=f_{K}(j)$ for all $j\in E$. Let $F$ be the filter in \textbf{D} generated by $\{f_{0}(j): j\in J\}$. We claim that $I\cap F=\emptyset$. If possible, suppose there exists $x\in D$ such that $x\in I\cap F$. Then there is $E\in \mathfrak{J}_{0}$ such that  $\sqcap f_{0}(E)\sqsubseteq x$, as $x\in F$. So there exists $K\in \mathfrak{J}_{0}$ such that $E\subseteq K$ and for all $j\in E$, $f_{K}(j)=f_{0}(j)$, which implies that $\sqcap f_{0}(E)=\sqcap f_{K}(E)\sqsupseteq \sqcap f_{K}(K)$. Then $\sqcap f_{K}(K)\sqsubseteq x$. This implies $\sqcap f_{K}(K)\in I$, which is a contradiction. Therefore $I\cap F= \emptyset$ and by the prime ideal theorem for dBas (Theorem \ref{PITDB}), there exists  $F_{0}\in\mathcal{F}_{pr}(\textbf{D}) $ such that $I\cap F_{0}= \emptyset$ and $F\subseteq F_{0}$. Then $F_{0}\cap D_{j\sqcap}\neq \emptyset$ for each $j\in J$, as $\{f_{0}(j): j\in J\}\subseteq F\subseteq F_{0}$. This implies that $F_{0}\cap D_{j}\neq \emptyset$ for each $j\in J$. So $F_{0}\in O^{ c}$ and hence  $I\notin O^{c\prime}_{\Delta^{-1}}$. So $I\in O^{ c\prime}_{\Delta^{-1}}$ implies that $I\in \cup_{a\in B_{J}}I_{a}$. 

Thus $O^{ c \prime}_{\Delta^{-1}}=\cup_{a\in B}I_{a}$, and $O^{\blacksquare}_{\nabla^{-1}}=O^{ c \prime}_{\Delta^{-1}}=\cup_{a\in B}I_{a}$, which is open as $I_{a}$ is open for all $a\in D$. Hence $\nabla^{-1}$ is upper semi-continuous. 
  
 \noindent  (ii) Let $O$ be an open set in $(\mathcal{F}_{pr}(\textbf{D}),\mathcal{T})$. Then by Note \ref{clopen subbase}, 
 $O=\cup_{j\in J}\cap_{a\in D_{j}}F_{\neg a}
 =\cup_{j\in J}F_{\neg(\vee_{a\in D_{j}}a)},$ and 
 $O^{\blacklozenge}=(\cup_{j\in J}F_{\neg(\vee_{a\in D_{j}}a)})^{\blacklozenge}
 =\cup_{j\in J}F^{\blacklozenge}_{\neg(\vee_{a\in D_{j}}a)}
 =\cup_{j\in J}I_{\lrcorner(\neg(\vee_{a\in D_{j}}a)\sqcap \neg(\vee_{a\in D_{j}}a))}
 =\cup_{j\in J}I_{\lrcorner(\neg(\vee_{a\in D_{j}}a))}$, \\which is an open set as $ I_{a}$ 
 is  open for all $a\in D$. Hence by Theorem \ref{conv upper semi-continuty}(\textbf{I}), $\nabla^{-1}$ is lower semi-continuous.
  \end{proof} 
   \begin{corollary}
   \label{BtoScxt}
{\rm If $\textbf{D}$ is Boolean  then $\nabla$ is a homeomorphism from $(\mathcal{F}_{pr}(\textbf{D}),\mathcal{T})$ to $(\mathcal{I}_{pr}(\textbf{D}),\mathcal{J})$.}
\end{corollary}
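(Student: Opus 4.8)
The plan is to show first that the relation $\nabla$, which \emph{a priori} is merely a correspondence, is in fact a bijective \emph{function} from $\mathcal{F}_{pr}(\textbf{D})$ to $\mathcal{I}_{pr}(\textbf{D})$, and then to upgrade the relational continuity already available from Theorem \ref{topcontext} to the statement that this function is a homeomorphism.

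Since $\textbf{D}$ is Boolean, Theorem \ref{Boolean and dBa} gives $\lrcorner a=\neg a$ and $\neg\neg a=a$ for all $a$, and $D_{\sqcap}=D_{\sqcup}=D$, so that the quasi-order $\sqsubseteq$ is exactly the underlying Boolean order and the dBa operations $\sqcap,\sqcup$ are the Boolean meet and join. Consequently the dBa notions of filter and ideal coincide with the Boolean ones, and a primary filter (ideal) is precisely a proper filter $F$ (ideal $I$) with $x\in F$ or $\neg x\in F$ (resp.\ $x\in I$ or $\neg x\in I$) for every $x$; these are exactly the prime---equivalently maximal---filters and ideals of the Boolean algebra.

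The crux of the argument---and the main thing to verify---is that $\nabla$ is single-valued and bijective. For a prime filter $F$, its set-complement $D\setminus F$ is a prime ideal disjoint from $F$, so $F\nabla(D\setminus F)$, giving at least one image. For uniqueness, if $I$ is any prime ideal with $F\cap I=\emptyset$ then $I\subseteq D\setminus F$; since prime ideals of a Boolean algebra are maximal and $D\setminus F$ is a proper ideal, $I=D\setminus F$. Hence $\nabla$ is a function with $\nabla(F)=D\setminus F$. Symmetrically, for a prime ideal $I$ there is a unique prime filter disjoint from it, namely $D\setminus I$, so the assignment $I\mapsto D\setminus I$ is a two-sided inverse and $\nabla$ is a bijection.

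Finally I would establish the topological part. Because $\nabla$ is now a bijective function, Observation \ref{luinv-img}(ii) identifies the relational inverses with the functional ones: $B^{\lozenge}=B^{\square}=\nabla^{-1}(B)$ and $A^{\blacklozenge}=A^{\blacksquare}=\nabla(A)$. Theorem \ref{topcontext} tells us $\nabla$ and $\nabla^{-1}$ are continuous as relations, so by Corollary \ref{continuty of R} preimages of open sets under $\nabla$ are open (continuity of the function $\nabla$), while by Corollary \ref{convers continuty} images of open sets under $\nabla$ are open (so $\nabla$ is an open map, i.e.\ its inverse is continuous). Thus $\nabla$ is a continuous open bijection, hence a homeomorphism. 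Alternatively, one may bypass the openness computation: $\nabla^{-1}(I_x)=(F_x)^c=F_{\neg x}$ is closed by Lemma \ref{derivation}(ii) and Note \ref{clopen subbase}, so $\nabla$ is continuous on the closed subbase $\{I_x:x\in D\}$; since both spaces are compact and Hausdorff by Theorem \ref{property of topspe}, a continuous bijection between them is automatically a homeomorphism. I expect no real obstacle beyond correctly pinning down the single-valuedness of $\nabla$, which rests on the maximality of prime ideals in a Boolean algebra.
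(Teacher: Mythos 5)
Your proposal is correct and takes essentially the same route as the paper's own proof: bijectivity of $\nabla$ comes from the observation that in a Boolean algebra primary filters and ideals are prime (hence maximal), so the set-complement $F^{c}$ is the unique prime ideal disjoint from a prime filter $F$, while continuity comes from Theorem \ref{topcontext}. Your explicit passage from relational continuity to functional continuity and openness via Observation \ref{luinv-img}(ii) (and your explicit check that $\nabla$ is total) merely fills in steps the paper leaves implicit, so no substantive difference remains.
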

\begin{proof}
Continuity of $\nabla$ and $\nabla^{-1}$ follows from  Theorem \ref{topcontext}. We show that $\nabla$ is a bijection from $\mathcal{F}_{pr}(\textbf{D})$ to $\mathcal{I}_{pr}(\textbf{D})$. Let $F\in \mathcal{F}_{pr}(\textbf{D})$, and $I_{1},I_{2}\in\mathcal{I}_{pr}(\textbf{D})$ be such that $F\nabla I_{1}, F\nabla I_{2}$. Then $F\cap I_{1}=\emptyset$, $F\cap I_{2}=\emptyset$, which implies that $I_{1}, I_{2}\subseteq F^{c}$. Since $\textbf{D}$ is a Boolean algebra, the notions of primary filter and ideal coincide with those of prime filter and ideal respectively. $F^{c}$ is then a prime ideal as $F$ is a prime filter, and $I_{1}=F^{c}=I_{2}$ as prime ideals are maximal. So $\nabla$ is a function from $\mathcal{F}_{pr}(\textbf{D})$  to $\mathcal{I}_{pr}(\textbf{D})$. Similarly, one can show that $\nabla^{-1}$ is a function from $\mathcal{I}_{pr}(\textbf{D})$ to $\mathcal{F}_{pr}(\textbf{D})$. Clearly, $\nabla^{-1}\circ \nabla$ is the identity map on $\mathcal{F}_{pr}(\textbf{D})$. So $\nabla$ is a bijection from $\mathcal{F}_{pr}(\textbf{D})$ to $\mathcal{I}_{pr}(\textbf{D})$.
\end{proof}
  \noindent  How are the CTSCRs corresponding to  two quasi-isomorphic dBas related? To see that, let us introduce the following  functions.
   \begin{definition}
   \label{alphbetah}
  {\rm Let $\textbf{M}$ and $\textbf{D}$ be two dBas and $h:\textbf{M}\rightarrow \textbf{D}$ a dBa homomorphism. The function $\alpha_{h}:\mathcal{F}_{pr}(\textbf{D})\rightarrow\mathcal{F}_{pr}(\textbf{M})$ is given by $\alpha_{h}(F):=h^{-1}(F)$  for any $F\in\mathcal{F}_{pr}(\textbf{D}),$ and the function  $\beta_{h}:\mathcal{I}_{pr}(\textbf{D})\rightarrow\mathcal{I}_{pr}(\textbf{M})$ is given by $\beta_{h}(I):=h^{-1}(I)$ for any $I\in \mathcal{I}_{pr}(\textbf{D})$.} 
\end{definition}

\noindent By Proposition \ref{inv-img-prim}(ii and iii) (cf. Section \ref{AIdBa}),  $\alpha_{h}$ and $\beta_{h}$ are well-defined functions. Moreover, 

\begin{proposition}
\label{cntxmor}
{\rm  $\alpha_h^{-1}(F_{x})=F_{h(x)}$  and $\beta_h^{-1}(I_{x})=I_{h(x)}$, for all $x\in D$.}
\end{proposition}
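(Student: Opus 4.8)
The statement is essentially definitional, so the plan is simply to unwind the definitions of $\alpha_{h}$, $\beta_{h}$ and of the sets $F_{x}$, $I_{x}$. First I would clarify the domain: for the preimage $\alpha_{h}^{-1}(F_{x})$ to be meaningful, $F_{x}$ must be a subset of $\mathcal{F}_{pr}(\textbf{M})$ (the codomain of $\alpha_{h}$), so here $x$ is taken in $M$ and $F_{x}=\{F\in\mathcal{F}_{pr}(\textbf{M}):x\in F\}$, while $h(x)\in D$ makes $F_{h(x)}=\{F\in\mathcal{F}_{pr}(\textbf{D}):h(x)\in F\}$ a subset of $\mathcal{F}_{pr}(\textbf{D})$; the situation for ideals is analogous. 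That $\alpha_{h}$ and $\beta_{h}$ actually land in $\mathcal{F}_{pr}(\textbf{M})$ and $\mathcal{I}_{pr}(\textbf{M})$ has already been recorded in Proposition \ref{inv-img-prim}(iii) and (ii) respectively, so I may use these facts freely.

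For the filter identity, fix $x\in M$ and let $F\in\mathcal{F}_{pr}(\textbf{D})$ be arbitrary. The core of the argument is the chain of equivalences
\begin{align*}
F\in\alpha_{h}^{-1}(F_{x}) &\iff \alpha_{h}(F)\in F_{x} \iff h^{-1}(F)\in F_{x}\\
&\iff x\in h^{-1}(F) \iff h(x)\in F \iff F\in F_{h(x)},
\end{align*}
in which the step $\alpha_{h}(F)=h^{-1}(F)$ is the definition of $\alpha_{h}$, the step $h^{-1}(F)\in F_{x}\iff x\in h^{-1}(F)$ is the defining condition of $F_{x}$, and $x\in h^{-1}(F)\iff h(x)\in F$ is just the meaning of the preimage $h^{-1}(F)=\{m\in M:h(m)\in F\}$. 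Since $F$ was arbitrary, this gives $\alpha_{h}^{-1}(F_{x})=F_{h(x)}$.

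The ideal identity is proved in exactly the same manner, replacing $\alpha_{h}$ by $\beta_{h}$ and $F_{x}$ by $I_{x}$, and invoking Proposition \ref{inv-img-prim}(ii) (in place of (iii)) to ensure $\beta_{h}(I)=h^{-1}(I)\in\mathcal{I}_{pr}(\textbf{M})$; the analogous chain of equivalences yields $\beta_{h}^{-1}(I_{x})=I_{h(x)}$.

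I do not expect any genuine obstacle here: the result is a direct computation from the definitions. The only point deserving a moment's care is the bookkeeping of which dBa each of $F_{x},I_{x}$ and $F_{h(x)},I_{h(x)}$ belongs to, together with the already-established fact (Proposition \ref{inv-img-prim}) that the $h$-preimage of a primary filter (ideal) of $\textbf{D}$ is again a primary filter (ideal) of $\textbf{M}$.
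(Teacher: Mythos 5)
Your proof is correct and is essentially identical to the paper's own argument: both simply unwind the definitions of $\alpha_{h}$, $\beta_{h}$, $F_{x}$ and $I_{x}$ via the equivalence $x\in h^{-1}(F)\iff h(x)\in F$ (and its ideal analogue). Your bookkeeping remark that $x$ must range over $M$ rather than $D$ (since $h:\textbf{M}\rightarrow\textbf{D}$, so that $F_{x}\subseteq\mathcal{F}_{pr}(\textbf{M})$ and $F_{h(x)}\subseteq\mathcal{F}_{pr}(\textbf{D})$) is a correct reading of the intended statement and matches how the proposition is actually applied later in Theorem \ref{db iso to cntx iso}.
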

\begin{proof}
Let $x\in D$. Then 
$\alpha_h^{-1}(F_{x})=\{F\in \mathcal{F}_{pr}(\textbf{D}): x\in h^{-1}(F)\}=\{F\in \mathcal{F}_{pr}(\textbf{D}): h(x)\in F\}=F_{h(x)}\\ \mbox{and}~ 
\beta_h^{-1}(I_{x})=\{I\in \mathcal{I}_{pr}(\textbf{D}): x\in h^{-1}(I)\}=\{I\in \mathcal{I}_{pr}(\textbf{D}): h(x)\in I\}=I_{h(x)}$.
\end{proof}

We show in Theorem \ref{db iso to cntx iso} below that if  dBas $\textbf{M}$ and $\textbf{D}$ are quasi-isomorphic, the  CTSCRs $\mathbb{K}^{T}_{pr}(\textbf{M}):=((\mathcal{F}_{pr}(\textbf{M}),\mathcal{T}),(\mathcal{I}_{pr}(\textbf{M}), \mathcal{J}),\nabla_{1})$ and $\mathbb{K}^{T}_{pr}(\textbf{D}):=((\mathcal{F}_{pr}(\textbf{D}),\mathcal{T}),(\mathcal{I}_{pr}(\textbf{D}), \mathcal{J}),\nabla_{2})$ corresponding to $\textbf{M}$ and $\textbf{D}$ are homeomorphic. In order to prove the result, we use the following.

\begin{theorem}
\label{fun homeomorphism}
{\rm \cite{munkres1975topology} Let $(X,\tau), (Y,\rho)$ be two topological spaces and $f: X\rightarrow Y$ be a bijective continuous function. If $(X,\tau)$ is compact and $(Y,\rho)$ is Hausdorff, then $f$ is a homeomorphism.}
\end{theorem}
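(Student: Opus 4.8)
The plan is to reduce the statement to showing that the inverse map $f^{-1}: Y \to X$ is continuous, since $f$ is already assumed to be a continuous bijection; once $f^{-1}$ is known to be continuous, $f$ is a homeomorphism by definition. Because $f$ is a bijection, continuity of $f^{-1}$ is equivalent to $f$ being a \emph{closed map}, that is, to the assertion that $f(C)$ is closed in $(Y,\rho)$ whenever $C$ is closed in $(X,\tau)$. Indeed, for a closed $C\subseteq X$ one has $(f^{-1})^{-1}(C)=f(C)$, so $f^{-1}$ pulls back closed sets to closed sets precisely when $f$ sends closed sets to closed sets. Thus the whole argument rests on proving that $f$ is closed.

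First I would fix an arbitrary closed set $C$ in $(X,\tau)$ and invoke the standard fact that a closed subset of a compact space is compact; since $(X,\tau)$ is compact, $C$ is compact. Next, continuity of $f$ gives that the image $f(C)$ is a compact subset of $(Y,\rho)$, using that the continuous image of a compact set is compact. Finally, since $(Y,\rho)$ is Hausdorff, every compact subset is closed, so $f(C)$ is closed in $(Y,\rho)$. This establishes that $f$ is a closed map, and by the reduction above $f^{-1}$ is continuous, so $f$ is a homeomorphism.

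There is no genuine obstacle here: the result is a classical theorem of point-set topology, and the entire argument is a concatenation of three textbook lemmas (a closed subset of a compact space is compact, the continuous image of a compact set is compact, and a compact subset of a Hausdorff space is closed). The only points worth any care are the two places where the hypotheses are actually consumed, namely that compactness of $(X,\tau)$ is used to pass from the closed set $C$ to a compact set, while the Hausdorff property of $(Y,\rho)$ is used to pass from the compact set $f(C)$ back to a closed set, together with the purely formal observation that, for a bijection, $f$ being closed is the same as $f^{-1}$ being continuous.
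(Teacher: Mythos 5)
Your proof is correct and is exactly the standard argument from Munkres, which the paper cites for this theorem without reproducing a proof: reduce to showing $f$ is a closed map, then chain the three facts that closed subsets of compact spaces are compact, continuous images of compact sets are compact, and compact subsets of Hausdorff spaces are closed. Nothing further is needed.
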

\begin{theorem}
\label{db iso to cntx iso}
{\rm  Let $h: \textbf{M} \ra \textbf{D}$ be a dBa quasi-isomorphism. Then $(\alpha_{h},\beta_{h}): \mathbb{K}^{T}_{pr}(\textbf{D}) \ra \mathbb{K}^{T}_{pr}(\textbf{M})$  is a CTSCR-homeomorphism.} 
\end{theorem}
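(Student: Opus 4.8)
The plan is to verify the three clauses that Definition \ref{hom of ctx on top}, as specialised to CTSCRs in Definition \ref{topocxthomeo}, demands of a CTSCR-homeomorphism: that $(\alpha_h,\beta_h)$ is a context homomorphism, that $\alpha_h$ and $\beta_h$ are bijective, and that both are homeomorphisms. Well-definedness of $\alpha_h$ and $\beta_h$ is already supplied by Proposition \ref{inv-img-prim}(ii) and (iii). For the context-homomorphism clause I first unwind the relations $\nabla_1,\nabla_2$ and observe that $h^{-1}(F)\cap h^{-1}(I)=h^{-1}(F\cap I)$, so that the required equivalence $F\nabla_2 I\iff\alpha_h(F)\nabla_1\beta_h(I)$ reduces to showing $F\cap I=\emptyset\iff h^{-1}(F\cap I)=\emptyset$ for $F\in\mathcal{F}_{pr}(\textbf{D})$, $I\in\mathcal{I}_{pr}(\textbf{D})$. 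The forward implication is immediate, while the converse is exactly where surjectivity of $h$ (part of the quasi-isomorphism hypothesis) is used: any $d\in F\cap I$ has some $m\in M$ with $h(m)=d$, whence $m\in h^{-1}(F\cap I)$, giving the contrapositive.

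Next I would establish that $\alpha_h$ and $\beta_h$ are bijections, and this is where the main work lies. For surjectivity of $\alpha_h$, given a primary filter $G$ of $\textbf{M}$, Proposition \ref{inv-img-prim}(v) (applicable precisely because $h$ is a quasi-isomorphism) guarantees that $h(G)$ is a primary filter of $\textbf{D}$, and I claim $\alpha_h(h(G))=h^{-1}(h(G))=G$. The inclusion $G\subseteq h^{-1}(h(G))$ is trivial; for the reverse, if $h(m)\in h(G)$ then $h(m)=h(g)$ for some $g\in G$, and quasi-injectivity of $h$ converts $h(m)=h(g)$ into $g\sqsubseteq m$ (and $m\sqsubseteq g$), so $m\in G$ since $G$ is a filter containing $g$. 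Injectivity of $\alpha_h$ then follows from surjectivity of $h$ via $h(h^{-1}(F))=F$. The arguments for $\beta_h$ are dual, invoking Proposition \ref{inv-img-prim}(ii) and (iv).

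Finally, continuity of $\alpha_h$ and $\beta_h$ is read straight off Proposition \ref{cntxmor}: the preimage under $\alpha_h$ of a subbasic closed set $F_y$ equals $F_{h(y)}$, again subbasic closed, so preimages of closed sets are closed, and similarly $\beta_h^{-1}(I_x)=I_{h(x)}$. Since Theorem \ref{property of topspe} shows that $(\mathcal{F}_{pr}(\cdot),\mathcal{T})$ and $(\mathcal{I}_{pr}(\cdot),\mathcal{J})$ are compact and Hausdorff for both $\textbf{D}$ and $\textbf{M}$, Theorem \ref{fun homeomorphism} upgrades the bijective continuous maps $\alpha_h,\beta_h$ to homeomorphisms without my having to check continuity of the inverses separately. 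Assembling the three clauses yields a CTS-homeomorphism between CTSCRs, that is, a CTSCR-homeomorphism. I expect the surjectivity step to be the principal obstacle: the identity $h^{-1}(h(G))=G$ is false for a generic quasi-injective map, and it is only the filter (respectively ideal) structure of $G$ together with quasi-injectivity that rescues it, so this is the point at which the full strength of the quasi-isomorphism hypothesis must be deployed.
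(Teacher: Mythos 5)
Your proposal is correct and follows essentially the same route as the paper's proof: a context homomorphism established via surjectivity of $h$, bijectivity of $\alpha_h,\beta_h$ via the image maps being primary (Proposition \ref{inv-img-prim}(iv)--(v)), continuity read off Proposition \ref{cntxmor} together with the subbasic description of the topologies (Note \ref{clopen subbase}), and the upgrade to homeomorphisms via compactness and Hausdorffness (Theorems \ref{property of topspe} and \ref{fun homeomorphism}). If anything, your bijectivity step is more complete than the paper's, which only records the one-sided identity $h(h^{-1}(F))=F$ (i.e.\ $f_h\circ\alpha_h=\mathrm{Id}$) before declaring $\alpha_h$ a bijection, whereas you also verify the other composite $h^{-1}(h(G))=G$ -- precisely the point where quasi-injectivity and the filter structure of $G$ are genuinely needed.
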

\begin{proof}
 Let $F\in \mathcal{F}_{pr}(\textbf{D})$ and $I\in \mathcal{I}_{pr}(\textbf{D})$ such that $F\cap I=\emptyset$. If possible, let $h^{-1}(F)\cap h^{-1}(I)\neq\emptyset$. Then there exists  $x\in h^{-1}(F)\cap h^{-1}(I).$ Hence $h(x)\in F\cap I$, which is a contradiction. So $h^{-1}(F)\cap h^{-1}(I)=\emptyset$. Now let $\alpha_{h}(F)\cap \beta_{h}(I)=h^{-1}(F)\cap h^{-1}(I)=\emptyset$ and if possible, let $F\cap I\neq\emptyset.$ Then there exists $x\in F\cap I$ and $y_1\in h^{-1}(F), y_2\in h^{-1}(I) $ such that $h(y_1)=h(y_{2})=x$. Therefore $y_{2}\in h^{-1}(F)$,  implying that $y_{2}\in h^{-1}(F)\cap h^{-1}(I)$, which is a contradiction. Hence $F\cap I=\emptyset$. So $F\nabla_{1} I$ if and only if $\alpha_{h}(F)\nabla_{2}\beta_{h}(I)$, which implies that $(\alpha_{h},\beta_{h})$ is a context homomorphism. Now let us define a correspondence $f_{h}$ from $\mathcal{F}_{pr}(\textbf{M})$ to  $\mathcal{F}_{pr}(\textbf{D})$ by $f_{h}(F):=h(F)$ for all $F\in \mathcal{F}_{pr}(\textbf{M}).$ By Proposition  \ref{inv-img-prim}(v) it follows that $f_{h}$ is a well-defined map. Similarly, $g_{h}$ from $ \mathcal{I}_{pr}(\textbf{M})$ to $\mathcal{I}_{pr}(\textbf{D})$ defined by $g_{h}(I):=h(I)$ is a  well-defined map. As $h$ is surjective, $f_{h}\circ \alpha_{h}=Id_{\mathcal{F}_{pr}(\textbf{D})}$, the identity map on $\mathcal{F}_{pr}(\textbf{D})$ and  $g_{h}\circ \beta_{h}=Id_{\mathcal{I}_{pr}(\textbf{D})}$, the identity map on $\mathcal{I}_{pr}(\textbf{D})$. Therefore $f_{h}=\alpha^{-1}_{h}$ and $g_{h}=\beta^{-1}_{h}$, that is $\alpha_{h},\beta_{h}$ are  bijections. Hence $(\alpha_{h},\beta_{h})$ is a context isomorphism.

 Next we  show that $(\alpha_{h},\beta_{h})$ is a CTSCR-homeomorphism, for which  $\alpha_{h}$ and $\beta_{h}$ both must be shown to be continuous. Let $O$ be an open set in $(\mathcal{F}_{pr}(\textbf{M}),\mathcal{T}).$ By Note \ref{clopen subbase}, $O=\cup_{j\in J}\cap_{x\in M_{j}}F_{\neg x}$,  where $M_{j}$ is a finite subset of $M$ for all $j\in J.$ So $\alpha_{h}^{-1}(O)=\alpha_{h}^{-1}(\cup_{j\in J}\cap_{x\in M_{j}}F_{\neg x})=\cup_{j\in J}\cap_{x\in M_{j}}\alpha_{h}^{-1}(F_{\neg x})=\cup_{j\in J}\cap_{x\in M_{j}}F_{\neg h (x)}$ by Proposition \ref{cntxmor}. Hence $\alpha_{h}$ is continuous. By Theorems \ref
{fun homeomorphism} and  \ref{property of topspe} it follows that $\alpha_{h}$ is a homeomorphism. Similarly we can show that $\beta_{h}$ is a homeomorphism. Hence $(\alpha_{h},\beta_{h})$ is a CTSCR-homeomorphism.
    \end{proof}
    \begin{corollary}
\label{idba hom set}
{\rm 
If $h$ is a dBa isomorphism from  $\textbf{M}$ to $\textbf{D},$  $(\alpha_{h},\beta_{h})$ is a CTSCR-homeomorphism  from $\mathbb{K}^{T}_{pr}(\textbf{D})$ to $\mathbb{K}^{T}_{pr}(\textbf{M})$.}
\end{corollary}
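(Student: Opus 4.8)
The plan is to reduce the corollary directly to Theorem~\ref{db iso to cntx iso}. That theorem already produces a CTSCR-homeomorphism $(\alpha_h,\beta_h)\colon \mathbb{K}^{T}_{pr}(\textbf{D}) \ra \mathbb{K}^{T}_{pr}(\textbf{M})$ whenever $h$ is a dBa \emph{quasi}-isomorphism, so it suffices to verify that every dBa isomorphism is in particular a dBa quasi-isomorphism. Recalling Definition~\ref{DBAHOM}, this amounts to checking two things: that $h$ is surjective, and that $h$ is quasi-injective, i.e. $x\sqsubseteq y \iff h(x)\sqsubseteq h(y)$ for all $x,y\in M$.

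Surjectivity is immediate, since an isomorphism is by definition bijective. For quasi-injectivity, the forward implication $x\sqsubseteq y \Rightarrow h(x)\sqsubseteq h(y)$ is exactly Proposition~\ref{inv-img-prim}(i), which holds for every dBa homomorphism. For the converse, the key observation is that the inverse map $h^{-1}$ of a bijective dBa homomorphism is again a dBa homomorphism: for $a,b\in D$ one writes $a=h(h^{-1}(a))$ and $b=h(h^{-1}(b))$, and uses that $h$ preserves each operation together with injectivity of $h$ to transfer the operation through $h^{-1}$ (for instance $h^{-1}(a\sqcap b)=h^{-1}(a)\sqcap h^{-1}(b)$, and similarly for $\sqcup,\neg,\lrcorner,\top,\bot$). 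Granting this, I apply Proposition~\ref{inv-img-prim}(i) to the homomorphism $h^{-1}$: from $h(x)\sqsubseteq h(y)$ it yields $h^{-1}(h(x))\sqsubseteq h^{-1}(h(y))$, that is $x\sqsubseteq y$. Hence both implications hold and $h$ is quasi-injective.

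Combining the two observations, every dBa isomorphism $h\colon \textbf{M}\ra\textbf{D}$ is a dBa quasi-isomorphism, and Theorem~\ref{db iso to cntx iso} then gives at once that $(\alpha_h,\beta_h)$ is a CTSCR-homeomorphism from $\mathbb{K}^{T}_{pr}(\textbf{D})$ to $\mathbb{K}^{T}_{pr}(\textbf{M})$. There is essentially no genuine obstacle here; the only mildly nontrivial ingredient is the standard fact that the inverse of a bijective homomorphism is a homomorphism, after which quasi-injectivity falls out of the monotonicity statement of Proposition~\ref{inv-img-prim}(i) applied in both directions.
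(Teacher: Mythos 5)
Your proof is correct and follows exactly the paper's route: the paper's own proof is the one-liner ``Follows from Theorem~\ref{db iso to cntx iso}, as any dBa isomorphism is a dBa quasi-isomorphism,'' and you simply fill in the routine verification of that fact (surjectivity plus quasi-injectivity via the inverse homomorphism). As a minor remark, the converse implication of quasi-injectivity can also be seen without invoking $h^{-1}$: from $h(x)\sqsubseteq h(y)$ one gets $h(x\sqcap y)=h(x\sqcap x)$ and $h(x\sqcup y)=h(y\sqcup y)$, and injectivity of $h$ gives $x\sqsubseteq y$ directly.
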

\begin{proof}
Follows from Theorem \ref{db iso to cntx iso}, as any dBa isomorphism is a dBa quasi-isomorphism.
\end{proof}

   Let us  note the following.
   \begin{proposition}
\label{semicon}
{\rm For all $x\in D$, 
 $(F_{\neg x},I_{x})\in \mathfrak{R}^{T}(\mathbb{K}_{pr}^{T}(\textbf{D}))$. Moreover, if $\textbf{D}$ is a pure dBa, $(F_{\neg x},I_{x})\in \mathfrak{S}^{T}(\mathbb{K}_{pr}^{T}(\textbf{D}))$ for all $x\in D$.}
\end{proposition}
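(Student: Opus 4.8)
The plan is to verify separately the two defining requirements of a clopen object oriented protoconcept: clopenness of the two components, and the protoconcept equation $F_{\neg x}^{\blacksquare\lozenge}=I_{x}^{\lozenge}$. The clopenness is immediate from Note \ref{clopen subbase}, which says that $F_{a}$ is clopen in $(\mathcal{F}_{pr}(\textbf{D}),\mathcal{T})$ for every $a\in D$ (so in particular $F_{\neg x}$ is clopen) and that $I_{x}$ is clopen in $(\mathcal{I}_{pr}(\textbf{D}),\mathcal{J})$. Hence the only substantive content is the protoconcept equation, and the whole argument reduces to chaining the identities of Proposition \ref{clopensemiconcept}.

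To establish the protoconcept equation I would compute each side using Proposition \ref{clopensemiconcept}. On the left, part (i)(a) applied to $\neg x$ gives $F_{\neg x}^{\blacksquare}=I_{\neg\neg x}$, and then part (ii)(b) applied to $\neg\neg x$ gives $I_{\neg\neg x}^{\lozenge}=F_{\neg((\neg\neg x)\sqcup(\neg\neg x))}$. On the right, part (ii)(b) gives $I_{x}^{\lozenge}=F_{\neg(x\sqcup x)}$. So everything reduces to showing $\neg((\neg\neg x)\sqcup(\neg\neg x))=\neg(x\sqcup x)$. Here I would use $\neg\neg x=x\sqcap x$ (Proposition \ref{pro2}(iii)) to rewrite the inner term as $(x\sqcap x)\sqcup(x\sqcap x)$, then axiom (12) of Definition \ref{DBA} to turn this into $(x\sqcup x)\sqcap(x\sqcup x)$, and finally axiom (4a) to obtain $\neg((x\sqcup x)\sqcap(x\sqcup x))=\neg(x\sqcup x)$. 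This gives $F_{\neg x}^{\blacksquare\lozenge}=I_{x}^{\lozenge}$, so $(F_{\neg x},I_{x})$ is an object oriented protoconcept of $\mathbb{K}_{pr}(\textbf{D})$, and together with clopenness it lies in $\mathfrak{R}^{T}(\mathbb{K}_{pr}^{T}(\textbf{D}))$.

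For the pure case I would invoke the dichotomy of Definition \ref{contextdefn}(c): for each $x$, either $x\sqcap x=x$ or $x\sqcup x=x$. If $x\sqcap x=x$, then $\neg\neg x=x\sqcap x=x$ (Proposition \ref{pro2}(iii)), so Proposition \ref{clopensemiconcept}(i)(a) gives $F_{\neg x}^{\blacksquare}=I_{\neg\neg x}=I_{x}$, exhibiting $(F_{\neg x},I_{x})$ as an object oriented semiconcept via the condition $A^{\blacksquare}=B$. If instead $x\sqcup x=x$, then Proposition \ref{clopensemiconcept}(ii)(b) gives $I_{x}^{\lozenge}=F_{\neg(x\sqcup x)}=F_{\neg x}$, exhibiting it as an object oriented semiconcept via $B^{\lozenge}=A$. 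In either case, together with the already-established clopenness, $(F_{\neg x},I_{x})\in\mathfrak{S}^{T}(\mathbb{K}_{pr}^{T}(\textbf{D}))$.

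The argument is essentially routine once Proposition \ref{clopensemiconcept} is in hand; the only point requiring genuine care — and what I expect to be the main (if minor) obstacle — is the algebraic reconciliation $\neg((x\sqcap x)\sqcup(x\sqcap x))=\neg(x\sqcup x)$, which is precisely where axiom (12) enters. Note also that in the pure case it is the dichotomy that collapses the protoconcept $(F_{\neg x},I_{x})$ to a semiconcept, so the second assertion should not be expected for arbitrary dBas, consistent with the fully-contextual-but-not-pure behaviour illustrated in Example \ref{motivation}.
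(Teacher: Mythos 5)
Your proposal is correct and follows essentially the same route as the paper: clopenness from Note \ref{clopen subbase}, the protoconcept equation $F_{\neg x}^{\blacksquare\lozenge}=F_{\neg(x\sqcup x)}=I_{x}^{\lozenge}$ via Proposition \ref{clopensemiconcept}, and the pure case via the dichotomy $x=x\sqcap x$ or $x=x\sqcup x$ combined with parts (i)(a) and (ii)(b) of that proposition. Your write-up is in fact slightly more explicit than the paper's, which compresses the algebraic reconciliation (Proposition \ref{pro2}(iii), axiom (12), axiom (4a)) into the single citation of Proposition \ref{clopensemiconcept}.
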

\begin{proof}
$F_{x}$ is clopen in $(\mathcal{F}_{pr}(\textbf{D}),\mathcal{T})$  and $I_{x}$ is clopen in $(\mathcal{I}_{pr}(\textbf{D}),\mathcal{J})$ for each $x\in D$ (cf. Note \ref{clopen subbase}). Now  $F_{\neg x}^{\blacksquare\lozenge}=F_{\neg(x\sqcup x)}=I_{x}^{\lozenge}$ by Proposition \ref{clopensemiconcept}. Therefore $(F_{\neg x},I_{x})$ is an object oriented protoconcept. So $(F_{\neg x},I_{x})\in \mathfrak{R}^{T}(\mathbb{K}_{pr}^{T}(\textbf{D}))$.
 
 \noindent In case  $D$ is a pure dBa, either $x=x\sqcap x$ or $x=x\sqcup x$ for all $x\in D$. If $x=x\sqcap x$, $(F_{\neg x},I_{x})=(F_{\neg x},I_{x\sqcap x}).$ Then it follows from  Proposition \ref{clopensemiconcept}(i(a))  that $(F_{\neg x},I_{x})$ is an object oriented semiconcept, as $\neg\neg x=x\sqcap x.$ On the other hand, if $x=x\sqcup x$, $(F_{\neg x},I_{x})=(F_{\neg( x\sqcup x)},I_{x}).$ From  Proposition \ref{clopensemiconcept}(ii(b)) it follows that $(F_{\neg x},I_{x})$ is an object oriented semiconcept. So $(F_{\neg x},I_{x})\in \mathfrak{S}^{T}(\mathbb{K}_{pr}^{T}(\textbf{D}))$.
\end{proof}

 We now give a characterization of the set of clopen object oriented semiconcepts of  $\mathbb{K}^{T}_{pr}(\textbf{D})$. This will be used to obtain the representation theorems for fully contextual as well as pure dBas. 
\begin{theorem}[\textbf{Characterization theorem}]
\label{characterization}
{\rm Let $\textbf{D}$ be a dBa. Then $(A,B)\in \mathfrak{R}^{T}(\mathbb{K}^{T}_{pr}(\textbf{D}))_{p}$ if and only if $(A,B)=(F_{\neg x}, I_{x})$ for some $x\in D_{p}$. Moreover, if $(A,B)\in \mathfrak{R}^{T}(\mathbb{K}^{T}_{pr}(\textbf{D}))_{\sqcap}$ then $x\in D_{\sqcap}$ and if $(A,B)\in \mathfrak{R}^{T}(\mathbb{K}^{T}_{pr}(\textbf{D}))_{\sqcup}$ then $x\in D_{\sqcup}$. }
\end{theorem}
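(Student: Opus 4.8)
The plan is to reduce membership in the pure part $\mathfrak{R}^{T}(\mathbb{K}^{T}_{pr}(\textbf{D}))_{p}$ to the two equations defining object oriented semiconcepts, and then to identify the clopen subsets of the two spaces with the basic clopen sets $F_{x}$ and $I_{x}$. Computing the idempotents of $\sqcap,\sqcup$ on a clopen object oriented protoconcept $(A,B)$ gives $(A,B)_{\sqcap}=(A,A^{\blacksquare})$ and $(A,B)_{\sqcup}=(B^{\lozenge},B)$; hence $(A,B)\in\mathfrak{R}^{T}(\mathbb{K}^{T}_{pr}(\textbf{D}))_{\sqcap}$ iff $B=A^{\blacksquare}$, and $(A,B)\in\mathfrak{R}^{T}(\mathbb{K}^{T}_{pr}(\textbf{D}))_{\sqcup}$ iff $A=B^{\lozenge}$. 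Since $\mathfrak{R}^{T}(\mathbb{K}^{T}_{pr}(\textbf{D}))_{p}$ is by definition the union of these two subsets, I would treat the $\sqcap$- and $\sqcup$-cases separately and symmetrically.

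For the ``if'' direction, given $x\in D_{p}$, Proposition \ref{semicon} already places $(F_{\neg x},I_{x})$ in $\mathfrak{R}^{T}(\mathbb{K}^{T}_{pr}(\textbf{D}))$. If $x\in D_{\sqcap}$, Proposition \ref{clopensemiconcept}(i)(a) with Proposition \ref{pro2}(iii) gives $F_{\neg x}^{\blacksquare}=I_{\neg\neg x}=I_{x\sqcap x}=I_{x}$, so $(F_{\neg x},I_{x})_{\sqcap}=(F_{\neg x},I_{x})$ and the pair lies in $\mathfrak{R}^{T}(\mathbb{K}^{T}_{pr}(\textbf{D}))_{\sqcap}$; dually, if $x\in D_{\sqcup}$, Proposition \ref{clopensemiconcept}(ii)(b) gives $I_{x}^{\lozenge}=F_{\neg(x\sqcup x)}=F_{\neg x}$, placing the pair in $\mathfrak{R}^{T}(\mathbb{K}^{T}_{pr}(\textbf{D}))_{\sqcup}$.

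The substance lies in the converse, and the main obstacle is showing that every clopen subset of $(\mathcal{F}_{pr}(\textbf{D}),\mathcal{T})$ has the form $F_{\neg x}$ with $x\in D_{\sqcap}$ (and dually for $\mathcal{J}$). Suppose $(A,B)\in\mathfrak{R}^{T}(\mathbb{K}^{T}_{pr}(\textbf{D}))_{\sqcap}$, so $B=A^{\blacksquare}$. Since $A$ is open, Note \ref{clopen subbase} together with Lemma \ref{derivation}(iv) writes $A=\bigcup_{j\in J}F_{c_{j}}$ with each $c_{j}=\sqcap_{a\in D_{j}}\neg a\in D_{\sqcap}$ (the degenerate index contributing the whole space, which is again a basic clopen set). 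Because $A$ is also closed in the compact space $(\mathcal{F}_{pr}(\textbf{D}),\mathcal{T})$ (Theorem \ref{property of topspe}), $A$ is compact, so finitely many of the open sets $F_{c_{j}}$ already cover it: $A=\bigcup_{j\in J_{0}}F_{c_{j}}$ with $J_{0}$ finite. The key algebraic lemma I would establish is $F_{x\vee y}=F_{x}\cup F_{y}$ for $x,y\in D_{\sqcap}$: the inclusion $\supseteq$ uses upward closure of filters, while $\subseteq$ uses that $F\cap D_{\sqcap}$ is a prime filter of the Boolean algebra $\textbf{D}_{\sqcap}$ (Lemma \ref{lema1}, Proposition \ref{comparison of two ideal}) together with $x\vee y\in D_{\sqcap}$ (Proposition \ref{pro2}(iv)). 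Iterating collapses the finite union to $A=F_{d}$ with $d:=\vee_{j\in J_{0}}c_{j}\in D_{\sqcap}$. Setting $x:=\neg d\in D_{\sqcap}$ (Proposition \ref{pro2}(i)) and using $\neg\neg d=d\sqcap d=d$ yields $A=F_{d}=F_{\neg x}$, whence $B=A^{\blacksquare}=F_{\neg x}^{\blacksquare}=I_{\neg\neg x}=I_{x}$ by Proposition \ref{clopensemiconcept}(i)(a). Thus $(A,B)=(F_{\neg x},I_{x})$ with $x\in D_{\sqcap}$, settling the first ``moreover'' clause.

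The case $(A,B)\in\mathfrak{R}^{T}(\mathbb{K}^{T}_{pr}(\textbf{D}))_{\sqcup}$ is dual: from $A=B^{\lozenge}$ and the openness/compactness of the clopen set $B$ in $(\mathcal{I}_{pr}(\textbf{D}),\mathcal{J})$, I would write $B=\bigcup_{j\in J_{0}}I_{e_{j}}$ with each $e_{j}\in D_{\sqcup}$ (using the dual of Note \ref{clopen subbase} and Lemma \ref{derivation}(iii)), invoke the dual collapsing lemma $I_{x\wedge y}=I_{x}\cup I_{y}$ for $x,y\in D_{\sqcup}$ (proved via prime ideals of the Boolean algebra $\textbf{D}_{\sqcup}$), collapse to $B=I_{x}$ with $x:=\wedge_{j\in J_{0}}e_{j}\in D_{\sqcup}$, and conclude $A=B^{\lozenge}=I_{x}^{\lozenge}=F_{\neg(x\sqcup x)}=F_{\neg x}$ by Proposition \ref{clopensemiconcept}(ii)(b). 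This gives $(A,B)=(F_{\neg x},I_{x})$ with $x\in D_{\sqcup}$ and the second ``moreover'' clause. Combining the two cases yields the stated equivalence. The crux throughout is the collapsing lemma $F_{x\vee y}=F_{x}\cup F_{y}$ (and its ideal dual), which is precisely where compactness meets the prime filter/ideal theory of the Boolean reducts $\textbf{D}_{\sqcap},\textbf{D}_{\sqcup}$.
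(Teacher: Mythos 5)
Your proof is correct, and its skeleton coincides with the paper's: split $\mathfrak{R}^{T}(\mathbb{K}^{T}_{pr}(\textbf{D}))_{p}$ into the $\sqcap$- and $\sqcup$-cases, use clopenness together with compactness of the spaces (Theorem \ref{property of topspe}) to reduce the clopen component to a finite expression in the subbasic sets, collapse that expression to a single $F_{(\cdot)}$ (resp. $I_{(\cdot)}$), and finish with Proposition \ref{clopensemiconcept}. There are two genuine differences. First, you prove the ``if'' direction explicitly, via Proposition \ref{semicon} and the computations $F_{\neg x}^{\blacksquare}=I_{x}$ for $x\in D_{\sqcap}$ and $I_{x}^{\lozenge}=F_{\neg x}$ for $x\in D_{\sqcup}$; the paper argues only the converse and leaves this direction implicit. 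Second, at the collapsing step you take a finite subcover of the compact set $A$ itself by the basic open sets $F_{c_{j}}$ and merge the resulting finite \emph{union} by a new lemma, $F_{x\vee y}=F_{x}\cup F_{y}$ for $x,y\in D_{\sqcap}$, proved from primeness of $F\cap D_{\sqcap}$ (Lemma \ref{lema1}, Proposition \ref{comparison of two ideal}); the paper instead takes a finite subcover of the complement $A^{c}$ and passes back through complementation, so that its collapse uses only the already-stated identities $F_{x}^{c}=F_{\neg x}$ and $F_{x}\cap F_{y}=F_{x\sqcap y}$ of Lemma \ref{derivation}, plus $\neg\neg y=y$ on $D_{\sqcap}$. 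The two devices are interchangeable: your union-collapse lemma in fact follows from Lemma \ref{derivation}(ii),(iv) and Proposition \ref{pro2}(iii) via $F_{x\vee y}=(F_{\neg x}\cap F_{\neg y})^{c}=F_{x}\cup F_{y}$, so no fresh appeal to primeness is actually required, while the paper's route hides the same primality inside Lemma \ref{derivation}(ii). What your version buys is a slightly more direct argument (no double complementation) that makes visible exactly where the prime filter/ideal structure of $\textbf{D}_{\sqcap}$ and $\textbf{D}_{\sqcup}$ enters; what the paper's buys is that it never leaves the stock of previously stated lemmas. Neither proof treats the degenerate case $A=\emptyset$ (empty index set) separately, but taking $d=\bot$ there causes no difficulty in either argument.
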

\begin{proof}
Let us recall that $\mathfrak{R}^{T}(\mathbb{K}^{T}_{pr}(\textbf{D}))_{p}=\mathfrak{R}^{T}(\mathbb{K}^{T}_{pr}(\textbf{D}))_{\sqcap}\cup \mathfrak{R}^{T}(\mathbb{K}^{T}_{pr}(\textbf{D}))_{\sqcup}$. Then the following cases  arise.\\
(i) Let $(A,B)\in\mathfrak{R}^{T}(\mathbb{K}^{T}_{pr}(\textbf{D}))_{\sqcap} $. Then $(A,B)=(A,B)\sqcap (A,B)=(A, A^{\blacksquare})$.
Since $A$ is closed, $A=\cap_{j\in J}\cup_{a\in D_{j}}F_{a}$, where $D_{j}'s$ are finite subsets of $D$ for each $j\in J$. As $A$ is open, $A^{c}=\cup_{j\in J}\cap_{a\in D_{j}}F^{c}_{a}$ is a closed subset of $ (\mathcal{F}_{pr}(\textbf{D}), \mathcal{T})$. Therefore $A^{c}$ is compact in $ (\mathcal{F}_{pr}(\textbf{D}), \mathcal{T})$, as  $ (\mathcal{F}_{pr}(\textbf{D}), \mathcal{T})$ is compact. So there exists a finite subset $E$ of $J$ such that $A^{c}=\cup_{j\in E}\cap_{a\in D_{j}}F^{c}_{a}=\cup_{j\in E}\cap_{a\in D_{j}}F_{\neg a}=\cup_{j\in E}F_{\sqcap_{a\in D_{j}}\neg a}=\cup_{j\in E}F_{\neg (\vee_{a\in D_{j}}a)}$ by Lemma \ref{derivation}.
This means $A=\cap_{j\in E}F_{\neg\neg (\vee_{a\in D_{j}}a)}=\cap_{j\in E}F_{ \vee_{a\in D_{j}}a}$, as $\vee_{a\in D_{j}}a$ in $D_{\sqcap}$ for each $j\in E$ by Note \ref{finite-B-meet}. This  implies that $A=F_{\sqcap_{j\in E}(\vee_{a\in D_{j}}a)}=F_{x}$, where $x=\sqcap_{j\in E}(\vee_{a\in D_{j}}a)$. Therefore $(A,A^{\blacksquare})=(F_{x},F^{\blacksquare}_{x})=(F_{x},I_{\neg x})=(F_{x\sqcap x}, I_{\neg x})=(F_{\neg\neg x}, I_{\neg x})$, by Lemma \ref{derivation}(iv), Proposition \ref{clopensemiconcept}, and \ref{pro2}(iii). So $(A,B)=(F_{\neg\neg x}, I_{\neg x})$, where $\neg x\in D_{\sqcap}\subseteq D_{p}$ by Proposition \ref{pro2}(i).\\
(ii)  Let $(A,B)\in\mathfrak{R}^{T}(\mathbb{K}^{T}_{pr}(\textbf{D}))_{\sqcup} $. Then $(A,B)=(A,B)\sqcup (A,B)=(B^{\lozenge},B)$. Similar to the proof of (i), we can show that $(B^{\lozenge}, B)=((I_{b})^{\lozenge}, I_{b})=(F_{\neg(b\sqcup b)}, I_{b})$ for some $b\in D$. Using Lemma \ref{derivation}(iii), $(A,B)=(F_{\neg(b\sqcup b)}, I_{b\sqcup b})$, where $b\sqcup b\in D_{\sqcup}\subseteq D_{p}$ by Proposition \ref{pro2}(iv).
\end{proof}

 \begin{corollary}
  \label{property-ston}
{\rm Let $\textbf{D}$ be a dBa. 
Then $F\nabla I$ if and only if for all $(X,Y)\in\mathfrak{R}^{T}(\mathbb{K}^{T}_{pr}(\textbf{D}))_{p}, I\in Y$ implies that $F\in X$.}
\end{corollary}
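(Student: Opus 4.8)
The plan is to first translate the clopen-protoconcept condition into a purely algebraic condition on $\textbf{D}$ by means of the Characterization theorem (Theorem \ref{characterization}), and then verify the resulting biconditional using only the defining properties of primary filters and ideals together with one identity from Theorem \ref{algebraic equation}.

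By Theorem \ref{characterization}, the pairs $(X,Y)\in\mathfrak{R}^{T}(\mathbb{K}^{T}_{pr}(\textbf{D}))_{p}$ are exactly those of the form $(F_{\neg x},I_{x})$ with $x\in D_{p}$. Since $I\in I_{x}$ iff $x\in I$, and $F\in F_{\neg x}$ iff $\neg x\in F$, and since $F\nabla I$ means $F\cap I=\emptyset$ (as $\nabla$ is the complement of $\Delta$), the statement to be proved reduces to: $F\cap I=\emptyset$ if and only if, for every $x\in D_{p}$, $x\in I$ implies $\neg x\in F$. I would open the proof by recording this reformulation explicitly.

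For the forward direction I would assume $F\cap I=\emptyset$ and take any $x\in D_{p}$ with $x\in I$. Because $F$ is primary, either $x\in F$ or $\neg x\in F$; the first option would place $x\in F\cap I$, contradicting disjointness, so $\neg x\in F$. (This direction actually holds for every $x\in D$, not merely for $x\in D_{p}$, so the purity restriction plays no role here.)

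For the converse I would argue by contradiction: suppose the implication holds for all $x\in D_{p}$ but $F\cap I\neq\emptyset$, and pick $z\in F\cap I$. The obstacle, and the only genuinely delicate point, is that the witness $z$ need not lie in $D_{p}$, so the hypothesis cannot be invoked for $z$ itself. To sidestep this I would pass to $z\sqcup z$: since $I$ is an ideal, $z\sqcup z\in I$, and $z\sqcup z\in D_{\sqcup}\subseteq D_{p}$, so the hypothesis gives $\neg(z\sqcup z)\in F$. As $z\in F$ too and $F$ is closed under $\sqcap$, we get $z\sqcap\neg(z\sqcup z)\in F$; but by Theorem \ref{algebraic equation}(i) (taking $x=y=z$) this meet equals $\bot$, so $\bot\in F$. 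Since $\bot\sqsubseteq y$ for all $y$ by Proposition \ref{pro1.5}(i) and $F$ is upward closed, this forces $F=D$, contradicting that a primary filter is proper. Hence $F\cap I=\emptyset$, completing the argument.
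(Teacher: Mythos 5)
Your proposal is correct and follows essentially the same route as the paper's proof: both directions rest on Theorem \ref{characterization} to parametrize the pure clopen protoconcepts as $(F_{\neg x},I_{x})$ with $x\in D_{p}$, the forward implication uses primality of $F$, and the converse passes from a witness $z\in F\cap I$ to $z\sqcup z\in D_{\sqcup}\subseteq D_{p}$ to reach a contradiction. The only cosmetic difference is at the final step, where you invoke Theorem \ref{algebraic equation}(i) to conclude $\bot\in F$, whereas the paper notes $z\sqcup z\in F$ and uses that a proper primary filter cannot contain both an element and its negation; these amount to the same thing.
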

\begin{proof} 
Let $F\nabla I$ and $(X,Y)\in \mathfrak{R}^{T}(\mathbb{K}^{T}_{pr}(\textbf{D}))_{p}$ such that $I\in Y$. Then $F\cap I=\emptyset$ and by Theorem \ref{characterization}, $(X,Y)=(F_{x},I_{\neg x})$ for some $x\in D_{p}$. $\neg x\in I$ and $F\cap I=\emptyset$ imply that $\neg x\notin F$, and so $ x \in F$. Hence $F\in X=F_{x}$. 
For the converse, let $F\in\mathcal{F}_{pr}(\textbf{D})$, and $I\in \mathcal{I}_{pr}(\textbf{D})$ be such that for all $(X,Y)\in\mathfrak{R}^{T}(\mathbb{K}^{T}_{pr}(\textbf{D}))_{p}$, $I\in Y$ implies  $F\in X$. If possible, let $F\cap I\neq\emptyset$. Then there exists $a\in F\cap I$. Therefore $I\in I_{a\sqcup a}$ and $a\sqcup a\in F$, that is $\neg( a\sqcup a)\notin F$. So $F\notin F_{\neg (a\sqcup a)}$, which is a contradiction, as $(F_{\neg(a\sqcup a)},I_{a\sqcup a})\in\mathfrak{R}^{T}(\mathbb{K}^{T}_{pr}(\textbf{D}))_{p}$. Therefore $F\cap I=\emptyset$, which implies $F\nabla I.$
\end{proof}
As mentioned in Theorem \ref{object-proto} (cf. Section \ref{repres}), any dBa \textbf{D} can be quasi-embedded into the algebra $\underline{\mathfrak{R}}(\mathbb{K}_{pr}(\textbf{D}))$ of object oriented protoconcepts, through the map 
 $h:D\rightarrow\mathfrak{R}(\mathbb{K}_{pr}(\textbf{D}))$ given by  $h(x):=(F_{\neg x}, I_{x})$ for all $x\in D$.  Proposition \ref{semicon} gives that  for all $x\in D$, $h(x) \in \mathfrak{R}^{T}(\mathbb{K}_{pr}^{T}(\textbf{D}))$, which is a subset of $\mathfrak{R}(\mathbb{K}_{pr}(\textbf{D}))$.  The next representation theorem establishes  that, in fact, the dBa $\textbf{D}$ can be quasi-embedded into the subalgebra $\underline{\mathfrak{R}}^{T}(\mathbb{K}_{pr}^{T}(\textbf{D}))$ of $\underline{\mathfrak{R}}(\mathbb{K}_{pr}(\textbf{D}))$, using the map $h$.
\begin{theorem}[\textbf{Representation theorem for  dBas and contextual dBas}]
\label{embedding of dBa}
\noindent
 \begin{enumerate}[{(i)}]
\item {\rm Any dBa $\textbf{D}$ is quasi-embedded  into  $\underline{\mathfrak{R}}^{T}(\mathbb{K}_{pr}^{T}(\textbf{D}))$,  where $h:D\rightarrow\mathfrak{R}^{T}(\mathbb{K}_{pr}^{T}(\textbf{D}))$  defined by $h(x):=(F_{\neg x}, I_{x})$ for all $x\in D$, gives the required quasi-injective dBa homomorphism. Moreover, $\textbf{D}_{p}$ is isomorphic to $\mathcal{S}^{T}(\mathbb{K}_{pr}^{T}(\textbf{D}))$. }
\item {\rm Any  contextual dBa $\textbf{D}$ is embedded into  $\underline{\mathfrak{R}}^{T}(\mathbb{K}_{pr}^{T}(\textbf{D}))$, the above map $h$ giving the required injective dBa homomorphism from $\textbf{D}$ into $\underline{\mathfrak{R}}^{T}(\mathbb{K}_{pr}^{T}(\textbf{D}))$.}
\end{enumerate}

\end{theorem}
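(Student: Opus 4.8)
The plan is to assemble the theorem from machinery already in place, treating $h$ first as a map into the full algebra $\underline{\mathfrak{R}}(\mathbb{K}_{pr}(\textbf{D}))$ and then cutting down to the clopen subalgebra. Recall that by the renaming at the start of this section $\mathbb{K}_{pr}(\textbf{D})=\mathbb{K}^{c}(\textbf{D})$, so Theorem \ref{object-proto} already tells us that $h(x):=(F_{\neg x},I_{x})$ defines a quasi-injective dBa homomorphism from $\textbf{D}$ into $\underline{\mathfrak{R}}(\mathbb{K}_{pr}(\textbf{D}))$. Proposition \ref{semicon} guarantees that every value $h(x)$ in fact lands in $\mathfrak{R}^{T}(\mathbb{K}_{pr}^{T}(\textbf{D}))$, and by Theorem \ref{clopen dba} together with Theorem \ref{topcontext} this set is closed under $\sqcup,\sqcap,\neg,\lrcorner$ and contains $\top,\bot$; that is, $\underline{\mathfrak{R}}^{T}(\mathbb{K}_{pr}^{T}(\textbf{D}))$ is a subalgebra of $\underline{\mathfrak{R}}(\mathbb{K}_{pr}(\textbf{D}))$.

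For the first statement of part (i), since $h$ is a dBa homomorphism whose image lies inside this subalgebra, and since the quasi-order $\sqsubseteq$ is determined by $\sqcap,\sqcup$ alone, $h$ is equally a quasi-injective dBa homomorphism from $\textbf{D}$ into $\underline{\mathfrak{R}}^{T}(\mathbb{K}_{pr}^{T}(\textbf{D}))$; by Definition \ref{DBAHOM} this is exactly the asserted quasi-embedding.

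For the isomorphism $\textbf{D}_{p}\cong\mathcal{S}^{T}(\mathbb{K}_{pr}^{T}(\textbf{D}))$, I would restrict $h$ to $D_{p}$. First I would record the topological analogue of Proposition \ref{largest sub algebra}(ii), namely $\mathcal{S}^{T}(\mathbb{K}_{pr}^{T}(\textbf{D}))=\underline{\mathfrak{R}}^{T}(\mathbb{K}_{pr}^{T}(\textbf{D}))_{p}$, which holds because a clopen object oriented protoconcept $(A,B)$ is idempotent under $\sqcap$ (resp. $\sqcup$) precisely when $B=A^{\blacksquare}$ (resp. $A=B^{\lozenge}$), i.e. precisely when it is a clopen object oriented semiconcept. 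Surjectivity of $h\vert_{D_{p}}$ onto this set is then immediate from the Characterization theorem (Theorem \ref{characterization}): every member of $\mathfrak{R}^{T}(\mathbb{K}_{pr}^{T}(\textbf{D}))_{p}$ has the form $(F_{\neg x},I_{x})=h(x)$ with $x\in D_{p}$. For injectivity, if $h(x)=h(y)$ with $x,y\in D_{p}$, then $h(x)\sqsubseteq h(y)$ and $h(y)\sqsubseteq h(x)$ (Proposition \ref{pro1.5}(iii)), so quasi-injectivity gives $x\sqsubseteq y$ and $y\sqsubseteq x$; since $\textbf{D}_{p}$ is a pure subalgebra (Proposition \ref{puresub}) and pure dBas are contextual (Proposition \ref{order pure}), $\sqsubseteq$ is a partial order on $D_{p}$, whence $x=y$. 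Thus $h\vert_{D_{p}}$ is a bijective dBa homomorphism, i.e. a dBa isomorphism.

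For part (ii), the same injectivity argument applies verbatim but now on all of $D$: if $\textbf{D}$ is contextual then $\sqsubseteq$ is a partial order on $D$, so $h(x)=h(y)$ forces $x\sqsubseteq y$ and $y\sqsubseteq x$ and hence $x=y$; combined with the first part, this upgrades the quasi-embedding to an injective dBa homomorphism, i.e. an embedding. I do not expect a serious obstacle at this stage, since the genuinely hard work has already been discharged upstream --- that $\mathbb{K}_{pr}^{T}(\textbf{D})$ is a CTSCR (Theorem \ref{topcontext}, via Rado's Selection Principle) and, above all, the Characterization theorem, whose compactness argument is precisely what forces the clopen idempotent elements to be exactly the $(F_{\neg x},I_{x})$ with $x\in D_{p}$. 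The only point demanding care in the present proof is the surjectivity step, where one must invoke both the identity $\mathcal{S}^{T}(\mathbb{K}_{pr}^{T}(\textbf{D}))=\underline{\mathfrak{R}}^{T}(\mathbb{K}_{pr}^{T}(\textbf{D}))_{p}$ and the Characterization theorem in order to pin down the codomain correctly.
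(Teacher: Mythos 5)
Your proposal is correct, and its second half (the isomorphism $\textbf{D}_{p}\cong\mathcal{S}^{T}(\mathbb{K}_{pr}^{T}(\textbf{D}))$ and part (ii)) coincides with the paper's argument: surjectivity of $h\vert_{D_{p}}$ from the Characterization theorem, the identification $\underline{\mathfrak{R}}^{T}(\mathbb{K}_{pr}^{T}(\textbf{D}))_{p}=\mathcal{S}^{T}(\mathbb{K}_{pr}^{T}(\textbf{D}))$, injectivity from the quasi-order being a partial order on a pure (hence contextual) dBa, and the contextual case in (ii) handled identically. Where you genuinely diverge is the first step: the paper re-proves from scratch that $h$ is a dBa homomorphism (by direct computation with Lemma \ref{derivation} and Proposition \ref{clopensemiconcept}) and that it is quasi-injective (by a separation argument: the Boolean prime ideal theorem in $\textbf{D}_{\sqcap}$, lifted to a primary filter of $\textbf{D}$ via Lemma \ref{lema1}(ii) and Proposition \ref{comparison of two ideal}, and dually for $\textbf{D}_{\sqcup}$), whereas you obtain both properties at once by citing Theorem \ref{object-proto} and observing that the image lies in the subalgebra $\underline{\mathfrak{R}}^{T}(\mathbb{K}_{pr}^{T}(\textbf{D}))$, within which the operations, and hence the equationally defined quasi-order, are restrictions of those of $\underline{\mathfrak{R}}(\mathbb{K}_{pr}(\textbf{D}))$. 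Your reduction is sound (and is exactly what the paper's own preamble to the theorem suggests), and it buys economy and modularity; the paper's route buys self-containedness, making the topological theorem independent of the earlier non-topological representation result and exhibiting explicitly where the prime ideal machinery enters. Your direct verification that the idempotents of $\underline{\mathfrak{R}}^{T}$ are precisely the clopen semiconcepts is also a welcome filling-in of a point the paper disposes of by appeal to Proposition \ref{largest sub algebra}.
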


\begin{proof}
(i) From Proposition \ref{semicon} it follows that $h$ is a well-defined  map. Let $x,y \in D$. Then \\ $h(x\sqcap y)=(F_{\neg (x\sqcap y)},I_{x\sqcap y})=(F_{\neg x},I_{x})\sqcap (F_{\neg y},I_{y})=h(x)\sqcap h(y)$, and \\$h(x\sqcup y)=(F_{\neg (x\sqcup y)},I_{x\sqcup y})=(F_{\neg x},I_{x})\sqcup (F_{\neg y},I_{y})=h(x)\sqcup h(y)$. \\Similarly, one can show that \\$h(\lrcorner x)=\lrcorner h(x), h(\neg x)=\neg h(x),h(\top)=(\emptyset,\emptyset)$ and $h(\bot)=(\mathcal{F}_{pr}(\textbf{D}),\mathcal{I}_{pr}(\textbf{D}))$. \\Therefore $h$ is a dBa homomorphism. 
 
 Now we  show that $h$ is quasi-injective. Let $x,y\in D$ and $h(x)\sqsubseteq h(y)$. If possible, suppose $x\not\sqsubseteq y$. Then by Proposition \ref{pro1}(iii), either $x_{\sqcap}\not\sqsubseteq_{\sqcap} y_{\sqcap}$ or $x_{\sqcup}\not\sqsubseteq_{\sqcup} y_{\sqcup}$. If $x_{\sqcap}\not\sqsubseteq_{\sqcap} y_{\sqcap}$  then by the prime ideal theorem  of Boolean algebras, there exists a prime filter $F_0$ in $\textbf{D}_{\sqcap}$ such that $x\sqcap x \in F_{0}$ and $y\sqcap y\notin F_0$. Therefore by Lemma \ref{lema1}(ii) and Proposition \ref{comparison of two ideal}, it follows that there exists a primary filter $F$ such that $F_{0}=F\cap D_{\sqcap}$. So $x\sqcap x \in F$ and $\neg y \in F$ (as $y\sqcap y\notin F$). Thus $F\in F_{\neg y}$ but $F\notin F_{\neg x}$, which implies $F_{\neg y}\not\subseteq F_{\neg x}$.  Now if $x_{\sqcup} \not\sqsubseteq_{\sqcup} y_{\sqcup} $ then dually we can show that $I_{y}\not\subseteq I_{x}$.  Therefore in both  cases, $h(x)\not\sqsubseteq h(y)$, which is a contradiction. So $x\sqsubseteq y$. Conversely, let $x\sqsubseteq y.$ Then $I_{y}\subseteq I_{x}$. Further $x_{\sqcap}\sqsubseteq_{\sqcap} y_{\sqcap}$ and $x_{\sqcup}\sqsubseteq_{\sqcup} y_{\sqcup}$. Therefore by Proposition \ref{pro2}(ii),  $\neg y=\neg y_{\sqcap}\sqsubseteq_{\sqcap} \neg x_{\sqcap}=\neg x$. So $F_{\neg y}\subseteq F_{\neg x}.$ Hence $h(x)\sqsubseteq h(y)$, using Proposition \ref{order object-proto}.  
 
 From Theorem \ref{characterization} it follows that $h\vert_{D_{p}}$  is a surjective dBa homomorphism from $\textbf{D}_{p}$ onto $\underline{\mathfrak{R}}^{T}(\mathbb{K}^{T}_{pr}(\textbf{D}))_{p}$.   The restriction of quasi order $\sqsubseteq $ on $D_{p}$ becomes a partial order by Proposition \ref{order pure}. So  $h\vert_{D_{p}}$ is a dBa isomorphism from $\textbf{D}_{p}$ to $\underline{\mathfrak{R}}^{T}(\mathbb{K}^{T}_{pr}(\textbf{D}))_{p}$.
Proposition \ref{largest sub algebra} applied to the case of $\underline{\mathfrak{R}}^{T}(\mathbb{K}^{T}_{pr}(\textbf{D}))_{p}$ and $\mathcal{S}^{T}(\mathbb{K}^{T}_{pr}(\textbf{D}))$ gives $\underline{\mathfrak{R}}^{T}(\mathbb{K}^{T}_{pr}(\textbf{D}))_{p}=\mathcal{S}^{T}(\mathbb{K}^{T}_{pr}(\textbf{D}))$.
 
 
\vskip 2pt
\noindent (ii) If $\textbf{D}$ is a contextual dBa then the quasi-order $\sqsubseteq$ becomes a partial order, whence $h$ becomes an injective dBa homomorphism.
\end{proof}

\noindent Recall the special case when a  dBa $\textbf{D}$ is finite, as remarked upon in Section \ref{repres}. We now show that  in this case, the topological spaces $(\mathcal{F}_{pr}(\textbf{D}), \mathcal{T})$ and $(\mathcal{I}_{pr}(\textbf{D}), \mathcal{J})$  become  discrete. So, effectively,  the topologies do not play any role in the representation theorem above and the result coincides with Corollary \ref{finite rep object}. 

\begin{proposition}
{\rm For a finite dBa $\textbf{D}$, $(\mathcal{F}_{pr}(\textbf{D}), \mathcal{T})$ and $(\mathcal{I}_{pr}(\textbf{D}), \mathcal{J})$  are discrete topological spaces.
}\end{proposition}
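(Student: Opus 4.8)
The plan is to combine two facts already at hand: the finiteness of the underlying point sets, and the Hausdorff property established in Theorem \ref{property of topspe}.

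First I would note that, by definition, every filter and every ideal of $\textbf{D}$ is a subset of $D$; hence $\mathcal{F}_{pr}(\textbf{D})$ and $\mathcal{I}_{pr}(\textbf{D})$ are subsets of the power set $\mathcal{P}(D)$. Since $\textbf{D}$ is finite, $\mathcal{P}(D)$ is finite, and therefore both $\mathcal{F}_{pr}(\textbf{D})$ and $\mathcal{I}_{pr}(\textbf{D})$ are finite sets.

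Next I would invoke the elementary fact that a finite Hausdorff space carries the discrete topology. The reasoning is as follows: in any Hausdorff space each singleton is closed, so in a finite space every subset, being a finite union of singletons, is closed; passing to complements, every subset is then open, i.e. the topology is discrete. Since $(\mathcal{F}_{pr}(\textbf{D}), \mathcal{T})$ and $(\mathcal{I}_{pr}(\textbf{D}), \mathcal{J})$ are Hausdorff by Theorem \ref{property of topspe}, applying this fact to each of them finishes the proof.

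For a more self-contained argument in the spirit of the construction, I would instead show directly that each singleton is open. Fixing $F \in \mathcal{F}_{pr}(\textbf{D})$, for every $F' \neq F$ the total-disconnectedness argument in the proof of Theorem \ref{property of topspe} produces an $x \in D$ separating $F$ and $F'$ by the clopen set $F_x$; since $F_x$ and its complement are both clopen (Note \ref{clopen subbase}), one of them is a clopen set $U_{F'}$ with $F \in U_{F'}$ and $F' \notin U_{F'}$. As $\mathcal{F}_{pr}(\textbf{D})$ is finite, $\{F\} = \bigcap_{F' \neq F} U_{F'}$ is a finite intersection of clopen sets, hence open, so the topology is discrete; the ideal side is dual. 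In either route there is no genuine obstacle: the only point needing care is the finiteness of the two spaces, which is immediate from that of $D$.
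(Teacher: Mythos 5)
Your proposal is correct, but it proves the proposition by a genuinely different route than the paper. You argue softly: $\mathcal{F}_{pr}(\textbf{D})$ and $\mathcal{I}_{pr}(\textbf{D})$ are finite (being subsets of $\mathcal{P}(D)$ with $D$ finite), both spaces are Hausdorff by Theorem \ref{property of topspe}, and a finite Hausdorff space is discrete since singletons are closed and complements of finite unions of closed sets are open; your second variant, intersecting finitely many separating clopen sets to isolate a point, is just an unwound form of the same argument and is equally valid. The paper instead works algebraically and explicitly: it uses the fact that in a finite dBa every primary filter is determined by an atom $a_i$ of the Boolean algebra $\textbf{D}_{\sqcap}$ (and every primary ideal by a coatom of $\textbf{D}_{\sqcup}$), and then shows that an arbitrary subset $A=\{F_{1},\ldots,F_{n}\}$ of $\mathcal{F}_{pr}(\textbf{D})$ is exactly the subbasic clopen set $F_{\vee_{i=1}^{n}a_{i}}$ (dually $B=I_{\wedge_{i=1}^{n}b_{i}}$ on the ideal side), so every subset is clopen. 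What your approach buys is brevity and independence from the fine structure of finite dBas: you only need the already-established separation property. What the paper's approach buys is an explicit identification of every subset with a set of the form $F_{x}$ or $I_{x}$ for a concrete $x\in D$ built from atoms and coatoms; this ties the proposition directly to the atom/coatom description underlying Corollary \ref{finite rep object} and makes transparent the surrounding remark that in the finite case the topologies contribute nothing beyond the purely algebraic representation. Both proofs are complete and correct; there is no gap in yours.
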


\begin{proof}
(i) 
Consider  $(\mathcal{F}_{pr}(\textbf{D}), \mathcal{T})$. 
We show that any subset $A$ of $\mathcal{F}_{pr}(\textbf{D})$ is open in $(\mathcal{F}_{pr}(\textbf{D}), \mathcal{T})$. Let $A:=\{F_{1},\ldots, F_{n}\}$, where $F_{i}\in \mathcal{F}_{pr}(\textbf{D}),i=1,\ldots,n$. Then $F_{i}=\{x\in D~:~ z\sqsubseteq x~\mbox{for some }~ z\in F_{0i}\}$, where $F_{0i}=\uparrow a_{i}$ is a principal filter of the Boolean algebra $\textbf{D}_{\sqcap}$ generated by an atom $a_{i}$ of $\textbf{D}_{\sqcap}$. We claim that $A=F_{\vee_{i=1}^{n}a_{i}}$. Indeed, let $F_{k}\in A$,  $k\in \{1,\ldots, n\}$. As  $a_{k}\sqsubseteq a_{k}$, $a_{k}\in F_{k}$. By Proposition \ref{pro1} and Note \ref{finite-B-meet}, it follows that $a_{k}\sqsubseteq \vee_{i=1}^{n} a_{i}$. As $F_{k}$ is a filter, $\vee_{i=1}^{n} a_{i}\in F_{k}$, whence $F_{k}\in F_{\vee_{i=1}^{n}a_{i}}$.

Now let $F\in F_{\vee_{i=1}^{n}a_{i}}$. Since $F$ is a primary filter, $F=\{x\in D~:~ z\sqsubseteq x~\mbox{for some}~ z\in F_{0}\}$, where $F_{0}=\uparrow a$ is a principal filter of the Boolean algebra $\textbf{D}_{\sqcap}$ generated by the atom $a$. As $\vee_{i=1}^{n}a_{i}\in F$, $a\sqsubseteq \vee_{i=1}^{n}a_{i} $. So by Proposition \ref{pro1} and Note \ref{finite-B-meet}, $a\sqsubseteq_{\sqcap} \vee_{i=1}^{n}a_{i} $, which implies that $a=a_{k}$ for some $k\in \{1,\ldots, n\}$, as $a$ is an atom of $\textbf{D}_{\sqcap}$. Therefore $F\in A$. Hence $A=F_{\vee_{i=1}^{n}a_{i}}$.\\
(ii) For the case of $(\mathcal{I}_{pr}(\textbf{D}), \mathcal{J})$, similar to the proof in (i), one can show that any subset $B$ of $\mathcal{I}_{pr}(\textbf{D})$ has the form $B=I_{\wedge_{i=1}^{n}b_{i}}$ for some coatoms $b_{i}$ of the Boolean algebra $\textbf{D}_{\sqcup}$. 
\end{proof}

\vskip 2pt
We next prove the isomorphism theorem for fully contextual dBas. For that, the following result will be used.
\begin{lemma}
\label{reqfcdBa}
{\rm For any dBa $\textbf{D}$ and $a\in D$, $F_{\neg a}=F_{\neg(a\sqcap a)}$.}
\end{lemma}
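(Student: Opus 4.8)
The plan is to reduce the asserted set equality to an equality of the two defining elements $\neg a$ and $\neg(a\sqcap a)$ of $D$. Recall from Notation \ref{Fx and Ix} that for any $z\in D$ the set $F_z$ is defined purely in terms of the element $z$, namely $F_z=\{F\in\mathcal{F}_{pr}(\textbf{D}):z\in F\}$. Consequently, if I can show that $\neg a=\neg(a\sqcap a)$ holds as elements of $D$, then the two sets $F_{\neg a}$ and $F_{\neg(a\sqcap a)}$ coincide at once, since they are indexed by the same element.

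The required element equality is immediate: axiom (4a) in Definition \ref{DBA} states precisely that $\neg(x\sqcap x)=\neg x$ for all $x\in D$. Specialising $x:=a$ gives $\neg(a\sqcap a)=\neg a$, and the claim follows. If one prefers an argument phrased at the level of primary filters rather than of elements, the same conclusion is reached by double inclusion: for any $F\in\mathcal{F}_{pr}(\textbf{D})$, one has $F\in F_{\neg a}$ iff $\neg a\in F$, iff $\neg(a\sqcap a)\in F$ (using $\neg a=\neg(a\sqcap a)$ from (4a)), iff $F\in F_{\neg(a\sqcap a)}$, so $F_{\neg a}=F_{\neg(a\sqcap a)}$.

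There is no substantive obstacle here: the whole content of the lemma is the idempotence axiom (4a) governing $\neg$ on squares, together with the observation that $F_{(\cdot)}$ depends only on its subscript. The statement is isolated as a lemma merely because the identity $F_{\neg a}=F_{\neg(a\sqcap a)}$ is convenient to cite in the proof of the subsequent isomorphism theorem for fully contextual dBas, where one repeatedly needs to pass between an element and its $\sqcap$-square inside the subscript of $F$.
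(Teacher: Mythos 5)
Your proof is correct, and it takes a genuinely different and simpler route than the paper's. You observe that axiom (4a) of Definition \ref{DBA} gives the element identity $\neg(a\sqcap a)=\neg a$ outright, so $F_{\neg a}$ and $F_{\neg(a\sqcap a)}$ coincide immediately, being the sets indexed by one and the same element of $D$ (cf. Notation \ref{Fx and Ix}). The paper instead argues by double inclusion at the level of primary filters: for $F_{\neg a}\subseteq F_{\neg(a\sqcap a)}$ it uses $a\sqcap a\sqsubseteq a$ (Proposition \ref{pro1.5}(v)), the antitonicity of $\neg$ (Proposition \ref{pro2}(ii)) and upward closure of filters; for the reverse inclusion it uses closure of $F$ under $\sqcap$ together with the primary-filter dichotomy ($x\in F$ or $\neg x\in F$). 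That argument never invokes axiom (4a) and derives the conclusion from order-theoretic and filter-theoretic machinery alone, which is strictly more work for the same statement. Your route buys brevity and makes transparent that the lemma is nothing but the idempotence axiom for $\neg$ on squares, read through the subscript of $F_{(\cdot)}$; the paper's route would still go through in a weaker setting where $\neg a$ and $\neg(a\sqcap a)$ were only known to be mutually $\sqsubseteq$-comparable rather than equal, but within the paper's own axiom system that extra generality is vacuous.
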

\begin{proof}
Let $F\in F_{\neg a}$. Then $\neg a\in F$. By Proposition \ref{pro1.5}(v) it follows that $a\sqcap a\sqsubseteq a$ and  Proposition \ref{pro2}(ii) gives $\neg a\sqsubseteq\neg(a\sqcap a)$. Therefore $\neg (a\sqcap a)\in F$, as $F$ is a filter. So $F_{\neg a}\subseteq F_{\neg(a\sqcap a)}$. Conversely, let $\neg(a\sqcap a)\in F$. As $F$ is a primary filter, $a\sqcap a\notin F$, which implies that $a\notin F$ (otherwise $a\sqcap a\in F$, as $F$ is a filter). Therefore $\neg a\in F$, as $F$ is a primary filter. This gives $F_{\neg(a\sqcap a)}\subseteq F_{\neg a}$. So $F_{\neg(a\sqcap a)}=F_{\neg a}$. 
\end{proof}

\begin{theorem}[\textbf{Representation theorem for fully contextual dBas}]
\label{iso-fullycxt dBa}
{\rm Any fully contextual dBa $\textbf{D}$ is isomorphic to $\underline{\mathfrak{R}}^{T}(\mathbb{K}_{pr}^{T}(\textbf{D}))$, the map $h:D\rightarrow \mathfrak{R}^{T}_{pr}(\mathbb{K}_{pr}^{T}(\textbf{D}))$ defined by $h(x):=(F_{\neg x}, I_{x})$.}
\end{theorem}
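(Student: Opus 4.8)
The plan is to deduce the isomorphism from three facts already in hand: that $h$ is an injective dBa homomorphism whose restriction to the pure part is an isomorphism (Theorem \ref{embedding of dBa}), that the target $\underline{\mathfrak{R}}^{T}(\mathbb{K}_{pr}^{T}(\textbf{D}))$ is itself fully contextual (Theorem \ref{clopen dba}(i)), and the extension machinery of Theorem \ref{iso}. Write $\textbf{M}:=\underline{\mathfrak{R}}^{T}(\mathbb{K}_{pr}^{T}(\textbf{D}))$ for brevity. By Theorem \ref{embedding of dBa}(i), $h$ is a quasi-injective dBa homomorphism and $h\vert_{D_{p}}$ is a dBa isomorphism from $\textbf{D}_{p}$ onto $\mathcal{S}^{T}(\mathbb{K}_{pr}^{T}(\textbf{D}))=\textbf{M}_{p}$. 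Since a fully contextual dBa is contextual, $\sqsubseteq$ is a partial order on $D$, so by Theorem \ref{embedding of dBa}(ii) the map $h$ is already injective. Hence $h$ is an injective dBa homomorphism restricting to an isomorphism on the pure parts, and only surjectivity remains.

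For surjectivity I would first invoke Theorem \ref{iso}: both $\textbf{D}$ and $\textbf{M}$ are fully contextual and $h\vert_{D_{p}}\colon\textbf{D}_{p}\to\textbf{M}_{p}$ is an isomorphism, so there is a unique dBa isomorphism $f\colon\textbf{D}\to\textbf{M}$ extending $h\vert_{D_{p}}$. It then remains to identify $f$ with $h$. The extension $f$ constructed in the proof of Theorem \ref{iso} is characterised by $f(x)_{\sqcap}=h(x_{\sqcap})$ and $f(x)_{\sqcup}=h(x_{\sqcup})$; since $h$ is a homomorphism we likewise have $h(x)_{\sqcap}=h(x\sqcap x)=h(x_{\sqcap})$ and $h(x)_{\sqcup}=h(x_{\sqcup})$. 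Thus $f(x)$ and $h(x)$ have the same $\sqcap$- and $\sqcup$-parts, so by Proposition \ref{pro1.5}(iv) each is $\sqsubseteq$ the other, and contextuality of $\textbf{M}$ forces $f(x)=h(x)$. Therefore $h=f$ is a dBa isomorphism.

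Alternatively, and this is in fact the conceptual core, surjectivity can be shown directly. Given $(A,B)\in\mathfrak{R}^{T}(\mathbb{K}^{T}_{pr}(\textbf{D}))$, I would apply the Characterization theorem \ref{characterization} to the pure elements $(A,B)_{\sqcap}=(A,A^{\blacksquare})$ and $(A,B)_{\sqcup}=(B^{\lozenge},B)$ to obtain $u\in D_{\sqcap}$ and $v\in D_{\sqcup}$ with $h(u)=(A,B)_{\sqcap}$ and $h(v)=(A,B)_{\sqcup}$. Applying axiom (12) inside $\textbf{M}$ gives $(A,B)_{\sqcap\sqcup}=(A,B)_{\sqcup\sqcap}$, that is $h(u_{\sqcup})=h(u)_{\sqcup}=(A,B)_{\sqcap\sqcup}=(A,B)_{\sqcup\sqcap}=h(v)_{\sqcap}=h(v_{\sqcap})$; injectivity of $h$ then yields $u_{\sqcup}=v_{\sqcap}$. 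Full contextuality of $\textbf{D}$ now supplies a unique $x\in D$ with $x_{\sqcap}=u$ and $x_{\sqcup}=v$, and since $h(x)_{\sqcap}=h(u)=(A,B)_{\sqcap}$ and $h(x)_{\sqcup}=h(v)=(A,B)_{\sqcup}$, contextuality of $\textbf{M}$ forces $h(x)=(A,B)$.

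The one genuinely delicate point is the compatibility equation $u_{\sqcup}=v_{\sqcap}$: it is precisely here that axiom (12) in the target algebra and the injectivity of $h$ must be combined so that the hypothesis of full contextuality of $\textbf{D}$ can be triggered to produce the preimage $x$. Everything else is bookkeeping resting on results already established, and the main obstacle is really recognising that the Characterization theorem is exactly the device that converts an abstract clopen object oriented protoconcept $(A,B)$ into the data $(u,v)$ inside $\textbf{D}$, after which full contextuality performs the gluing.
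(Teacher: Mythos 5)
Your proposal is correct, and it in fact contains two sound arguments, one of which coincides with the paper's proof and one of which does not. Your second ("conceptual core") route is essentially the paper's own argument: the paper likewise gets injectivity of $h$ from Theorem \ref{embedding of dBa} plus the fact that fully contextual implies contextual, decomposes a clopen object oriented protoconcept $(X,Y)$ into its pure parts $(X,X^{\blacksquare})$ and $(Y^{\lozenge},Y)$, pulls these back to elements of $D_{\sqcap}$ and $D_{\sqcup}$ via the Characterization theorem \ref{characterization}, transfers the compatibility equation into $\textbf{D}$ using injectivity of $h$, and then invokes full contextuality of $\textbf{D}$ to produce the preimage $c$. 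The only real difference is the last step: the paper verifies $h(c)=(X,Y)$ by explicit manipulations of the sets $F_{x},I_{x}$ (Lemma \ref{derivation}, Lemma \ref{reqfcdBa}, Proposition \ref{pro2}), whereas you observe that $h(x)$ and $(A,B)$ have identical $\sqcap$- and $\sqcup$-parts and conclude by Proposition \ref{pro1.5}(iv) together with contextuality of the target algebra (available from Theorem \ref{clopen dba}(i)); this is cleaner and bypasses Lemma \ref{reqfcdBa} altogether. Your first route is genuinely different from the paper: it treats Theorem \ref{iso} as a black box to extend $h\vert_{D_{p}}$ (an isomorphism onto $\mathcal{S}^{T}(\mathbb{K}^{T}_{pr}(\textbf{D}))=\underline{\mathfrak{R}}^{T}(\mathbb{K}^{T}_{pr}(\textbf{D}))_{p}$ by Theorem \ref{embedding of dBa}(i) and Proposition \ref{largest sub algebra}) to an isomorphism $f\colon\textbf{D}\to\underline{\mathfrak{R}}^{T}(\mathbb{K}^{T}_{pr}(\textbf{D}))$, and then identifies $f$ with $h$; this buys you surjectivity for free at the price of needing full contextuality of the target (Theorem \ref{clopen dba}(i)) and the identification argument. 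One small remark on that route: you do not actually need to peek into the proof of Theorem \ref{iso} for the characterisation $f(x)_{\sqcap}=h(x_{\sqcap})$ and $f(x)_{\sqcup}=h(x_{\sqcup})$ — it follows from the statement alone, since $f$ is a homomorphism agreeing with $h$ on $D_{p}$ and $x_{\sqcap},x_{\sqcup}\in D_{p}$, so $f(x)_{\sqcap}=f(x_{\sqcap})=h(x_{\sqcap})$ and dually. Both routes are complete proofs.
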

\begin{proof}
From Theorem  \ref{embedding of dBa} and the fact that a fully contextual dBa $\textbf{D}$ is a contextual dBa, it follows that $h$ is a dBa embedding. To complete the proof it remains to show that $h$ is surjective. For that, let $(X,Y)\in \mathfrak{R}^{T}_{pr}(\mathbb{K}_{pr}^{T}(\textbf{D})) $. Then $X$ is clopen in $ (\mathcal{F}_{pr}(\textbf{D}),\mathcal{T})$ and $Y$ is clopen in $ (\mathcal{I}_{pr}(\textbf{D}),\mathcal{J})$. Further, $X^{\blacksquare\lozenge}=Y^{\lozenge}$, which is equivalent to $X^{\blacksquare}=Y^{\lozenge\blacksquare}$, by Observation \ref{equivalent def of proto}. Now we consider the object oriented protoconcepts $(X, X^{\blacksquare}), (Y^{\lozenge}, Y)\in \mathfrak{R}^{T}(\mathbb{K}^{T}_{pr}(\textbf{D}))_{p}$. Then the following equations  hold. $(X, X^{\blacksquare})_{\sqcup}=(X, X^{\blacksquare})\sqcup (X, X^{\blacksquare})=(X^{\blacksquare\lozenge}, X^{\blacksquare})=(Y^{\lozenge},Y^{\lozenge\blacksquare})=(Y^{\lozenge}, Y)\sqcap (Y^{\lozenge}, Y)=(Y^{\lozenge}, Y)_{\sqcap}$. As $(X, X^{\blacksquare}), (Y^{\lozenge}, Y)\in \mathfrak{R}^{T}(\mathbb{K}^{T}_{pr}(\textbf{D}))_{p}$, $(X, X^{\blacksquare})=(F_{\neg x}, I_{x})$ and  $(Y^{\lozenge}, Y)=(F_{\neg b}, I_{b})$ for some $x\in D_{\sqcap}$ and $b\in D_{\sqcup}$ by Theorem \ref{characterization}.

From the above it follows that $(F_{x},I_{\neg x})_{\sqcup}=(F_{\neg(b\sqcup b)}, I_{b})_{\sqcap}$, which is equivalent to $(I_{\neg x}^{\lozenge}, I_{\neg x})=(F_{\neg(b\sqcup b)}, F_{\neg(b\sqcup b)}^{\blacksquare})$. Therefore $(F_{\neg(\neg x\sqcup\neg x)}, I_{\neg x})=(F_{\neg(b\sqcup b)}, I_{\neg\neg(b\sqcup b)})$, which is equivalent to $(F_{\neg(\neg x\sqcup\neg x)},\\ I_{\neg x\sqcup \neg x})=(F_{\neg\neg\neg(b\sqcup b)}, I_{\neg\neg(b\sqcup b)})$, as $I_{\neg x\sqcup \neg x}=I_{\neg x}$ by Lemma \ref{derivation}(iii) and $\neg\neg\neg(b\sqcup b)=\neg (b\sqcup b)$ by Proposition \ref{pro2}(ix). Therefore $h(\neg x\sqcup \neg x)= h(\neg\neg (b\sqcup b))$. As $h$ is injective, $\neg x\sqcup \neg x=\neg\neg(b\sqcup b)=(b\sqcup b)\sqcap (b\sqcup b)$ by Proposition \ref{pro2}(iii).  $\neg x\sqcap\neg x=\neg x\in D_{\sqcap}$ and $(b\sqcup b)\sqcup (b\sqcup b)=(b\sqcup b)\in D_{\sqcup }$ by Proposition \ref{pro2}(i and iv). Since $\textbf{D}$ is a fully contextual dBa, there exists a unique  $c\in D$ such that $c\sqcap c=\neg x$ and $c\sqcup c=b\sqcup b$. Therefore $X=F_{x}=F_{x\sqcap x}=F_{\neg\neg x}$ by  Lemma \ref{derivation}(iv) and Proposition \ref{pro2}(iii), which implies that $X=F_{\neg(c\sqcap c)}=F_{\neg c}$ by Lemma \ref{reqfcdBa}, and $Y=I_{b}=I_{b\sqcup b}=I_{c\sqcup c}=I_{c}$, by Lemma \ref{derivation}(iii). Therefore $h(c)=(F_{\neg c}, I_{c})=(X,Y)$, which implies that $h$ is surjective. Hence $h$ is a dBa isomorphism.
\end{proof}

\begin{corollary}
  {\rm If $\textbf{D}$ is a fully contextual dBa then $\mathbb{K}^{T}_{pr}(\textbf{D})$ is homeomorphic to $\mathbb{K}^{T}_{pr}( \mathfrak{R}^{T}(\mathbb{K}_{pr}^{T}(\textbf{D})) )$.}
  \end{corollary}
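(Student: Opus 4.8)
The plan is to deduce this corollary directly from the isomorphism theorem for fully contextual dBas (Theorem \ref{iso-fullycxt dBa}) together with the correspondence between dBa isomorphisms and CTSCR-homeomorphisms recorded in Corollary \ref{idba hom set}. Essentially all the real work has already been carried out, so the proof should be a short assembly of existing results.

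First I would check that the expression $\mathbb{K}^{T}_{pr}(\mathfrak{R}^{T}(\mathbb{K}_{pr}^{T}(\textbf{D})))$ is well-formed, i.e.\ that the object to which we apply the functor $\mathbb{K}^{T}_{pr}(-)$ is genuinely a dBa. Since $\textbf{D}$ is fully contextual, $\mathbb{K}_{pr}^{T}(\textbf{D})$ is a CTSCR by Theorem \ref{topcontext}, and then Theorem \ref{clopen dba}(i) guarantees that the algebra $\underline{\mathfrak{R}}^{T}(\mathbb{K}_{pr}^{T}(\textbf{D}))$ of clopen object oriented protoconcepts (whose underlying set is denoted $\mathfrak{R}^{T}(\mathbb{K}_{pr}^{T}(\textbf{D}))$ in the statement) is itself a fully contextual dBa. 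Hence it is legitimate to form its associated CTSCR $\mathbb{K}^{T}_{pr}(\underline{\mathfrak{R}}^{T}(\mathbb{K}_{pr}^{T}(\textbf{D})))$.

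Next I would invoke Theorem \ref{iso-fullycxt dBa}, which provides a dBa isomorphism $h\colon \textbf{D}\rightarrow \underline{\mathfrak{R}}^{T}(\mathbb{K}_{pr}^{T}(\textbf{D}))$ given by $h(x):=(F_{\neg x},I_{x})$. Applying Corollary \ref{idba hom set} with $\textbf{M}:=\textbf{D}$ and the target dBa taken to be $\underline{\mathfrak{R}}^{T}(\mathbb{K}_{pr}^{T}(\textbf{D}))$, the induced pair $(\alpha_{h},\beta_{h})$ is a CTSCR-homeomorphism from $\mathbb{K}^{T}_{pr}(\underline{\mathfrak{R}}^{T}(\mathbb{K}_{pr}^{T}(\textbf{D})))$ to $\mathbb{K}^{T}_{pr}(\textbf{D})$. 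Finally, Proposition \ref{invhomeo} shows that the inverse $(\alpha_{h}^{-1},\beta_{h}^{-1})$ is again a CTSCR-homeomorphism, running in the reverse direction, so that $\mathbb{K}^{T}_{pr}(\textbf{D})$ is homeomorphic to $\mathbb{K}^{T}_{pr}(\underline{\mathfrak{R}}^{T}(\mathbb{K}_{pr}^{T}(\textbf{D})))$, as claimed.

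I do not anticipate any genuine obstacle here: the statement is a clean consequence of the representation theorem and the functoriality already established. The only points deserving a line of care are confirming that the target algebra qualifies as a fully contextual dBa (so the $\mathbb{K}^{T}_{pr}(-)$ construction applies) and observing, via Proposition \ref{invhomeo}, that ``homeomorphic'' is a symmetric relation so the direction produced by Corollary \ref{idba hom set} is immaterial.
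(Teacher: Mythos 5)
Your proposal is correct and follows essentially the same route as the paper, whose proof simply cites the representation theorem for fully contextual dBas (Theorem \ref{iso-fullycxt dBa}) together with Theorem \ref{db iso to cntx iso}; your use of Corollary \ref{idba hom set} is just that theorem specialized to isomorphisms. The extra care you take — verifying via Theorems \ref{topcontext} and \ref{clopen dba}(i) that the target is a dBa, and invoking Proposition \ref{invhomeo} to reverse the direction of the homeomorphism — fills in details the paper leaves implicit but introduces nothing new.
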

  \begin{proof}
  Follows directly from Theorems \ref{iso-fullycxt dBa} and \ref{db iso to cntx iso}.
  \end{proof}

Now we obtain the representation theorem for the class of pure dBas.
  \begin{theorem}[\textbf{Representation theorem for pure dBas}]
\label{RTDBA} 
{\rm Any pure dBa $\textbf{D}$ is isomorphic to $\mathcal{S}^{T}(\mathbb{K}_{pr}^{T}(\textbf{D}))$. 
}
\end{theorem}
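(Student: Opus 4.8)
The plan is to derive this representation theorem as a near-immediate consequence of the work already assembled for the general and contextual cases, since for a pure dBa the object $\mathcal{S}^{T}(\mathbb{K}_{pr}^{T}(\textbf{D}))$ is exactly the ``pure part'' that Theorem \ref{embedding of dBa}(i) already handles. Recall that Proposition \ref{puresub} tells us $\textbf{D}_{p}=\textbf{D}$ precisely when $\textbf{D}$ is pure. Thus the first step is simply to invoke purity to identify $\textbf{D}$ with its largest pure subalgebra $\textbf{D}_{p}$.

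First I would recall the map $h\colon D\rightarrow \mathfrak{S}^{T}(\mathbb{K}_{pr}^{T}(\textbf{D}))$ given by $h(x):=(F_{\neg x},I_{x})$, noting that by Proposition \ref{semicon} the purity of $\textbf{D}$ guarantees $h(x)\in \mathfrak{S}^{T}(\mathbb{K}_{pr}^{T}(\textbf{D}))$ for every $x\in D$, so $h$ is a well-defined map into the algebra of clopen object oriented \emph{semi}concepts rather than merely protoconcepts. Next, Theorem \ref{embedding of dBa}(i) established that $h$ is a quasi-injective dBa homomorphism and, crucially, that its restriction $h\vert_{D_{p}}$ is a dBa isomorphism from $\textbf{D}_{p}$ onto $\mathcal{S}^{T}(\mathbb{K}_{pr}^{T}(\textbf{D}))$ (using the identification $\underline{\mathfrak{R}}^{T}(\mathbb{K}^{T}_{pr}(\textbf{D}))_{p}=\mathcal{S}^{T}(\mathbb{K}^{T}_{pr}(\textbf{D}))$). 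Since $\textbf{D}_{p}=\textbf{D}$ by purity, this restriction is just $h$ itself, and therefore $h$ is already the desired dBa isomorphism from $\textbf{D}$ onto $\mathcal{S}^{T}(\mathbb{K}_{pr}^{T}(\textbf{D}))$.

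For completeness I would spell out why no quasi-isomorphism/isomorphism gap arises here: by Proposition \ref{order pure} every pure dBa is contextual, so the quasi-order $\sqsubseteq$ on $D$ is a genuine partial order, which upgrades the quasi-injectivity of $h$ to injectivity; and surjectivity onto $\mathfrak{S}^{T}(\mathbb{K}_{pr}^{T}(\textbf{D}))$ follows from the Characterization Theorem \ref{characterization}, which shows every element of $\mathfrak{R}^{T}(\mathbb{K}^{T}_{pr}(\textbf{D}))_{p}=\mathfrak{S}^{T}(\mathbb{K}_{pr}^{T}(\textbf{D}))$ is of the form $(F_{\neg x},I_{x})$ for some $x\in D_{p}=D$. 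There is essentially no obstacle to overcome: the entire content of the theorem was front-loaded into Theorem \ref{embedding of dBa}(i) and Proposition \ref{semicon}, and the present statement is the specialization obtained by setting $\textbf{D}_{p}=\textbf{D}$. The only point requiring a moment's care is confirming that the codomain genuinely collapses from protoconcepts to semiconcepts under purity, which is exactly what Proposition \ref{semicon} provides.
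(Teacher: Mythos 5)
Your proposal is correct and follows exactly the paper's own route: the paper's proof is precisely the two-step observation that $\textbf{D}_{p}=\textbf{D}$ by Proposition \ref{puresub} and that Theorem \ref{embedding of dBa}(i) already asserts $\textbf{D}_{p}$ is isomorphic to $\mathcal{S}^{T}(\mathbb{K}_{pr}^{T}(\textbf{D}))$. The additional details you spell out (Proposition \ref{semicon} for the codomain, Proposition \ref{order pure} for injectivity, Theorem \ref{characterization} for surjectivity) are accurate but redundant here, since they are already internal to the proof of Theorem \ref{embedding of dBa}(i).
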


\begin{proof}
Since $\textbf{D}$ is pure, $\textbf{D}_{p}=\textbf{D}$ by Proposition \ref{puresub}. From Theorem \ref{embedding of dBa} it follows that $\textbf{D}$ is isomorphic to $\mathcal{S}^{T}(\mathbb{K}_{pr}^{T}(\textbf{D}))$.
\end{proof}
  \begin{corollary}
  {\rm If $\textbf{D}$ is a pure dBa then $\mathbb{K}^{T}_{pr}(\textbf{D})$ is homeomorphic to $\mathbb{K}^{T}_{pr}( \mathcal{S}^{T}(\mathbb{K}_{pr}^{T}(\textbf{D})) )$.}
  \end{corollary}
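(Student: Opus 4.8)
The plan is to deduce this immediately from the isomorphism established in Theorem \ref{RTDBA} together with the ``functoriality''-type result of Theorem \ref{db iso to cntx iso}, exactly paralleling the corollary just proved for fully contextual dBas. The guiding principle is that a dBa isomorphism between two dBas induces a CTSCR-homeomorphism between their associated standard CTSCRs, so once $\textbf{D}$ has been identified with an algebra of clopen object oriented semiconcepts, the corresponding CTSCRs must be homeomorphic.

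First I would record that, since $\textbf{D}$ is pure, Theorem \ref{RTDBA} supplies a dBa isomorphism $h:\textbf{D}\rightarrow \mathcal{S}^{T}(\mathbb{K}_{pr}^{T}(\textbf{D}))$, given concretely by $h(x):=(F_{\neg x}, I_{x})$. For this to be usable I must check that $\mathcal{S}^{T}(\mathbb{K}_{pr}^{T}(\textbf{D}))$ is itself a dBa, so that the construction $\mathbb{K}_{pr}^{T}(\cdot)$ can be applied to it. This is guaranteed by Theorem \ref{clopen dba}(ii), which gives that the clopen object oriented semiconcepts of a CTSCR form a pure dBa, together with Theorem \ref{topcontext}, which ensures $\mathbb{K}_{pr}^{T}(\textbf{D})$ is indeed a CTSCR.

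Next I would apply Corollary \ref{idba hom set} to the dBa isomorphism $h$. This yields a CTSCR-homeomorphism $(\alpha_{h},\beta_{h})$ from $\mathbb{K}_{pr}^{T}(\mathcal{S}^{T}(\mathbb{K}_{pr}^{T}(\textbf{D})))$ to $\mathbb{K}_{pr}^{T}(\textbf{D})$. Finally, invoking Proposition \ref{invhomeo}, which states that the inverse of a CTSCR-homeomorphism is again a CTSCR-homeomorphism, I obtain the homeomorphism in the stated direction, from $\mathbb{K}_{pr}^{T}(\textbf{D})$ to $\mathbb{K}_{pr}^{T}(\mathcal{S}^{T}(\mathbb{K}_{pr}^{T}(\textbf{D})))$.

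There is no genuine obstacle here, since all the substantive work has already been carried out in Theorems \ref{RTDBA}, \ref{topcontext}, \ref{clopen dba}, and \ref{db iso to cntx iso}; the corollary is purely a matter of assembling these. The only points demanding minor care are verifying that the target of the isomorphism is a bona fide dBa (so that the standard-CTSCR construction is legitimate) and keeping track of the direction of the induced homeomorphism, reversing it via Proposition \ref{invhomeo} to match the statement.
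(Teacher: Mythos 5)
Your proposal is correct and follows essentially the same route as the paper, which proves this corollary directly from Theorem \ref{RTDBA} together with Theorem \ref{db iso to cntx iso} (of which Corollary \ref{idba hom set} is the immediate specialization you invoke). Your additional bookkeeping --- confirming via Theorems \ref{topcontext} and \ref{clopen dba}(ii) that $\mathcal{S}^{T}(\mathbb{K}_{pr}^{T}(\textbf{D}))$ is a pure dBa, and reversing the induced homeomorphism through Proposition \ref{invhomeo} --- only makes explicit what the paper leaves implicit.
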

  \begin{proof} By Theorems \ref{RTDBA} and \ref{db iso to cntx iso}.
  \end{proof}

\begin{corollary}[\textbf{Representation theorem for Boolean algebras}]
\label{boolean algebra}
{\rm 
Any Boolean algebra $\textbf{B}$ is  isomorphic to  $\mathcal{S}^{T}(\mathbb{K}_{pr}^{T}(\textbf{B}))$.} 
\end{corollary}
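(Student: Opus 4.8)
The plan is to realize the Boolean algebra $\textbf{B}$ as a pure dBa, apply the representation theorem for pure dBas, and then check that the resulting dBa isomorphism is in fact a Boolean algebra isomorphism. First I would invoke Theorem \ref{Boolean and dBa}: setting $\lrcorner a := \neg a$ for all $a \in B$ turns the Boolean algebra $\textbf{B} = (B,\sqcap,\sqcup,\neg,\top,\bot)$ into a pure (indeed fully contextual) dBa $(B,\sqcup,\sqcap,\neg,\lrcorner,\top,\bot)$. Applying Theorem \ref{RTDBA} to this pure dBa, the map $h:B\rightarrow \mathfrak{S}^{T}(\mathbb{K}_{pr}^{T}(\textbf{B}))$ given by $h(x):=(F_{\neg x},I_{x})$ is a dBa isomorphism from $\textbf{B}$ onto $\mathcal{S}^{T}(\mathbb{K}_{pr}^{T}(\textbf{B}))$.

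It then remains to verify that the codomain carries a Boolean algebra structure and that $h$ respects it. Since $\textbf{B}$ is Boolean, Corollary \ref{BtoScxt} yields that $\nabla$ is a homeomorphism from $(\mathcal{F}_{pr}(\textbf{B}),\mathcal{T})$ to $(\mathcal{I}_{pr}(\textbf{B}),\mathcal{J})$; feeding this into Corollary \ref{ScxttoB} shows that $\mathcal{S}^{T}(\mathbb{K}_{pr}^{T}(\textbf{B})) = \underline{\mathfrak{R}}^{T}(\mathbb{K}_{pr}^{T}(\textbf{B}))$ is itself a Boolean algebra. Under the identification of Theorem \ref{Boolean and dBa}, the Boolean operations on either side are precisely the dBa operations $\sqcap$, $\sqcup$, $\neg$ together with the constants $\top,\bot$. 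Because a dBa isomorphism preserves exactly these operations, the map $h$ is automatically a Boolean algebra isomorphism, which gives the claim.

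I expect no genuine obstacle here: the corollary is essentially a specialization of the pure-dBa representation theorem. The only point needing care is the final transfer step — confirming that a dBa isomorphism between two structures that happen to be Boolean algebras is a Boolean isomorphism — and this is immediate since the complement, meet, join, top and bottom of each Boolean algebra are recovered verbatim from the retained dBa operations, so no additional structure must be matched.
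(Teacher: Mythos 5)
Your proposal is correct and takes essentially the same route as the paper: the paper's proof is exactly your first paragraph, namely viewing $\textbf{B}$ as a pure dBa via Theorem \ref{Boolean and dBa} and then applying Theorem \ref{RTDBA}. Your additional verification that the codomain is itself a Boolean algebra (via Corollaries \ref{BtoScxt} and \ref{ScxttoB}) goes beyond what the paper's proof records, but it is sound and matches the remark the paper makes immediately after the corollary, where the identification $\underline{\mathfrak{R}}^{T}(\mathbb{K}_{pr}^{T}(\textbf{B}))=\mathcal{S}^{T}(\mathbb{K}_{pr}^{T}(\textbf{B}))$ is noted.
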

\begin{proof} Follows from Theorem  \ref{RTDBA}, as any Boolean algebra is a pure dBa  (by Theorem \ref{Boolean and dBa}). 
\end{proof}
\noindent In \cite{howlader3} it is shown that every Boolean algebra $\textbf{B}$ is isomorphic to $\underline{\mathfrak{R}}^{T}(\mathbb{K}_{pr}^{T}(\textbf{B}))$. Note that this also follows from Theorem \ref{iso-fullycxt dBa} as a Boolean algebra $\textbf{B}$ is  fully contextual (Theorem \ref{Boolean and dBa}). Furthermore, this is in consonance with Corollary  \ref{boolean algebra}, as  $\underline{\mathfrak{R}}^{T}(\mathbb{K}_{pr}^{T}(\textbf{B}))= \mathcal{S}^{T}(\mathbb{K}_{pr}^{T}(\textbf{B}))$. In fact,  the set of all clopen object oriented concepts of $\mathbb{K}^{T}_{pr}(\textbf{B})$ coincides with both the set of clopen object oriented protoconcepts and that of clopen object oriented semiconcepts.

\vskip 3pt

We note here that, using the relations between $\blacksquare,\lozenge$ and $^\prime$ operators given in  Theorem \ref{property of box}(vi), one can rewrite the representation results for dBas  obtained in this section,  in terms of the algebra of protoconcepts and that of semiconcepts (cf. Notation \ref{semiproto}).

\section{ Duality results for dBas}
\label{CE} 
Pure dBas as objects  and  dBa  isomorphisms  as morphisms constitute a category, denoted as $\textbf{PDBA}$. Fully contextual dBas  and  dBa isomorphisms  also form a category, denoted as $\textbf{FCDBA}$. 
On the other hand, abstraction of properties of the CTSCR $\mathbb{K}^{T}_{pr}(\textbf{D})$ corresponding to any  dBa $\textbf{D}$, leads us to the definition of {\it Stone contexts}. Stone contexts and CTSCR-homeomorphisms are observed to constitute a category, denoted as  $\textbf{Scxt}$.  
Moreover, relations between the collections of morphisms of these categories have already been obtained.  By Theorem \ref{iso}, for  fully contextual dBas $\textbf{D}$ and $\textbf{M}$, any map $f: D\rightarrow M$ is a dBa isomorphism if and only if  $f\vert_{D_{p}}$ is  a dBa isomorphism between the pure subalgebras $\textbf{D}_{p}$ and $\textbf{M}_{p}$ of $\textbf{D}$ and $\textbf{M}$ respectively.
 By Corollary \ref{idba hom set}, corresponding to any  dBa isomorphism  from dBa $\textbf{D}$ to dBa $\textbf{M}$, there is a  CTSCR- homeomorphism from $\mathbb{K}^{T}_{pr}(\textbf{M})$ to  $\mathbb{K}^{T}_{pr}(\textbf{D})$. A natural question then is to relate the  categories $\textbf{FCDBA}, \textbf{PDBA}$ and $\textbf{Scxt}$. It is  shown in this section that $\textbf{FCDBA}$ and  $\textbf{PDBA}$ are equivalent, whereas  $\textbf{PDBA}$ and $\textbf{Scxt}$ are dually equivalent.

 We divide the section into two subsections.  In Section \ref{SC} the goal is to define Stone contexts and give examples;  Section \ref{TCPdBaSC} presents the  relationships among the  categories $\textbf{FCDBA}, \textbf{PDBA}$ and $\textbf{Scxt}$.

\begin{notation}
{\rm For  objects \textbf{A}, \textbf{B} of a locally small category \cite{awodey2010category} $\mathfrak{C}$, the set of morphisms from \textbf{A} to \textbf{B} is denoted by $Hom_{\mathfrak{C}}(\textbf{A},\textbf{B})$. The categories we work on in this paper are all locally small.}
\end{notation}

 Let us briefly recall here, some properties of functors. Suppose $F$ is a functor from category $\mathfrak{C}$ to category $\mathfrak{E}$.  $F$ is  an  equivalence \cite{awodey2010category}   if and only if it is  faithful, full, and essential surjective. $F$ is faithful and full if and only if the restriction of $F$ on  $Hom_{\mathfrak{C}}(\textbf{A},\textbf{B})$ is a bijection from $Hom_{\mathfrak{C}}(\textbf{A},\textbf{B})$ to $Hom_{\mathfrak{E}}(F(\textbf{A}),F(\textbf{B}))$. To show $F$ is essential surjective, for each object $\textbf{X}$ of the category $\mathfrak{E}$ one finds an object $\textbf{A}$ in $\mathfrak{C}$   such that $F(\textbf{A})$ is isomorphic to $\textbf{X}$.

\subsection{\rm \textbf{Stone contexts}}
\label{SC}
 On abstraction of properties of the CTSCR $\mathbb{K}_{pr}^{T}(\textbf{D})$ corresponding to any  dBa $\textbf{D}$, one obtains the definition of a {\it Stone context} (Definition \ref{stone cxt} below). Let us note that a {\it totally disconnected space} is a topological space $(X,\tau)$ in which for any two $x,y\in X$  such that $x\neq y$, there is a clopen set $U$ in $(X,\tau)$ such that $x\in U$ and $y\notin U.$ We may remark here that, usually, a space $(G,\tau)$ is called totally disconnected if and only if every quasi-component consists of a single point. But for compact topological spaces, these two definitions coincide.
Recall that
a topological space $(X,\tau)$ is called a {\it Stone space} if and only if it is compact and totally disconnected \cite{davey2002introduction}.
\begin{definition}
\label{stone cxt}
{\rm A CTSCR $\mathbb{K}^{T}:=((G,\tau),(M,\rho),R)$ is called a {\it Stone context} if the following hold:
\begin{enumerate}[(a)]
\item $(G,\rho)$ and $(M,\tau)$ are Stone spaces,
\item $gRm,$  if for all $(A,B)\in\mathfrak{S}^{T}(\mathbb{K}^{T}),m\in B$ implies $g\in A$. 
\end{enumerate}}
\end{definition}

\begin{theorem}
\label{scxt}
{\rm For any dBa $\textbf{D}$, $\mathbb{K}^{T}_{pr}(\textbf{D}):=((\mathcal{F}_{pr}(\textbf{D}),\mathcal{T}),(\mathcal{I}_{pr}(\textbf{D}),\mathcal{J}),\nabla)$ is a Stone context.}
\end{theorem}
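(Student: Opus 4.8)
The plan is to unpack Definition \ref{stone cxt} and verify its requirements one at a time, each of which has essentially been prepared by an earlier result. A Stone context must first be a CTSCR, its two topological spaces must be Stone spaces, and its relation must be recoverable from the clopen object oriented semiconcepts via condition (b). So I would begin by simply recording that $\mathbb{K}^{T}_{pr}(\textbf{D})$ is already known to be a CTSCR by Theorem \ref{topcontext}, which carries out all the continuity work for $\nabla$ and $\nabla^{-1}$ (via Rado's Selection Principle and the prime ideal theorem). Thus the ambient structure is in place, and only the two defining clauses of a Stone context remain.

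For clause (a), I would invoke Theorem \ref{property of topspe}, which states that $(\mathcal{F}_{pr}(\textbf{D}),\mathcal{T})$ and $(\mathcal{I}_{pr}(\textbf{D}),\mathcal{J})$ are both compact and totally disconnected. Since a Stone space is by definition a compact, totally disconnected space, this immediately yields that both component spaces are Stone spaces, as required.

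The only clause needing a genuine, if short, argument is (b), namely that $F\nabla I$ holds precisely when every clopen object oriented semiconcept $(A,B)$ with $I\in B$ also satisfies $F\in A$. Here I would combine two earlier facts. First, Corollary \ref{property-ston} gives exactly this characterization but phrased in terms of $\mathfrak{R}^{T}(\mathbb{K}^{T}_{pr}(\textbf{D}))_{p}$, the idempotent part of the algebra of clopen object oriented protoconcepts: $F\nabla I$ iff for all $(X,Y)\in\mathfrak{R}^{T}(\mathbb{K}^{T}_{pr}(\textbf{D}))_{p}$, $I\in Y$ implies $F\in X$. Second, Proposition \ref{largest sub algebra}, applied in the clopen setting exactly as already used inside the proof of Theorem \ref{embedding of dBa}, identifies this idempotent part with the clopen object oriented semiconcepts, so that $\mathfrak{R}^{T}(\mathbb{K}^{T}_{pr}(\textbf{D}))_{p}=\mathfrak{S}^{T}(\mathbb{K}^{T}_{pr}(\textbf{D}))$. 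Substituting the latter into the former turns the statement of Corollary \ref{property-ston} verbatim into condition (b) of Definition \ref{stone cxt} (and in fact delivers the full biconditional, which subsumes the one-directional ``if'' of the definition).

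I do not expect a real obstacle, since the substantive content (continuity, compactness, total disconnectedness, and the semiconcept characterization of $\nabla$) has already been established in Theorems \ref{topcontext} and \ref{property of topspe} and Corollary \ref{property-ston}. The only care required is the bookkeeping step of recognizing that $\mathfrak{R}^{T}_{p}$ and $\mathfrak{S}^{T}$ coincide, so that the protoconcept-phrased Corollary \ref{property-ston} is literally the semiconcept condition demanded by Definition \ref{stone cxt}; this is the point at which I would be most careful to cite Proposition \ref{largest sub algebra} correctly, as the entire proof reduces to assembling these three previously proved results.
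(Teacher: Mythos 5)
Your proof is correct and takes essentially the same route as the paper, whose entire proof of this theorem reads ``Follows from Theorems \ref{topcontext} and \ref{property of topspe}, and Corollary \ref{property-ston}.'' Your extra bookkeeping step --- invoking Proposition \ref{largest sub algebra} to identify $\mathfrak{R}^{T}(\mathbb{K}^{T}_{pr}(\textbf{D}))_{p}$ with $\mathfrak{S}^{T}(\mathbb{K}^{T}_{pr}(\textbf{D}))$ so that the protoconcept-phrased Corollary \ref{property-ston} literally yields clause (b) of Definition \ref{stone cxt} --- makes explicit a gluing step the paper leaves implicit, and is a welcome clarification rather than a deviation.
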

\begin{proof}
Follows from Theorems \ref{topcontext} and \ref{property of topspe},  and Corollary \ref{property-ston}.
\end{proof}

\noindent Apart from the example of Stone context provided by $\mathbb{K}^{T}_{pr}(\textbf{D}),$ we have the following.
\begin{example}
{\rm Any context $\mathbb{K}:=(X,Y,R)$, where X,Y are finite and $R\subseteq X\times Y$ is an arbitrary relation, can be trivially looked upon as a Stone context with finite discrete topology.}
\end{example}

\begin{example}
\label{example of stone context}
{\rm $\mathbb{K}_{+}^{T}$ given in Example \ref{exmple of topocxt} is a Stone context, when $(X,\tau_{1})$ and $(Y,\tau_{2})$ are non-empty Stone spaces. }
\begin{proof}
It is  already established in Example \ref{exmple of topocxt} that $\mathbb{K}_{+}^{T}$ is a CTSCR.  We just verify condition (b) of Definition \ref{stone cxt}.
Let $g\in X$ and $m\in Y$ such that for all $(A,B)\in \mathfrak{S}^{T}(\mathbb{K}_{+}^{T}),$ $m\in B$ implies that $g\in A$.  If possible, assume that $g\cancel{R} m$. Then $m\notin C$ and so $m\in C^{c}$. Now we take $B=C^{c}$. From the proof of  Example \ref{exmple of topocxt}, it follows that $(\emptyset, C^{c})$ is an  object oriented semiconcept. Since $\emptyset$ and C  are clopen in $\tau_{1} ,\tau_{2}$ respectively, $(\emptyset,C^{c})\in\mathfrak{S}^{T}(\mathbb{K}_{+}^{T}) $. Therefore we get a clopen object oriented semiconcept $(\emptyset,C^{c})$ such that $m\in C^{c}$, but $g\notin \emptyset$. Thus we have a contradiction, and  $gRm$ must hold. Hence $\mathbb{K}_{+}^{T}$ is a Stone context. 
\end{proof}
\end{example}
\noindent A familiar example of a Stone space is  the Cantor set \cite{vallin2013elements}. One can construct a Stone context using the Cantor set. 
\begin{example}
{\rm Let $X=Y=C$, where $C$ is the Cantor set and $\tau$ be the subspace topology on $C$ induced by the  usual topology on $\mathbb{R}$. Let $\mathbb{K}^{T}:=((X,\tau),(Y,\tau),R),$ where $R(x)=[0,x]\cap C$ for all $x\in C$. Then $\mathbb{K}^{T}$ is a Stone context.}
\begin{proof}
$(X,\tau)$ and $(Y,\tau)$ both are Stone spaces. 
We show that $\mathbb{K}^{T}$ is a CTSCR, and check condition (b) of Definition \ref{stone cxt}.\\
To show that $R$ is continuous, let $O$ be an open set in $(Y,\tau)$. We verify  that $O^{\lozenge},O^{\square}$ are open in $(X,\tau)$. 
For any subset $A\subseteq C$,  $A\subseteq A^{\lozenge}$,  as for all $x\in A$, $([0,x]\cap C)\cap A\neq \emptyset$. Let $z\in O^{\lozenge}.$ Then $([0,z]\cap C)\cap O\neq\emptyset$. Therefore there exists  $x_{0}\in ([0,z]\cap C)\cap O.$ Now if $z=x_{0}$ then $z\in O\subseteq O^{\lozenge}.$ If $x_{0}< z$ then $n=\mbox{inf}(O)< z$, where $\mbox{inf}(O)$ is the infimum of $O$. We choose $\epsilon=z-n.$ Then $z\in (z-(z-n), z+(z-n))\cap C=(n,2z-n)\cap C\neq\emptyset$. Let $x\in (n,2z-n)\cap C.$ $([0,x]\cap C)\cap O\neq\emptyset$, as $n<x$. So $(n,2z-n)\cap C\subseteq O^{\lozenge}$. So in both cases, $z$ is an interior point of $O^{\lozenge}$. Thus $O^{\lozenge}$ is open in $(X,\tau)$. 
For the case of $O^{\square}$, let $z\in O^{\square}.$ This means $[0,z]\cap C\subseteq O$. So $z\in O=O^{*}\cap C$, where $O^{*}$ is an open set in $\mathbb{R}$. Therefore there exists an open interval $(a,b)$ in $\mathbb{R}$ such that $z\in (a,b)\cap C\subseteq O$. Let $x\in (a,b)\cap C$. Then either $a< x\leq z$ or $z\leq x< b$. If $x\leq z$ then $[0,x]\cap C\subseteq [0,z]\cap C\subseteq O.$ If $z< x< b$ then $[0,x]\cap C=([0,z]\cup[z,x])\cap C\subseteq O$, as $[0,z]\cap C, [z,x]\cap C$  are subsets of $O$. So $(a,b)\cap C\subseteq O^{\square}$. Therefore $O^{\square}$ is open in $(X,\tau)$. Hence $R$ is continuous.

\noindent   It is easy to see that $R^{-1}(y)=[y,1]\cap C$. To show that $R^{-1}$ is continuous, let $B$ be an open set in $(X,\tau)$, and consider  $B^{\blacklozenge}$ and $B^{\blacksquare}$. 
  Let $z\in B$. Then $([z,1]\cap C) \cap B\neq \emptyset$
and therefore $B\subseteq B^{\blacklozenge}$. Let $z\in B^{\blacklozenge}$. Then $([z,1]\cap C) \cap B\neq \emptyset$. Therefore there is  $z_{0}\in B$ such that $z\leq z_{0}$. If $z=z_{0}$ then $z\in B\subseteq B^{\blacklozenge}$, and if $z< z_{0}$ then $z\in (-z_{0},z_{0})\cap C\neq\emptyset$. Let $x\in (-z_{0},z_{0})\cap C $. Then $z_{0}\in ([x,1]\cap C)\cap B\neq\emptyset$, and so $(-z_{0},z_{0})\cap C\subseteq B^{\blacklozenge}$. Therefore in both cases, $z$ is an interior point of $B$. Hence $B^{\blacklozenge}$ is open in $(Y,\tau)$. 
Next let $z\in B^{\blacksquare}$. Then $z\in [z,1]\cap C\subseteq B$, and there exists an open neighborhood such that $z\in (a,b)\cap C\subseteq B$. Now we can show that $(a,b)\cap C\subseteq B^{\blacksquare}$, and hence $B^{\blacksquare}$ is open in $(Y,\tau)$. Hence $R^{-1}$ is continuous. Therefore $\mathbb{K}^{T}$ is a CTSCR. 

Now let $g\in C$ and $m\in C$ such that for all $(A,B)\in \mathfrak{S}^{T}({\mathbb{K}^{T}})$, $m\in B$ implies that $g\in A$. If possible,  let $g\cancel{R}m$, that is $m\notin R(g)$. So $g<m$. As $C$ is nowhere dense in $\mathbb{R}$, there exists  $x\in (g,m)$ such that $x\notin C$. We choose a real number $a$ such that $1< a$. Then $O=[x,a]\cap C=(x,a)\cap C$ is a clopen set in $C$ such that $m\in O$, but $g\notin O^{\lozenge}$, as $g< x$. This is a contradiction, as $(O^{\lozenge},O)\in \mathfrak{S}^{T}(\mathbb{K^{T}}).$ 
Therefore  $gRm$. 
\end{proof}
\end{example}

It is easy to show
\begin{proposition}
\label{catScxt}
{\rm Stone contexts and CTSCR-homeomorphisms form a category. It is denoted by $\textbf{Scxt}$.}
\end{proposition}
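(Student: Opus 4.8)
The plan is to verify the three defining data of a category---a class of objects, hom-sets of morphisms, and an associative composition equipped with identities---taking Stone contexts as objects and CTSCR-homeomorphisms as morphisms. Since every piece of this structure is built componentwise from the two underlying maps, the bulk of the verification reduces to facts already recorded in Section \ref{dba} about composition of context homomorphisms, together with standard properties of homeomorphisms; so the argument is essentially bookkeeping.

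First I would pin down the composition law. Given Stone contexts $\mathbb{K}^{T}_{1},\mathbb{K}^{T}_{2},\mathbb{K}^{T}_{3}$ and CTSCR-homeomorphisms $f:=(\alpha,\beta):\mathbb{K}^{T}_{1}\rightarrow\mathbb{K}^{T}_{2}$ and $g:=(\delta,\gamma):\mathbb{K}^{T}_{2}\rightarrow\mathbb{K}^{T}_{3}$, define $g\circ f:=(\delta\circ\alpha,\gamma\circ\beta)$, exactly the componentwise composition used for context homomorphisms in Section \ref{dba}. The essential step is to check that this composite is again a CTSCR-homeomorphism, which I would split into three clauses: (i) $(\delta\circ\alpha,\gamma\circ\beta)$ is a context homomorphism, which is precisely the fact, already noted after the composition diagram in Section \ref{dba}, that context homomorphisms compose; (ii) $\delta\circ\alpha$ and $\gamma\circ\beta$ are homeomorphisms, immediate since a composite of homeomorphisms is a homeomorphism; and (iii) each component map is a bijection, so the composite is a context isomorphism. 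Clauses (i)--(iii) together show that $g\circ f$ is a CTSCR-homeomorphism in the sense of Definition \ref{topocxthomeo}, whence the hom-sets are closed under composition.

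Next I would exhibit identities and the unit laws. For a Stone context $\mathbb{K}^{T}:=((G,\rho),(M,\tau),R)$, take $id_{\mathbb{K}^{T}}:=(id_{G},id_{M})$. The identity maps are bijective homeomorphisms, and $gRm\iff id_{G}(g)\,R\,id_{M}(m)$ holds trivially, so $(id_{G},id_{M})$ is a CTSCR-homeomorphism. The unit laws $f\circ id_{\mathbb{K}^{T}_{1}}=f=id_{\mathbb{K}^{T}_{2}}\circ f$ then hold because composition is componentwise and $id_{G},id_{M}$ are two-sided units for function composition. Associativity of composition follows at once: for composable $f,g,h$, both $(h\circ g)\circ f$ and $h\circ(g\circ f)$ are evaluated componentwise, and function composition is associative on each component. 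This discharges all the category axioms.

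I do not expect any genuine obstacle here, as the content is purely structural. The only point deserving care is clauses (i)--(iii), namely confirming that the class of CTSCR-homeomorphisms is closed under the componentwise composition; but each clause is either a standard topological fact or was already established for context homomorphisms in Section \ref{dba}, so no new argument is needed. (The companion fact that inverses of CTSCR-homeomorphisms are again such, Proposition \ref{invhomeo}, is not required for the category axioms but guarantees that every morphism of $\textbf{Scxt}$ is an isomorphism, making $\textbf{Scxt}$ a groupoid.)
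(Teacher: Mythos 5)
Your proposal is correct and is exactly the routine verification the paper has in mind: the paper offers no written proof (it simply asserts the result is easy), and your componentwise check of closure under composition, identities, unit laws, and associativity is the intended argument. The one point genuinely worth isolating—that the composite of two CTSCR-homeomorphisms is again one—you handle correctly via composition of context homomorphisms and of homeomorphisms, so nothing is missing.
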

\subsection{\rm \textbf{The categorical duality between dBas and Stone contexts}}
\label{TCPdBaSC}
In this section, the categories $\textbf{FCDBA}$, $\textbf{PDBA}$, $\textbf{Scxt}$ and their relationships are presented.
\subsubsection{\rm \textbf{Equivalence of $\textbf{FCDBA}$ and $\textbf{PDBA}$}}
\label{FCDBA-duality}

It is  straightforward to show 

\begin{proposition}
{\rm \noindent
\begin{enumerate}[(i)]
\item Fully contextual dBas considered as objects and  dBa isomorphisms as morphisms yield a category. It is denoted  by $\textbf{FCDBA}$.
\item Pure dBas considered as objects and dBa isomorphisms as morphisms constitute a category. It is denoted  by $\textbf{PDBA}$.
\end{enumerate}}
\end{proposition}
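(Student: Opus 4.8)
The plan is to verify directly that the stated data satisfy the axioms of a locally small category. Since parts (i) and (ii) are entirely parallel, I would carry out (i) in full and then indicate that (ii) follows verbatim after replacing ``fully contextual'' by ``pure'' (cf. Definition \ref{contextdefn}). For (i) the objects are all fully contextual dBas, and for fully contextual dBas $\textbf{D},\textbf{M}$ I take $Hom_{\textbf{FCDBA}}(\textbf{D},\textbf{M})$ to be the set of dBa isomorphisms from $\textbf{D}$ to $\textbf{M}$ in the sense of Definition \ref{DBAHOM}. Each such morphism is in particular a map of the underlying sets, so $Hom_{\textbf{FCDBA}}(\textbf{D},\textbf{M})\subseteq M^{D}$ is a set, which secures local smallness.

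The two substantive points are the existence of identities and closure of the morphisms under composition. First I would note that for every dBa $\textbf{D}$ the identity map $id_{D}$ is a bijection preserving $\sqcup,\sqcap,\neg,\lrcorner,\top,\bot$, hence $id_{D}\in Hom_{\textbf{FCDBA}}(\textbf{D},\textbf{D})$. Next, given dBa isomorphisms $f:\textbf{D}\rightarrow\textbf{M}$ and $g:\textbf{M}\rightarrow\textbf{N}$ with $\textbf{D},\textbf{M},\textbf{N}$ fully contextual, the composite $g\circ f$ is a dBa homomorphism by the remark following Definition \ref{DBAHOM}, and it is a bijection as a composite of bijections; thus $g\circ f$ is again a dBa isomorphism and therefore a morphism of $\textbf{FCDBA}$. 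Taking composition of morphisms to be ordinary function composition, this establishes that the morphisms are closed under composition.

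It then remains only to record that associativity of composition and the unit laws $id_{M}\circ f=f=f\circ id_{D}$ hold, both being inherited immediately from composition of functions on the underlying sets; this completes the proof that $\textbf{FCDBA}$ is a category. For (ii) the identical argument applies with the object class taken to be the pure dBas, since the facts used---that identity maps are dBa isomorphisms and that a composite of bijective dBa homomorphisms is a bijective dBa homomorphism---do not depend on whether the objects are fully contextual or pure. I do not expect a genuine obstacle here; the one place needing care is confirming that the composite of two isomorphisms is again an \emph{isomorphism} rather than merely a homomorphism, and this is supplied precisely by the stated closure of dBa homomorphisms under composition together with the preservation of bijectivity.
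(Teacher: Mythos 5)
Your proposal is correct: the routine verification of identities, closure under composition (using the paper's remark after Definition \ref{DBAHOM} that composites of dBa homomorphisms are dBa homomorphisms, plus preservation of bijectivity), associativity, and unit laws is exactly what the paper intends when it dismisses this proposition as straightforward, and it offers no further argument of its own. Your one flagged point of care — that the composite of two isomorphisms is again an isomorphism, not merely a homomorphism — is handled correctly.
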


\noindent 
A natural correspondence $G$  from  $\textbf{FCDBA}$  to $\textbf{PDBA}$ is obtained as follows: \vskip 2pt
\noindent 
 $G(\textbf{D}):=\textbf{D}_{p}$, for any object $\textbf{D}$ in $\textbf{FCDBA}$ and \\
  $G(f):=f\vert_{D_{p}}$,  for any $f\in Hom_{\textbf{FCDBA}}(\textbf{D},\textbf{M})$.
  \vskip 3pt 
\noindent  $G$ is well-defined and a covariant functor, using Proposition \ref{puresub} and Theorem \ref{iso}.  $G$ turns out to be an equivalence between \textbf{FCDBA} and \textbf{PDBA}, as we prove  below. 
\begin{theorem}
\label{algebricdualty}
{\rm \textbf{FCDBA} is equivalent to \textbf{PDBA}.}
\end{theorem}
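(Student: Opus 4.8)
The plan is to show that the functor $G \colon \textbf{FCDBA} \to \textbf{PDBA}$, given by $G(\textbf{D}):=\textbf{D}_p$ and $G(f):=f\vert_{D_p}$, is an equivalence of categories, which amounts to verifying that $G$ is faithful, full, and essentially surjective. By the remarks preceding the statement, faithfulness and fullness together are equivalent to the claim that, for any two fully contextual dBas $\textbf{D}$ and $\textbf{M}$, the restriction map
\[
G\colon Hom_{\textbf{FCDBA}}(\textbf{D},\textbf{M}) \to Hom_{\textbf{PDBA}}(\textbf{D}_p,\textbf{M}_p), \qquad f \mapsto f\vert_{D_p},
\]
is a bijection. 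So the work splits into two independent pieces: bijectivity of this Hom-set map, and essential surjectivity.

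First I would establish that $G$ restricted to each Hom-set is a bijection. This is exactly the content of Theorem \ref{iso}. Indeed, that theorem asserts that for fully contextual $\textbf{D}$ and $\textbf{M}$, every dBa isomorphism $h$ from $\textbf{D}_p$ to $\textbf{M}_p$ extends \emph{uniquely} to a dBa isomorphism $f$ from $\textbf{D}$ to $\textbf{M}$ with $f\vert_{D_p}=h$. Surjectivity of $G$ on Hom-sets is the existence of the extension; injectivity of $G$ on Hom-sets is the uniqueness clause (two morphisms $f,f_1$ with the same restriction to $D_p$ must coincide). I would also note, using Theorem \ref{iso}, that $G(f):=f\vert_{D_p}$ genuinely lands in $Hom_{\textbf{PDBA}}(\textbf{D}_p,\textbf{M}_p)$, i.e.\ is itself an isomorphism, so that $G$ is well-defined as a functor; functoriality ($G(\mathrm{id})=\mathrm{id}$ and $G(g\circ f)=G(g)\circ G(f)$) is immediate since restriction of maps respects composition and identities.

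Next I would prove essential surjectivity: given any pure dBa $\textbf{P}$ (an object of $\textbf{PDBA}$), I must produce a fully contextual dBa $\textbf{D}$ with $\textbf{D}_p \cong \textbf{P}$. The natural candidate is the fully contextual dBa furnished by the representation machinery of Section \ref{RT}. Concretely, form the Stone context $\mathbb{K}^T_{pr}(\textbf{P})$ and set $\textbf{D}:=\underline{\mathfrak{R}}^T(\mathbb{K}^T_{pr}(\textbf{P}))$; by Theorem \ref{clopen dba}(i) this is a fully contextual dBa, and by Theorem \ref{embedding of dBa}(i) its pure part satisfies $\underline{\mathfrak{R}}^T(\mathbb{K}^T_{pr}(\textbf{P}))_p = \mathcal{S}^T(\mathbb{K}^T_{pr}(\textbf{P}))$, to which $\textbf{P}_p$ is isomorphic. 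Since $\textbf{P}$ is pure, $\textbf{P}_p=\textbf{P}$ by Proposition \ref{puresub}, so $\textbf{D}_p = G(\textbf{D})$ is isomorphic to $\textbf{P}$, as required.

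The main obstacle is not in the construction but in the two nontrivial inputs, both of which are already available in the excerpt: the unique-extension theorem (Theorem \ref{iso}) doing all the heavy lifting for the Hom-set bijection, and the representation theorem (Theorem \ref{embedding of dBa}) supplying the fully contextual preimage for essential surjectivity. Thus the proof reduces to quoting these results and checking the categorical bookkeeping. I would therefore write: $G$ is faithful and full by Theorem \ref{iso} (existence and uniqueness of the extension giving surjectivity and injectivity of the Hom-set map respectively), and essentially surjective because for any pure dBa $\textbf{P}$ the fully contextual dBa $\underline{\mathfrak{R}}^T(\mathbb{K}^T_{pr}(\textbf{P}))$ has pure part isomorphic to $\textbf{P}$ by Theorem \ref{embedding of dBa}; hence $G$ is an equivalence and $\textbf{FCDBA}$ is equivalent to $\textbf{PDBA}$.
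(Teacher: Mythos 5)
Your proposal is correct and follows essentially the same route as the paper: faithfulness and fullness via the unique-extension result (Theorem \ref{iso}), and essential surjectivity by taking, for a pure dBa $\textbf{P}$, the fully contextual dBa $\underline{\mathfrak{R}}^{T}(\mathbb{K}^{T}_{pr}(\textbf{P}))$ whose pure part $\mathcal{S}^{T}(\mathbb{K}^{T}_{pr}(\textbf{P}))$ is isomorphic to $\textbf{P}$. The only cosmetic difference is that the paper cites Proposition \ref{largest sub algebra} and Theorem \ref{RTDBA} for the last step, whereas you unroll the same facts through Theorem \ref{embedding of dBa}(i) and Proposition \ref{puresub}.
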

\begin{proof}
Due to Theorem \ref{iso}, $G$ is  a faithful and full functor. 
Now let $\textbf{D}$ be a pure dBa. $\mathbb{K}^{T}_{pr}(\textbf{D})$ is a CTSCR by Theorem \ref{topcontext}. So $\underline{\mathfrak{R}}^{T}(\mathbb{K}^{T}_{pr}(\textbf{D}))$  is a fully contextual dBa by Theorem \ref{clopen dba}(i). Applying  Proposition \ref{largest sub algebra} to $\underline{\mathfrak{R}}^{T}(\mathbb{K}^{T}_{pr}(\textbf{D}))_{p}$ and $\mathcal{S}^{T}(\mathbb{K}^{T}_{pr}(\textbf{D}))$, we get
  $\underline{\mathfrak{R}}^{T}(\mathbb{K}^{T}_{pr}(\textbf{D}))_{p}=\mathcal{S}^{T}(\mathbb{K}^{T}_{pr}(\textbf{D}))$ for any dBa $\textbf{D}$. Therefore $G(\underline{\mathfrak{R}}^{T}(\mathbb{K}^{T}_{pr}(\textbf{D})))=\mathcal{S}^{T}(\mathbb{K}^{T}_{pr}(\textbf{D}))$, which is isomorphic to $\textbf{D}$ by Theorem \ref{RTDBA}. So $G$ is essential surjective.
\end{proof}
\subsubsection{\rm \textbf{Dual equivalence of $\textbf{PDBA}$ and $\textbf{FCDBA}$ with $\textbf{Scxt}$ }}
\label{dualityresult}



\noindent 
Recall the maps $\alpha_{f}$ and $\beta_{f}$ corresponding to any dBa homomorphism $f$ (cf. Definition \ref{alphbetah}, Section \ref{RT}).
A natural correspondence $F$ is defined  from  $\textbf{PDBA}$  to $\textbf{Scxt}$ by: \vskip 2pt
\noindent 
 $F(\textbf{D}):=\mathbb{K}^{T}_{pr}(\textbf{D})$, for any object $\textbf{D}$ in $\textbf{PDBA}$ and \\
  $F(f):=(\alpha_{f},\beta_{f})$,  for any $f\in Hom_{\textbf{PDBA}}(\textbf{D},\textbf{M})$.
  \vskip 3pt 
\noindent   $F$ is a well-defined contravariant functor, using Theorem \ref{db iso to cntx iso} and Corollary \ref{idba hom set}.  We  show in Theorem \ref{duality} below that $F$ is an  equivalence between $\textbf{PDBA}$ and $\textbf{Scxt}^{op}$, the opposite category of $\textbf{Scxt}$. 
\vskip 2pt
Now to show that $F$ is essential surjective, for each Stone context $\mathbb{K}^{T}$ one needs to  find a pure dBa $\textbf{D}$ such that $\mathbb{K}^{T}_{pr}(\textbf{D})$ is homeomorphic to $\mathbb{K}^{T}$.
 $\textbf{D}$ is, expectedly, the pure dBa $\mathcal{S}^{T}(\mathbb{K}^{T})$. We construct a CTSCR-homeomorphism  from   $\mathbb{K}^{T}$ to $\mathbb{K}^{T}_{pr}(\mathcal{S}^{T}(\mathbb{K}^{T}))$ as follows.\vskip 2pt
\noindent For any CTSCR $\mathbb{K}^{T}:=((G,\tau_{1}),(M,\tau_{2}),R)$, define the  functions $k_{1}:G \ra \mathcal{P}(\mathfrak{S}^{T}(\mathbb{K}^{T}))$ and $k_{2}: M \ra \mathcal{P}(\mathfrak{S}^{T}(\mathbb{K}^{T}))$  given by \\
$k_{1}(g):=\{(A,B)\in\mathfrak{S}^{T}(\mathbb{K}^{T}): g\notin A\}$ for any $g\in G$, and\\
$k_{2}(m):=\{(A,B)\in\mathfrak{S}^{T}(\mathbb{K}^{T}):m\in B\}$
for any $m\in M$.
\begin{proposition}
\label{dualitypro}
{\rm 
\noindent 
\begin{enumerate}[{(i)}]
\item $k_{1}(g)$ is a primary filter in $\mathcal{S}^{T}(\mathbb{K}^{T})$, for each $g\in G$.
\item $k_{2}(m)$ is a primary ideal in $\mathcal{S}^{T}(\mathbb{K}^{T})$, for each $m\in M$.
\end{enumerate}}
\end{proposition}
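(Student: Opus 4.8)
The plan is to verify directly that $k_{1}(g)$ satisfies the filter axioms, properness and the primary property, and dually for $k_{2}(m)$; everything reduces to bookkeeping on the two components of a semiconcept together with the order description in Proposition \ref{order object-semi}. First I would record the standing facts: since $\mathbb{K}^{T}$ is a Stone context it is in particular a CTSCR, so by Theorem \ref{clopen dba}(ii) the algebra $\mathcal{S}^{T}(\mathbb{K}^{T})$ is a pure dBa; hence by Proposition \ref{order pure} the quasi-order $\sqsubseteq$ on it is a genuine partial order, and by Proposition \ref{order object-semi} we have $(A,B)\sqsubseteq (C,D)$ if and only if $C\subseteq A$ and $D\subseteq B$. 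This is what makes the notions of (primary) filter and ideal meaningful here.

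For (i), I would first show $k_{1}(g)$ is a filter. Closure under $\sqcap$: if $(A,B),(C,D)\in k_{1}(g)$, then $g\notin A$ and $g\notin C$, and since $(A,B)\sqcap (C,D)=(A\cup C,(A\cup C)^{\blacksquare})$ has first component $A\cup C$, we get $g\notin A\cup C$, so the meet lies in $k_{1}(g)$. Upward closure: if $(A,B)\in k_{1}(g)$ and $(A,B)\sqsubseteq (C,D)$, then $C\subseteq A$ by Proposition \ref{order object-semi}, so $g\notin A$ forces $g\notin C$ and $(C,D)\in k_{1}(g)$. Properness follows because $\bot=(G,M)\notin k_{1}(g)$ (as $g\in G$), while any filter containing $\bot$ would, by $\bot\sqsubseteq x$ (Proposition \ref{pro1.5}(i)) and upward closure, be the whole algebra. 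Finally $k_{1}(g)$ is primary: for any $(A,B)\in\mathfrak{S}^{T}(\mathbb{K}^{T})$ we have $\neg(A,B)=(A^{c},A^{c\blacksquare})$, so $\neg(A,B)\in k_{1}(g)$ precisely when $g\notin A^{c}$, that is $g\in A$; thus exactly one of $(A,B)$ and $\neg(A,B)$ lies in $k_{1}(g)$, according to whether $g\notin A$ or $g\in A$.

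Part (ii) is the order-theoretic dual, obtained by swapping the roles of the two components. I would show $k_{2}(m)$ is closed under $\sqcup$ using $(A,B)\sqcup (C,D)=((B\cap D)^{\lozenge},B\cap D)$: if $m\in B$ and $m\in D$ then $m\in B\cap D$; downward closure uses that $(A,B)\sqsubseteq (C,D)$ gives $D\subseteq B$, so $m\in D$ implies $m\in B$. Properness follows from $\top=(\emptyset,\emptyset)\notin k_{2}(m)$ (as $m\notin\emptyset$) together with $x\sqsubseteq\top$ (Proposition \ref{pro1.5}(ii)) and downward closure. Primeness uses $\lrcorner(A,B)=(B^{c\lozenge},B^{c})$: here $\lrcorner(A,B)\in k_{2}(m)$ if and only if $m\in B^{c}$, that is $m\notin B$, so exactly one of $(A,B)$ and $\lrcorner(A,B)$ belongs to $k_{2}(m)$.

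There is no genuine obstacle in this proposition---each clause is a one-line set-membership check---so the only thing to be careful about is consistently tracking which component of a semiconcept is relevant ($A$ for the condition on $g$ defining the filter, $B$ for the condition on $m$ defining the ideal) and remembering that $\neg$ and $\lrcorner$ complement exactly that component, which is precisely what produces the primary property in each case.
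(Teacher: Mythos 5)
Your proof is correct and takes essentially the same approach as the paper's own: direct verification that $k_{1}(g)$ is closed under $\sqcap$ and upward closed (via Proposition \ref{order object-semi}), proper (since $\bot=(G,M)\notin k_{1}(g)$), and primary (since $\neg(A,B)=(A^{c},A^{c\blacksquare})$ complements exactly the first component), with part (ii) obtained dually --- the paper simply says ``proved dually'' where you spell the dual argument out. One tiny terminological point: the property you verify in (ii) using $\lrcorner(A,B)=(B^{c\lozenge},B^{c})$ is the \emph{primary} property of an ideal, not ``primeness.''
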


\begin{proof}(i) 
$k_{1}(g)$ is a proper subset of $\mathfrak{S}^{T}(\mathbb{K}^{T})$, as $\bot=(G,M)\notin k_{1}(g)$. Let $x:=(A,B)$ and $y:=(C,D)$ be two elements in $k_{1}(g)$. Then $g\notin A$ and $g\notin C$. $x\sqcap y=(A\cup C,(A\cup C)^{\blacksquare})\in k_{1}(g)$, as $g\notin A\cup C$. Next let $z:=(E,F)\in\mathfrak{S}^{T}(\mathbb{K}^{T}) $ such that  $x\sqsubseteq z$.
By Proposition \ref{order object-semi}, $E\subseteq A$ and so $g\notin E$ (as $g\notin A$). Hence $z\in k_{1}(g)$. Further, for any $x:=(A,B)\in \mathfrak{S}^{T}(\mathbb{K}^{T})$, either $g\notin A^{c}$ or $g\notin A$. So $\neg x\in k_{1}(g)$ or $x\in k_{1}(g)$. Therefore $k_{1}(g)$ is a primary filter for each $g\in G$.\\
(ii) is proved dually.
\end{proof}
\begin{theorem}
\label{isom for boolean ctx}
{\rm Let $\mathbb{K}^{T}:=((G,\tau_{1}),(M,\tau_{2}),R)$ be a Stone context. Then $K:=(k_{1},k_{2})$  is a CTSCR-homeomorphism  from $\mathbb{K}^{T}$ to $\mathbb{K}^{T}_{pr}(\mathcal{S}^{T}(\mathbb{K}^{T}))$.}
\end{theorem}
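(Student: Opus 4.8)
The plan is to verify that $K:=(k_1,k_2)$ meets every requirement of a CTSCR-homeomorphism (cf. Definitions \ref{hom of ctx on top} and \ref{topocxthomeo}): that it is a context homomorphism, that $k_1,k_2$ are bijective, and that $k_1,k_2$ are homeomorphisms. By Proposition \ref{dualitypro}, $k_1$ sends $G$ into $\mathcal{F}_{pr}(\mathcal{S}^{T}(\mathbb{K}^{T}))$ and $k_2$ sends $M$ into $\mathcal{I}_{pr}(\mathcal{S}^{T}(\mathbb{K}^{T}))$, so $K$ is at least well-typed. For the context homomorphism condition I would show $gRm$ if and only if $k_1(g)\,\nabla\,k_2(m)$, i.e. $k_1(g)\cap k_2(m)=\emptyset$. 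Unravelling the definitions, $k_1(g)\cap k_2(m)=\emptyset$ says precisely that for every $(A,B)\in\mathfrak{S}^{T}(\mathbb{K}^{T})$, $m\in B$ implies $g\in A$. One direction is immediate for any context: if $gRm$ and $m\in B$, then in the case $A^{\blacksquare}=B$ one has $R^{-1}(m)\subseteq A$ so $g\in A$, while in the case $B^{\lozenge}=A$ one has $R(g)\cap B\neq\emptyset$ so $g\in A$. The converse is exactly axiom (b) of Definition \ref{stone cxt}; this is where the Stone context hypothesis is consumed.

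Next I would establish injectivity using total disconnectedness. If $g\neq g'$ in $(G,\tau_1)$, choose a clopen $U$ with $g\in U$ and $g'\notin U$; then $U^c$ is clopen and, since $R^{-1}$ is continuous, $(U^c)^{\blacksquare}$ is clopen by Corollary \ref{convers continuty}, so $(U^c,(U^c)^{\blacksquare})\in\mathfrak{S}^{T}(\mathbb{K}^{T})$ lies in $k_1(g)$ but not in $k_1(g')$; hence $k_1$ is injective. Dually, separating $m\neq m'$ by a clopen $V\subseteq M$ and using continuity of $R$ (Corollary \ref{continuty of R}) to see $(V^{\lozenge},V)\in\mathfrak{S}^{T}(\mathbb{K}^{T})$, one obtains injectivity of $k_2$.

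The main obstacle is surjectivity, which I would handle by compactness together with the primary property. Given a primary filter $F$ of $\mathcal{S}^{T}(\mathbb{K}^{T})$, consider the family $\{A^c:(A,B)\in F\}$ of clopen subsets of $G$. For any finite selection $(A_1,B_1),\dots,(A_n,B_n)\in F$, the meet $\sqcap_{i=1}^{n}(A_i,B_i)=\big(\cup_{i=1}^{n}A_i,(\cup_{i=1}^{n}A_i)^{\blacksquare}\big)$ lies in $F$, and properness of $F$ forces $\cup_{i=1}^{n}A_i\neq G$ (otherwise this meet would equal $\bot=(G,M)$), i.e. $\cap_{i=1}^{n}A_i^c\neq\emptyset$. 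Thus the family has the finite intersection property, and compactness of $(G,\tau_1)$ yields a point $g\in\cap_{(A,B)\in F}A^c$; by construction $F\subseteq k_1(g)$. For the reverse inclusion, if $(A,B)\in k_1(g)$ but $(A,B)\notin F$, then $\neg(A,B)=(A^c,A^{c\blacksquare})\in F$ by the primary property, forcing $g\in A$ and contradicting $g\notin A$; hence $k_1(g)=F$. The surjectivity of $k_2$ is dual, replacing $\sqcap$ by $\sqcup$, filters by ideals, the bound $\bot$ by $\top=(\emptyset,\emptyset)$, and $\neg$ by $\lrcorner$, using compactness of $(M,\tau_2)$.

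Finally, for continuity it suffices to check preimages of the subbasic closed sets of Definition \ref{spec}. One computes $k_1^{-1}(F_{(A,B)})=\{g:(A,B)\in k_1(g)\}=A^c$ and $k_2^{-1}(I_{(A,B)})=\{m:(A,B)\in k_2(m)\}=B$, both clopen, hence closed; so $k_1,k_2$ are continuous. Since $(G,\tau_1)$ and $(M,\tau_2)$ are Stone spaces, hence compact, while the target spaces $(\mathcal{F}_{pr}(\mathcal{S}^{T}(\mathbb{K}^{T})),\mathcal{T})$ and $(\mathcal{I}_{pr}(\mathcal{S}^{T}(\mathbb{K}^{T})),\mathcal{J})$ are compact Hausdorff by Theorem \ref{property of topspe}, Theorem \ref{fun homeomorphism} upgrades the continuous bijections $k_1,k_2$ to homeomorphisms. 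Combining all parts, $K=(k_1,k_2)$ is a CTSCR-homeomorphism from $\mathbb{K}^{T}$ to $\mathbb{K}^{T}_{pr}(\mathcal{S}^{T}(\mathbb{K}^{T}))$.
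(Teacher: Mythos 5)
Your proposal is correct, and most of it runs parallel to the paper's own proof: the equivalence $gRm \Leftrightarrow k_{1}(g)\nabla k_{2}(m)$ (with the Stone axiom (b) of Definition \ref{stone cxt} consumed exactly where you consume it), injectivity of $k_{1},k_{2}$ from total disconnectedness, the computation $k_{1}^{-1}(F_{(A,B)})=A^{c}$ for continuity, and the final upgrade of continuous bijections to homeomorphisms via Theorem \ref{fun homeomorphism} together with Theorem \ref{property of topspe} all appear in the paper in essentially the same form. The genuine divergence is surjectivity. The paper argues indirectly: $k_{1}(G)$ is compact, hence closed in the Hausdorff space $(\mathcal{F}_{pr}(\mathcal{S}^{T}(\mathbb{K}^{T})),\mathcal{T})$, and is then shown to be dense by proving that every non-empty basic open set $\cap_{a\in D_{j}}F_{\neg a}=F_{\neg(c_{1}\vee\cdots\vee c_{n})}$ meets the image, which requires computing $c_{1}\vee\cdots\vee c_{n}=(\cap A_{i},(\cap A_{i})^{\blacksquare})$ and a case analysis on $\cap A_{i}$. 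You instead fix a primary filter $F$ and manufacture a preimage point directly: the clopen family $\{A^{c}:(A,B)\in F\}$ has the finite intersection property by properness of $F$, compactness of $(G,\tau_{1})$ yields $g$ with $F\subseteq k_{1}(g)$, and the primary property of $F$ turns the inclusion into equality. Your route is the classical Stone-duality argument; it is shorter, case-free, and notably sidesteps the delicate bookkeeping in the paper's density step (there, since $k_{1}(g)\in F_{\neg(c_{1}\vee\cdots\vee c_{n})}$ exactly when $g\in\cap A_{i}$, the witness must be taken inside $\cap A_{i}\neq\emptyset$, not outside it as the printed argument actually chooses, so your argument also avoids having to repair that slip). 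What the paper's route buys in exchange is the explicit structural fact that the image is a closed dense subspace, which is occasionally a useful formulation in its own right; both arguments ultimately rest on the same two pillars, compactness of the Stone space and primality of the filters.
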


\begin{proof}
One needs to show the following.\noindent 
\begin{enumerate}[{(i)}]
\item $(k_{1},k_{2})$ is a context homomorphism, that is, for any $g\in G$ and $m\in M$, $gRm$ if and only if $k_{1}(g)\nabla k_{2}(m)$. 
\item $k_{1}$ is a homeomorphism from $(G,\tau_{1})$ to $(\mathcal{F}_{pr}(\mathcal{S}^{T}(\mathbb{K}^{T})),\mathcal{T})$.
\item $k_{2}$ is a homeomorphism from $(M,\tau_{2})$ to $(\mathcal{I}_{pr}(\mathcal{S}^{T}(\mathbb{K}^{T})),\mathcal{J})$.
\end{enumerate}
\noindent (i) Let $g\in G, m\in M$ and $gRm$. If possible, let $ k_{1}(g)\cap k_{2}(m)\neq\emptyset$ and $(A,B)\in k_{1}(g)\cap k_{2}(m)$. Then $g\notin A$ and $m\in B$. So $g\in B^{\lozenge}$, as $gRm$. Since  object oriented semiconcepts are also object oriented protoconcepts, $A^{\blacksquare\lozenge}=B^{\lozenge}$,  which implies that $g\in A^{\blacksquare\lozenge}$. $ A^{\blacksquare\lozenge}\subseteq A $ by Theorem \ref{property of box}(ix). So $g\in A$, which  a contradiction. Hence $k_{1}(g)\cap k_{2}(m)=\emptyset$.\\
\noindent For the converse, let us assume that $k_{1}(g)\nabla k_{2}(m)$, that is $k_{1}(g)\cap k_{2}(m)=\emptyset$. Then for all $(A,B)\in \mathfrak{S}^{T}(\mathbb{K}^{T})$, $m\in B$ implies that $g\in A$ -- otherwise, $k_{1}(g)\cap k_{2}(m)\neq\emptyset$. Hence $gRm$, as $\mathbb{K}^{T}$ is a Stone context.\\
  (ii)
It is given that $(G,\tau_{1})$ and $(\mathcal{F}_{pr}(\mathcal{S}^{T}(\mathbb{K}^{T})),\mathcal{T})$ are two compact Hausdorff topological spaces. Therefore by Theorem \ref{fun homeomorphism}, it is sufficient to show that $k_{1}$ is a continuous bijection. 

\noindent $k_{1}$ is injective: since $(G,\tau_{1})$ is a totally disconnected space, for any two $g_{1},g_{2}\in G$ with $g_{1}\neq g_{2}$, there exists a clopen set $A\subseteq G$ such that $g_{2}\in A$ and $g_{1}\notin A$. Therefore $(A,A^{\blacksquare})\in k_{1}(g_{1})$ and $(A,A^{\blacksquare})\notin k_{1}(g_{2})$, which imply $k_{1}(g_{1})\neq k_{1}(g_{2})$. Hence $k_{1}$ is injective. \\
$k_{1}$ is continuous: for this, we  first show that $k^{-1}_{1}(F_{c})$ is open in $(G,\tau_{1})$ for any  open set $F_{c}$ in $ (\mathcal{F}_{pr}(\mathcal{S}^{T}(\mathbb{K}^{T})),\mathcal{T})$. Let $c:=(A,B)\in \mathfrak{S}^{T}(\mathbb{K}^{T}).$ Then 
$ k_{1}^{-1}(F_{c})=\{g\in G:k_{1}(g)\in F_{c}\}=\{g\in G: c\in k_{1}(g)\}=\{g\in G:g\notin A\}=A^{c}$, which is open in $(G,\tau_{1})$. If $O$ is an open set in $(\mathcal{F}_{pr}(\mathcal{S}^{T}(\mathbb{K}^{T})),\mathcal{T})$ then by Note \ref{clopen subbase}, $O=\cup_{j\in J}\cap_{a\in D_{j}}F_{\neg a},$ where $D_{j},j \in J,$ is a finite subset of $ \mathfrak{S}^{T}(\mathbb{K}^{T}),$  $J$ being  an index set.  $k_{1}^{-1}(O)=k_{1}^{-1}(\cup_{j\in J}\cap_{a\in D_{j}}F_{\neg a})=\cup_{j\in J}\cap_{a\in D_{j}}k_{1}^{-1}(F_{\neg a})$. Therefore $k_{1}^{-1}(O)$ is open in $(G,\tau_{1})$, which implies that $k_{1}$ is continuous.\\
Lastly,  let us show that $k_{1}$ is surjective.  $k_{1}(G)$ is compact in $(\mathcal{F}_{pr}(\mathcal{S}^{T}(\mathbb{K}^{T})),\mathcal{T})$, since $k_{1}$ is continuous. Therefore $k_{1}(G)$ is closed in $(\mathcal{F}_{pr}(\mathcal{S}^{T}(\mathbb{K}^{T})),\mathcal{T})$. We now prove  that $k_{1}(G)$  is dense in $(\mathcal{F}_{pr}(\mathcal{S}^{T}(\mathbb{K}^{T}),\mathcal{T})$, as then we would have $k_{1}(G)=\mathcal{F}_{pr}(\mathcal{S}^{T}(\mathbb{K}^{T})$. To establish this, we show that any non-empty  open set $O$ in $(\mathcal{F}_{pr}(\mathcal{S}^{T}(\mathbb{K}^{T}), \mathcal{T})$ intersects $k_{1}(G)$. 
By Note \ref{clopen subbase}, $O
 =\cup_{j\in J}\cap_{a\in D_{j}}F_{\neg a},$ where $D_{j},j \in J,$ is a finite subset of $ \mathfrak{S}^{T}(\mathbb{K}^{T}).$ As $O$ is non-empty, there is $j \in J$ such that the open set $O_{j}=\cap_{a\in D_{j}}F_{\neg a}$ is non-empty. It is then 
  sufficient to show that for all such non-empty  $O_{j}$, $O_{j}\cap k_{1}(G)\neq\emptyset$.  Let $D_j:= \{c_1,\ldots,c_n\}$, where $c_{i}:=(A_{i},B_{i})\in  \mathfrak{S}^{T}(\mathbb{K}^{T})$ for $i=1,\ldots,n$.   
Now $O_{j}=\cap_{i=1}^{n}F_{\neg c_{i}}=F_{\neg c_{1}\sqcap  \ldots\sqcap\neg c_{n}}=F_{\neg (c_{1}\vee\ldots\vee c_{n})}=\{F\in \mathcal{F}_{pr}(\mathcal{S}^{T}(\mathbb{K}^{T}): c_{1} \vee \ldots\vee c_{n}\notin F\}.$  $c_{1}\vee\ldots\vee c_{n}=\neg((A_{1}^{c},A_{1}^{c\blacksquare})\sqcap\ldots\sqcap (A_{n}^{c},A_{n}^{c\blacksquare}) )=(\cap A_{i},(\cap A_{i} )^{\blacksquare})$. Then $\cap A_{i}\neq\emptyset$, otherwise $c_{1}\vee \ldots\vee c_{n}=(\emptyset,\emptyset^{\blacksquare})=\top\sqcap\top=\neg \bot$ implies that $O_{j}$ is empty (as every primary filter contains $\neg \bot$), giving a contradiction. So either $\cap A_{i}=G$ or $\cap A_{i}\subsetneq G$. Now if $\cap A_{i}= G$ then  $c_{1}\vee\ldots\vee c_{n}=(G,M)=\bot$, and so $O_{j}=\mathcal{F}_{pr}(\mathcal{S}^{T}(\mathbb{K}^{T})$. Hence $k_{1}(G)\cap O_{j}\neq \emptyset$. If $\cap A_{i}$ is a proper subset of $G$ then there exists  $g\in G$ such that $g\notin \cap A_{i}$.  We consider $k_{1}(g)=\{(A,B)\in\mathfrak{S}^{T}(\mathbb{K}^{T}):g\notin A\} $.  Then $c_{1}\vee \ldots\vee c_{n} \notin k_{1}(g)$. So $k_{1}(g)\in O_{j}$, and $O_{j}\cap k_{1}(G)\neq\emptyset$. \\
 (iii) Dually one can show that $k_{2}$ is a homeomorphism from $(M,\tau_{2})$ to $(\mathcal{I}_{pr}(\mathcal{S}^{T}(\mathbb{K}^{T})),\mathcal{J})$.
\end{proof}

\noindent To prove the main theorem (Theorem \ref{duality}), we need two more results.  Recall  the map $h$ giving  (Representation) Theorem \ref{RTDBA}, namely $h:D\rightarrow\mathfrak{S}^{T}(\mathbb{K}_{pr}^{T}(\textbf{D}))$  defined by $h(x):=(F_{\neg x},I_{x})$ for all $x\in D$. 
\begin{proposition}
\label{diagram comut}
{\rm Let $\textbf{D}_{1}$, $\textbf{D}_{2}$ be pure dBas. For a dBa isomorphism $f$ from $\textbf{D}_{1}$ to  $\textbf{D}_{2}$, the following diagram commutes
\begin{center}
\begin{tikzcd}
\textbf{D}_{1}\arrow[r, "f" ]\arrow[d, "h_{1}"]& \textbf{D}_{2}\arrow[d,"h_{2}"] \\\mathcal{S}^{T}(\mathbb{K}^{T}_{pr}(\textbf{D}_{1}))\arrow[r, "f_{\alpha_{f}\beta_{f}}" ]& \mathcal{S}^{T}(\mathbb{K}^{T}_{pr}(\textbf{D}_{2}))
\end{tikzcd}
\end{center}
that is, $h_{2}\circ f=f_{\alpha_{f}\beta_{f}}\circ h_{1}$, where $h_{1},h_2$ are as in Theorem \ref{RTDBA}. }
\end{proposition}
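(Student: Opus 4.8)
The plan is to establish the identity $h_{2}\circ f=f_{\alpha_{f}\beta_{f}}\circ h_{1}$ by a pointwise computation: I would fix an arbitrary $x\in D_{1}$ and evaluate the two composites on $x$, showing that both produce the same clopen object oriented semiconcept of $\mathbb{K}^{T}_{pr}(\textbf{D}_{2})$. Before starting I would record the directions of the maps involved, since the correspondence is contravariant. By Corollary \ref{idba hom set} the pair $(\alpha_{f},\beta_{f})$ is a CTSCR-homeomorphism from $\mathbb{K}^{T}_{pr}(\textbf{D}_{2})$ to $\mathbb{K}^{T}_{pr}(\textbf{D}_{1})$, so that, via Definition \ref{topoisocxt} and Theorem \ref{injecmor}, the induced map $f_{\alpha_{f}\beta_{f}}$ runs from $\mathcal{S}^{T}(\mathbb{K}^{T}_{pr}(\textbf{D}_{1}))$ to $\mathcal{S}^{T}(\mathbb{K}^{T}_{pr}(\textbf{D}_{2}))$, matching the bottom arrow of the diagram. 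That all the images actually lie in $\mathfrak{S}^{T}$ is guaranteed by Proposition \ref{semicon}, so the two composites are honest maps into $\mathcal{S}^{T}(\mathbb{K}^{T}_{pr}(\textbf{D}_{2}))$.

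For the left-hand composite, I would compute $h_{2}(f(x))=(F_{\neg f(x)},I_{f(x)})$ directly from the definition of $h_{2}$ in Theorem \ref{RTDBA}. Since $f$ is a dBa homomorphism it commutes with $\neg$, so $\neg f(x)=f(\neg x)$, giving $h_{2}(f(x))=(F_{f(\neg x)},I_{f(x)})$.

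For the right-hand composite, I would first write $h_{1}(x)=(F_{\neg x},I_{x})$ and then apply $f_{\alpha_{f}\beta_{f}}$ using Definition \ref{topoisocxt}, obtaining $f_{\alpha_{f}\beta_{f}}(h_{1}(x))=(\alpha_{f}^{-1}(F_{\neg x}),\beta_{f}^{-1}(I_{x}))$. The crucial input is Proposition \ref{cntxmor}, which yields $\alpha_{f}^{-1}(F_{\neg x})=F_{f(\neg x)}$ and $\beta_{f}^{-1}(I_{x})=I_{f(x)}$; hence the right-hand side equals $(F_{f(\neg x)},I_{f(x)})$, which coincides with the value of the left-hand composite. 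As $x\in D_{1}$ was arbitrary, the diagram commutes.

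The computation carries no genuine difficulty; the only point requiring care is the bookkeeping forced by contravariance, namely correctly identifying the domains and codomains of $\alpha_{f},\beta_{f}$ and $f_{\alpha_{f}\beta_{f}}$, and applying Proposition \ref{cntxmor} with the substitution $h:=f$, reading its statement so that the argument ranges over the domain $D_{1}$ of $f$. Once these are in place the two composites collapse to the same pair, so the main obstacle is purely notational rather than mathematical.
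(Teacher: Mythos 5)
Your proposal is correct and follows essentially the same route as the paper's own proof: fix $x\in D_{1}$, compute both composites pointwise, and identify them via Proposition \ref{cntxmor} together with the fact that $f$ commutes with $\neg$. Your extra care about contravariance and about reading Proposition \ref{cntxmor} with the argument ranging over $D_{1}$ (the domain of $f$) is exactly the bookkeeping the paper performs implicitly.
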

\begin{proof}
Let $x\in D_{1}$. Then $h_{2}\circ f(x)=h_{2}(f(x))=(F_{\neg f(x)},I_{f(x)})$ and $f_{\alpha_{f}\beta_{f}}\circ h_{1}(x)=f_{\alpha_{f}\beta_{f}}(h_1(x))=f_{\alpha_{f}\beta_{f}}(F_{\neg x},I_{x})=(\alpha^{-1}_{f}(F_{\neg x}),\beta^{-1}_{f}(I_{x}))=(F_{f(\neg x)},I_{f(x)})$ -- the last as we have shown in the proof of Proposition \ref{cntxmor} that $\alpha^{-1}_{f}(F_{x})=F_{f(x)}$ and $\beta^{-1}_{f}(I_{x})=I_{f(x)}$, for any $x$ in $ D_{1}$. Since $f$ is a dBa homomorphism, 
$f_{\alpha_{f}\beta_{f}}\circ h_{1}(x)=(F_{\neg f(x)},I_{f(x)})$.
\end{proof}
\begin{proposition}
\label{dBa-hom and ctx homeo}
{\rm Let $\mathbb{K}_{1}^{T}:=((G_{1},\tau_{1}),(M_{1},\rho_{1}),R_{1})$ and $\mathbb{K}_{2}^{T}:=((G_{2},\tau_{2}),(M_{2},\rho_{2}),R_{2})$ be Stone contexts, and $f_{1}:=(\alpha_{1},\beta_{1})$ and $f_{2}:=(\alpha_{2},\beta_{2})$ be CTSCR-homeomorphisms from  $\mathbb{K}_{1}^{T}$ to $\mathbb{K}_{2}^{T}$. If $f_{\alpha_{1}\beta_{1}}=f_{\alpha_{2}\beta_{2}}$ then $f_{1}=f_{2}.$}
\end{proposition}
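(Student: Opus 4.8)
The plan is to unwind the hypothesis $f_{\alpha_1\beta_1}=f_{\alpha_2\beta_2}$ into a statement about preimages of clopen sets, and then exploit total disconnectedness of the Stone spaces to conclude that the component maps coincide. Recall from Definition \ref{topoisocxt} that $f_{\alpha_i\beta_i}\colon\mathfrak{R}^{T}(\mathbb{K}^{T}_{2})\to\mathfrak{R}^{T}(\mathbb{K}^{T}_{1})$ sends $(A,B)$ to $(\alpha_i^{-1}(A),\beta_i^{-1}(B))$. Hence the assumed equality of these two maps says precisely that for every clopen object oriented protoconcept $(A,B)$ of $\mathbb{K}^{T}_{2}$ we have $\alpha_1^{-1}(A)=\alpha_2^{-1}(A)$ and $\beta_1^{-1}(B)=\beta_2^{-1}(B)$. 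The target is $\alpha_1=\alpha_2$ and $\beta_1=\beta_2$, which together give $f_1=f_2$.

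The key step is to upgrade this from ``components of protoconcepts'' to ``all clopen sets''. First I would observe that for an \emph{arbitrary} clopen $A\subseteq G_2$, the pair $(A,A^{\blacksquare})$ is an object oriented semiconcept by Proposition \ref{obssemi}(i), hence an object oriented protoconcept; moreover, since $\mathbb{K}^{T}_{2}$ is a CTSCR its converse $R_2^{-1}$ is continuous, so $A^{\blacksquare}$ is clopen in $(M_2,\rho_2)$ by Corollary \ref{convers continuty}. Therefore $(A,A^{\blacksquare})\in\mathfrak{S}^{T}(\mathbb{K}^{T}_{2})\subseteq\mathfrak{R}^{T}(\mathbb{K}^{T}_{2})$, and the hypothesis yields $\alpha_1^{-1}(A)=\alpha_2^{-1}(A)$ for \emph{every} clopen $A\subseteq G_2$. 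Dually, for every clopen $B\subseteq M_2$ the pair $(B^{\lozenge},B)$ lies in $\mathfrak{S}^{T}(\mathbb{K}^{T}_{2})$ (using continuity of $R_2$ and Corollary \ref{continuty of R} to see $B^{\lozenge}$ is clopen), giving $\beta_1^{-1}(B)=\beta_2^{-1}(B)$ for every clopen $B\subseteq M_2$.

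Finally I would conclude by separation. Suppose $\alpha_1\neq\alpha_2$, so $\alpha_1(g)\neq\alpha_2(g)$ for some $g\in G_1$. As $\mathbb{K}^{T}_{2}$ is a Stone context, $(G_2,\tau_2)$ is a Stone space and in particular totally disconnected, so there is a clopen $U\subseteq G_2$ with $\alpha_1(g)\in U$ and $\alpha_2(g)\notin U$. Then $g\in\alpha_1^{-1}(U)$ while $g\notin\alpha_2^{-1}(U)$, contradicting $\alpha_1^{-1}(U)=\alpha_2^{-1}(U)$ established in the previous step. Hence $\alpha_1=\alpha_2$; an identical argument on $(M_2,\rho_2)$ gives $\beta_1=\beta_2$, and therefore $f_1=(\alpha_1,\beta_1)=(\alpha_2,\beta_2)=f_2$.

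The only genuinely substantive point is the realization step in the second paragraph: one must know that every clopen subset of $G_2$ (resp.\ $M_2$) actually arises as a component of some element of $\mathfrak{R}^{T}(\mathbb{K}^{T}_{2})$, since the hypothesis only constrains $\alpha_i^{-1}$ and $\beta_i^{-1}$ on such components. This is exactly where the CTSCR property (continuity of both $R_2$ and $R_2^{-1}$) is needed, to guarantee that $(A,A^{\blacksquare})$ and $(B^{\lozenge},B)$ are genuine clopen object oriented semiconcepts; once this richness is available, the remaining separation argument is routine.
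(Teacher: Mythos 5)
Your proof is correct and follows essentially the same route as the paper's: both arguments rest on realizing each clopen $A\subseteq G_2$ as the first component of the clopen semiconcept $(A,A^{\blacksquare})$ (which needs continuity of $R_2^{-1}$), and then using total disconnectedness of the Stone space to separate $\alpha_1(g)$ from $\alpha_2(g)$ by a clopen set. The only differences are organizational: the paper argues by contradiction and handles the $\beta$ case with a ``without loss of generality,'' whereas you derive agreement of preimages on all clopen sets first and spell out the dual argument with $(B^{\lozenge},B)$ explicitly, which is slightly more complete.
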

\begin{proof}
Let $f_{\alpha_{1}\beta_{1}}=f_{\alpha_{2}\beta_{2}}$. 
If possible, suppose $f_{1}\neq f_{2}$. So either $\alpha_{1}\neq \alpha_{2}$ or $\beta_{1}\neq\beta_{2}$. Without loss of  generality, suppose $\alpha_{1}\neq\alpha_{2}$. Then there exists $a\in G_{1}$ such that $\alpha_{1}(a)\neq \alpha_{2}(a)$. There also exists a clopen set $A$ in $(G_{2},\tau_{2})$ such that $\alpha_{1}(a)\in A$, but $\alpha_{2}(a)\notin A$, as $(G_{2},\tau_{2})$ is a Stone space. Therefore $\alpha_{1}^{-1}(A)\neq\alpha_{2}^{-1}(A)$. Now consider the clopen object oriented semiconcept $x:=(A, A^{\blacksquare})$ in $\mathfrak{S}^{T}(\mathbb{K}^{T}_{2})$. Then $f_{\alpha_{1}\beta_{1}}((A,A^{\blacksquare}))=(\alpha^{-1}_{1}(A),\beta^{-1}_{1}(A^{\blacksquare}))$, and $f_{\alpha_{2}\beta_{2}}((A,A^{\blacksquare}))=(\alpha^{-1}_{2}(A),\beta^{-1}_{2}(A^{\blacksquare}))$. Therefore $f_{\alpha_{1}\beta_{1}}((A,A^{\blacksquare}))\neq f_{\alpha_{2}\beta_{2}}((A,A^{\blacksquare}))$, as $\alpha^{-1}_{1}(A)\neq \alpha^{-1}_{2}(A)$, which is a contradiction. This gives $f_{1}=f_{2}$. 
\end{proof}

We now obtain
\begin{theorem}
\label{duality}
{\rm \textbf{PDBA} is equivalent to $\textbf{Scxt}^{op}$. 
}
\end{theorem}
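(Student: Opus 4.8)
The plan is to show that the contravariant functor $F$ gives an equivalence between $\textbf{PDBA}$ and $\textbf{Scxt}^{op}$ by verifying, via the criterion recalled earlier, that $F$ is essentially surjective, faithful and full. Concretely, for each pair of pure dBas $\textbf{D}_{1},\textbf{D}_{2}$ I must check that the assignment $f\mapsto(\alpha_{f},\beta_{f})$ is a bijection from $Hom_{\textbf{PDBA}}(\textbf{D}_{1},\textbf{D}_{2})$ onto $Hom_{\textbf{Scxt}}(\mathbb{K}^{T}_{pr}(\textbf{D}_{2}),\mathbb{K}^{T}_{pr}(\textbf{D}_{1}))=Hom_{\textbf{Scxt}^{op}}(F(\textbf{D}_{1}),F(\textbf{D}_{2}))$, and that every Stone context is isomorphic in $\textbf{Scxt}$ to some $F(\textbf{D})$. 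That $F$ is a genuine contravariant functor is already known from Theorem \ref{db iso to cntx iso} and Corollary \ref{idba hom set}.

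For essential surjectivity, given any Stone context $\mathbb{K}^{T}$ I would take $\textbf{D}:=\mathcal{S}^{T}(\mathbb{K}^{T})$, which is a pure dBa by Theorem \ref{clopen dba}(ii) and hence an object of $\textbf{PDBA}$. Theorem \ref{isom for boolean ctx} then furnishes the CTSCR-homeomorphism $K=(k_{1},k_{2})$ from $\mathbb{K}^{T}$ to $\mathbb{K}^{T}_{pr}(\mathcal{S}^{T}(\mathbb{K}^{T}))=F(\textbf{D})$, so that $\mathbb{K}^{T}\cong F(\textbf{D})$ in $\textbf{Scxt}$, and therefore also in $\textbf{Scxt}^{op}$. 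This is the step into which all the topological work (compactness, total disconnectedness and condition (b) of a Stone context) has already been packaged.

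Faithfulness and fullness would both be obtained from the representation isomorphisms $h_{1},h_{2}$ of Theorem \ref{RTDBA}, the commuting square of Proposition \ref{diagram comut}, and the cancellation of Proposition \ref{dBa-hom and ctx homeo}. For faithfulness, if $f,f'\in Hom_{\textbf{PDBA}}(\textbf{D}_{1},\textbf{D}_{2})$ satisfy $F(f)=F(f')$, then $f_{\alpha_{f}\beta_{f}}=f_{\alpha_{f'}\beta_{f'}}$, and Proposition \ref{diagram comut} gives $h_{2}\circ f=f_{\alpha_{f}\beta_{f}}\circ h_{1}=f_{\alpha_{f'}\beta_{f'}}\circ h_{1}=h_{2}\circ f'$; since $h_{2}$ is a bijection, $f=f'$. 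For fullness, given a CTSCR-homeomorphism $g:=(\alpha,\beta):\mathbb{K}^{T}_{pr}(\textbf{D}_{2})\to\mathbb{K}^{T}_{pr}(\textbf{D}_{1})$, Theorem \ref{injecmor} makes $g_{\alpha\beta}$ a dBa isomorphism from $\mathcal{S}^{T}(\mathbb{K}^{T}_{pr}(\textbf{D}_{1}))$ to $\mathcal{S}^{T}(\mathbb{K}^{T}_{pr}(\textbf{D}_{2}))$, and I would set $f:=h_{2}^{-1}\circ g_{\alpha\beta}\circ h_{1}$, a dBa isomorphism from $\textbf{D}_{1}$ to $\textbf{D}_{2}$. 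Applying Proposition \ref{diagram comut} to $f$ gives $f_{\alpha_{f}\beta_{f}}\circ h_{1}=h_{2}\circ f=g_{\alpha\beta}\circ h_{1}$, whence $f_{\alpha_{f}\beta_{f}}=g_{\alpha\beta}$ since $h_{1}$ is surjective; Proposition \ref{dBa-hom and ctx homeo} then forces $(\alpha_{f},\beta_{f})=(\alpha,\beta)$, that is $F(f)=g$.

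The remaining points are purely bookkeeping: that isomorphisms in $\textbf{Scxt}^{op}$ are exactly the CTSCR-homeomorphisms, and that the composites above land in the correct categories. The main obstacle — and the only place needing genuine care — is keeping the variance consistent, so that the candidate inverse $h_{2}^{-1}\circ g_{\alpha\beta}\circ h_{1}$ is actually mapped back to $g$ by $F$; this is precisely where the commutativity of the square of Proposition \ref{diagram comut} and the cancellation of Proposition \ref{dBa-hom and ctx homeo} must be used together. With these three checks complete, $F$ is an equivalence and $\textbf{PDBA}$ is equivalent to $\textbf{Scxt}^{op}$.
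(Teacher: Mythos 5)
Your proposal is correct, and two of its three components (essential surjectivity via Theorem~\ref{clopen dba}(ii) and Theorem~\ref{isom for boolean ctx}, and fullness via Theorem~\ref{injecmor}, Proposition~\ref{diagram comut} and Proposition~\ref{dBa-hom and ctx homeo}) follow the paper's proof almost verbatim, including the candidate preimage $h_{2}^{-1}\circ g_{\alpha\beta}\circ h_{1}$ and the cancellation of $h_{1}$. Where you genuinely diverge is faithfulness: the paper proves it by a direct algebraic separation argument --- given $f\neq g$ it takes $x$ with $f(x)\neq g(x)$, splits into the $\sqcap$- and $\sqcup$-components using purity, invokes the prime ideal theorem for the Boolean algebra $\textbf{D}_{2\sqcap}$ (resp.\ $\textbf{D}_{2\sqcup}$) together with Lemma~\ref{lema1} and Proposition~\ref{comparison of two ideal} to manufacture a primary filter $F_{1}$ with $x\in f^{-1}(F_{1})$, $x\notin g^{-1}(F_{1})$, and concludes $\alpha_{f}\neq\alpha_{g}$. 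You instead deduce faithfulness formally from the commuting square of Proposition~\ref{diagram comut}: $F(f)=F(f')$ gives $f_{\alpha_{f}\beta_{f}}=f_{\alpha_{f'}\beta_{f'}}$ as maps (since $f_{\alpha\beta}$ depends only on the pair $(\alpha,\beta)$), hence $h_{2}\circ f=h_{2}\circ f'$, and injectivity of the representation isomorphism $h_{2}$ of Theorem~\ref{RTDBA} finishes. Your route is shorter and arguably more natural: it reuses the naturality statement that is needed for fullness anyway, and avoids re-running the prime-filter machinery; what the paper's hands-on argument buys is independence from Theorem~\ref{RTDBA} and Proposition~\ref{diagram comut} in that step (it separates morphisms using only the definition of $\alpha_{f}$, which makes the faithfulness claim visibly a statement about filters rather than about the representation). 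Both are complete proofs; yours is the leaner of the two.
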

\begin{proof}
We must show that $F$ is (i) faithful, (ii)  full and (iii) essential surjective.\\
(i) 
Let $\textbf{D}_{1},\textbf{D}_{2}\in Obj(\textbf{PDBA})$,
$f,g\in Hom_{\textbf{PDBA}}(\textbf{D}_{1},\textbf{D}_{2})$ and $F(f)=F(g)$. If possible, suppose $f\neq g$. Then there exists $x\in D_{1}$ such that $f(x)\neq g(x)$. By  Proposition \ref{order pure}, either $f(x)\sqcap f(x)\neq g(x)\sqcap g(x) $ or $f(x)\sqcup f(x)\neq g(x)\sqcup g(x)$. Let us assume that $f(x)\sqcap f(x)\neq g(x)\sqcap g(x)$. As $f(x)\sqcap f(x),g(x)\sqcap g(x)\in \textbf{D}_{2\sqcap}$,  either $f(x)\sqcap f(x)\not\sqsubseteq_{\sqcap} g(x)\sqcap g(x) $ or  $g(x)\sqcap g(x)\not\sqsubseteq_{\sqcap}  f(x)\sqcap f(x)$. Suppose $f(x)\sqcap f(x)\not\sqsubseteq_{\sqcap} g(x)\sqcap g(x).$ Then there exists a prime filter $F_{0}$ in $\textbf{D}_{2\sqcap} $ (a Boolean algebra)  such that $f(x)\sqcap f(x)\in F_{0}$ and $g(x)\sqcap g(x)\notin F_{0} $.
Due to Lemma \ref{lema1},  $F_{0}=F_{1}\cap D_{2\sqcap}$ where  $F_{1}=\{a\in D_{2}:y\sqsubseteq a~\mbox{for some} ~ y\in F_{0}\}$ is a filter in $\textbf{D}_{2}$. By Proposition \ref{comparison of two ideal} it follows that $F_{1}$ is a primary filter of $\textbf{D}_{2}$. $f(x)\in F_{1}$ as $f(x)\sqcap f(x)\sqsubseteq f(x)$ and $g(x)\notin F_{1}$ -- otherwise $g(x)\sqcap g(x)\in F_{1}\cap D_{2\sqcap}=F_{0}$. Therefore $x\in f^{-1}(F_{1})$ and $x\notin g^{-1}(F_{1})$. So $\alpha_{f}\neq \alpha_{g}.$
  
 \noindent If  $g(x)\sqcap g(x)\not\sqsubseteq_{\sqcap}  f(x)\sqcap f(x)$, then similarly we can find a primary filter $F_{2}$ of $\textbf{D}_{2}$ such that $x\in g^{-1}(F_{2})$ and $x\notin f^{-1}(F_{2})$. So $\alpha_{f}\neq\alpha_{g}$. Therefore in both cases,  $F(f)\neq F(g)$.
 
 \noindent  If $f(x)\sqcup f(x)\neq g(x)\sqcup g(x)$, then dually we can show that $\beta_{f}\neq \beta_{g}$. So $F(f)\neq F(g)$ in this case as well. 
 Hence $f=g$, and $F$ is faithful.

\noindent (ii) 
Let $h:=(\alpha,\beta)\in Hom_{\small\textbf{Scxt}}(G(\textbf{D}_{2}), G(\textbf{D}_{1}))$, where $\textbf{D}_{1}$ and $\textbf{D}_{2}$ are pure dBas. By Theorem \ref{injecmor}, $f_{\alpha\beta}$ is a dBa isomorphism from $\mathcal{S}^{T}(G(\textbf{D}_{1}))$ to $\mathcal{S}^{T}(G(\textbf{D}_{2}))$. Now let $l:=h_{2}^{-1}\circ f_{\alpha\beta}\circ h_{1}$, where for $i=1,2$, $h_{i}$ are the dBa isomorphisms from $\textbf{D}_{i}$ to $\mathcal{S}^{T}(G(\textbf{D}_{i}))$ defined as in Theorem \ref{RTDBA}. Then $l$ is a dBa isomorphism  from $\textbf{D}_{1}$ to $\textbf{D}_{2}$. We show that $G(l)=h$, that is, $(\alpha_{l},\beta_{l})=(\alpha,\beta)$. Indeed, by Proposition \ref{diagram comut}, $l=h_{2}^{-1}\circ f_{\alpha_{l}\beta_{l}}\circ h_{1}$. Therefore $f_{\alpha\beta}=f_{\alpha_{l}\beta_{l}}$, whence by Proposition \ref{dBa-hom and ctx homeo},  $(\alpha_{l},\beta_{l})=(\alpha,\beta)$.

\noindent (iii) For each $\mathbb{K}^{T}\in Obj(\textbf{Scxt})$, $\mathcal{S}^{T}(\mathbb{K}^{T})\in Obj(\textbf{PDBA})$ by Theorem \ref{clopen dba}. By Theorem \ref{isom for boolean ctx}, $\mathbb{K}^{T}$ is homeomorphic to $\mathbb{K}^{T}_{pr}(\mathcal{S}^{T}(\mathbb{K}^{T}))= F(\mathcal{S}^{T}(\mathbb{K}^{T}))$. Hence $F$ is essential surjective.
\end{proof}

Theorems \ref{algebricdualty} and \ref{duality} give 
\begin{theorem}
\label{fcdbaduallty}
{\rm $\textbf{FCDBA}$ is dually equivalent to $\textbf{Scxt}$.}
\end{theorem}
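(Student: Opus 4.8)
The plan is to obtain the dual equivalence simply by composing the two equivalences already established, namely the covariant equivalence $G:\textbf{FCDBA}\to\textbf{PDBA}$ of Theorem \ref{algebricdualty} and the contravariant equivalence $F:\textbf{PDBA}\to\textbf{Scxt}$ of Theorem \ref{duality}. Concretely, I would form the composite functor $H:=F\circ G$, which on objects sends a fully contextual dBa $\textbf{D}$ to the Stone context $\mathbb{K}^{T}_{pr}(\textbf{D}_{p})$, and on a morphism $f\in Hom_{\textbf{FCDBA}}(\textbf{D},\textbf{M})$ sends it to $(\alpha_{f\vert_{D_{p}}},\beta_{f\vert_{D_{p}}})$. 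Since $G$ is covariant and $F$ is contravariant, $H$ is a contravariant functor from $\textbf{FCDBA}$ to $\textbf{Scxt}$; equivalently, viewing $F$ as a covariant functor $\textbf{PDBA}\to\textbf{Scxt}^{op}$, which is precisely how Theorem \ref{duality} is phrased, $H$ becomes a covariant functor $\textbf{FCDBA}\to\textbf{Scxt}^{op}$.

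The key step is then to verify that $H$ is an equivalence, and this reduces to the standard fact that faithfulness, fullness, and essential surjectivity are each preserved under composition of functors. Theorem \ref{algebricdualty} gives that $G$ is faithful, full, and essentially surjective, and Theorem \ref{duality} gives the same three properties for $F:\textbf{PDBA}\to\textbf{Scxt}^{op}$. Hence the composite $H$ is faithful (a composite of injections on hom-sets is an injection), full (a composite of surjections on hom-sets is a surjection), and essentially surjective (given an object of $\textbf{Scxt}^{op}$, pull it back through $F$ up to isomorphism and then through $G$ up to isomorphism, using that functors preserve isomorphisms). Therefore $H$ is an equivalence between $\textbf{FCDBA}$ and $\textbf{Scxt}^{op}$, which is exactly the assertion that $\textbf{FCDBA}$ is dually equivalent to $\textbf{Scxt}$.

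I do not expect any serious obstacle here, since all the substantive work is contained in Theorems \ref{algebricdualty} and \ref{duality}, and the present statement is a formal consequence. The only points that require care are the bookkeeping of variance --- making sure that composing a covariant equivalence with a contravariant one yields a contravariant equivalence, and that an equivalence onto the opposite category $\textbf{Scxt}^{op}$ is by definition a dual (contravariant) equivalence onto $\textbf{Scxt}$ --- and, should one prefer the quasi-inverse formulation over the faithful/full/essentially-surjective criterion, checking that chosen quasi-inverses of $G$ and $F$ compose to a quasi-inverse of $H$. Either route completes the argument.
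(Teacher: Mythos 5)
Your proposal is correct and takes essentially the same route as the paper: the paper's proof of Theorem \ref{fcdbaduallty} is precisely the observation that Theorems \ref{algebricdualty} and \ref{duality} compose, with the composite $F\circ G$ even displayed in the paper's summary diagram. Your explicit bookkeeping of variance and of the preservation of faithfulness, fullness, and essential surjectivity under composition simply spells out what the paper leaves implicit.
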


\section{Conclusions}
\label{conclusion}
In order to give topological representation results for dBas, this work adds topologies to the sets of all primary filters and ideals of dBas and introduces an enhanced version $\mathbb{K}_{pr}^{T}(\textbf{D})$  of the standard context defined by Wille. Contexts with topological spaces, CTSCR, clopen object oriented semiconcepts and protoconcepts   are defined. For every dBa \textbf{D}, it is proved that  $\mathbb{K}_{pr}^{T}(\textbf{D}):=((\mathcal{F}_{pr}(\textbf{D}), \mathcal{T}), (\mathcal{I}_{pr}(\textbf{D}), \mathcal{J}), \nabla)$  is a CTSCR.  The representation results  obtained for dBas are as follows.
Any dBa $\textbf{D}$ is quasi-embeddable into the algebra of clopen object oriented protoconcepts of the CTSCR $\mathbb{K}_{pr}^{T}(\textbf{D})$. The largest pure subalgebra $\textbf{D}_{p}$  of $\textbf{D}$ is isomorphic to the algebra of clopen object oriented semiconcepts of the CTSCR $\mathbb{K}_{pr}^{T}(\textbf{D})$, as a consequence of which any pure dBa $\textbf{D}$ is isomorphic to the algebra of clopen object oriented semiconcepts of  $\mathbb{K}_{pr}^{T}(\textbf{D})$. For a contextual dBa $\textbf{D}$ the quasi-embedding becomes an embedding, and in case $\textbf{D}$ is   fully contextual, it is an isomorphism. When $\textbf{D}$ is a finite dBa, 
it is observed that a representation result  can be  obtained in terms of object oriented protoconcepts and semiconcepts. This representation is also obtained as a special case from the above-mentioned quasi-embedding theorem for dBas. 

Some observations on Boolean algebras are also made. Trivially, Boolean algebras provide examples of dBas that are both fully contextual and pure, where the Boolean negation serves as both the negations defining a dBa.
It is shown here that, on the other hand, if in any dBa the two negations defining it coincide and  the law of double negation holds, it becomes a Boolean algebra. In case of a Boolean algebra $\textbf{D}$, $\nabla$ in the CTSCR $\mathbb{K}_{pr}^{T}(\textbf{D}):=((\mathcal{F}_{pr}(\textbf{D}), \mathcal{T}), (\mathcal{I}_{pr}(\textbf{D}), \mathcal{J}), \nabla)$  is,  in fact, a homeomorphism. The isomorphism theorems for fully contextual and pure dBas yield a representation theorem  for Boolean algebras as well.

The definition of a Stone context is obtained on  abstraction of  properties  of the  CTSCR  $\mathbb{K}_{pr}^{T}(\textbf{D})$. Categories $\textbf{FCDBA}$,  $\textbf{PDBA}$ and $\textbf{Scxt}$ are defined. Covariant and contravariant functors $G$, $F$ are obtained from $\textbf{FCDBA}$ into $\textbf{PDBA}$ and from $\textbf{PDBA}$ into $\textbf{Scxt}$ respectively. $G$ is shown to be an equivalence, while $F$ is a dual equivalence. The relationships between the categories are summarized in the following diagram.

\begin{figure}[h!]
\centering
\caption{Categorical Relations}
\begin{tikzpicture}
\node (1) at (1.5,0) {\textbf{PDBA}};
\node (2) at (-1.7,0) {\textbf{FCDBA}};
\node (3) at (0,-4.7) {$\textbf{Scxt}^{op}$};
\node(4) at (2.2,-2) {\textbf{$F$}};
\node (5) at (0,1.4) {\textbf{$G$}};
\node(6) at (-2.7,-2) {\textbf{$F\circ G$}};
\draw (1,0) ellipse (2cm and 1cm);
\draw (-1,0) ellipse (2cm and 1cm);
\draw (0,-4)  ellipse (2cm and 1cm);
\draw [->] (1) to [bend left=45] (3);
\draw [->] (2) to [bend right=45] (3);
\draw [->] (2) to [bend left=90] (1);
\end{tikzpicture}

\end{figure}
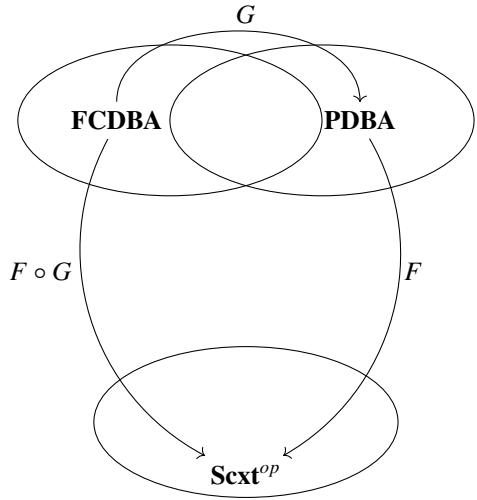

The isomorphism theorem implies that  every pure dBa is a subalgebra of a fully contextual dBa, but it may not be fully contextual itself -- as observed through an example. So the class of fully contextual dBas does not form a variety. On the other hand, fully contextual dBas may not be pure -- observed through the same example. A characterisation of (that is, an isomorphism theorem for) dBas  that are neither fully contextual nor pure, remains an open question. 
 \section{Acknowledgements}
 This work is supported by the \emph{Council of Scientific and Industrial Research} (CSIR) India - Research Grant No. 09/092(0950)/2016-EMR-I.




\begin{thebibliography}{10}

\bibitem{guide2006infinite}
{\sc Aliprantis, C.~D., and Border, K.~C.}
\newblock {\em Infinite Dimensional Analysis: A Hitchhiker's Guide}.
\newblock Springer Berlin Heidelberg, 2006.

\bibitem{awodey2010category}
{\sc Awodey, S.}
\newblock {\em Category Theory}, vol.~52 of {\em Oxford Logic Guides}.
\newblock Oxford University Press, Oxford, 2010.

\bibitem{BALBIANI2012260}
{\sc Balbiani, P.}
\newblock Deciding the word problem in pure double {B}oolean algebras.
\newblock {\em Journal of Applied Logic 10}, 3 (2012), 260 -- 273.

\bibitem{berge1997topological}
{\sc Berge, C.}
\newblock {\em Topological Spaces: Including a Treatment of Multi-Valued
  Functions, Vector Spaces, and Convexity}.
\newblock Dover Publications,Inc., Mineola, NY, 1997.

\bibitem{breckner2019topological}
{\sc Breckner, B.~E., and S{\u{a}}c{\u{a}}rea, C.}
\newblock A topological representation of double {B}oolean lattices.
\newblock {\em Studia. Universitatis Babe\c{s}-Bolyai Mathematica 64}, 1
  (2019), 11--23.

\bibitem{MR648287}
{\sc Burris, S., and Sankappanavar, H.~P.}
\newblock {\em A {C}ourse in {U}niversal {A}lgebra}, vol.~78 of {\em Graduate
  Texts in Mathematics}.
\newblock Springer-Verlag, New York-Berlin, 1981.

\bibitem{davey2002introduction}
{\sc Davey, B.~A., and Priestley, H.~A.}
\newblock {\em Introduction to Lattices and Order}.
\newblock Cambridge University Press, NY, 2002.

\bibitem{duntsch2002modal}
{\sc D{\"u}ntsch, I., and Gediga, G.}
\newblock Modal-style operators in qualitative data analysis.
\newblock In {\em Proceedings of the 2002 {IEEE} International Conference on
  Data Mining\/} (2002), K.~Vipin, T.~Shusaku, Z.~Ning, S.~Y. Philip, and
  W.~Xindong, Eds., {IEEE} Computer Society, pp.~155--162.

\bibitem{FCAARDT}
{\sc Ganter, B., and Meschke, C.}
\newblock A formal concept analysis approach to rough data tables.
\newblock In {\em Transactions on Rough Sets XIV\/} (2011), J.~F. Peters,
  A.~Skowron, H.~Sakai, M.~K. Chakraborty, D.~Slezak, A.~E. Hassanien, and
  W.~Zhu, Eds., Springer Berlin Heidelberg, pp.~37--61.

\bibitem{ganter2012formal}
{\sc Ganter, B., and Wille, R.}
\newblock {\em Formal Concept Analysis: Mathematical Foundations}.
\newblock Springer-verlag Berlin Heidelberg, 1999.
\newblock Translated from the 1996 German original by Cornelia Franzke.

\bibitem{GUO2014885}
{\sc Guo, L., Li, Q., and Huang, M.}
\newblock A categorical representation of algebraic domains based on variations
  of rough approximable concepts.
\newblock {\em International Journal of Approximate Reasoning 55}, 3 (2014),
  885--895.

\bibitem{GUO201929}
{\sc Guo, L., Li, Q., and Zhang, G.-Q.}
\newblock A representation of continuous domains via relationally approximable
  concepts in a generalized framework of formal concept analysis.
\newblock {\em International Journal of Approximate Reasoning 114\/} (2019),
  29--43.

\bibitem{hartonas1997stone}
{\sc Hartonas, C., and Dunn, J.~M.}
\newblock Stone duality for lattices.
\newblock {\em Algebra Universalis 37}, 3 (1997), 391--401.

\bibitem{Hartung1992}
{\sc Hartung, G.}
\newblock A topological representation of lattices.
\newblock {\em Algebra Universalis 29}, 2 (1992), 273--299.

\bibitem{hitzler2006categorical}
{\sc Hitzler, P., Kr{\"o}tzsch, M., and Zhang, G.-Q.}
\newblock A categorical view on algebraic lattices in formal concept analysis.
\newblock {\em Fundamenta Informaticae 74}, 2-3 (2006), 301--328.

\bibitem{hitzler2004cartesian}
{\sc Hitzler, P., and Zhang, G.-Q.}
\newblock A {C}artesian closed category of approximable concept structures.
\newblock In {\em International Conference on Conceptual Structures\/} (2004),
  K.~E. Wolff, H.~D. Pfeiffer, and H.~S. Delugach, Eds., Springer Berlin
  Heidelberg, pp.~170--185.

\bibitem{howlader2018algebras}
{\sc Howlader, P., and Banerjee, M.}
\newblock Algebras from semiconcepts in rough set theory.
\newblock In {\em International Joint Conference on Rough Sets\/} (2018), H.~S.
  Nguyen, Q.-T. Ha, T.~Li, and M.~Przyby{\l}a-Kasperek, Eds., Springer
  International Publishing, pp.~440--454.

\bibitem{howlader2020}
{\sc Howlader, P., and Banerjee, M.}
\newblock Object oriented protoconcepts and logics for double and pure double
  {B}oolean algebras.
\newblock In {\em International Joint Conference on Rough Sets\/} (2020),
  R.~Bello, D.~Miao, R.~Falcon, M.~Nakata, A.~Rosete, and D.~Ciucci, Eds.,
  Springer International Publishing, pp.~308--323.

\bibitem{howlader3}
{\sc Howlader, P., and Banerjee, M.}
\newblock Remarks on prime ideal and representation theorems for double
  {B}oolean algebras.
\newblock In {\em CLA 2020\/} (2020), F.~J. Valverde-Albacete and M.~Trnecka,
  Eds., CEUR Workshop Proceedings, pp.~83--94.

\bibitem{CACLkeyun}
{\sc Hul, K., Sui, Y., Lu, Y., Wang, J., and Shi, C.}
\newblock Concept approximation in concept lattice.
\newblock In {\em Advances in Knowledge Discovery and Data Mining\/} (2001),
  D.~Cheung, G.~J. Williams, and Q.~Li, Eds., Springer Berlin Heidelberg,
  pp.~167--173.

\bibitem{RCA}
{\sc Kent, R.~E.}
\newblock Rough concept analysis.
\newblock In {\em Rough Sets, Fuzzy Sets and Knowledge Discovery\/} (1994),
  W.~P. Ziarko, Ed., Springer, London, pp.~248--255.

\bibitem{kwuida2007prime}
{\sc Kwuida, L.}
\newblock Prime ideal theorem for double {B}oolean algebras.
\newblock {\em Discussiones Mathematicae-General Algebra and Applications 27},
  2 (2007), 263--275.

\bibitem{lei2009rough}
{\sc Lei, Y., and Luo, M.}
\newblock Rough concept lattices and domains.
\newblock {\em Annals of Pure and Applied Logic 159}, 3 (2009), 333--340.

\bibitem{aclmc}
{\sc Meschke, C.}
\newblock Approximations in concept lattices.
\newblock In {\em Formal Concept Analysis\/} (2010), L.~Kwuida and B.~Sertkaya,
  Eds., Springer Berlin Heidelberg, pp.~104--123.

\bibitem{munkres1975topology}
{\sc Munkres, J.~R.}
\newblock {\em Topology: A First Course}.
\newblock Prentice-Hall, Inc., Englewood Cliffs, NJ, 1975.

\bibitem{pawlak2012rough}
{\sc Pawlak, Z.}
\newblock {\em Rough sets: Theoretical Aspects of Reasoning about Data}.
\newblock Kluwer Academic Publishers, 1991.

\bibitem{priestley1970representation}
{\sc Priestley, H.~A.}
\newblock Representation of distributive lattices by means of ordered {S}tone
  spaces.
\newblock {\em The Bulletin of the London Mathematical Society 2}, 2 (1970),
  186--190.

\bibitem{rado1949axiomatic}
{\sc Rado, R.}
\newblock Axiomatic treatment of rank in infinite sets.
\newblock {\em Canadian Journal of Mathematics 1}, 4 (1949), 337--343.

\bibitem{MR946626}
{\sc Salbany, S., and Todorov, T.~D.}
\newblock Alexander's subbase lemma.
\newblock {\em Proceedings of the American Mathematical Society 105}, 1 (1989),
  262.

\bibitem{saquer2001concept}
{\sc Saquer, J., and Deogun, J.~S.}
\newblock Concept approximations based on rough sets and similarity measures.
\newblock {\em International Journal of Applied Mathematics and Computer
  Science 11}, 3 (2001), 655--674.

\bibitem{StoneB}
{\sc Stone, M.~H.}
\newblock The theory of representation for {B}oolean algebras.
\newblock {\em Transactions of the American Mathematical Society 40}, 1 (1936),
  37–111.

\bibitem{stone1937applications}
{\sc Stone, M.~H.}
\newblock Applications of the theory of {B}oolean rings to general topology.
\newblock {\em Transactions of the American Mathematical Society 41}, 3 (1937),
  375--481.

\bibitem{stone1938topological}
{\sc Stone, M.~H.}
\newblock Topological representations of distributive lattices and {B}rouwerian
  logics.
\newblock {\em {\v{C}}asopis pro p{\v{e}}stov{\'a}n{\'\i} matematiky a fysiky
  67}, 1 (1938), 1--25.

\bibitem{urquhart1978topological}
{\sc Urquhart, A.}
\newblock A topological representation theory for lattices.
\newblock {\em Algebra Universalis 8}, 1 (1978), 45--58.

\bibitem{vallin2013elements}
{\sc Vallin, R.~W.}
\newblock {\em The elements of {C}antor sets: with applications}.
\newblock Wiley Online \& Sons, Inc., Hoboken,NJ, 2013.

\bibitem{vormbrock}
{\sc Vormbrock, B.}
\newblock A solution of the word problem for free double {B}oolean algebras.
\newblock In {\em Formal Concept Analysis\/} (2007), S.~O. Kuznetsov and
  S.~Schmidt, Eds., Springer Berlin Heidelberg, pp.~240--270.

\bibitem{vormbrock2005semiconcept}
{\sc Vormbrock, B., and Wille, R.}
\newblock Semiconcept and protoconcept algebras: the basic theorems.
\newblock In {\em Formal Concept Analysis: Foundations and Applications\/}
  (2005), B.~Ganter, G.~Stumme, and R.~Wille, Eds., Springer Berlin Heidelberg,
  pp.~34--48.

\bibitem{wille1982restructuring}
{\sc Wille, R.}
\newblock Restructuring lattice theory: an approach based on hierarchies of
  concepts.
\newblock In {\em Ordered Sets. NATO Advanced Study Institutes Series (Series C
  — Mathematical and Physical Sciences)}, I.~Rival, Ed. Springer, Dordrecht,
  1982, pp.~445--470.

\bibitem{wille1989knowledge}
{\sc Wille, R.}
\newblock Knowledge acquisition by methods of formal concept analysis.
\newblock In {\em Proceedings of The Conference on Data Analysis, Learning
  Symbolic and Numeric Knowledge\/} (1989), E.~Diday, Ed., Nova Science
  Publishers, Inc., pp.~365--380.

\bibitem{wille}
{\sc Wille, R.}
\newblock Boolean concept logic.
\newblock In {\em Conceptual Structures: Logical, Linguistic, and Computational
  Issues\/} (2000), B.~Ganter and G.~W. Mineau, Eds., Springer Berlin
  Heidelberg, pp.~317--331.

\bibitem{yang2009rough}
{\sc Yang, L., and Xu, L.}
\newblock On rough concept lattices.
\newblock {\em Electronic Notes in Theoretical Computer Science 257\/} (2009),
  117--133.

\bibitem{yao2004comparative}
{\sc Yao, Y.~Y.}
\newblock A comparative study of formal concept analysis and rough set theory
  in data analysis.
\newblock In {\em International Conference on Rough Sets and Current Trends in
  Computing\/} (2004), S.~Tsumoto, R.~S{\l}owi{\'n}ski, J.~Komorowski, and
  J.~W. Grzyma{\l}a-Busse, Eds., Springer Berlin Heidelberg, pp.~59--68.

\bibitem{yao2004concept}
{\sc Yao, Y.~Y.}
\newblock Concept lattices in rough set theory.
\newblock In {\em IEEE Annual Meeting of the Fuzzy Information Processing
  Society-NAFIPS\/} (2004), vol.~2, IEEE, pp.~796--801.

\bibitem{RSAFCAyao}
{\sc Yao, Y.~Y., and Chen, Y.}
\newblock Rough set approximations in formal concept analysis.
\newblock In {\em Transactions on Rough Sets V\/} (2006), J.~F. Peters and
  A.~Skowron, Eds., Springer Berlin Heidelberg, pp.~285--305.

\bibitem{yao1996generalization}
{\sc Yao, Y.~Y., and Lin, T.}
\newblock Generalization of rough sets using modal logics.
\newblock {\em Intelligent Automation and Soft Computing 2}, 2 (1996),
  103--119.

\bibitem{zhang2006approximable}
{\sc Zhang, G.-Q., and Shen, G.}
\newblock Approximable concepts, {C}hu spaces, and information systems.
\newblock {\em Theory and Applications of Categories 17\/} (2006), 79--102.

\end{thebibliography}
\end{document}